\DeclareMathOperator{\Mod}{-Mod}
\DeclareMathOperator{\Ann}{Ann}
\DeclareMathOperator{\pt}{pt}
\DeclareMathOperator{\mx}{\it{Max}}
\DeclareMathOperator{\Max}{\it{Max}}
\DeclareMathOperator{\Hom}{Hom}
\DeclareMathOperator{\End}{End}
\theoremstyle{plain}
\newtheorem*{theorem*}{Theorem}
\newtheorem{thm}{Theorem}[section]
\newtheorem{cor}[thm]{Corollary}
\newtheorem{lem}[thm]{Lemma}
\newtheorem{prop}[thm]{Proposition}
\theoremstyle{definition}
\newtheorem{dfn}[thm]{Definition}
\newtheorem{obs}[thm]{Remark}
\newtheorem{ej}[thm]{Example}
\newcommand{\V}{\mathcal{V}}
\newcommand{\U}{\mathcal{U}}
\newcommand{\sm}{\sigma[M]}
\newcommand{\F}{\mathcal{F}}
\newcommand{\ltc}{\mathcal{L}} 
\newcommand\blfootnote[1]{%
  \begingroup
  \renewcommand\thefootnote{}\footnote{#1}%
  \addtocounter{footnote}{-1}%
  \endgroup
}
\title[ Strongly harmonic and Gelfand modules]{ 
On strongly harmonic and Gelfand modules }
\author[Medina,\\ Morales,\\ Sandoval,\\ Zald\'ivar]{Mauricio Medina-B\'arcenas \\  Lorena Morales-Callejas\\ Martha Lizbeth Shaid Sandoval-Miranda \\  \'Angel Zald\'ivar-Corichi }
\dedicatory{Dedicated to the memory of Professor Harold Simmons.}
\subjclass[2010]{Primary 16D80, 16D70, 54H99}
\keywords{Strongly harmonic modules, Gelfand modules, Normal idiom, Space of maximal submodules}
\begin{document}

\begin{abstract}
We introduce the notions of strongly harmonic and Gelfand module, as a generalization of the well-known ring theoretic case. We prove some properties of these modules and we characterize them via their lattice of submodules and their space of maximal submodules. It is also observed that, under some assumptions, the space of maximal submodules of a strongly harmonic module constitutes a compact Hausdorff space whose frame of open sets is isomorphic to the frame $\Psi(M)$ defined in \cite{medina2017attaching}. Finally, we mention some open questions that arose during this investigation.

\end{abstract}

\maketitle

\section{Introduction}\label{sec1}

The present manuscript can be considered as a natural step of the investigation initiated in \cite{medina2017attaching}.  
In that document, we associated to a module $M$ (satisfying some conditions) two frames,
the frame of \emph{semiprimitive submodules} $SPm(M)$ and the frame $\Psi(M)$ given by \[N\in\Psi(M)\Leftrightarrow(\forall n\in N)[N+\Ann_{M}(Rn)=M].\] 
It is observed that these two frames are spatial and they work as classification objects of the module $M$ \cite[Theorem 3.13, Theorem 5.6]{medina2017attaching}. In fact, we have that $SPm(M)\cong\mathcal{O}(\mx(M))$ (the frame of open sets of the space of maximal submodules of $M$).

For the frame $\Psi(M)$, it seems that its point space $\pt(\Psi(M))$ is hard to describe, and there is not a direct  connection with the frame of semiprimitive submodules.

In the ring-theoretic case, the point space of $\Psi(R)$ can be described for certain classes of rings, the strongly harmonic and Gelfand rings. 
The general definition of strongly harmonic ring was introduced in \cite{koh1972representation}. In that paper is observed that the space of maximal ideals $\mx(R)$ of a strongly harmonic ring $R$ with the hull-kernel topology is a compact Hausdorff space. Later, in \cite{mulvey1979generalisation} Gelfand rings were introduced, and it was proved that for these rings the space of maximal ideals is also compact Hausdorff (it results that any Gelfand ring is strongly harmonic). The importance of these kind of spaces reside in that strongly harmonic rings can be represented as the ring of global sections over compact Hausdorff spaces \cite[Theorem 3.7]{koh1972representation}. In that path, in \cite{borceux2006algebra} the authors (as an example of a more general theory) introduce a representation for rings based on the frame $\Psi(R)$ defined as the set of \emph{pure ideals} (i.e., ideals $I$ such that $R/I$ is a flat right module). In a more particular setting in \cite{borceux1984sheaf} (see \cite{simmonssheaf} for the strongly harmonic case) is observed that the frame $\Psi(R)$ serves as a good space to unify the known representations and they show that, for Gelfand rings the point space of $\Psi(R)$ is homeomorphic to $\mx(R)$ with the hull-kernel topology, equivalently $\Psi(R)\cong\mathcal{O}(\mx(R))$.

Later in \cite{simmonssome}, the author organizes the ring theoretic properties of strongly harmonic rings and Gelfand rings. Following that manuscript, we introduce the notions of \emph{strongly harmonic module}  and \emph{Gelfand module} and we explore the properties of these modules. We study their space of maximal fully invariant submodules $\mx^{fi}(M)$ for strongly harmonic modules (Theorem \ref{dreamtheo})  and $\mx(M)$ for Gelfand modules and we relate those spaces with the point space of $\Psi(M)$. We will make use of latticial and point-free techniques applied to the \emph{idiom} of submodules of a given module $M$. In fact, many of these results were obtained trying to prove Theorem \ref{dreamtheo} as a reminiscence of \cite[Theorem 3.5]{simmons1989compact} and \cite[Corollary 4.7]{paseka1986regular}.

We now give a brief description of the contents in this paper. 
Section \ref{sec2} is the background material needed to make this manuscript as self-contained as possible. In Section \ref{sec3} we introduce the notion of normal idiom. Using the notion of quasi-quantale and of relative spectrum introduced in \cite{medina2015generalization}, given a quasi-quantal $A$ and a subquasi-quantal $B$ satisfying $(\star)$ (Definition \ref{sqqcond}), the space $Spec_B(A)$ (the spectrum of $A$ relative to $B$) is normal if and only if the fixed point defined by the hull-kernel topology is normal (Proposition \ref{pronormal}). This allows us to characterize the frames of semiprime and semiprimitive submodules (resp. ideals) of a module $M$ (resp. of a ring $R$) in terms of the normality of the spaces $Spec(M)$ and $\mx(M)$ (resp. $Spec(R)$ and $\mx(R)$) (Corollaries \ref{normaleq}--\ref{NormalledMaxring}).
Section \ref{sec4} is the main section and is devoted to the study of strongly harmonic modules and their space of maximal submodules. We give some properties of those modules, we show that factoring out with a fully invariant submodule of a strongly harmonic module inherits the property (Proposition \ref{summandfa}), also we prove that direct sums of copies of a strongly harmonic module is strongly harmonic (Proposition \ref{dirsumfa}). It is proved when the condition of normality on the space $Spec(M)$ or on $\Lambda(M)$ characterizes a strongly harmonic module $M$ (Proposition \ref{norstr}  and Theorem \ref{theoremsth}). We make use of the operator $Ler$ introduced in \cite[Section 5]{medina2017attaching} to prove a characterization (Theorem  \ref{dreamtheo}) which will be the key to make a connection with the frame $\Psi(M)$. We see that the frame $\Psi(M)$ is a regular frame (Theorem \ref{dreamcon}) and we  prove that $\pt(\Psi(M))$ is homeomorphic to the space $\mx(M)$ and hence $\Psi(M)\cong \mathcal{O}(\mx(M))$ as frames (Theorem \ref{homeo} and  Corollary \ref{corhomeo}). In Section \ref{sec5}, we present Gelfand modules, we show that for a module $M$ projective in $\sm$, if $M$ is a Gelfand module then $M$ is strongly harmonic; and the converse follows provided that $M$ is quasi-duo (Theorem \ref{SHDuoGelfand}).  In Proposition \ref{summandgel}, it is also observed that for a quasi-projective Gelfand module each factor module is Gelfand. In Theorem \ref{DreamcomG}, it is shown that the operator $Ler$ defines a frame isomorphism between $\Psi(M)$ and $SPm(M)$ for a Gelfand module $M$ provided of additional hypothesis. We present a characterization of Gelfand modules (Theorem \ref{DOS}) in connection the well-known  of Demarco-Orsati-Simmons Theorem \cite{de1971commutative,simmons1980reticulated}. This theorem characterizes commutative Gelfand rings as those rings $R$ such that $\mx(R)$ is a retraction of $Spec(R)$. At the end, some open questions and possible lines to work in are exposed.

\section{preliminaries}\label{sec2}

Throughout this paper $R$ will be an associative ring with identity, not necessarily commutative. The word \emph{ideal} will mean two-sided ideal, unless explicitly stated the side (left or right ideal). All modules are unital and left $R$-modules. Given an $R$-module $M$, a submodule $N$ of $M$ is denoted by $N\leq M$, whereas we write $N<M$ when $N$ is a proper submodule of $M$. Recall that $N\leq M$ is said to be \emph{a fully invariant submodule}, denoted by $N\leq_{fi}M$, if for every endomorphism $f\in \End_R(M)$, it follows that $f(N)\subseteq N.$ Set $\Lambda(M)=\{N\mid N\leq M\},$ and $\Lambda^{fi}(M)=\{N\mid N\leq_{fi}M\}$. Given a module $M$ and a set $X$, the direct sum of copies of $M$ is denoted by $M^{(X)}$, if the set is finite, say $|X|=n$ we write $M^{(n)}$. An $R$-module $N$ is said to be \emph{$M$-generated} if there exists an epimorphism $\rho:M^{(X)}\to N$, and $N$ is \emph{$M$-subgenerated} if $N$ can be embedded into an $M$-generated module. In order to generalize the ring properties to modules we will work in the category $\sm$ for a module $M$. \emph{The category $\sm$} is the full subcategory of $R$-Mod consisting of all $M$-subgenerated modules. It can be seen that if $R=M$ then $\sm=R$-Mod. As the ring $R$ is always projective in $R$-Mod, some projectivity conditions will be needed. Recall that given modules $M$ and $N$, it is said that \emph{$M$ is $N$-projective} if for every epimorphism $\rho\colon N\to X$ and every homomorphism $\alpha:M\to X$ there exists $\beta\colon M\to N$ such that $\rho\beta=\alpha$. The module $M$ is \emph{quasi-projective} if it is $M$-projective. To get deeper results and make a module more tractable some assumptions will be imposed along the paper. Principally, it will be asked for a module $M$ to be \emph{projective in $\sm$} and in some cases that every submodule of $M$ is $M$-generated (\emph{self-generator module}). For undefined notions and general module theory we refer the reader to \cite{lamlectures} and \cite{wisbauerfoundations}.


\begin{dfn}\label{Id} 
An \emph{idiom} $(A,\leq,\bigvee,\wedge, 1, 0)$ is a complete, upper-continuous, modular lattice, that is, $A$ is a complete lattice that satisfies the following distributive laws: 
\vspace{-4pt}\[a\wedge (\bigvee X)=\bigvee\{a\wedge x\mid x\in X\},\]
for all $a\in A$  and $X\subseteq A$ directed; and 
\[a\leq b\Rightarrow (a\vee c)\wedge b=a\vee(c\wedge b)\]
for all $a,b,c\in A$.
\end{dfn} 

Our basic examples of idioms are the complete lattices $\Lambda(M)$ and $\Lambda^{fi}(M)$ for a module $M$.

A distinguish class of idioms, are the distributive ones:
\begin{dfn}
A complete lattice $(A,\leq,\bigvee,\wedge,1,0)$ is a \emph{frame}, if $A$ satisfies
\[a\wedge (\bigvee X)=\bigvee\{a\wedge x\mid x\in X\}\leqno({\rm FDL}),\]
\noindent for all $a\in A$ and $X\subseteq A$ any subset.
\end{dfn}

Of course the prototypical example of a frame comes from topology. Given a topological space $S$ with topology $\mathcal{O}(S)$, it is known that $\mathcal{O}(S)$ is a frame.

The point-free techniques we are interested in are based on the concept of nucleus. We give a quick review of that.

\begin{prop}{\rm(\cite[Lemma 3.1]{simmonsintroduction}).}\label{hullker}
Given any morphism of $\bigvee$-semilattices, $f^{*}\colon A\rightarrow B$ there exists  $f_{*}\colon B\rightarrow  A$ such that 
\[f^{*}(a)\leq b\Leftrightarrow a\leq f_{*}(b),\]
  for each $a\in A$ and $ b\in B.$ 
That is, $f^{*}$ and $f_*$ form an adjunction 
{\vspace{-8pt}}
\[\xymatrix{ A\ar@<-.7ex> @/   _.5pc/[r]_--{f^{*}} & B.\ar@<-.7ex>@/ _.5pc/[l]_--{f_{*}} }\]

In fact, $f_{*}(b)=\bigvee \{x\in A \mid f^{*}(x)\leq b\},\,$ for each $b\in B.$
\end{prop}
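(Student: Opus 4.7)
The plan is to construct $f_{*}$ explicitly by the formula given in the statement and then verify the adjoint equivalence by a two-way implication. First I would define
\[
f_{*}(b) \;:=\; \bigvee\{x \in A \mid f^{*}(x) \leq b\}
\]
for each $b \in B$; this join exists because $A$ is a complete $\bigvee$-semilattice, and the indexing set is nonempty since $f^{*}$ preserves all joins (including the empty one), so $f^{*}(0)=0\leq b$ places $0$ in the set.

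Next I would check the two implications. The forward direction $f^{*}(a)\leq b \Rightarrow a\leq f_{*}(b)$ is immediate from the definition: the hypothesis says $a$ is a member of the set whose join defines $f_{*}(b)$, hence $a$ is bounded by that join. For the reverse direction, suppose $a\leq f_{*}(b)$. Since $f^{*}$ preserves binary joins it is monotone, so $f^{*}(a)\leq f^{*}(f_{*}(b))$. Then I would invoke the fact that $f^{*}$ preserves arbitrary joins to compute
\[
f^{*}(f_{*}(b)) \;=\; f^{*}\Bigl(\bigvee\{x \mid f^{*}(x)\leq b\}\Bigr) \;=\; \bigvee\{f^{*}(x) \mid f^{*}(x)\leq b\} \;\leq\; b,
\]
the last inequality holding because $b$ is by construction an upper bound of the family $\{f^{*}(x)\mid f^{*}(x)\leq b\}$. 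Chaining these gives $f^{*}(a)\leq b$, which closes the equivalence.

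Honestly there is no substantial obstacle to overcome here; this is the archetypal Galois-connection argument for posets (the poset form of the adjoint functor theorem), and the join-preservation hypothesis on $f^{*}$ is exactly what makes the construction of the right adjoint pointwise. The only convention-level point to be careful about is that a morphism of $\bigvee$-semilattices is understood to preserve the empty join, so that the bottom element is available and the defining set for $f_{*}(b)$ is always inhabited; with that convention (standard in the idiom-theoretic framework used in the paper), the argument above is complete.
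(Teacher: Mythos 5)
Your proof is correct and is exactly the standard Galois-connection argument that the paper itself invokes: the paper gives no independent proof of this proposition, simply citing \cite{simmonsintroduction} and remarking that it is a special case of the (poset form of the) General Adjoint Functor Theorem. Your explicit verification --- defining $f_{*}(b)$ as the indicated join, getting the forward implication from membership in the defining set, and the reverse implication from monotonicity plus $f^{*}(f_{*}(b))\leq b$ via join-preservation --- is precisely the argument the cited reference supplies, so there is nothing to reconcile; the only remark worth making is that the nonemptiness of the defining set is not actually needed, since in a complete $\bigvee$-semilattice the empty join exists and equals the bottom element, so $f_{*}(b)$ is well defined in any case.
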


This is a particular case of the General Adjoint Functor Theorem. A proof of this can be found in any standard book of category theory, for instance, \cite[Theorem 6.3.10]{leinster2014basic}, and \cite[Lemma 3.1]{simmonsintroduction}.

The reader can see  \cite{simmonsintroduction} and and \cite{rosenthal1990quantales} for more details of all these facts.

\begin{lem}{\rm(\cite[Lemma 3.3]{simmonsintroduction}).}
\label{lema3.3HS}  Let $f^{*}\colon A \to B$ be an arbitrary morphism of $\bigvee$-semilattices, and $f_{*}\colon B\rightarrow  A$ the right adjoint of $f^{*}.$
Then, $\mu:=f_{*}{\circ}f^{*}{\colon} A\to A$ is a closure operation satisfying the following conditions:
\begin{enumerate}
\item[\rm(a)] $x\leq \mu(a)$ if and only if $f^{*}(x)\leq f^{*}(a)$,  for each $x,a\in A,$
\item[\rm(b)] $f^{*}(\mu(a))=f^{*}(a)$, for each $a\in A.$
\end{enumerate}
\end{lem}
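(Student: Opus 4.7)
The plan is to use the adjunction $f^{*}(a)\leq b \iff a\leq f_{*}(b)$ as the main tool; essentially every item follows by choosing the right element to plug in.

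First I would prove part (a), because it is the cleanest use of the adjunction and the other facts follow from it. By definition $\mu(a)=f_{*}(f^{*}(a))$, so the adjunction applied with $b=f^{*}(a)$ gives
\[
x\leq f_{*}(f^{*}(a)) \iff f^{*}(x)\leq f^{*}(a),
\]
which is exactly (a).

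Next I would establish that $\mu$ is a closure operation. For extensivity, apply the adjunction to the trivial inequality $f^{*}(a)\leq f^{*}(a)$ to get $a\leq f_{*}(f^{*}(a))=\mu(a)$; equivalently, take $x=a$ in (a). For monotonicity, note that both adjoints are monotone (standard: if $a\leq a'$ then $f^{*}(a)\leq f^{*}(a')\leq f^{*}(f_{*}(f^{*}(a')))$ follows again from the adjunction, and similarly for $f_{*}$), so $\mu$ is monotone as a composition. For idempotency, take $x=\mu(a)$ in (a): since $\mu(a)\leq\mu(a)$, we get $f^{*}(\mu(a))\leq f^{*}(a)$. Conversely by monotonicity of $f^{*}$ applied to $a\leq\mu(a)$ we have $f^{*}(a)\leq f^{*}(\mu(a))$, so these are equal; this already gives (b). Applying $f_{*}$ to (b) then yields $\mu(\mu(a))=\mu(a)$.

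There is no real obstacle here: the whole statement is a formal consequence of having an adjoint pair, and once (a) is in hand the closure axioms and (b) drop out by setting $x$ to the obvious choices ($x=a$ for extensivity, $x=\mu(a)$ for (b)). The only thing to be slightly careful about is the direction of the inequalities in the adjunction, since reversing $f^{*}$ and $f_{*}$ would produce an interior operator rather than a closure operator; but Proposition \ref{hullker} fixes the convention so that $f^{*}$ is the left adjoint, and then $\mu=f_{*}\circ f^{*}$ is on the correct side to be a closure.
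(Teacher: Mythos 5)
Your proof is correct: part (a) is indeed an immediate instance of the adjunction with $b=f^{*}(a)$, and the closure axioms and (b) then follow by the substitutions you indicate. The paper itself gives no proof of this lemma (it is quoted from Simmons's notes), and your argument is exactly the standard one that source would supply, so there is nothing to compare beyond confirming it is sound.
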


\noindent According to the terminology in  \cite[Definition 3.2]{simmonsintroduction}, the function $\mu\colon A\to A$  it is referred to as {\it the kernel of }$f^{*}.$

\begin{dfn}\label{nuc}
Let $A$ be an idiom. A \emph{nucleus} on $A$ is a function $j\colon A\rightarrow A$ such that:
\begin{enumerate}[label={(\alph*)}]
\item $j$ is an inflator.

\item $j$ is idempotent.

\item $j$ is a \emph{prenucleus}, that is, $j(a\wedge b)=j(a)\wedge j(b)$.
\end{enumerate}
\end{dfn}

Applying Proposition \ref{hullker} and Lemma \ref{lema3.3HS}  it gets the following result for idiom morphisms:

\begin{lem}{\rm(\cite[Lemma 3.12]{simmonsintroduction}).}
Let $f^{*}\colon A \to B$ be an arbitrary idiom morphism. Then, the kernel of $f^{\ast}$, namely $\mu\colon A \to A$  is a nucleus.
\end{lem}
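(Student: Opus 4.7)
The plan is to reduce the claim to the one property of a nucleus that is not already handed to us by Lemma~\ref{lema3.3HS}, namely the prenucleus condition $\mu(a\wedge b)=\mu(a)\wedge\mu(b)$ from item (c) of Definition~\ref{nuc}. Indeed, since $f^{*}$ is in particular a morphism of $\bigvee$-semilattices, Proposition~\ref{hullker} furnishes the right adjoint $f_{*}$, and Lemma~\ref{lema3.3HS} immediately gives that $\mu=f_{*}\circ f^{*}$ is a closure operation on $A$, so inflation and idempotence (items (a) and (b) of Definition~\ref{nuc}) come for free. The only extra ingredient that the idiom-morphism hypothesis on $f^{*}$ brings in over the $\bigvee$-semilattice-morphism hypothesis is the preservation of finite meets, and this is exactly what I will use.

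For one direction, monotonicity of $\mu$ (built into being a closure operator, via $a\wedge b\leq a,b$) yields $\mu(a\wedge b)\leq \mu(a)\wedge \mu(b)$. For the reverse direction, I would compute
\[
f^{*}\bigl(\mu(a)\wedge \mu(b)\bigr)=f^{*}(\mu(a))\wedge f^{*}(\mu(b))=f^{*}(a)\wedge f^{*}(b)=f^{*}(a\wedge b),
\]
where the outer two equalities use that $f^{*}$ preserves finite meets (idiom morphism), and the middle one is item (b) of Lemma~\ref{lema3.3HS}. Now item (a) of Lemma~\ref{lema3.3HS}, read with $x:=\mu(a)\wedge \mu(b)$ and $a:=a\wedge b$, converts the equality $f^{*}(\mu(a)\wedge \mu(b))\leq f^{*}(a\wedge b)$ into $\mu(a)\wedge \mu(b)\leq \mu(a\wedge b)$. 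Combining both inequalities gives the desired equality.

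There is essentially no obstacle here beyond bookkeeping: the proof is a two-line application of the adjunction together with meet-preservation of $f^{*}$. The only care that has to be taken is to remember that an \emph{idiom morphism} is required to preserve both arbitrary joins (so that the adjunction from Proposition~\ref{hullker} applies) and finite meets (so that $f^{*}$ interacts correctly with $\wedge$); without the latter assumption $\mu$ would only be a closure operator, not necessarily a prenucleus. Once that is noted, the result is immediate by chaining items (a) and (b) of Lemma~\ref{lema3.3HS} in the order indicated above.
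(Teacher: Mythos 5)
Your argument is correct and is exactly the intended one: the paper gives no proof of its own (it cites Simmons and merely remarks that the result follows by ``applying Proposition~\ref{hullker} and Lemma~\ref{lema3.3HS}''), and your derivation --- closure-operator properties for free from Lemma~\ref{lema3.3HS}, then the prenucleus identity $\mu(a\wedge b)=\mu(a)\wedge\mu(b)$ via meet-preservation of $f^{*}$ combined with items (a) and (b) of that lemma --- is precisely the standard argument being invoked. No gaps.
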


As we mentioned before, every topological space $S$ determines a frame, its topology $\mathcal{O}(S)$. This defines a functor from the category of topological spaces to the category of frames $\mathcal{O}(\_)\colon \mathrm{Top}\rightarrow \EuScript{F}\mathrm{rm}$. There exists a functor in the other direction:

\begin{dfn}\label{point}
Let $A$ be a frame. An element $p\in A$ is a \emph{point} or a $\wedge$\emph{-irreducible}
if $p\neq 1$ and $a\wedge b\leq p \Rightarrow a\leq p \text{ or } b\leq p.$
\end{dfn}
Denote by $\pt(A)$ the set of all points of $A$. This set can be endowed with a topology as follows: for each $a\in A$ define

\vspace{4pt}\centerline{\vspace{4pt} $U_{A}(a)=\{p\in\pt(A)\mid a\nleq p\}.$}

The collection $\mathcal{O}\pt(A)=\{U_{A}(a)\mid a\in A\}$ constitutes a topology for $\pt(A)$. We have a frame morphism 
\[U_{A}\colon A\rightarrow \mathcal{O}\pt(A)\]
that determines a nucleus on $A$ by Proposition \ref{hullker}. This nucleus or the adjoint situation is called the \emph{hull-kernel adjunction}. With this, the frame $A$ is \emph{spatial} if $U_{A}$ is an injective morphism (hence an isomorphism). 

It  can be proved that this defines a functor $\pt(\_)\colon \EuScript{F}\mathrm{rm}\rightarrow \mathrm{Top}$ in such way that the pair 
\[\xymatrix{ &\mathrm{ Top}\ar @/^/[r]^{\mathcal{O}(\_)} & \EuScript{F}\mathrm{rm} \ar @/^/[l]^{\pt(\_)}}  \]
forms an adjunction.
For more details, see \cite{johnstone1986stone}, \cite{simmonspoint} and \cite{picado2011frames}, and \cite{simmonsintroduction}.

We need some other point-free structures that generalize idioms and frames.


\begin{dfn}[\cite{medina2015generalization}]
A \emph{quasi-quantale} $A$ is a complete lattice with an associative product $A\times A\to A$ such that for all directed subsets $X,Y\subseteq A$ and $a\in A$:

\centerline{ $\left(\bigvee X\right)a=\bigvee\{xa\mid x\in X\}$}

and

\centerline{$ a\left(\bigvee Y\right)=\bigvee\{ay\mid y\in Y\}.$}
\end{dfn}

\begin{dfn}
A \emph{multiplicative idiom} is an idiom $(A,\leq,\bigvee,\wedge,\cdot)$ with an extra operation compatible with the order in such way $(A,\leq,\bigvee,\cdot)$ is a quasi-quantale.
\end{dfn}

\begin{ej}
For any left $R$-module $M,$ in \cite[Lemma 2.1]{BicanPr}  was defined the product  

\centerline{ \vspace{4pt} $N_{M}L:=\sum\{f(N)\mid f\in\Hom(M,L)\}, $}

\noindent for submodules $N,L\in{\Lambda(M)}$. In \cite[Proposition 1.3]{PepeGab} is proved that 
 \[\left(\sum_I N_i\right) {_ML}=\sum_I\left({N_i}_ML\right),\]
for each family of submodules $\{N_i\}_I$ of $M$ and each $L\leq M$. On the other hand, since $N_M\_$ is a prerradical in $R$-Mod (i.e. a subfunctor of the identity functor),
\[N_M\left(\sum_IL_i\right)=\sum_I(N_ML_i)\]
holds for every directed family $\{L_i\}_I$ of submodules of $M$ and any $N\leq M$. In general this product is not associative, but if $M$ is projective in $\sm$ the product is assocative \cite[Proposition 5.6]{beachy2002m}. Therefore, if $M$ is projective in $\sm$ then $\Lambda(M)$ is a multiplicative idiom. 
\end{ej}

Recently, in \cite[Corollary 1.5]{PepeyJaime} has been shown that for a class of modules called \emph{multiplication modules}, the product $-_M-$ is associative even if the module $M$ is not projective in $\sm$.

\begin{dfn}[\cite{medina2015generalization}, Definition 3.15]
A sub $\bigvee$-semilattice $B$ of a quasi-quantale $A$ is a \emph{subquasi-quantale} if $B$ is a quasi-quantale with the restriction of the product in $A$. 
\end{dfn}

\begin{dfn}\label{sqqcond}
Given a subquasi-quantal $B$ of a quasi-quantale $A$, we will say $B$ satisfies the condition $(\star)$ if $0,1\in B$ and $1b,b1\leq b$ for all $b\in B$.
\end{dfn}

The condition $(\star)$ comes from our canonical example of quasi-quantale $\Lambda(M)$ with $M$ an $R$-module and the canonical subquasi-quantale $\Lambda^{fi}(M)$. Note that $\Lambda^{fi}(M)$ satisfies condition $(\star)$. In general, $\Lambda(M)$ does not satisfies $(\star)$. For example, consider $M=\mathbb{Z}_2\oplus\mathbb{Z}_2$, then $\left(\mathbb{Z}_2\oplus 0\right){_MM}=M\nsubseteq\mathbb{Z}_2\oplus 0$.

\begin{dfn}[\cite{medina2015generalization}, Definition 3.16]
Let $B$ be a subquasi-quantale of a quasi-quantale $A$. An element $1\neq p\in A$ is a \emph{prime element relative to $B$} if whenever $ab\leq p$ with $a,b\in B$ then $a\leq p$ or $b\leq p$. We define the \emph{spectrum relative to $B$ of $A$} as
\[Spec_B(A)=\{p\in A\mid p\;\text{is prime relative to }B\}.\]
In the case $A=B$ this is the usual definition of prime element. We denote the set of prime elements of $A$ by $Spec(A)$.
\end{dfn}

\begin{obs}
In the case $A=\Lambda(M)$ and $B=\Lambda^{fi}(M)$, following \cite{medina2017attaching} we write $LgSpec(M)=Spec_{B}(A)$ and we call it \emph{the large spectrum of $M$} and for $Spec_B(B)$ we just write $Spec(M)$. Note that when $R=M$, $Spec(M)$ is the usual prime spectrum. As it was noticed in \cite[Example 4.14]{medina2015generalization}, if $M$ is quasi-projective then $\mx(M)\subseteq LgSpec(M)$. Moreover, if $M$ is projective in $\sm$, $\emptyset\neq \mx(M)\subseteq LgSpec(M)$ by \cite[22.3]{wisbauerfoundations}.
\end{obs}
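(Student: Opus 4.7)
The remark makes three assertions and I would address each in turn. For the first ($Spec(M) = Spec(R)$ when $M = R$), I would unwind definitions: as a left $R$-module, $R$ has submodules the left ideals, and the fully invariant ones (those preserved by every $R$-linear endomorphism of ${_RR}$, i.e., every right multiplication) are precisely the two-sided ideals of $R$. Since every $R$-linear map $R \to J$ is right multiplication by an element of $J$, the module product $I_R J = \sum\{f(I) \mid f \in \Hom_R(R, J)\}$ collapses to $\sum_{j \in J} Ij = IJ$, the usual ideal product. Hence primeness relative to $\Lambda^{fi}(R)$ coincides with the classical notion of prime ideal.

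For $\mx(M) \subseteq LgSpec(M)$ under quasi-projectivity, I would argue by contradiction. Fix $N \in \mx(M)$ and suppose $A, B \leq_{fi} M$ satisfy $A_M B \leq N$ while $A, B \nleq N$. Maximality gives $A + N = M = B + N$ and $M/N$ is simple; in particular, the composite $\sigma : B \hookrightarrow M \twoheadrightarrow M/N$ is surjective. The crucial step, carried out in \cite[Example 4.14]{medina2015generalization}, is to exploit quasi-projectivity of $M$ together with full invariance of $B$ to produce $f \in \Hom(M, B)$ with $\sigma \circ f = \pi$, where $\pi : M \to M/N$ is the natural projection. Granted $f$, the definition of the product immediately gives $f(A) \subseteq A_M B \leq N$, while $\sigma \circ f = \pi$ means $a - f(a) \in N$ for every $a \in A$. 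Combining these, $a = f(a) + (a - f(a)) \in N + N = N$, so $A \leq N$, the required contradiction.

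Finally, any module projective in $\sm$ is in particular quasi-projective, so the preceding paragraph already yields $\mx(M) \subseteq LgSpec(M)$; the nonemptyness $\mx(M) \neq \emptyset$ is then supplied by \cite[22.3]{wisbauerfoundations}, which guarantees maximal submodules in nonzero projective objects of $\sm$ through a standard Zorn's lemma argument mirroring the classical free-module case. The principal obstacle is constructing the lift $f : M \to B$ used in the middle step: when $M$ is projective in $\sm$ this is immediate because $B, M/N \in \sm$ and $\sigma$ is an epimorphism there, so the universal property of projectivity applies directly. Under mere quasi-projectivity, however, the epi $\sigma$ has source $B$ rather than $M$, and one must exploit full invariance of $B$ and the simplicity of $M/N$ more carefully, working inside $\End(M)$ as carried out in the cited example, to ensure that the lift actually takes values in $B$.
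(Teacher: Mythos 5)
The paper offers no proof of this remark---its content is established entirely by citation to \cite[Example 4.14]{medina2015generalization} and \cite[22.3]{wisbauerfoundations}---so your reconstruction is necessarily more explicit than anything in the text, and it is essentially correct. The identification of $Spec(R)$ with the usual prime spectrum via $\End_R({}_RR)\cong R^{op}$ and $I_RJ=IJ$ is fine. For the inclusion $\mx(M)\subseteq LgSpec(M)$ your argument is the right one, but the ``crucial step'' is more elementary than you make it sound: the lift $f\colon M\to B$ with $\sigma\circ f=\pi$ requires neither full invariance of $B$ nor simplicity of $M/N$; it is exactly the assertion that $M$ is $B$-projective, which holds because an $M$-projective module is projective relative to every submodule (and every factor module) of $M$ --- this is a standard consequence of quasi-projectivity, recorded in \cite{wisbauerfoundations}. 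Granted $f$, your computation $f(A)\leq A_MB\leq N$ together with $a-f(a)\in N$ closes the argument, and in fact proves the slightly stronger statement that a maximal submodule is prime relative to all of $\Lambda(M)$, not only $\Lambda^{fi}(M)$. The last claim is, as you say, pure citation; compare the paper's own Remark \ref{casicoat}, which runs the same reference \cite[22.3]{wisbauerfoundations} through the quotient $M/N$ to obtain the p-condition.
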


\begin{prop}[\cite{medina2015generalization}]\label{t}
Let $B$ be a subquasi-quantale satisfying $(\star)$ of a quasi-quantale $A$. Then $Spec_B(A)$ is a topological space with closed subsets given by

\centerline{\vspace{4pt} $ \V(b)=\{p\in Spec_B(A)\mid b\leq p\}$}

\noindent with $b\in{B}$. 

In dual form, the open subsets are of the form

\vspace{4pt} \centerline{\vspace{4pt} $\U(b)=\{p\in{Spec_B(A)}\mid b\nleq{p}\}$}

\noindent with $b\in{B}$.
\end{prop}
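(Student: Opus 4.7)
The plan is to verify the Kuratowski closed-set axioms for the family $\mathcal{V}=\{\V(b)\mid b\in B\}$, after which the dual open-set description follows automatically by complementation. The trivial cases are immediate: $\V(0)=Spec_B(A)$ because $0\leq p$ for every $p$, and $\V(1)=\emptyset$ because $(\star)$ ensures $1\in B$ while every prime element $p$ satisfies $p\neq 1$, hence $1\not\leq p$. For arbitrary intersections, given $\{b_i\}_{i\in I}\subseteq B$ the supremum $\bigvee_{i\in I} b_i$ lies in $B$ because $B$ is a sub-$\bigvee$-semilattice of $A$, and the universal property of suprema gives $\bigcap_{i\in I} \V(b_i)=\V\bigl(\bigvee_{i\in I} b_i\bigr)$.

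The substantive step is closure under finite unions, and this is exactly where $(\star)$ is used. I would prove $\V(a)\cup\V(b)=\V(ab)$ for $a,b\in B$; note first that $ab\in B$ since $B$ is closed under the restricted product. For the forward inclusion, assume $a\leq p$. Since $\{b,1\}$ is a (trivially) directed subset of $A$, distributivity of the product over directed joins implies monotonicity of the product in each argument, and in particular $ab\leq a\cdot 1$. Condition $(\star)$ then gives $a\cdot 1\leq a$, so $ab\leq a\leq p$. The case $b\leq p$ is symmetric via $1\cdot b\leq b$. Conversely, if $ab\leq p$, then primeness of $p$ relative to $B$ (together with $a,b\in B$) forces $a\leq p$ or $b\leq p$. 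An obvious induction, using associativity of the product in the quasi-quantale, extends this to any finite union.

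The only mildly delicate point, and the one I would be sure to flag, is that the monotonicity of the product is not postulated outright in a quasi-quantale but must be extracted from distributivity over directed joins applied to the two-element chain $\{x,y\}$ with $x\leq y$. Once this observation is in hand, the role of condition $(\star)$ becomes transparent: it is precisely what converts the hypothesis $a\leq p$ into the conclusion $ab\leq p$ without our needing to know anything about how the prime element $p$ itself interacts with the product.
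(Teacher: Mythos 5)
Your proof is correct. The paper itself imports this proposition from \cite{medina2015generalization} without reproducing a proof, and your argument is the standard verification of the closed-set axioms: you correctly identify the two genuinely non-trivial points, namely that monotonicity of the product must be extracted from distributivity over the directed two-element chain $\{x,y\}$ with $x\leq y$, and that condition $(\star)$ (together with $0,1\in B$ for the trivial cases $\V(0)=Spec_B(A)$ and $\V(1)=\emptyset$) is exactly what yields $ab\leq a$ and $ab\leq b$, so that $\V(a)\cup\V(b)=\V(ab)$ with the reverse inclusion supplied by primeness relative to $B$.
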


\begin{obs}[\cite{medina2015generalization}]\label{adjunction}
Let $B$ be a subquasi-quantale satisfying $(\star)$ of a quasi-quantale $A$. Let $\mathcal{O}(Spec_B(A))$ be the frame of open subsets of $Spec_B(A)$. We have an adjunction of $\bigvee$-morphisms 
{\vspace{-5pt}}
\[\xymatrix@=20mm{B\ \ \ar@/^/[r]^{\mathcal{U}} & \mathcal{O}(Spec_B(A))\ \ \ar@/^/[l]^{\mathcal{U}_*}}\]
where $\mathcal{U}_*$ is defined as
$\U_*(W)=\bigvee\{b\in{B}\mid\U(b)\subseteq{W}\}.$
The composition $\mu:=\U_*\circ\U$ is a closure operator in $B$. Note that $\U(x)=\U(\mu(x))$ (equivalently, $\V(x)=\V(\mu(x))$) for all $x\in B$.  
\end{obs}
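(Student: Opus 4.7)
The plan is to recognize this remark as a direct application of the abstract adjunction and kernel machinery already collected in Proposition~\ref{hullker} and Lemma~\ref{lema3.3HS}. Three things need to be established: that $\U\colon B\to \mathcal{O}(Spec_B(A))$ is a $\bigvee$-semilattice morphism (so that the adjunction and its explicit formula exist at all), that $\mu:=\U_*\circ\U$ is a closure operator on $B$, and that $\U(x)=\U(\mu(x))$ for every $x\in B$. The second and third are immediate consequences of Lemma~\ref{lema3.3HS} once the first is in hand.

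The one genuine calculation is preservation of arbitrary joins by $\U$. Since $B$ is a sub-$\bigvee$-semilattice of $A$ containing $0$ (by condition $(\star)$), for any family $\{x_i\}_{i\in I}\subseteq B$ the join $\bigvee_i x_i$ is computed as in $A$ and still lies in $B$, so that $\U(\bigvee_i x_i)$ and each $\U(x_i)$ are legitimate basic opens of the topology from Proposition~\ref{t}. Unfolding the definition,
\[
p\in \U\bigl(\textstyle\bigvee_{i} x_i\bigr)\iff \bigvee_i x_i\nleq p\iff \exists\, i\,(x_i\nleq p)\iff p\in \textstyle\bigcup_i \U(x_i),
\]
which is exactly the join in $\mathcal{O}(Spec_B(A))$; the empty join handles $\U(0)=\emptyset$. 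This identification is the heart of the matter and relies only on the order-theoretic characterization of joins, not on the product of $A$.

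With $\U$ a $\bigvee$-morphism in hand, Proposition~\ref{hullker} automatically produces the right adjoint $\U_*$, and the general formula specializes to $\U_*(W)=\bigvee\{b\in B\mid \U(b)\subseteq W\}$, matching the displayed definition. Lemma~\ref{lema3.3HS} then yields that $\mu=\U_*\circ\U$ is a closure operation on $B$, and part~(b) of the same lemma gives $\U(\mu(x))=\U(x)$ for every $x\in B$. The parenthetical equivalence $\V(x)=\V(\mu(x))$ follows by complementation in $Spec_B(A)$, since $\V(b)=Spec_B(A)\setminus\U(b)$ for all $b\in B$. The only real content is the contrapositive of the definition of join used in the chain of equivalences above; everything else is a formal consequence of the adjunction toolkit recalled earlier in the section, which is presumably why the author presents this as a remark rather than as a proposition.
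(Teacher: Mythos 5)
Your proposal is correct and follows exactly the route the paper intends: the paper states this as a remark citing \cite{medina2015generalization} and relies on the machinery it has just recalled (Proposition \ref{hullker} for the existence and formula of the right adjoint, Lemma \ref{lema3.3HS} for $\mu$ being a closure operation with $\U(\mu(x))=\U(x)$), so the only substantive step is your verification that $\U$ preserves arbitrary joins, which you carry out correctly via the least-upper-bound characterization of $\bigvee_i x_i$ in $B\subseteq A$. The complementation argument for $\V(x)=\V(\mu(x))$ is likewise exactly what is meant by the parenthetical equivalence.
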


\begin{prop}\label{41}
Let $B$ be a subquasi-quantale satisfying $(\star)$ of a quasi-quantale $A$ and  $\mu=\U_*\circ\U\colon B\to B$ as above. Then, the  following conditions hold.
\begin{enumerate}[label=\emph{(\alph*)}]
\item \cite[Proposition 3.20]{medina2015generalization} For each  $b\in{B},$ $\mu(b)$ is the largest element of $B$ such that 

\vspace{4pt}\centerline{\vspace{4pt} $ \mu(b)\leq\bigwedge\{p\in{Spec_B(A)}\mid p\in\mathcal{V}(b)\}.$}

\item \cite[Theorem 3.21]{medina2015generalization} $\mu$ is a multiplicative nucleus.  
\item \cite[Corollary 3.22]{medina2015generalization} $B_\mu$ is a meet-continuous lattice.
\item \cite[Corollary 3.11]{medina2015generalization} If $B$ satisfies that for any $X\subseteq{B}$ and $a\in{B}$ 

\centerline{\vspace{4pt} $ \left(\bigvee{X}\right)a=\bigvee\{xa\mid x\in{X}\}$}

\noindent then, $B_\mu$ is a frame. 
\end{enumerate}
\end{prop}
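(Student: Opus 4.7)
The plan is to prove the four items in order by exploiting the hull-kernel adjunction $\U \dashv \U_*$ of Remark~\ref{adjunction} throughout. For (a), I would unfold $\mu(b) = \U_*(\U(b))$ via Proposition~\ref{hullker} into
\[\mu(b) = \bigvee\{x \in B \mid \U(x) \subseteq \U(b)\}.\]
Passing to complements inside $Spec_B(A)$, the condition $\U(x) \subseteq \U(b)$ is the contrapositive of $\V(b) \subseteq \V(x)$, i.e.\ of $x \leq p$ for every $p \in \V(b)$, equivalently $x \leq \bigwedge\{p \in Spec_B(A) \mid p \in \V(b)\}$. These candidates $x$ form a set closed under the $B$-join (which agrees with the $A$-join since $B$ is a sub $\bigvee$-semilattice), so $\mu(b)$ is itself the largest element of $B$ lying below this meet of primes.

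For (b), $\mu$ is already a nucleus by Lemma~\ref{lema3.3HS}, so the task reduces to the multiplicative law $\mu(ab) = \mu(a) \wedge \mu(b)$. Here condition $(\star)$ enters crucially: $ab \leq a \cdot 1 \leq a$ and $ab \leq 1 \cdot b \leq b$ give $ab \leq a \wedge b$, hence $\V(a) \cup \V(b) \subseteq \V(ab)$, while the reverse inclusion is exactly primality. Thus $\U(ab) = \U(a) \cap \U(b)$, and because $\U_*$ preserves meets as a right adjoint,
\[\mu(ab) = \U_*\bigl(\U(a) \cap \U(b)\bigr) = \U_*(\U(a)) \wedge \U_*(\U(b)) = \mu(a) \wedge \mu(b).\]

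For (c) and (d) I would work entirely inside $B_\mu$, where meets are inherited from $B$ and joins of $Y \subseteq B_\mu$ are computed as $\mu(\bigvee Y)$. Given $a \in B_\mu$ and $X \subseteq B_\mu$ (directed for (c), arbitrary for (d)), part (b) yields
\[a \wedge \mu\bigl(\bigvee X\bigr) = \mu(a) \wedge \mu\bigl(\bigvee X\bigr) = \mu\bigl((\bigvee X)\,a\bigr).\]
The distributivity of the product over the join---the quasi-quantale axiom for directed $X$ in (c), and the full distributivity hypothesis for arbitrary $X$ in (d)---rewrites $(\bigvee X)\,a$ as $\bigvee\{xa \mid x \in X\}$. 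Now $\mu(xa) = x \wedge a$ again by (b), and $\mu(\bigvee Z) = \mu(\bigvee \mu(Z))$ holds for any closure operator, so the expression collapses to $\mu(\bigvee\{a \wedge x\})$, which is precisely the $B_\mu$-join of $\{a \wedge x \mid x \in X\}$. This is meet-continuity in (c) and the frame distributive law in (d).

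The main obstacle I anticipate is the identification $\U(ab) = \U(a) \cap \U(b)$ at the heart of (b): one inclusion requires $(\star)$ to force $ab \leq a \wedge b$ and the other is primality. The entire multiplicative structure of $B_\mu$---and thereby the proofs of (c) and (d)---hangs on this single equality. Once (b) is secured, items (a), (c), (d) reduce to formal manipulations with the closure operator.
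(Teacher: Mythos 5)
Your argument is essentially correct, but there is nothing in the paper to compare it against: Proposition~\ref{41} is stated without proof, each item being imported by citation from \cite{medina2015generalization} (Proposition 3.20, Theorem 3.21, and Corollaries 3.22 and 3.11 there). What you have written is a plausible reconstruction of those arguments, and the skeleton is sound: unfolding $\U_*\U(b)$ via Proposition~\ref{hullker} and translating $\U(x)\subseteq\U(b)$ into $x\leq\bigwedge\V(b)$ gives (a); the identity $\U(ab)=\U(a)\cap\U(b)$ (one inclusion from $(\star)$ plus monotonicity of the product forcing $ab\leq a\wedge b$, the other from relative primality) combined with meet-preservation of the right adjoint gives (b); and (c), (d) then follow by pushing the appropriate distributivity law through $\mu$ exactly as you describe. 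Two small points deserve care. First, Lemma~\ref{lema3.3HS} only makes $\mu$ a \emph{closure operator}, not a nucleus; the content of ``multiplicative nucleus'' is precisely the law $\mu(ab)=\mu(a)\wedge\mu(b)$ that you go on to prove, so your parenthetical ``already a nucleus'' overstates what is free. Second, since $B$ is only assumed to be a sub-$\bigvee$-semilattice of $A$, the meets appearing in (b)--(d) are the meets of the complete lattice $B$ (equivalently of $B_\mu$, where meets of fixed points are again fixed), not necessarily those of $A$; your computation is consistent with this reading, but it is worth saying explicitly which lattice the $\wedge$ lives in.
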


\begin{dfn}\label{star}
Let $B$ be a subquasi-quantale satisfying $(\star)$ of a quasi-quantale $A$. We say $A$ satisfies \emph{the p-condition relative to $B$} if for all $b\in B$ there exists $p\in Spec_B(A)$ such that $b\leq p$. If $A=B$ we just say that $A$ satisfies \emph{the p-condition.}
\end{dfn}

\begin{obs}\label{casicoat}
Let $M$ be projective in $\sm$ and set $A=\Lambda(M)$ and $B=\Lambda^{fi}(M)$. Then $A$ satisfies the p-condition relative to $B$. For, let $N\in B$. Since $M$ is projective in $\sm$, $M/N$ is projective in $\sigma[M/N]$. It follows from \cite[22.3]{wisbauerfoundations} that $\mx(M/N)\neq\emptyset$. This implies that there exists $\mathcal{M}\in\mx(M)\subseteq LgSpec(M)$ such that $N\leq\mathcal{M}$.
\end{obs}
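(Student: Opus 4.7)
The plan is to reduce the p-condition for $A = \Lambda(M)$ relative to $B = \Lambda^{fi}(M)$ to the existence of maximal submodules of appropriate quotients, and then invoke the fact $\mx(M)\subseteq LgSpec(M)$ recorded earlier. So, fix $N\in\Lambda^{fi}(M)$ with $N<M$ (the case $N=M$ is vacuous in the intended reading of Definition \ref{star}). I need a prime element $p\in LgSpec(M)$ of $A$ relative to $B$ with $N\leq p$; since the earlier observation gives $\emptyset\neq\mx(M)\subseteq LgSpec(M)$, it is enough to produce some $\mathcal{M}\in\mx(M)$ containing $N$.

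First I would invoke the standard correspondence: maximal submodules of $M$ containing $N$ are in bijection with elements of $\mx(M/N)$. So the target becomes showing $\mx(M/N)\neq\emptyset$ whenever $N<M$ is fully invariant. Next, I would exploit the hypothesis that $M$ is projective in $\sigma[M]$ together with the full invariance of $N$. The key transfer lemma here is standard in the Wisbauer framework: if $N\leq_{fi} M$, then $\sigma[M/N]$ is a hereditary subcategory of $\sigma[M]$ consisting of modules annihilated in an appropriate sense by $N$, and the quotient $M/N$, being $M$-generated, inherits projectivity in $\sigma[M/N]$ from the projectivity of $M$ in $\sigma[M]$. This step is the one that genuinely uses full invariance; without it, $\sigma[M/N]$ need not be closed under the operations that make the transfer work.

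Once $M/N$ is known to be projective in its own subgenerated category and nonzero, I would apply \cite[22.3]{wisbauerfoundations} directly to conclude $\mx(M/N)\neq\emptyset$, which is the form of the "every nonzero self-projective module in $\sigma[M/N]$ has a maximal submodule" statement needed here. Lifting any such maximal submodule of $M/N$ back through the canonical projection $M\to M/N$ yields $\mathcal{M}\in\mx(M)$ with $N\leq\mathcal{M}$, and the inclusion $\mx(M)\subseteq LgSpec(M)$ finishes the argument.

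The only step that requires real care is passing projectivity from $M\in\sigma[M]$ to $M/N\in\sigma[M/N]$; everything else is bookkeeping with the correspondence theorem and the previously recorded inclusion $\mx(M)\subseteq LgSpec(M)$. Fortunately, this transfer is a well-known consequence of $N$ being fully invariant, so the remark reduces to a clean application of \cite[22.3]{wisbauerfoundations}.
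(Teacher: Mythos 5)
Your proposal is correct and follows essentially the same route as the paper's own argument: pass to the quotient, use that $M/N$ is projective in $\sigma[M/N]$ together with \cite[22.3]{wisbauerfoundations} to obtain a maximal submodule of $M/N$, lift it back through $M\to M/N$, and invoke the inclusion $\mx(M)\subseteq LgSpec(M)$. The only difference is that you make explicit where full invariance of $N$ enters the projectivity transfer, a step the paper asserts without comment.
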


\begin{lem}\label{lemmap}
Let $B$ be a subquasi-quantale satisfying $(\star)$ of a quasi-quantale $A$ and $\mu$ the multiplicative nucleus given by the adjoint situation in Remark \ref{adjunction}. Then, for $x,y\in B$, the following conditions hold. 
\begin{enumerate}[label=\emph{(\alph*)}]
\item $x\leq y$ implies $\V(y)\leq \V(x).$
\item $\V(x)=\V(\mu(x))$.
\item $xy\leq \mu(0)$ if and only if $\U(x)\cap \U(y)=\emptyset.$
\item If $x\vee y=1$ then $\V(x)\cap\V(y)=\emptyset.$ If in addition, $A$ satisfies the $p-$condition relative to $B$, then the converse holds. 
\end{enumerate}
\end{lem}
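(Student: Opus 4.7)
The plan is to treat each of the four parts in turn, leaning heavily on Proposition \ref{41}(a) (which identifies $\mu(b)$ as the largest element of $B$ dominated by $\bigwedge\{p\in Spec_B(A)\mid b\leq p\}$) and on condition $(\star)$.

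Part (a) is immediate from the definition of $\V$: if $x\leq y$ and $y\leq p$, then $x\leq p$, so $\V(y)\subseteq\V(x)$. Part (b) then follows by sandwiching. Since $x\leq\mu(x)$, part (a) yields $\V(\mu(x))\subseteq\V(x)$. For the reverse inclusion, Proposition \ref{41}(a) gives $\mu(x)\leq\bigwedge\V(x)$, so any $p\in\V(x)$ satisfies $\mu(x)\leq p$, i.e.\ $p\in\V(\mu(x))$.

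Part (c) is the substantive piece. The preliminary observation I would record first is that in any quasi-quantale the product is monotone (apply directed-sup distributivity to $\{x,y\}$ when $x\leq y$), so combined with $(\star)$ we get $ab\leq 1\cdot b\leq b$ and $ab\leq a\cdot 1\leq a$ for all $a,b\in B$; in particular $xy\in B$ and $xy\leq x\wedge y$. For the forward direction, suppose $xy\leq\mu(0)$. Proposition \ref{41}(a) applied to $b=0$ gives $\mu(0)\leq\bigwedge Spec_B(A)$, so $xy\leq p$ for every $p\in Spec_B(A)$. Since $x,y\in B$, primality of $p$ relative to $B$ forces $x\leq p$ or $y\leq p$, ruling out $p\in\U(x)\cap\U(y)$. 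For the converse, if $\U(x)\cap\U(y)=\emptyset$ then every $p\in Spec_B(A)$ satisfies $x\leq p$ or $y\leq p$, whence (using $xy\leq x$ and $xy\leq y$) $xy\leq p$. Thus $xy$ is an element of $B$ lying below $\bigwedge Spec_B(A)=\bigwedge\V(0)$, so the maximality clause in Proposition \ref{41}(a) yields $xy\leq\mu(0)$.

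Part (d) is short. If $x\vee y=1$, any $p\in\V(x)\cap\V(y)$ would satisfy $1=x\vee y\leq p$, contradicting $p\neq 1$, so $\V(x)\cap\V(y)=\emptyset$. For the converse under the $p$-condition relative to $B$, note $x\vee y\in B$ (since $B$ is a sub-$\bigvee$-semilattice); if $x\vee y\neq 1$ the $p$-condition produces $p\in Spec_B(A)$ with $x\vee y\leq p$, placing $p$ in $\V(x)\cap\V(y)$ against the hypothesis.

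The only even mildly delicate step is the converse of part (c): one must notice both that $xy$ lives in $B$ (so that the maximality characterization of $\mu(0)$ applies) and that the inequality $xy\leq p$ for all primes requires $ab\leq a,b$, which in turn uses both monotonicity of the product and condition $(\star)$. Everything else is direct unwinding of the definitions.
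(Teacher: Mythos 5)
Your proposal is correct and follows essentially the same route as the paper: (a) and (d) are the same direct unwindings, (b) is the same sandwich via Proposition \ref{41}(a), and (c) rests on the identity $\U(xy)=\U(x)\cap\U(y)$ together with the characterization of $\mu(0)$ as the largest element of $B$ below $\bigwedge Spec_B(A)$. The only difference is that you make explicit the supporting facts (monotonicity of the product, $xy\leq x\wedge y$ from $(\star)$, and $xy\in B$) that the paper absorbs silently into Proposition \ref{t} and Proposition \ref{41}(a); this is a welcome clarification but not a different argument.
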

\begin{proof} 
(a) Let $p\in Spec_B(A)$ such that $y\leq p.$ Since $x\leq y,$ it is clear that $x\leq p.$

(b) From the fact that $\mu$ is inflatory and by (a), it follows that $\V(\mu(x))\subseteq \V(x).$ On the other hand, for every $p\in \V(x)$, $\mu(x)\leq p$ by Proposition \ref{41}(a). Then, $\V(x)\subseteq \V(\mu(x)).$

(c) Suppose that $xy\leq \mu(0).$ Then, for every $p\in Spec_B(A),$ $xy\leq p.$ Thus, $Spec_B(A)=\V(xy).$ Therefore, $\emptyset=\U(xy)=\U(x)\cap\U(y).$
Conversely, supposse that $\U(x)\cap\U(y)=\emptyset.$ Then, $Spec_B(A)=\V(xy).$ So, for every $p\in Spec_B(A),$ $xy\leq p$ and hence,  $xy\leq \mu(0).$ 

(d) First, suppose that $x\vee y=1.$ Then, $\V(x\vee y)=\V(1)=\emptyset.$ Thus, $\emptyset=\V(x\vee y)=\V(x)\cap\V(y).$  

On the other hand, suppose that $A$ satisfies the p-condition relative to $B$ and $\V(x)\cap\V(y)=\emptyset$. Hence $\V(x\vee y)=\emptyset.$ 

If $x\vee y\neq 1,$ there exists $1\neq p\in A$ such that $x\vee y\leq p\lneq 1$ implying that $p\in \V(x\vee y)=\emptyset $ which is a contradiction. Thus, $x\vee y=1$.
\end{proof}

\begin{lem}\label{lemaV}  
Let $B$ be a subquasi-quantale satisfying $(\star)$ of a quasi-quantale $A$ and $\mu$ be the multiplicative nucleus given by the adjoint situation on Remark \ref{adjunction}. Consider the following conditions for $x,y\in B$.
\begin{enumerate}[label=\emph{(\alph*)}]
\item $x\vee y=1.$
\item $\mu(x)\vee\mu(y)=1.$
\item $\mu(\mu(x)\vee \mu(y))=1.$
\item $\V(\mu(x))\cap\V(\mu(y))=\emptyset.$
\item $\V(x)\cap\V(y)=\emptyset.$
\end{enumerate}
The implications \emph{(a)}$\Rightarrow$\emph{(b)}$\Rightarrow$\emph{(c)}$\Rightarrow$\emph{(d)}$\Rightarrow$\emph{(e)} hold. If in addition,  $A$ satisfies the p-condition relative to $B$, all these conditions are equivalent.
\end{lem}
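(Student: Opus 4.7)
The chain is largely a routine unwinding of the properties of $\mu$ collected in Proposition~\ref{41} and Lemma~\ref{lemmap}, so my plan is to go through the four forward implications in order, reserving the p-condition argument for the closing loop.

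First I would handle \emph{(a)}$\Rightarrow$\emph{(b)}$\Rightarrow$\emph{(c)} purely from the fact that $\mu$ is an inflator. If $x\vee y=1$, then $x\leq \mu(x)$ and $y\leq \mu(y)$ force $\mu(x)\vee\mu(y)=1$; applying $\mu$ once more and using inflation again gives $\mu(\mu(x)\vee\mu(y))=1$.

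For \emph{(c)}$\Rightarrow$\emph{(d)} I would argue by contrapositive using Proposition~\ref{41}(a). Note the standard identity $\V(\mu(x))\cap \V(\mu(y))=\V(\mu(x)\vee\mu(y))$, which holds because a prime element lies above a join iff it lies above both summands. If this intersection is nonempty, pick $p\in Spec_B(A)$ with $\mu(x)\vee\mu(y)\leq p$; then Proposition~\ref{41}(a) gives $\mu(\mu(x)\vee\mu(y))\leq p$, and since points satisfy $p\neq 1$ by definition, this contradicts \emph{(c)}. Then \emph{(d)}$\Rightarrow$\emph{(e)} is immediate from Lemma~\ref{lemmap}(b), which says $\V(x)=\V(\mu(x))$ and $\V(y)=\V(\mu(y))$, so the two intersections in \emph{(d)} and \emph{(e)} coincide.

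Finally, under the p-condition relative to $B$, I would close the loop by proving \emph{(e)}$\Rightarrow$\emph{(a)}, which is precisely the second assertion of Lemma~\ref{lemmap}(d): if $x\vee y\ne 1$, the p-condition furnishes some $p\in Spec_B(A)$ with $x\vee y\leq p$, so $p\in\V(x)\cap\V(y)$, contradicting \emph{(e)}. There is no genuine obstacle here; the only thing to keep in mind is the distinction made in Proposition~\ref{41}(a), where $\mu(b)$ is only guaranteed to be \emph{below} the meet of primes above $b$ in general, so one must not assume equality unless invoking the p-condition—precisely the reason the last implication requires the extra hypothesis.
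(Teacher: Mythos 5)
Your proposal is correct and follows essentially the same route as the paper: inflation of $\mu$ for (a)$\Rightarrow$(b)$\Rightarrow$(c), the identity $\V(\mu(x))\cap\V(\mu(y))=\V(\mu(x)\vee\mu(y))$ together with $\V(z)=\V(\mu(z))$ (equivalently, Proposition~\ref{41}(a)) for (c)$\Rightarrow$(d)$\Rightarrow$(e), and the p-condition argument of Lemma~\ref{lemmap}(d) to close the loop. Your explicit citation of Lemma~\ref{lemmap}(d) for (e)$\Rightarrow$(a) is in fact the correct reference (the paper's citation of part (c) there appears to be a slip).
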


\begin{proof}
(a)$\Rightarrow$(b)$\Rightarrow$\textit(c) Follows from the fact that $\mu$ is inflatory.

(c)$\Rightarrow$(d) We have,

\centerline{ $\emptyset=\V(1)=\V(\mu(\mu(x)\vee \mu(y)))=\V(\mu(x)\vee \mu(y))=\V(\mu(x))\cap\V(\mu(y)).$}

(d)$\Rightarrow$(e) It is clear.

Assume $A$ satisfies the p-condition relative to $B$. 

(e)$\Rightarrow$(a) It follows from Lemma \ref{lemmap}.(c).
\end{proof}

Given a complete lattice $\ltc,$ recall that an element $c\in \ltc$ is \textit{compact} if  for every $X\subseteq \ltc$ such that $c\leq \bigvee X,$ there exists a finite subset $F\subseteq X$ satisfying $c\leq \bigvee F.$
Also, recall that a lattice $\ltc$ is said to be a \emph{compact lattice} if and only if $1_\ltc$ is compact in $\ltc$.

In \cite{HMPL2018}, the authors have extensively studied the conditions of compactness in different lattices which have been of interest in the study of module theory.  In particular, \cite[Propositions 4.6,  4.7, and Lemma 4.12]{HMPL2018},  give characterizations of compact elements in   $\Lambda^{fi}(M)$.

\begin{prop}\label{bcompspeccomp}
Let $B$ be a subquasi-quantale satisfying $(\star)$ of a quasi-quantale $A$. Then, $B$ is compact and $A$ satisfies the p-condition relative to $B$ if and only if $Spec_B(A)$ is a compact space. 
\end{prop}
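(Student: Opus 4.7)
My plan is to use two elementary identities about the map $\U\colon B\to\mathcal{O}(Spec_B(A))$ of Remark \ref{adjunction}: $\U(\bigvee X)=\bigcup_{x\in X}\U(x)$ for every $X\subseteq B$ (since $\bigvee X\leq p$ iff $x\leq p$ for every $x\in X$), and $\U(1)=Spec_B(A)$ (since every $p\in Spec_B(A)$ is proper). Together these translate open covers of $Spec_B(A)$ into expressions of the form $\bigvee_i b_i=1$ in $B$, modulo the p-condition distinguishing $\mu(b)=1$ from $b=1$.

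For the forward direction, suppose $B$ is compact and the p-condition holds. Given an open cover $\{\U(b_i)\}_{i\in I}$ of $Spec_B(A)$, the identities yield $\U(\bigvee_{i\in I}b_i)=\U(1)$, so no prime contains $\bigvee_{i\in I}b_i$. The p-condition forces $\bigvee_{i\in I}b_i=1$, compactness of $B$ supplies a finite $F\subseteq I$ with $\bigvee_{i\in F}b_i=1$, and $\{\U(b_i)\}_{i\in F}$ is the required finite subcover.

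For the backward direction, assume $Spec_B(A)$ is compact. To prove $B$ compact, take $X\subseteq B$ with $\bigvee X=1$; the family $\{\U(x)\}_{x\in X}$ covers $Spec_B(A)=\U(1)$, compactness extracts a finite $F\subseteq X$ with $\U(\bigvee_{x\in F}x)=\U(1)$, and the p-condition promotes this to $\bigvee_{x\in F}x=1$. For the p-condition itself, I would apply a Zorn/maximality argument in $B$: for each proper $b\in B$, the set $\{c\in B\mid b\leq c<1\}$ admits a maximal element $m$ (using compactness of $Spec_B(A)$ to guarantee that chains of proper elements cannot join to $1$), and maximality together with the monotonicity of the product and the $(\star)$ condition forces $m$ to be prime relative to $B$, via the standard expansion of $(m\vee x)(m\vee y)=1$ under the assumption of non-primality.

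The main obstacle is precisely this last step. Compactness of $Spec_B(A)$ naturally lives at the level of the hull-kernel nucleus $\mu$ of Remark \ref{adjunction}, so disentangling it into $B$-compactness and the p-condition hinges on the identification $\mu(b)=1\Leftrightarrow b=1$. Carrying out the Zorn argument cleanly and verifying primality of the maximal element $m$ via the $(\star)$-product expansion is the delicate part; everything else reduces to the two identities above.
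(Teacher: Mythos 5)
Your forward direction is exactly the paper's argument: translate the cover into $\U(\bigvee_I b_i)=\U(1)$, use the p-condition (via Lemma \ref{lemaV}) to pass from $\mu(\bigvee_I b_i)=1$ to $\bigvee_I b_i=1$, and invoke compactness of $B$. That part is fine. The paper then simply remarks that ``all the instances are reversible,'' i.e.\ it reads the converse as: \emph{given} the p-condition, compactness of $Spec_B(A)$ gives back compactness of $B$ by running the same chain backwards. You correctly noticed that the literal ``if and only if'' asks for more, namely that the p-condition itself be extracted from compactness of the space, and that is where your proposal breaks down.

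The Zorn argument you sketch is circular: to show that a chain of proper elements of $B$ cannot have join $1$, compactness of $Spec_B(A)$ only tells you that some member $c$ of the chain satisfies $\U(c)=Spec_B(A)$, i.e.\ $\mu(c)=1$; upgrading $\mu(c)=1$ to $c=1$ is precisely the p-condition you are trying to establish. Moreover, even granting a maximal proper $m\geq b$, the ``standard expansion of $(m\vee x)(m\vee y)$'' is unavailable here: a quasi-quantale's product distributes only over \emph{directed} joins, so $(m\vee x)(m\vee y)$ need not decompose into $mm\vee mx\vee xm\vee xy$, and $(\star)$ alone does not rescue the computation. The paper proves the corresponding fact only in the concrete idiom $\Lambda^{fi}(M)$ (Lemma \ref{maxfiinspec}), using module-specific identities such as $N_MM=N$ and distributivity of $N_M(-)$ over the finite sum $\mathcal{M}+N$, not from the quasi-quantale axioms. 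In fact the implication you are after is false at this level of generality: take $A=B$ a finite lattice with the identically-zero product; then $(\star)$ holds, $Spec_B(A)=\emptyset$ is compact, yet no proper element lies below a prime. So the honest conclusion is: your forward direction coincides with the paper's, your recovery of $B$-compactness from spatial compactness is fine \emph{once the p-condition is assumed}, but the attempt to derive the p-condition from compactness of $Spec_B(A)$ cannot be completed and should be dropped (or the statement read, as the paper's one-line proof implicitly does, with the p-condition as a standing hypothesis).
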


\begin{proof}
Let $\{\U(b_i)\}_I$ be an open cover of $Spec_B(A)$, that is 
\[Spec_B(A)=\displaystyle\bigcup_I\U(b_i)=\U(\bigvee_Ib_i).\]
Hence $1=\mu(\bigvee_Ib_i)$. Since $\mu$ is inflatory, $1=\mu(\bigvee_I\mu(b_i))$.  By Lemma \ref{lemaV}, $1=\bigvee_Ib_i$. Since $B$ is compact, there exists a finite subset $F\subseteq I$ such that $\bigvee_F b_i=1$. This implies that $Spec_B(A)=\bigcup_F\U(b_i)$.
Observe that all the instances are reversible.
\end{proof}

Using Proposition \ref{bcompspeccomp} we want to determine when $LgSpec(M)$ and $Spec(M)$ are compact spaces for a given module $M$, for we need the following lemma.

\begin{lem}\label{compatomic2}
Let $M$ be projective in $\sm$. If $\Lambda^{fi}(M)$ is compact, then $\Lambda^{fi}(M)$ is coatomic.
\end{lem}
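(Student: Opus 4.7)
The plan is to apply Zorn's lemma on the poset of proper fully invariant submodules sitting above a given one. Fix $N \in \Lambda^{fi}(M)$ with $N < M$, and set
\[
\mathcal{P} = \{L \in \Lambda^{fi}(M) \mid N \leq L < M\},
\]
ordered by inclusion. This set is nonempty, since $N \in \mathcal{P}$. To apply Zorn, I would show that every chain in $\mathcal{P}$ has an upper bound in $\mathcal{P}$.

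Let $\{L_i\}_{i \in I}$ be a chain in $\mathcal{P}$, and set $L = \sum_{i \in I} L_i$. Since any sum of fully invariant submodules is again fully invariant (each endomorphism sends $L_i$ into itself, hence into $L$), $L \in \Lambda^{fi}(M)$ and is the supremum of the chain in $\Lambda^{fi}(M)$. The key step is to rule out $L = M$, and this is where the compactness hypothesis enters: if $L = M$, then $1 = \bigvee_{i \in I} L_i$ in $\Lambda^{fi}(M)$, so compactness of $1$ supplies a finite subset $F \subseteq I$ with $\bigvee_{i \in F} L_i = M$. Because $\{L_i\}_{i \in F}$ is a finite chain, it has a maximum element $L_{i_0}$, forcing $L_{i_0} = M$ and contradicting $L_{i_0} \in \mathcal{P}$. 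Hence $L < M$, so $L \in \mathcal{P}$ is the required upper bound, and Zorn's lemma produces a maximal element $\mathcal{M} \in \mathcal{P}$, which is a coatom of $\Lambda^{fi}(M)$ lying above $N$.

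The main thing to verify carefully is that the supremum of a chain in the lattice $\Lambda^{fi}(M)$ really coincides with the module-theoretic sum, so that compactness (as stated for the lattice) can be leveraged; this is essentially immediate from the observation that a directed union of fully invariant submodules is fully invariant. The hypothesis that $M$ is projective in $\sigma[M]$ does not appear to be used directly in the Zorn argument itself; I expect its role to be that of a blanket hypothesis of the surrounding theory (it is what makes $\Lambda^{fi}(M)$ a multiplicative idiom and guarantees, via \cite[22.3]{wisbauerfoundations}, the nonemptiness results needed to interpret ``coatomic'' meaningfully at the level of the $p$-condition discussed in Remark~\ref{casicoat}), so I would carry it through as stated and remark on this rather than attempt to eliminate it.
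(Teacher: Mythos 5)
Your argument is correct and is essentially the paper's own proof: both apply Zorn's lemma, using compactness of $1$ in $\Lambda^{fi}(M)$ to show that a chain of proper fully invariant submodules cannot have sum equal to $M$ (a finite subchain would then already reach $M$, and a finite chain has a maximum). If anything, your version is slightly more careful, since you work in the poset of proper fully invariant submodules \emph{above a fixed} $N$, which is literally what coatomicity requires, whereas the paper's proof only exhibits maximal elements of $\Lambda^{fi}(M)\setminus\{M\}$; your observation that projectivity in $\sigma[M]$ is not used in the argument also matches the paper.
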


\begin{proof}
Let $\Gamma=\{N\in\Lambda^{fi}(M)\mid N\neq M\}$ and $\mathcal{C}=\{N_i\}_I$ be a chain in $\Gamma$. If $M=\bigcup\mathcal{C}=\sum_I N_i$, then $M=\sum_{i\in F} N_i$ for some $F\subseteq I$ finite because $\Lambda^{fi}(M)$ is compact. Since $\mathcal{C}$ is a chain, $M=N_j$ for some $j\in F$ but $N_j\neq M$, a contradiction. Thus $\bigcup\mathcal{C}$ is in $\Gamma$. By Zorn's Lemma,  $\Lambda^{fi}(M)$ has maximal elements.
\end{proof}

\begin{lem}\label{maxfiinspec}
Let $M$ be  projective in $\sm$. If $\mathcal{M}$ is a maximal element in $\Lambda^{fi}(M)$, then $\mathcal{M}\in Spec(M)$.
\end{lem}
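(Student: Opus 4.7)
The task is to show that if $\mathcal{M}$ is a coatom of $\Lambda^{fi}(M)$ (for $M$ projective in $\sm$) then $\mathcal{M}$ is prime in the quasi-quantale sense: given $N,L\in\Lambda^{fi}(M)$ with $N_{M}L\le\mathcal{M}$, we must have $N\le\mathcal{M}$ or $L\le\mathcal{M}$. My plan is a direct argument by contradiction exploiting the distributivity of $-_{M}-$ available for a module projective in $\sm$.

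First, I would assume $N\not\le\mathcal{M}$ and $L\not\le\mathcal{M}$. Since $N+\mathcal{M}$ and $L+\mathcal{M}$ belong to $\Lambda^{fi}(M)$ (the sum of fully invariant submodules is fully invariant) and both strictly contain $\mathcal{M}$, maximality forces $N+\mathcal{M}=M=L+\mathcal{M}$. The plan is then to compute $(N+\mathcal{M})_{M}(L+\mathcal{M})$ in two ways and derive $M\le\mathcal{M}$, contradicting $\mathcal{M}\neq M$.

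On one hand, $(N+\mathcal{M})_{M}(L+\mathcal{M})=M_{M}M=M$, because $\mathrm{id}_{M}\in\End(M)$ gives $M\subseteq\sum\{f(M)\mid f\in\End(M)\}$. On the other hand, using the two-sided distributivity of the product recalled right before Definition~\ref{sqqcond} — which holds on the right for arbitrary sums and, thanks to $M$ being projective in $\sm$, also for finite sums on the left (lifting any $f\colon M\to L_{1}+L_{2}$ through the canonical epimorphism $L_{1}\oplus L_{2}\twoheadrightarrow L_{1}+L_{2}$ splits $f(N)$ as $\tilde f_{1}(N)+\tilde f_{2}(N)$) — we may expand
\[
(N+\mathcal{M})_{M}(L+\mathcal{M})=N_{M}L\,+\,N_{M}\mathcal{M}\,+\,\mathcal{M}_{M}L\,+\,\mathcal{M}_{M}\mathcal{M}.
\]
Now each of the four summands lies inside $\mathcal{M}$: the first one by hypothesis; for $N_{M}\mathcal{M}$ one has $f(N)\subseteq f(M)\subseteq\mathcal{M}$ whenever $f\in\Hom(M,\mathcal{M})$; and for $\mathcal{M}_{M}L$ and $\mathcal{M}_{M}\mathcal{M}$ any $f\in\Hom(M,L)$ or $f\in\Hom(M,\mathcal{M})$ can be composed with the inclusion into $M$, so $f(\mathcal{M})\subseteq\mathcal{M}$ by full invariance. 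Summing gives $M\le\mathcal{M}$, the desired contradiction.

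The only delicate point I anticipate is justifying the finite distributivity $X_{M}(Y+Z)=X_{M}Y+X_{M}Z$ used above, since the excerpt records right-distributivity for arbitrary sums but only left-distributivity over directed families; the projectivity of $M$ in $\sm$ is precisely what bridges this gap via the lifting argument through $Y\oplus Z\twoheadrightarrow Y+Z$. Once that is in place, the rest of the proof is routine. Associativity of $-_{M}-$, which also requires $M$ projective in $\sm$, is not actually needed here, only distributivity together with full invariance of $\mathcal{M}$.
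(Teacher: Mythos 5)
Your argument is correct and uses essentially the same ingredients as the paper's proof: maximality of $\mathcal{M}$ forcing sums to equal $M$, distributivity of $-_M-$ over finite sums (the cited Lemma~2.1 of the reference the paper invokes, which is exactly your lifting through $Y\oplus Z\twoheadrightarrow Y+Z$), and full invariance of $\mathcal{M}$ to absorb the cross terms. The paper just runs the computation one-sidedly and directly --- if $L\nleq\mathcal{M}$ then $N=N_MM=N_M(\mathcal{M}+L)=N_M\mathcal{M}+N_ML\leq\mathcal{M}$ --- whereas you expand the full product $(N+\mathcal{M})_M(L+\mathcal{M})$ by contradiction; this is only a cosmetic difference.
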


\begin{proof}
Let $\mathcal{M}$ be a maximal element in $B$. Consider $N,L\in B$ such that $N_ML\leq\mathcal{M}$. If $L\nsubseteq \mathcal{M}$ then $M=\mathcal{M}+N$. Hence, using \cite[Lemma 2.1]{maustructure} 

\vspace{4pt}\centerline{\vspace{4pt}$ N=N_MM=N_M(\mathcal{M}+N)=(N_M\mathcal{M})+(N_ML)\leq \mathcal{M}.$}

Thus, $\mathcal{M}\in Spec(M)$.
\end{proof}

\begin{cor}
Let $M$ be projective in $\sm$. If $\Lambda^{fi}(M)$ is a compact lattice then $LgSpec(M)$ and $Spec(M)$ are compact. In particular, this is satisfied when $M$ is finitely generated. 
\end{cor}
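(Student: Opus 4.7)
The plan is to reduce the corollary to Proposition \ref{bcompspeccomp} in two instances: once with $(A,B)=(\Lambda(M),\Lambda^{fi}(M))$ to obtain the compactness of $LgSpec(M)=Spec_B(A)$, and once with $(A,B)=(\Lambda^{fi}(M),\Lambda^{fi}(M))$ to obtain the compactness of $Spec(M)=Spec_B(B)$. In both cases the hypothesis gives ``$B$ is compact''; what remains in each case is to verify the relevant p-condition.

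For $LgSpec(M)$ the p-condition of $\Lambda(M)$ relative to $\Lambda^{fi}(M)$ is already handled: since $M$ is projective in $\sm$, Remark \ref{casicoat} ensures that every fully invariant submodule $N\leq_{fi}M$ is contained in some $\mathcal{M}\in \mx(M)\subseteq LgSpec(M)$. Combined with the hypothesis that $\Lambda^{fi}(M)$ is compact, Proposition \ref{bcompspeccomp} immediately yields that $LgSpec(M)$ is a compact space.

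For $Spec(M)$ I need to produce, for each $N\in\Lambda^{fi}(M)$ with $N\neq M$, a prime element of $\Lambda^{fi}(M)$ above it. Here I invoke Lemma \ref{compatomic2}: compactness of $\Lambda^{fi}(M)$ forces it to be coatomic, so $N$ is contained in some maximal element $\mathcal{M}$ of $\Lambda^{fi}(M)$. Lemma \ref{maxfiinspec} then says $\mathcal{M}\in Spec(M)$, verifying the p-condition for $B=\Lambda^{fi}(M)$ inside itself. A second application of Proposition \ref{bcompspeccomp} concludes that $Spec(M)$ is compact.

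For the final clause, if $M$ is finitely generated then $M$ is a compact element of $\Lambda(M)$, i.e.\ $1_{\Lambda(M)}$ is compact. Since $\Lambda^{fi}(M)$ is a sub-$\bigvee$-semilattice of $\Lambda(M)$ sharing the same top, any directed cover of $1$ in $\Lambda^{fi}(M)$ is also a cover in $\Lambda(M)$ and so admits a finite subcover; hence $\Lambda^{fi}(M)$ is compact and the previous two paragraphs apply. There is no serious obstacle beyond matching the module-theoretic data with the abstract framework of quasi-quantales; the content of the result has been fully prepared by Proposition \ref{bcompspeccomp}, Remark \ref{casicoat}, Lemma \ref{compatomic2} and Lemma \ref{maxfiinspec}.
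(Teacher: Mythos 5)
Your proof is correct and follows essentially the same route as the paper: both instances of Proposition \ref{bcompspeccomp} with $(A,B)=(\Lambda(M),\Lambda^{fi}(M))$ via Remark \ref{casicoat}, and with $A=B=\Lambda^{fi}(M)$ via Lemma \ref{compatomic2} and Lemma \ref{maxfiinspec}. Your added verification that finite generation of $M$ gives compactness of $\Lambda^{fi}(M)$ (since joins in $\Lambda^{fi}(M)$ agree with sums in $\Lambda(M)$) is correct and slightly more explicit than the paper, which leaves that clause unproved.
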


\begin{proof}
If we set $A=\Lambda(M)$ and $B=\Lambda^{fi}(M)$ then, $LgSpec(M)$ is compact by Remark \ref{casicoat} and Proposition \ref{bcompspeccomp}. On the other hand, if we set $A=B=\Lambda^{fi}(M)$, by Lemma \ref{compatomic2} $\Lambda^{fi}(M)$ is coatomic and by Lemma \ref{maxfiinspec} each coatom of $\Lambda^{fi}(M)$ is in $Spec(M)$. Hence $\Lambda^{fi}(M)$ satisfies the p-condition. It follos from Proposition \ref{bcompspeccomp} that $Spec(M)$ is compact.
\end{proof}

The following example shows a module $M$ which is not finitely generated but $\Lambda^{fi}(M)$ is compact.

\begin{ej}
Consider the abelian group $M=\mathbb{Z}_3\oplus\mathbb{Z}_2^{(X)}$ with $X$ an infinite set. It is clear that $M$ is not finitely generated. Note that 

\centerline{ $ \Lambda^{fi}(M)=\{0, \mathbb{Z}_3\oplus 0,0\oplus\mathbb{Z}_2^{(X)}, M\}.$}

\noindent Hence $Spec(M)$ and $LgSpec(M)$ are compact.
\end{ej}

Actually, as a consequence of \cite[Proposition 4.6]{HMPL2018}, it follows that $\Lambda^{fi}(M)$ is a compact idiom if and only if there exists $N\in \Lambda(M)$ finitely generated such that $\widehat{N}=M,$ where $\widehat{ N}$ denotes the least fully invariant submodule of $M$ containing $N.$

\section{Normal idioms}\label{sec3}
In this section, we study the interaction between the property of being normal in the setting of idioms  and how certain topological spaces associated with them turn out to be normal in the topological sense. 
In particular, we highlight the results obtained in Proposition \ref{pronormal} and Corollary \ref{muglobal}. Applying these results to modules, we get conditions to $Spec(M)$ and $Max(M)$  to be normal, in terms of the frames $SP(M)$ and $SPm(M),$ respectively, see Corollaries  \ref{normaleq} and \ref{NormaleqMax}

\begin{dfn}
Let $A$ be a multiplicative idiom. We say that $A$ is \emph{normal} if for every $a,b\in A$ with $a\vee b=1,$ there exist $a',b'\in A$ such that $a\vee b'=1=a'\vee b$ and $a'b'=0.$ 
\end{dfn}

\begin{lem}\label{lema1}
Let $A$ be a coatomic multiplicative idiom and $\mu\colon  A \to A$ a multiplicative nucleus which fixes every coatom. If $x,y\in A$ satisfies that $\mu(x\vee y)=1$ then $x\vee y=1$.
\end{lem}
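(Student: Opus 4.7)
The plan is to argue by contradiction, leveraging the coatomicity of $A$ together with the monotonicity of the closure operator $\mu$. Suppose, contrary to the conclusion, that $x \vee y < 1$. Since $A$ is coatomic, there exists a coatom $m \in A$ with $x \vee y \leq m < 1$.

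Next, I would apply $\mu$ to the inequality $x \vee y \leq m$. Recall that any nucleus is in particular a closure operator (see Definition \ref{nuc}), and in particular monotone. Therefore $\mu(x \vee y) \leq \mu(m)$. By hypothesis, $\mu$ fixes every coatom, so $\mu(m) = m$, which gives $\mu(x \vee y) \leq m$.

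Combining this with the assumption $\mu(x \vee y) = 1$ yields $1 \leq m$, contradicting the fact that $m$ is a proper element (coatom) of $A$. Hence the assumption $x \vee y < 1$ cannot hold, and we conclude $x \vee y = 1$.

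I do not expect any serious obstacle here; the multiplicative structure on $A$ plays no role in this particular statement, only the lattice-theoretic facts that $\mu$ is order-preserving (as a nucleus) and that $A$ is coatomic with $\mu$ fixing all coatoms. The only care needed is to ensure that the coatom $m$ exists as a proper element strictly below $1$, which is exactly what coatomicity guarantees.
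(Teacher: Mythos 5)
Your proof is correct and follows essentially the same argument as the paper: assume $x \vee y < 1$, use coatomicity to find a coatom above it, apply monotonicity of $\mu$ together with the hypothesis that $\mu$ fixes coatoms, and derive a contradiction with $\mu(x\vee y)=1$. Your observation that the multiplicative structure plays no role here is also accurate.
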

\begin{proof}
Let $x,y\in A$ be such that $\mu(x\vee y)=1.$ If $x\vee y\lneq 1,$ then there exists a coatom $\alpha\in A$ such that $x\vee y\leq\alpha\lneq 1.$ Since $\mu$ is monotone, we obtain that $\mu(x\vee y)\leq\mu(\alpha)\lneq \mu(1)=1.$ By hypothesis, $\mu(\alpha)=\alpha$. Then,   $1=\mu(x\vee y)\leq\mu(\alpha)=\alpha\lneq 1,$ which is a contradiction.
\end{proof}

\begin{lem}\label{lema2}
Let $A$ be a coatomic multiplicative idiom and $\mu\colon  A \to A$ a multiplicative nucleus which fixes every coatom. If $A$ is normal, then $A_{\mu}$ is normal.
\end{lem}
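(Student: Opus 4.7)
The plan is to take $a, b \in A_\mu$ with $a \vee_\mu b = 1$ (where $\vee_\mu$ denotes the join in the fixed-point lattice $A_\mu$, i.e., $\mu$ applied to the join in $A$), lift the hypothesis to $A$, apply normality there, and then push the witnesses back into $A_\mu$ by applying $\mu$.

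First I would observe that $a \vee_\mu b = \mu(a \vee b) = 1$, so by Lemma \ref{lema1} we get $a \vee b = 1$ already in $A$. Since $A$ is normal, there exist $a', b' \in A$ with
\[
a \vee b' = 1 = a' \vee b, \qquad a' b' = 0.
\]
The candidates witnessing normality of $A_\mu$ will be $\mu(a'), \mu(b') \in A_\mu$.

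For the two join conditions: because $\mu$ is an inflator, $b' \leq \mu(b')$, hence $a \vee b' \leq a \vee \mu(b')$; applying $\mu$ and using $\mu(1)=1$,
\[
1 = \mu(a \vee b') \leq \mu(a \vee \mu(b')) = a \vee_\mu \mu(b'),
\]
so $a \vee_\mu \mu(b') = 1$. The symmetric argument gives $\mu(a') \vee_\mu b = 1$. For the product condition, I use that $\mu$ is a \emph{multiplicative} nucleus, so $\mu(a')\mu(b') \leq \mu(a'b') = \mu(0)$; then the product in $A_\mu$ satisfies
\[
\mu(a') \cdot_\mu \mu(b') = \mu(\mu(a')\mu(b')) \leq \mu(\mu(0)) = \mu(0),
\]
and $\mu(0)$ is precisely the bottom of $A_\mu$. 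Thus $\mu(a')$ and $\mu(b')$ witness normality of $A_\mu$.

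The only real subtlety is in the last step, namely making sure that the product on $A_\mu$ (induced by the multiplicative nucleus) plays well with the inequality $a'b' \leq \mu(0)$; the multiplicativity of $\mu$ is exactly what makes this transfer go through, and the application of Lemma \ref{lema1} at the start is what lets us replace $\mu(a \vee b) = 1$ by the honest join $a \vee b = 1$ needed to invoke normality of $A$.
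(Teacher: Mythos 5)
Your proposal is correct and follows essentially the same route as the paper: reduce $\mu(a\vee b)=1$ to $a\vee b=1$ via Lemma \ref{lema1}, invoke normality of $A$, and take $\mu(a')$, $\mu(b')$ as the witnesses in $A_\mu$, using monotonicity and idempotence of $\mu$ for the join conditions and multiplicativity for the product condition. Your remark that the induced product on $A_\mu$ is $\mu(\mu(a')\mu(b'))$ and that $\mu(0)$ is the bottom of $A_\mu$ is in fact slightly more explicit than the paper's own closing step.
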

\begin{proof}
Let $a,b\in A_{\mu}$ such that $\mu(a\vee b)=1.$ By Lemma \ref{lema1}, $a\vee b=1.$ Since $A$ is normal, it follows that there are $a',b'\in A$ satisfying that $a\vee b'=1=a'\vee b$ and $a'b'=0.$ 

Thus,

\centerline{$1=\mu(a\vee b')=\mu( \mu(a)\vee \mu(b'))=\mu(a\vee \mu(b')) ,$} 

\noindent and

\centerline{$1=\mu(a'\vee b)=\mu( \mu(a')\vee \mu(b))=\mu(\mu(a')\vee b).$}

\noindent Also, the fact that $\mu$ is a multiplicative nucleus implies that  $\mu(a')\mu(b')\leq \mu(a')\wedge\mu(b')=\mu(a'b')=\mu(0).$ Furthermore, $\mu(a')\mu(b')\leq\mu(0)$.
\end{proof}

The next Lemma gives a partial converse of Lemma \ref{lema2}.

\begin{lem}\label{mu}
Let $A$ be a coatomic multiplicative idiom and $\mu\colon  A \to A$ a multiplicative nucleus which fixes every coatom. If $A_{\mu}$ is normal and $\mu(0)=0$ then $A$ is normal.
\end{lem}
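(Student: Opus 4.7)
The plan is to lift the normality of $A_\mu$ back up to $A$, using the hypothesis $\mu(0)=0$ to transfer the multiplicative vanishing, and Lemma \ref{lema1} to transfer the joins.

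Begin with $a,b\in A$ satisfying $a\vee b=1$. Push them into $A_\mu$ by applying $\mu$ and compute their join there: since $\mu$ is an inflationary idempotent prenucleus,
\[\mu(a)\vee_{A_\mu}\mu(b)=\mu\bigl(\mu(a)\vee\mu(b)\bigr)=\mu(a\vee b)=\mu(1)=1.\]
Normality of $A_\mu$ then supplies elements $a',b'\in A_\mu$ with $\mu(a)\vee_{A_\mu}b'=1=a'\vee_{A_\mu}\mu(b)$ and $a'b'\leq \mu(0)$ (this being the appropriate ``zero product'' condition in $A_\mu$, exactly as exhibited at the end of the proof of Lemma \ref{lema2}).

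Now translate back to $A$. The hypothesis $\mu(0)=0$ collapses $a'b'\leq\mu(0)$ to $a'b'=0$ in $A$, giving the required product condition. For the join $\mu(a)\vee_{A_\mu}b'=1$, rewrite it as $\mu(\mu(a)\vee b')=1$; since $a\leq\mu(a)$ we have $\mu(a\vee b')\leq\mu(\mu(a)\vee b')=1$, while the reverse inequality is immediate from $\mu(a)\vee b'\leq\mu(a\vee b')$ after another application of $\mu$. Hence $\mu(a\vee b')=1$, and Lemma \ref{lema1} (whose hypotheses on $A$ and $\mu$ are precisely those assumed here) forces $a\vee b'=1$. A symmetric argument yields $a'\vee b=1$, and the triple $(a',b')\in A_\mu\subseteq A$ witnesses normality of $A$.

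The only subtle point is recognising where each hypothesis is consumed: the assumption $\mu(0)=0$ is exactly what turns the $A_\mu$-level vanishing into a genuine vanishing in $A$, while coatomicity together with $\mu$ fixing coatoms is what powers Lemma \ref{lema1} and allows us to unwind the $\mu$-closures from the joins. Both ingredients are necessary, so this really is only a partial converse to Lemma \ref{lema2}, in agreement with the statement preceding the lemma.
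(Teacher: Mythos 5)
Your argument is correct and follows essentially the same route as the paper's own proof: apply normality in $A_{\mu}$ to the images of $a,b$, use $\mu(0)=0$ to turn the vanishing product in $A_{\mu}$ into $a'b'=0$ in $A$, and use coatomicity together with $\mu$ fixing coatoms to pull the join conditions back down to $A$. The only cosmetic difference is that you cite Lemma \ref{lema1} for that last step, whereas the paper re-runs the coatom argument inline.
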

\begin{proof}
Consider any $a,b \in A$ such that $a\vee b=1$, then $\mu(a\vee b)=1$ and thus $\mu(a)=1$ and $\mu(b)=1$, that is, $a\vee_{\mu} b=1$ (this supremum is in $A_{\mu}$) then by hypothesis there exists $a', b'\in A_{\mu}$ with $a'b'=\mu(0)=0$ and $a\vee_{\mu} b'=1=a'\vee_{\mu} b$. Therefore we only need to prove that this $a',b'$  do the job in $A$, to this end if $a\vee b'\neq 1$ there exists by hypothesis a coatom $m\in A$ such that $a\vee b'< m$ then under $\mu$ we have $1=\mu(a\vee b')\leq\mu(m)=m$ which is a contradiction.
\end{proof}

\begin{obs}\label{mufix}
Let $M$ be projective in $\sm$, $A=\Lambda(M)$ and $B=\Lambda^{fi}(M)$. Consider the adjunction given in Remark \ref{adjunction}. Then $\mu$ fixes coatoms. For, let $\mathcal{M}$ be a maximal element in $B$. Then $\mathcal{M}\in Spec(M)\subseteq LgSpec(M)$ by Lemma \ref{maxfiinspec}. By Proposition \ref{41}.(a), $\mu(\mathcal{M})=\mathcal{M}$.
\end{obs}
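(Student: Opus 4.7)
The claim is that every coatom $\mathcal{M}$ of $B=\Lambda^{fi}(M)$ satisfies $\mu(\mathcal{M})=\mathcal{M}$. Since $\mu$ is a closure operator it is automatically inflatory, so the only content is to establish the reverse inequality $\mu(\mathcal{M})\leq\mathcal{M}$. My plan is to place $\mathcal{M}$ inside the spectrum $Spec_B(A)=LgSpec(M)$ and then read off the bound from the spectral description of $\mu$ given by Proposition \ref{41}(a).

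First, I would invoke Lemma \ref{maxfiinspec}: since $M$ is projective in $\sm$, every maximal element of $\Lambda^{fi}(M)$ is a prime element of $B$, so $\mathcal{M}\in Spec(M)=Spec_B(B)$. Next, I note the inclusion $Spec_B(B)\subseteq Spec_B(A)$, which is essentially tautological: the prime condition $ab\leq p\Rightarrow a\leq p\text{ or }b\leq p$ is tested against factorizations with $a,b\in B$ in either spectrum, while only the ambient lattice is enlarged from $B$ to $A\supseteq B$. Hence $\mathcal{M}\in Spec_B(A)$.

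Finally, I apply Proposition \ref{41}(a): $\mu(\mathcal{M})$ is the largest element of $B$ satisfying $\mu(\mathcal{M})\leq\bigwedge\{p\in Spec_B(A)\mid \mathcal{M}\leq p\}$. Since $\mathcal{M}$ itself lies in this set (trivially $\mathcal{M}\leq\mathcal{M}$, and $\mathcal{M}\in Spec_B(A)$ by the previous step), the infimum is already bounded above by $\mathcal{M}$, which forces $\mu(\mathcal{M})\leq\mathcal{M}$. Combined with inflation, this gives the desired equality.

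There is no real obstacle: the argument is a two-line consequence of the Koh-style fact that maximal fully invariant submodules of a module projective in $\sm$ are prime (Lemma \ref{maxfiinspec}), paired with the spectral description of $\mu$ provided by the hull-kernel adjunction of Remark \ref{adjunction}. The only subtle point worth flagging is the inclusion $Spec(M)\subseteq LgSpec(M)$, which is purely formal once one unpacks both definitions relative to the same subquasi-quantale $B$.
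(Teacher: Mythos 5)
Your proof is correct and follows exactly the same route as the paper: Lemma \ref{maxfiinspec} places the coatom $\mathcal{M}$ in $Spec(M)\subseteq LgSpec(M)=Spec_B(A)$, and Proposition \ref{41}(a) then bounds $\mu(\mathcal{M})$ above by $\mathcal{M}$, with inflation giving equality. You merely make explicit two steps the paper leaves tacit (the inclusion $Spec_B(B)\subseteq Spec_B(A)$ and the use of inflation), which is fine.
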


\begin{obs}\label{adjunctionsb}
Let $B$ be a subquasi-quantale satisfying $(\star)$ of a quasi-quantale $A$. We can consider a little more general situation than that of Remark \ref{adjunction}. Given a subspace $S$ of $Spec_B(A)$, we have the hull-kernel adjunction 

\centerline{ \xymatrix@=20mm{B\ \ \ar@/^/[r]^{m} & \mathcal{O}(S)\ \ \ar@/^/[l]^{m_*}} }
 
\noindent with $m(b)=\U(b)\cap S$. Then $\tau:= m_*\circ m\colon B\to B$ is a multiplicative nucleus as in the case of $\mu$.
\end{obs}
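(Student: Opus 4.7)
The plan is to adapt the proof that $\mu = \U_* \circ \U$ is a multiplicative nucleus (Proposition \ref{41}(b), following \cite[Theorem 3.21]{medina2015generalization}) to the restricted setting where $Spec_B(A)$ is replaced by an arbitrary subspace $S$. First I would check that $m(b) = \U(b) \cap S$ is a $\bigvee$-semilattice morphism $B \to \mathcal{O}(S)$: this is immediate from $\U(\bigvee X) = \bigcup_{x \in X} \U(x)$ (Proposition \ref{t}) together with distributivity of intersection over union. Proposition \ref{hullker} then produces the right adjoint $m_*(W) = \bigvee\{b \in B \mid m(b) \subseteq W\}$, and Lemma \ref{lema3.3HS} guarantees that $\tau := m_* \circ m$ is inflatory and idempotent.

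The remaining content is that $\tau$ is a prenucleus and is multiplicative. I would first establish the analogue of Proposition \ref{41}(a), namely that $\tau(x)$ is the largest element of $B$ bounded above by every $p \in S$ with $x \leq p$; the derivation copies the one for $\mu$ once one replaces $Spec_B(A)$ by $S$ in the correspondence between basic opens and their complements. With this characterization in hand, I would verify $\tau(a) \wedge \tau(b) \leq \tau(a \wedge b)$ as follows: for any $p \in S$ with $a \wedge b \leq p$, condition $(\star)$ gives $ab \leq a\cdot 1 \leq a$ and $ab \leq 1\cdot b \leq b$, so $ab \leq a \wedge b \leq p$; since $p \in Spec_B(A)$ is prime relative to $B$, either $a \leq p$ or $b \leq p$, and in either case $\tau(a) \wedge \tau(b) \leq p$ by the characterization of $\tau$. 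Taking meets over all such $p$ yields the desired inequality, and the opposite inequality is monotonicity.

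For multiplicativity I would show $\tau(a)\cdot\tau(b) \leq \tau(ab)$ by a parallel argument: for every $p \in S$ with $ab \leq p$, primality of $p$ forces $a \leq p$ or $b \leq p$, so $\tau(a) \leq p$ or $\tau(b) \leq p$, and applying $(\star)$ once more gives $\tau(a)\tau(b) \leq 1\cdot\tau(b) \leq \tau(b) \leq p$ in the second case (and symmetrically in the first). Meeting over such $p$ and invoking the characterization of $\tau(ab)$ concludes.

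The main obstacle is bookkeeping: meets in $B$ need not coincide with meets in $A$, and the product is a priori defined on $A$ rather than $B$. The key observation that resolves this is precisely that under $(\star)$, for $a,b \in B$ one has $ab \leq a \wedge b$, which is the bridge between the multiplicative primality condition (which handles $ab \leq p$) and the lattice-theoretic prenucleus condition (which involves $a \wedge b$). Once this inequality is recorded, the entire verification for $\mu$ transfers verbatim to $\tau$ with $Spec_B(A)$ replaced throughout by $S$.
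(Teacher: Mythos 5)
Your proposal is correct and follows exactly the route the paper intends: the remark offers no independent argument but simply asserts that the proof of Proposition \ref{41}(b) (i.e.\ \cite[Theorem 3.21]{medina2015generalization}) transfers verbatim once $Spec_B(A)$ is replaced by the subspace $S$, which is precisely the adaptation you carry out, including the correct identification of $ab\leq a\wedge b$ under $(\star)$ as the bridge between primality and the prenucleus condition. No gaps.
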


Recall that a topological space $S$ is \emph{normal} if given two closed subsets $K$ and $L$ such that $K\cap L=\emptyset$ then there exist open subsets $U$ and $V$ with the property $K\subseteq U$, $L\subseteq V$ and $U\cap V=\emptyset$.

\begin{prop}\label{pronormal}
Let $B$ be a subquasi-quantale satisfying $(\star)$ of a quasi-quantale $A$ and let $S$ be a subspace of $Spec_B(A)$. Let $\tau$ be the multiplicative nucleus given by Remark \ref{adjunctionsb}. Then, the following conditions are equivalent.
\begin{enumerate}[label=\emph{(\alph*)}]
\item $S$ is a normal topological space.
\item $B_{\tau}$ is a normal lattice.
\end{enumerate}
\end{prop}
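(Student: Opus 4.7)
The plan is to translate between the topology of $S$ and the lattice $B_\tau$ through the hull-kernel adjunction $m\dashv m_*$ of Remark~\ref{adjunctionsb}. The working dictionary I will rely on has three entries: for $b\in B$, $\tau(b)=1$ iff $m(b)=S$ and $\tau(b)=\tau(0)$ iff $m(b)=\emptyset$; using that the points of $Spec_B(A)$ are prime relative to $B$, $m(xy)=\U(xy)\cap S=(\U(x)\cap\U(y))\cap S=m(x)\cap m(y)$, so in particular $m(ab)=m(ba)$ and hence $\tau(ab)=\tau(ba)$; and every open subset of $S$ is of the form $m(a)$ with $a\in B$, while every closed subset has the form $\V(x)\cap S=\V(\tau(x))\cap S$ with $\tau(x)\in B_\tau$.

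For (a)$\Rightarrow$(b), given $x,y\in B_\tau$ with $x\vee_\tau y=\tau(x\vee y)=1$, the dictionary turns $K_1=\V(x)\cap S$ and $K_2=\V(y)\cap S$ into disjoint closed subsets of $S$. Normality of $S$ supplies disjoint opens $m(a)\supseteq K_1$ and $m(b)\supseteq K_2$; these containments translate to $\tau(x\vee a)=1=\tau(y\vee b)$ and the disjointness to $\tau(ab)=\tau(0)$. The candidate witnesses are $x'=\tau(b)$ and $y'=\tau(a)$ in $B_\tau$: the joins $x\vee_\tau y'$ and $x'\vee_\tau y$ collapse to $\tau(x\vee a)$ and $\tau(b\vee y)$ respectively, both equal to $1$, while $x'\cdot_\tau y'=\tau(\tau(b)\tau(a))\leq\tau(ba)=\tau(ab)=\tau(0)=0_{B_\tau}$, the single inequality invoking the multiplicative-nucleus property of $\tau$.

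For (b)$\Rightarrow$(a) I run the dictionary in reverse: disjoint closed $K_1=\V(x)\cap S$, $K_2=\V(y)\cap S$ with $x,y\in B_\tau$ give $\tau(x\vee y)=1$, so $x\vee_\tau y=1$ in $B_\tau$; normality of $B_\tau$ then produces $x',y'\in B_\tau$ with $\tau(x\vee y')=1=\tau(x'\vee y)$ and $\tau(x'y')=\tau(0)$, which back in $S$ reads as $K_1\subseteq m(y')$, $K_2\subseteq m(x')$, and $m(x')\cap m(y')=m(x'y')=\emptyset$. The main technical care is with the induced product on $B_\tau$---specifically the symmetry $\tau(ab)=\tau(ba)$, which is precisely what primality buys, and the multiplicative-nucleus inequality $\tau(u)\tau(v)\leq\tau(uv)$, which is what converts a disjointness of basic opens into a zero in $B_\tau$. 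Once these identities are in place the two implications become mirror images of each other.
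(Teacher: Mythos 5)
Your proposal is correct and follows essentially the same route as the paper: both proofs translate normality back and forth through the hull-kernel adjunction $m\dashv m_*$, using primality to get $m(ab)=m(a)\cap m(b)$ and the multiplicative-nucleus inequality to identify $\tau(0)$ with the zero of $B_\tau$, with the witnesses $\tau(a),\tau(b)$ coming from the separating opens. Your explicit remark that $\tau(ab)=\tau(ba)$ is a small extra precaution the paper leaves implicit, but it does not change the argument.
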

\begin{proof}
(a) $\Rightarrow $(b) Let $n,l\in B_{\tau}$ such that $\tau(n\vee l)=1$. Then $m(n)\cup m(l)=S$. Since $S$ is a normal space, there exist $m(k_1),m(k_2)$ open subsets such that $m(k_1)\cap m(k_2)=\emptyset$, $(S\setminus m(n))\subseteq m(k_1)$ and $(S\setminus m(l))\subseteq m(k_2)$  where $k_1,k_2\in B$. Hence $\tau(k_1)\tau(k_2)=\tau(k_1k_2)=\tau(0)$.

Note that $S=m(n)\cup m(k_1)$ and $S=m(l)\cup m(k_2)$. This implies that $\tau(n\vee k_1)=1$ and $\tau(l\vee k_2)=1$. 	Then,

\centerline{$ 1=\tau(n\vee k_1)\leq\tau(n\vee \tau(k_1)). $}

\noindent Hence $1=\tau(n\vee\tau(k_1)).$ Similarly, $1=\tau(l\vee\tau(k_2)).$ Therefore, $B_{\tau}$ is normal.

(b) $\Rightarrow$ (a)  Let $S\setminus m(n)$ and $S\setminus m(l)$ two closed sets such that $(S\setminus m(n))\cap (S\setminus m(l))=\emptyset,$ with $n,l\in B.$ Thus $m(n)\cup m(l)=S$, this implies that $\tau(\tau(n)\vee\tau(l))=1$


Since $\tau(n),\,\tau(l)\in B_{\tau}$ and $B_{\tau}$ is a normal lattice, there exist $k_1,k_2\in B_{\tau}$ such that $\tau(\tau(n)\vee k_1)=1,$ $\tau(\tau(l)\vee k_2)=1$ and $k_1k_2=\tau(0).$

Then, 

\centerline{$  (S\setminus m(n))\cap(S\setminus m(k_1))=(S\setminus m(\tau(n)))\cap(S\setminus m(k_1))=\emptyset $}

\noindent and

\centerline{\vspace{4pt} $ (S\setminus m(l))\cap(S\setminus m(k_2))=(S\setminus m(\tau(l)))\cap (S\setminus m(k_2))=\emptyset.$}

\noindent From these facts, $(S\setminus m(n))\subseteq m(k_1)$ and $(S\setminus m(l))\subseteq m(k_2).$ We have that $k_1k_2=\tau(0)$, so $\U(k_1)\cap \U(k_2)=\emptyset$. Therefore, $S$ is a normal space.  
\end{proof}  

\begin{cor}\label{muglobal}
Let $A$ be a quasi-quantale satisfying $(\star)$. Let $\mu$ be the multiplicative nucleus given by the adjoint situation on Remark \ref{adjunction}. Then, the following conditions are equivalent
\begin{enumerate}[label=\emph{(\alph*)}]
\item $Spec(A)$ is a normal space.
\item $A_\mu$ is a normal lattice
\end{enumerate} 
\end{cor}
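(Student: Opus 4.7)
The plan is to deduce this corollary as the special case of Proposition~\ref{pronormal} obtained by taking the subquasi-quantale $B$ to be all of $A$ and the subspace $S$ to be all of $Spec_B(A) = Spec(A)$.

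First, I would verify that the hypotheses of Proposition~\ref{pronormal} apply in this situation. Since $A$ is a quasi-quantale satisfying $(\star)$, and $A$ is trivially a subquasi-quantale of itself, we have a legitimate choice $B := A$. The condition $(\star)$ for $B = A$ is exactly the hypothesis we have assumed on $A$ (namely $0,1 \in A$ and $1a, a1 \leq a$ for all $a \in A$). With this choice, $Spec_B(A) = Spec_A(A) = Spec(A)$, and $S := Spec(A)$ is vacuously a subspace of itself.

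Second, I would identify the nucleus $\tau$ from Remark~\ref{adjunctionsb} with the nucleus $\mu$ from Remark~\ref{adjunction}. When $S = Spec_B(A)$ is the full spectrum, the map $m \colon B \to \mathcal{O}(S)$ of Remark~\ref{adjunctionsb} is given by $m(b) = \U(b) \cap S = \U(b)$, which is precisely the map $\U$ from Remark~\ref{adjunction}. Passing to right adjoints yields $m_* = \U_*$, and therefore $\tau = m_* \circ m = \U_* \circ \U = \mu$. In particular $B_\tau = A_\mu$.

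Third, applying the equivalence $(a) \Leftrightarrow (b)$ of Proposition~\ref{pronormal} with these identifications immediately gives: $Spec(A)$ is a normal topological space if and only if $A_\mu$ is a normal lattice. This is exactly the statement of the corollary.

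There is essentially no obstacle here; the only subtle point is the bookkeeping verification that the nucleus $\tau$ of Remark~\ref{adjunctionsb} collapses to $\mu$ of Remark~\ref{adjunction} when the subspace $S$ is taken to be the whole spectrum. Once that identification is noted, the corollary follows with no further work.
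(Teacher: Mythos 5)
Your proposal is correct and follows exactly the paper's own proof, which simply reads ``Take $B=A$ and $S=Spec(A)$ in Proposition \ref{pronormal}''; your additional verification that $\tau$ collapses to $\mu$ when $S$ is the full spectrum is the right bookkeeping and is implicit in the paper's one-line argument.
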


\begin{proof}
Take $B=A$ and $S=Spec(A)$ in Proposition \ref{pronormal}.
\end{proof}

Next we give some applications to modules and rings. Recall that a proper fully invariant sbmodule $N$ of a module $M$ is said to be \emph{semiprime} if given $L\in\Lambda^{fi}(M)$ such that $L_ML\leq N$ then $L\leq N$ \cite{raggisemiprime}. In \cite[Proposition 1.11]{maugoldie} is proved that $N\in\Lambda^{fi}(M)$ is semiprime if and only if $N$ is an intersection of prime submodules, that is, an intersection of elements of $Spec(M)$. Set
\[SP(M)=\{M\}\cup\{\text{semiprime submodules}\}\subseteq\Lambda^{fi}(M).\]

In \cite[Proposition 4.27]{medina2015generalization} it is proved that $SP(M)$ is a spatial frame. 

\begin{cor}\label{normaleq}
Let $M$ be projective in $\sm$. The following conditions are equivalent:
\begin{enumerate}[label=\emph{(\alph*)}]
\item $Spec(M)$ is a normal space.
\item The frame $SP(M)$ is normal.
\end{enumerate}
\end{cor}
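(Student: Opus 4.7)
The plan is to apply Corollary \ref{muglobal} to the quasi-quantale $A = \Lambda^{fi}(M)$ (taken as a subquasi-quantale of itself) and then to identify the subframe of $\mu$-fixed points with the frame $SP(M)$ of semiprime submodules.

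First I would check the setup. Since $M$ is projective in $\sm$, the product $-_M-$ is associative on $\Lambda(M)$ by \cite[Proposition 5.6]{beachy2002m} and restricts to an associative product on $\Lambda^{fi}(M)$; moreover $0, M \in \Lambda^{fi}(M)$ and for any $N \leq_{fi} M$ one has $M_M N \leq N$ and $N_M M \leq N$ by full invariance. Therefore $\Lambda^{fi}(M)$, regarded as a subquasi-quantale of itself, satisfies condition $(\star)$, and by the remark after Definition \ref{sqqcond} we have $Spec(M) = Spec_B(B)$ with $B = \Lambda^{fi}(M)$. Corollary \ref{muglobal} now yields that $Spec(M)$ is normal as a topological space if and only if the lattice $\Lambda^{fi}(M)_\mu$ of fixed points of the multiplicative nucleus $\mu = \U_* \circ \U$ is normal.

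Next I would identify $\Lambda^{fi}(M)_\mu$ with $SP(M)$. By Proposition \ref{41}(a), for every $N \in \Lambda^{fi}(M)$ the element $\mu(N)$ equals the largest fully invariant submodule below $\bigwedge\{p \in Spec(M) \mid N \leq p\}$; since $\mu$ is inflatory this forces
\[
\mu(N) = \bigwedge\{p \in Spec(M) \mid N \leq p\}
\]
whenever the meet on the right is itself in $\Lambda^{fi}(M)$ (which it is, as an intersection of fully invariant submodules). Consequently a fully invariant submodule is fixed by $\mu$ precisely when it is an intersection of prime submodules, and by \cite[Proposition 1.11]{maugoldie} this is exactly the definition of a semiprime submodule. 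Adjoining the top element $M = \mu(M)$, one obtains $\Lambda^{fi}(M)_\mu = SP(M)$ as frames. Plugging this equality into the conclusion of Corollary \ref{muglobal} gives (a)$\Leftrightarrow$(b).

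The one step that needs a little care, and that I would flag as the main point to verify carefully, is the identification of the fixed points of $\mu$ with $SP(M)$: one must handle the top element $M$ separately (since $\bigwedge \emptyset = M$ by convention) and must invoke the cited characterization of semiprime submodules as intersections of primes to match the definition of $SP(M)$ given just before the corollary. Once this identification is in place, the equivalence is an immediate specialization of Corollary \ref{muglobal}.
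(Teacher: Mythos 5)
Your proposal is correct and follows essentially the same route as the paper: the paper's (one-line) proof likewise specializes Corollary \ref{muglobal} to $A=B=\Lambda^{fi}(M)$ and identifies the fixed points of $\mu$ with the intersections of prime submodules via Proposition \ref{41}, i.e.\ with $SP(M)$. You have merely made explicit the verification of condition $(\star)$ and the handling of the top element, which the paper leaves implicit.
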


\begin{proof}
By Proposition \ref{41}, $\Lambda^{fi}(M)_\mu$ is the set of all submodules which are intersection of prime submodules of $M$.
\end{proof}

\begin{cor}
The following conditions are equivalent for a ring $R$:
\begin{enumerate}[label=\emph{(\alph*)}]
\item $Spec(R)$ is a normal space.
\item The frame $SP(R)$ is normal.
\end{enumerate}
\end{cor}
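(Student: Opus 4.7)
The plan is to deduce this corollary as a direct specialization of Corollary \ref{normaleq} by taking $M = R$ viewed as a left module over itself. First, I would note that the hypothesis of Corollary \ref{normaleq} is automatic in this setting, since $R$ is always projective in $\sigma[R] = R\text{-Mod}$.

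The bulk of the work is then the routine verification that the module-theoretic objects reduce to the familiar ring-theoretic ones when $M = R$. Concretely, $\Lambda^{fi}(R)$ coincides with the lattice of two-sided ideals of $R$; the product $N_{M}L = \sum\{f(N)\mid f\in \Hom_R(R,L)\}$ collapses to the usual ideal product $NL$ via the canonical isomorphism $\Hom_R(R,L)\cong L$ given by evaluation at $1$; prime submodules (in the sense of prime elements of the quasi-quantale $\Lambda^{fi}(R)$) coincide with prime two-sided ideals of $R$; and semiprime submodules coincide with semiprime ideals, i.e., intersections of primes. The topology on $Spec_B(A)$ defined in Proposition \ref{t} by the closed sets $\V(b)$ specializes to the hull-kernel (Zariski) topology on the classical prime spectrum. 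Consequently $Spec(M)$ is exactly the topological space $Spec(R)$ and $SP(M)$ is exactly the frame $SP(R)$.

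With these identifications in hand, the equivalence (a)$\Leftrightarrow$(b) is an immediate transcription of Corollary \ref{normaleq}. I do not expect a real obstacle here: the only thing that requires care is checking the identification of the product $N_M L$ with the ideal product, but this is a well-known consequence of $R$ being projective and a generator in $R\text{-Mod}$. Thus the proof is essentially a one-line reduction to the already-proved module version.
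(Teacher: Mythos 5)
Your proposal is correct and is exactly what the paper intends: the ring corollary is stated without proof precisely because it is the specialization $M = R$ of Corollary \ref{normaleq}, using that $R$ is projective in $\sigma[R] = R\text{-Mod}$ and that $\Lambda^{fi}(R)$, the product $N_RL$, and the prime/semiprime notions reduce to the usual two-sided ideals, ideal product, and prime/semiprime ideals. The identifications you verify are the right ones and present no obstacle.
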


Recall that a proper fully invariant submodule $N$ of a module $M$ is called \emph{primitive} if $N=\Ann_M(S)$ for some simple module $S$ in $\sm$. A submodule of $M$ is called \emph{semiprimitive} if it is an intersection of primitive submodules \cite{medina2017attaching}. Set 
\[SPm(M)=\{M\}\cup\{\text{semiprimitive submodules}\}\subseteq\Lambda^{fi}(M).\]
As we said before, if $M$ is projective in $\sm$ then $\mx(M)\subseteq LgSpec(M)$. By Remark \ref{adjunctionsb}, we have a multiplicative nucleus $\tau:\Lambda^{fi}(M)\to \Lambda^{fi}(M)$. By \cite[Theorem 3.13]{medina2017attaching}, $\Lambda^{fi}(M)_\tau=SPm(M)$. Therefore, we have the following corollaries.

\begin{cor}\label{NormaleqMax}
Let $M$ be projective in $\sm$. The following conditions are equivalent:
\begin{enumerate}[label=\emph{(\alph*)}]
\item $\mx(M)$ is a normal space. 
\item The frame $SPm(M)$ is normal.
\end{enumerate}
\end{cor}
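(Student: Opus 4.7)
The plan is to obtain this as an immediate application of Proposition \ref{pronormal}, which supplies exactly the type of equivalence required but in the general quasi-quantale setting.

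First I would set up the data. Take $A = \Lambda(M)$ and $B = \Lambda^{fi}(M)$, so that $B$ is a subquasi-quantale of $A$ satisfying $(\star)$, and consider the subspace $S = \mx(M)$ of $Spec_B(A) = LgSpec(M)$; this last inclusion is available because $M$ is projective in $\sm$ and so $\mx(M) \subseteq LgSpec(M)$ as noted in Remark \ref{casicoat}. By Remark \ref{adjunctionsb}, this subspace produces an adjunction
\[
\xymatrix@=20mm{\Lambda^{fi}(M)\ \ \ar@/^/[r]^{m} & \mathcal{O}(\mx(M))\ \ \ar@/^/[l]^{m_*}}
\]
whose composition $\tau = m_* \circ m$ is a multiplicative nucleus on $\Lambda^{fi}(M)$.

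Second, I would identify the fixed-point lattice $\Lambda^{fi}(M)_\tau$ with $SPm(M)$. This identification is exactly the content of \cite[Theorem 3.13]{medina2017attaching}, invoked in the paragraph preceding the corollary, so no new argument is needed here; it is already baked into the setup.

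Finally, Proposition \ref{pronormal} applied to this subspace $S = \mx(M)$ gives that $\mx(M)$ is a normal topological space if and only if $\Lambda^{fi}(M)_\tau$ is a normal lattice. Combined with the identification $\Lambda^{fi}(M)_\tau = SPm(M)$, this is exactly the equivalence (a) $\Leftrightarrow$ (b). There is no real obstacle: the whole content of the corollary is contained in making sure the hypotheses of Proposition \ref{pronormal} hold for the particular choice of subspace $\mx(M) \subseteq LgSpec(M)$, and that the resulting nucleus agrees with the one whose fixed points are $SPm(M)$. Both of these have been arranged beforehand in the preliminaries, so the proof is essentially a citation.
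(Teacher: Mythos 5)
Your proposal is correct and is essentially identical to the paper's own argument: the paper likewise takes $A=\Lambda(M)$, $B=\Lambda^{fi}(M)$, $S=\mx(M)\subseteq LgSpec(M)$, obtains the nucleus $\tau$ from Remark \ref{adjunctionsb}, identifies $\Lambda^{fi}(M)_\tau=SPm(M)$ via \cite[Theorem 3.13]{medina2017attaching}, and concludes by Proposition \ref{pronormal}. Nothing is missing.
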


\begin{cor}\label{NormalledMaxring}
The following conditions are equivalent for a ring $R$:
\begin{enumerate}[label=\emph{(\alph*)}]
\item $\mx(R)$ is a normal space.
\item The frame $SPm(R)$ is normal.
\end{enumerate}
\end{cor}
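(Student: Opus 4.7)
The plan is to derive Corollary \ref{NormalledMaxring} as an immediate specialisation of Corollary \ref{NormaleqMax} to the case $M=R$ viewed as a left module over itself. First I would verify the standing hypothesis of Corollary \ref{NormaleqMax}: the category $\sigma[R]$ coincides with $R$-Mod, and $R$ is projective in $R$-Mod, so $R$ is projective in $\sm$. Thus Corollary \ref{NormaleqMax} applies with $M=R$.

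Next, I would spell out the dictionary that reduces the module-theoretic objects appearing in Corollary \ref{NormaleqMax} to their familiar ring-theoretic counterparts. Every endomorphism of ${}_{R}R$ as a left $R$-module is a right multiplication by an element of $R$, so the fully invariant submodules of ${}_{R}R$ are precisely the two-sided ideals of $R$; consequently $\Lambda^{fi}(R)$ coincides with the ideal lattice. Under this identification the internal product $N_{M}L$ of submodules reduces to the ordinary ideal product $NL$, and therefore the notions of prime, primitive and semiprimitive submodule specialise to the standard ring-theoretic ones. In particular $SPm(R)$ and the hull-kernel space $\mx(R)$ appearing in the statement are exactly the usual frame of semiprimitive ideals and the maximal spectrum of $R$.

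Once these identifications are in place, the equivalence ``$\mx(R)$ is a normal space $\Leftrightarrow$ $SPm(R)$ is a normal frame'' is literally Corollary \ref{NormaleqMax} applied to $M=R$. There is no substantive obstacle: this is a pure corollary obtained by feeding the regular left module $R$ into the previously established module-theoretic statement, and the only care required is checking that the conventions of the ring setting match those of the left regular module, which they do via the dictionary above.
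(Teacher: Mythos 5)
Your proposal is correct and matches the paper's intent exactly: the paper states this corollary without proof precisely because it is the specialisation of Corollary \ref{NormaleqMax} to $M=R$, using that $R$ is projective in $\sigma[R]=R\text{-Mod}$ and that fully invariant submodules of ${}_RR$, the product $N_ML$, and the primitive/semiprimitive notions reduce to the usual two-sided ideals, ideal product, and primitive/semiprimitive ideals. The dictionary you spell out is the right (and only) verification needed.
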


\section{strongly harmonic modules}\label{sec4}

Throughout this section, we will be interested in to study the theory of strongly harmonic modules over associative rings with unity.

Denote the set of all coatoms in $\Lambda^{fi}(M)$ by $\mx^{fi}(M)$. Note that $\mx^{fi}(M)$ is a subspace of $Spec(M)$.

\begin{dfn}\label{strongly}
A module $M$ is \emph{strongly harmonic} if for every distinct elements $N,L\in \mx^{fi}(M)$ there exist $N',L'\in\Lambda^{fi}(M)$ such that $L'\nleq L,\,N'\nleq N$ and $L'_MN'=0.$ 
\end{dfn}

\begin{prop}\label{STRONGLY}
	Let $M$ be a module such that $-_M-$ is an associative product. Then, the following conditions are equivalent.
	\begin{enumerate}[label=\emph{(\alph*)}]
		\item $M$ is strongly harmonic.
		\item For every distinct elements $N,L\in \mx^{fi}(M)$ there exist $N',L'\in\Lambda(M)$ such that $L'\nleq L,\,N'\nleq N$ and $L'_MN'=0.$
		\item For every distinct elements $N,L\in \mx^{fi}(M)$ there exist $a,b\in M$ such that $a\notin N,$ $b\notin L,$ $a\in \Ann_M(Rb)$.  
	\end{enumerate}
\end{prop}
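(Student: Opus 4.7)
The plan is a cyclic proof $(a)\Rightarrow(b)\Rightarrow(c)\Rightarrow(a)$. Throughout I will exploit the symmetry of the quantifier ``for every distinct $N,L\in \mx^{fi}(M)$'': each of the three conditions is invariant under swapping $N$ and $L$, which lets us apply the hypothesis to whichever orientation matches the conclusion we want.

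The implication $(a)\Rightarrow(b)$ is immediate, since $\Lambda^{fi}(M)\subseteq\Lambda(M)$ and the same pair of witnesses serves both. For $(b)\Rightarrow(c)$, fix distinct $N,L\in \mx^{fi}(M)$ and apply (b) to the swapped pair $(L,N)$ to obtain $L',N'\in\Lambda(M)$ with $L'\nleq N$, $N'\nleq L$, and $L'\cdot_M N'=0$. Pick $a\in L'\setminus N$ and $b\in N'\setminus L$. Since $Ra\leq L'$ and $Rb\leq N'$, monotonicity of the product in both variables gives $Ra\cdot_M Rb\leq L'\cdot_M N'=0$, which is exactly $a\in\Ann_M(Rb)$.

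The essential step is $(c)\Rightarrow(a)$. Applying (c) to the pair $(L,N)$ produces $a,b\in M$ with $a\notin L$, $b\notin N$, and $Ra\cdot_M Rb=0$. Set $L':=\widehat{Ra}$ and $N':=\widehat{Rb}$; these lie in $\Lambda^{fi}(M)$, while $a\in L'\setminus L$ and $b\in N'\setminus N$ supply $L'\nleq L$ and $N'\nleq N$. The remaining task is to verify $L'\cdot_M N'=0$. The key identity is
\[\widehat{Rx}\;=\;\sum_{f\in\End(M)}f(Rx)\;=\;Rx\cdot_M M\]
for every $x\in M$, which rewrites $L'\cdot_M N'$ as $(Ra\cdot_M M)\cdot_M(Rb\cdot_M M)$. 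Using associativity of the product (the standing hypothesis of the proposition) together with the trivial inclusion $M\cdot_M Rb\leq Rb$, one computes
\[L'\cdot_M N'\;=\;Ra\cdot_M(M\cdot_M Rb)\cdot_M M\;\leq\;Ra\cdot_M Rb\cdot_M M\;=\;0\cdot_M M\;=\;0.\]
This last calculation, in which associativity is used to promote the vanishing of a product of cyclic submodules to the vanishing of a product of their fully invariant closures, is the only nontrivial ingredient; the rest is symmetry of the quantifier and a careful choice of witnessing elements.
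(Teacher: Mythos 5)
Your proof is correct and follows essentially the same route as the paper's: the witnesses for (c) come from picking elements of the submodules provided by (b), and for (c)$\Rightarrow$(a) both arguments take the fully invariant closures $Ra_MM$ and $Rb_MM$ and kill their product by associativity (the paper regroups as $((Ra_MM)_MRb)_MM$ and uses $Ra_MM\leq\Ann_M(Rb)$, while you regroup via $M_MRb\leq Rb$ --- an inessential variation). Your explicit use of the symmetry of the quantifier also quietly repairs the paper's slight mislabelling of which of $a,b$ avoids which maximal submodule.
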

\begin{proof}  
(a) $\Rightarrow$ (b) It is clear.
 
(b) $\Rightarrow$ (c) Let $N,L\in \mx^{fi}(M).$ By (b), there exist $N',L'\in\Lambda(M)$ such that $L'\nleq L,\,N'\nleq N$ and $L'_MN'=0$. In particular,  
there exist $a\in L'\backslash L$ and $b\in N'\backslash N.$ Consequently, $Ra\leq L'$ and $Rb\leq N'.$ Then $Ra_M Rb\leq L'_MN'=0.$
Hence, $a\in Ra\leq \Ann_M(Rb).$ 

(c) $\Rightarrow$ (a) Let $N,L\in \mx^{fi}(M).$ There exist $a,b\in M$ such that $a\notin N,$ $b\notin L,$ $a\in \Ann_M(Rb)$ and   $b\in \Ann_M(Ra).$ Set $N':=Ra_MM$ and $L':=Rb_MM$. Note that $N',L'\in \Lambda^{fi}(M),$  $N'\nleq N$ and $L'\nleq L.$ 
Now, using the associativity of the product $-_M-,$ we have that 
$L'_MN'={(Ra_MM)}_M(Rb_MM)={({(Ra_MM)}_MRb)}_MM.$	

Inasmuch as  $a\in \Ann_M(Rb)\in\Lambda^{fi}(M),$ it follows that $Ra\leq \Ann_M(Rb)$ and so ${Ra}_MM\leq {\Ann_M(Rb)}_MM=\Ann_M(Rb).$ Thus,

\vspace{4pt}\centerline{$ L'_MN'={({(Ra_MM)}_MRb)}_MM\leq ({\Ann_M(Rb)}_MRb)_MM=0.$}
\end{proof}

\begin{lem}[Lemma 17, \cite{raggiprime}]\label{lemmapp}
Let $M_1,M_2$ be modules and let $f:M_1\to M_2$ be an epimorphism with $K_1=Ker f$.
\begin{enumerate}[label=\emph{(\alph*)}]
\item If $K_1$ is a fully invariant in $M_1$ and $N_2$ is a fully invariant submodule of $M_2,$ then $f^{-1}(N_2)$ is a fully invariant submodule of $M_1$.
\item If $M_1$ is quasi-projective and $N_1$ is a fully invariant submodule of $M_1,$ then $f(N_1)$ is a fully invariant submodule of $M_2$.
\end{enumerate}
\end{lem}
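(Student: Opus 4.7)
The plan is to handle the two parts separately, each by finding an auxiliary endomorphism that ``transfers'' the full invariance between $M_1$ and $M_2$ along $f$.

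For part (a), let $g\in\End_R(M_1)$; I want to show $g(f^{-1}(N_2))\subseteq f^{-1}(N_2)$. The key observation is that since $K_1$ is fully invariant in $M_1$, $g(K_1)\subseteq K_1$, so $f\circ g$ vanishes on $K_1$. Because $f$ is an epimorphism with kernel $K_1$, the universal property of the quotient (via the isomorphism $M_1/K_1\cong M_2$) produces an induced endomorphism $\bar{g}\in\End_R(M_2)$ satisfying $\bar{g}\circ f = f\circ g$. Full invariance of $N_2$ in $M_2$ then gives $\bar g(N_2)\subseteq N_2$, and for any $x\in f^{-1}(N_2)$ one computes $f(g(x))=\bar g(f(x))\in N_2$, so $g(x)\in f^{-1}(N_2)$, as desired.

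For part (b), let $h\in\End_R(M_2)$; I want $h(f(N_1))\subseteq f(N_1)$. Here the key tool is quasi-projectivity of $M_1$: applied to the diagram with $f\colon M_1\to M_2$ an epimorphism and the homomorphism $h\circ f\colon M_1\to M_2$, it yields a lift $\tilde h\in\End_R(M_1)$ with $f\circ \tilde h = h\circ f$. Full invariance of $N_1$ in $M_1$ gives $\tilde h(N_1)\subseteq N_1$, so $h(f(N_1))=f(\tilde h(N_1))\subseteq f(N_1)$.

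Neither part has a genuine obstacle; the only thing to be careful about is motivating the existence of the two auxiliary endomorphisms. In (a) this uses only the universal property of the quotient (the hypothesis on $K_1$ is exactly what is needed for $f\circ g$ to factor through $f$), and in (b) it uses precisely the definition of quasi-projectivity applied to the epimorphism $f$. Once the induced endomorphisms $\bar g$ and $\tilde h$ are in hand, the containments follow from one-line diagram chases.
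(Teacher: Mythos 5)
Your proof is correct on both counts: in (a) the full invariance of $K_1$ is exactly what lets $f\circ g$ factor through $M_1/K_1\cong M_2$ to produce $\bar g$, and in (b) quasi-projectivity of $M_1$ applied to the epimorphism $f$ and the map $h\circ f$ gives the lift $\tilde h$, after which both conclusions are immediate diagram chases. Note that the paper itself supplies no proof here --- it quotes the statement verbatim from an external source (Lemma 17 of the cited reference) --- and your argument is the standard one that reference uses, so there is nothing to compare beyond saying it is the expected proof.
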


Recall that a nonzero module $M$ is called \emph{FI-simple} if $\Lambda^{fi}(M)=\{0,M\}$. The following lemma will be useful, and it is a direct consequence of Lemma \ref{lemmapp}.

 \begin{lem}\label{maxFI}
Let $M$ be a quasi-projective module and $N\in\Lambda^{fi}(M)$. Then, $N\in\mx^{fi}(M)$ if and only if $M/N$ is FI-simple.
\end{lem}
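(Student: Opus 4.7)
The plan is to establish a bijective correspondence between fully invariant submodules of $M/N$ and fully invariant submodules of $M$ containing $N$, then translate the coatom condition through this correspondence.

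First, I would consider the canonical epimorphism $\pi \colon M \to M/N$, whose kernel is $N$. Since $N$ is fully invariant in $M$ by hypothesis, Lemma \ref{lemmapp}(a) guarantees that $\pi^{-1}(K') \in \Lambda^{fi}(M)$ for every $K' \in \Lambda^{fi}(M/N)$, and clearly $N \leq \pi^{-1}(K')$. Going the other direction, since $M$ is quasi-projective, Lemma \ref{lemmapp}(b) yields that $\pi(K) \in \Lambda^{fi}(M/N)$ whenever $K \in \Lambda^{fi}(M)$; when $N \leq K$ this is just $K/N$.

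Next I would verify that these two assignments,
\[
K \longmapsto K/N, \qquad K' \longmapsto \pi^{-1}(K'),
\]
are mutually inverse and preserve the order. This gives an order-isomorphism between the interval $\{K \in \Lambda^{fi}(M) \mid N \leq K \leq M\}$ and the full lattice $\Lambda^{fi}(M/N)$.

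With this correspondence in hand, the statement is immediate: $N \in \mx^{fi}(M)$ means $N$ is a coatom in $\Lambda^{fi}(M)$, equivalently the interval above $N$ in $\Lambda^{fi}(M)$ consists only of $N$ and $M$, which via the isomorphism corresponds to $\Lambda^{fi}(M/N) = \{0, M/N\}$, i.e.\ $M/N$ is FI-simple. There is no serious obstacle here; the only point requiring care is the invocation of quasi-projectivity, which is what makes Lemma \ref{lemmapp}(b) applicable so that surjective images of fully invariant submodules of $M$ remain fully invariant in $M/N$.
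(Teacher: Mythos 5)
Your proposal is correct and matches the paper's intent exactly: the paper gives no written proof beyond stating that the lemma ``is a direct consequence of Lemma \ref{lemmapp},'' and your argument is precisely the spelled-out version of that consequence, using parts (a) and (b) of that lemma to set up the order-isomorphism between $\Lambda^{fi}(M/N)$ and the interval above $N$ in $\Lambda^{fi}(M)$ and then reading off the coatom condition.
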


\begin{prop}\label{summandfa}
Let $M$ be a quasi-projective strongly harmonic module and let $N$ be a fully invariant submodule of $M$. Then $M/N$ is a strongly harmonic module.
\end{prop}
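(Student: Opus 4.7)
The plan is to pass two distinguishing maximal fully invariant submodules of $M/N$ up to $M$ via the correspondence given by Lemma \ref{lemmapp}, apply strong harmonicity in $M$, and then push the resulting pair of submodules back down to $M/N$, taking care that the product $-_{M/N}-$ in the quotient can be computed from the product $-_M-$ in $M$.

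First I would take distinct $\overline K_1, \overline K_2 \in \mx^{fi}(M/N)$ and write $\overline K_i = K_i/N$ with $N\leq K_i$. Since $M$ is quasi-projective and $N$ is fully invariant, Lemma \ref{lemmapp}(a) shows that each $K_i$ is fully invariant in $M$, and Lemma \ref{lemmapp}(b) together with Lemma \ref{maxFI} (applied to both $M$ and $M/N$, noting $M/N$ is quasi-projective because $M$ is and $N$ is fully invariant) shows that $K_1$ and $K_2$ are in fact maximal in $\Lambda^{fi}(M)$. By strong harmonicity of $M$ there exist $L',N'\in \Lambda^{fi}(M)$ with $L'\nleq K_2$, $N'\nleq K_1$ and $L'_M N'=0$.

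Now I set $\overline{L'}=(L'+N)/N$ and $\overline{N'}=(N'+N)/N$. Since $N\leq K_i$, the conditions $L'\nleq K_2$ and $N'\nleq K_1$ transfer to $\overline{L'}\nleq \overline K_2$ and $\overline{N'}\nleq \overline K_1$, and both $\overline{L'}, \overline{N'}$ are fully invariant in $M/N$ by Lemma \ref{lemmapp}(b). The heart of the argument is to verify that
\[
\overline{L'}_{M/N}\,\overline{N'}=0.
\]
For this, let $\pi\colon M\to M/N$ be the projection. Given any $g\in \Hom_R(M/N,\overline{N'})$, the composite with $\pi$ and the inclusion $\overline{N'}\hookrightarrow M/N$ yields a map $M\to M/N$, which by quasi-projectivity of $M$ lifts to $\widetilde g\colon M\to M$ with $\pi\widetilde g=g\pi$ and $\widetilde g(M)\leq \pi^{-1}(\overline{N'})=N'+N$. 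Then $\widetilde g(L')\leq L'_M(N'+N)=L'_M N'+L'_M N$. Since $L'_M N'=0$ by choice, and since $N$ is fully invariant in $M$ forces $L'_M N=\sum\{f(L')\mid f\in\Hom_R(M,N)\}\leq N$, we conclude $\widetilde g(L')\leq N$, and similarly $\widetilde g(N)\leq N$. Hence $g(\overline{L'})=\pi(\widetilde g(L'+N))=0$. As $g$ was arbitrary, the quotient product vanishes.

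The main obstacle is precisely this last computation: the product $-_{M/N}-$ is defined by homomorphisms out of $M/N$, and relating it to the product in $M$ requires the lifting property coming from quasi-projectivity of $M$ together with the full invariance of $N$ (to kill the mixed terms $L'_M N$ and $N_M N'$). Once this bridge is in place, the three conditions $\overline{L'}\nleq \overline K_2$, $\overline{N'}\nleq \overline K_1$, $\overline{L'}_{M/N}\overline{N'}=0$ are exactly what Definition \ref{strongly} demands for $M/N$.
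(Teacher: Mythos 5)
Your overall strategy coincides with the paper's: lift the two distinct elements of $\mx^{fi}(M/N)$ to $\mx^{fi}(M)$ via Lemma \ref{lemmapp} and Lemma \ref{maxFI}, apply strong harmonicity in $M$, push the resulting pair down to $(L'+N)/N$ and $(N'+N)/N$, and verify the vanishing of the quotient product by a quasi-projectivity lift; all of that bookkeeping (full invariance of the images, $N\leq K_i$, the non-containments) is fine. The divergence, and the gap, is in the lift itself. You lift $g\pi$ through $\pi\colon M\to M/N$ to obtain $\widetilde g\in\End_R(M)$ with $\widetilde g(M)\leq N'+N$, and then assert
\[
\widetilde g(L')\leq L'_M(N'+N)=L'_MN'+L'_MN .
\]
Only the inclusion $L'_MN'+L'_MN\leq L'_M(N'+N)$ is formal. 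The reverse inclusion, which is what you actually use, is not covered by anything established in the paper: the product $-_M-$ distributes over arbitrary sums in its \emph{first} argument, but in the second argument only over \emph{directed} families, and $\{N',N\}$ is not directed. Concretely, a homomorphism $M\to N'+N$ need not split as a sum of a map into $N'$ and a map into $N$, so the containment $\widetilde g(L')\leq L'_MN'+L'_MN$ is precisely the non-trivial point of the whole computation, not a consequence of $L'_MN'=0$ and $L'_MN\leq N$.

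The step can be repaired: since $M$ is quasi-projective, $M$ is $(N'\oplus N)$-projective (relative projectivity is closed under finite direct sums in the target and passes to submodules, a standard fact), so $\widetilde g$ lifts through the epimorphism $N'\oplus N\to N'+N$ and decomposes as $g_1+g_2$ with $g_1(L')\leq L'_MN'=0$ and $g_2(L')\leq N$. But it is cleaner to avoid the mixed term altogether, which is what the paper does: instead of lifting $g\pi$ through $\pi\colon M\to M/N$, lift it through the epimorphism $\pi|_{N'}\colon N'\to (N'+N)/N$ (legitimate because a quasi-projective module is projective relative to every submodule of itself). This yields $\overline g\colon M\to N'$ with $\pi|_{N'}\overline g=g\pi$, whence $\overline g(L')\leq L'_MN'=0$ and $g(\overline{L'})=g\pi(L')=\pi|_{N'}\overline g(L')=0$ directly. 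With either repair your argument closes; as written, the displayed equality is an unjustified assertion at the crux.
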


\begin{proof}
Let $\mathcal{M}/N,\mathcal{N}/N\in\mx^{fi}(M/N)$ be distinct. It follows from Lemma \ref{lemmapp} and Lemma \ref{maxFI} that $\mathcal{M},\mathcal{N}\in\mx^{fi}(M)$. Since $M$ is strongly harmonic, there exist $A,B\in\Lambda^{fi}(M)$ such that $A\nleq\mathcal{M}$, $B\nleq\mathcal{N}$ and $A_MB=0$. Since $A\nleq\mathcal{M}$, $(A+N)/N\nleq\mathcal{M}/N$. Analogously, $(B+N)/N\nleq\mathcal{N}/N$. We claim that the product $\left(\frac{A+N}{N}\right)_{M/N}\left(\frac{B+N}{N}\right)=0$. Let $f\colon M/N\to (B+N)/N$ be any homomorphism. Since $M$ is quasi-projective, there exists $\overline{f}:M\to B$ such that $\pi|_{B}\overline{f}=f\pi$ where $\pi:M\to M/N$ is the canonical projection. Note that $\overline{f}(A)=0$ because $A_MB=0$. Hence,
$ f\left(\displaystyle\frac{A+N}{N}\right)=f\pi(A)=\pi|_B\overline{f}(A)=0.$
This proves the claim. Thus, $M/N$ is a strongly harmonic module.
\end{proof}

\begin{cor}
Let $R$ be a strongly harmonic ring and let $I$ be an ideal of $R$. Then the ring $R/I$ is strongly harmonic.
\end{cor}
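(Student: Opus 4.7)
The plan is to derive this corollary as a direct application of Proposition \ref{summandfa}, after making explicit the dictionary between the ring-theoretic and module-theoretic notions of ``strongly harmonic''. First I would view $R$ as a left module over itself. Then $R$ is projective, hence quasi-projective; the endomorphisms of ${_R}R$ are right multiplications by elements of $R$, so $\Lambda^{fi}(R)$ is exactly the lattice of two-sided ideals of $R$, and in particular $\mx^{fi}(R)$ is the set of maximal two-sided ideals of $R$. Under this identification the product $-_R-$ on submodules agrees with the usual product of ideals (for $I,J$ two-sided, $I_R J=\sum\{f(I)\mid f\in\Hom({_R}R,J)\}=IJ$), so the module-theoretic condition of Definition \ref{strongly} applied to ${_R}R$ coincides with the standard notion of strongly harmonic ring in the sense of \cite{koh1972representation,simmonssome}. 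In particular the hypothesis on $R$ says precisely that ${_R}R$ is a strongly harmonic module.

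Next, since $I$ is a two-sided ideal, $I\leq_{fi} R$. Applying Proposition \ref{summandfa} to the quasi-projective strongly harmonic module $M={_R}R$ and its fully invariant submodule $N=I$, I conclude that the left $R$-module $R/I$ is strongly harmonic.

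It remains to transfer this back to the ring-theoretic statement. The submodules of $R/I$ as a left $R$-module coincide with its submodules as a left $R/I$-module (both are the left ideals of $R/I$), and the fully invariant ones coincide with the two-sided ideals of $R/I$. Moreover, the product $-_{R/I}-$ on $\Lambda^{fi}(R/I)$ agrees with the usual product of ideals of $R/I$, and agrees with the product $-_{R/I\text{ as }R\text{-module}}-$ because every $R$-endomorphism of $R/I$ factors through $R/I$ as an $R/I$-module. Therefore the module-theoretic condition of Definition \ref{strongly} for $R/I$ as an $R$-module is identical to the ring-theoretic condition for $R/I$, and we conclude that the ring $R/I$ is strongly harmonic.

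The proof is essentially formal, so there is no substantial obstacle; the only point requiring care is the verification that the module and ring formulations of ``strongly harmonic'' really agree for $R$ and for $R/I$, so that Proposition \ref{summandfa} can be invoked and its output translated back into ring theory.
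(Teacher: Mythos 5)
Your proposal is correct and is exactly the derivation the paper intends: the corollary is stated immediately after Proposition \ref{summandfa} with no separate proof because it is the special case $M={}_RR$, $N=I$, using that $R$ is projective (hence quasi-projective) in $R\Mod=\sigma[R]$, that $\Lambda^{fi}(R)$ is the lattice of two-sided ideals, and that $-_R-$ is the usual ideal product. Your careful spelling out of this dictionary, including the identification of the module and ring formulations for $R/I$, is precisely the routine verification the authors leave implicit.
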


\begin{cor}
Let $R$ be a strongly harmonic ring and $e\in R$ be a central idempotent. Then $Re$ is a strongly harmonic module.
\end{cor}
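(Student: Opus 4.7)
The plan is to reduce this to Proposition \ref{summandfa} applied with $M={}_RR$, the regular left $R$-module. First I would note that since $e$ is central, $1-e$ is also a central idempotent, and the left ideal $R(1-e)$ is then a two-sided ideal of $R$, i.e.\ a fully invariant submodule of ${}_RR$. Since $R$ is projective over itself it is in particular quasi-projective, so the hypotheses of Proposition \ref{summandfa} will be met provided we can justify that $R$ is strongly harmonic \emph{as a module}, not merely as a ring.

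The equivalence of these two readings is the small bookkeeping step one has to check: the fully invariant submodules of ${}_RR$ are exactly the two-sided ideals of $R$, the module-theoretic product $N_R L$ coincides with the usual ideal product $NL$ (since $\mathrm{End}({}_RR)\cong R$ acting by right multiplication), and therefore $\mx^{fi}({}_RR)$ is the set of maximal two-sided ideals of $R$. So Definition \ref{strongly} for $M={}_RR$ reduces verbatim to the ring-theoretic definition of strongly harmonic ring, and our hypothesis supplies what we need.

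Applying Proposition \ref{summandfa} with $N=R(1-e)$ we obtain that $R/R(1-e)$ is a strongly harmonic $R$-module. To finish, I would identify this quotient with $Re$ via the map $\varphi\colon R\to Re$, $r\mapsto re$. This is an epimorphism of left $R$-modules; centrality of $e$ gives $R(1-e)e=0$, so $R(1-e)\subseteq\ker\varphi$, and conversely if $re=0$ then $r=r(1-e)+re=r(1-e)\in R(1-e)$, so $\ker\varphi=R(1-e)$ and $R/R(1-e)\cong Re$ as left $R$-modules. Transporting the strongly harmonic property along this isomorphism yields the result.

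There is no serious obstacle here; the only subtle point is the translation between the ring and module formulations of the strongly harmonic condition when $M={}_RR$, which is why I would make that identification explicit before invoking Proposition \ref{summandfa}. Alternatively, one could quote the preceding corollary applied to the ideal $I=R(1-e)$ to see that $Re\cong R/R(1-e)$ is a strongly harmonic ring, and then re-read this on the left $R$-module structure through the same dictionary.
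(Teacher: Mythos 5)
Your argument is correct and is exactly the route the paper intends: the corollary is stated as an immediate consequence of Proposition \ref{summandfa} (equivalently of the preceding corollary on $R/I$), via the identification $Re\cong R/R(1-e)$ with $R(1-e)$ a two-sided, hence fully invariant, submodule of ${}_RR$. Your explicit check that $\ker(r\mapsto re)=R(1-e)$ and that the module- and ring-theoretic strongly harmonic conditions agree for $M={}_RR$ is the standard bookkeeping the paper leaves implicit.
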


\begin{obs}\label{isosumafi}
Given a module $M$ and $N\in\Lambda^{fi}(M),$ there exists a preradical $\alpha$ in $R$-Mod such that $N=\alpha(M)$, see \cite{raggilattice}. Then, for every index set $I$ there exists a lattice isomorphism $\Theta:\Lambda^{fi}(M)\to\Lambda^{fi}(M^{(I)})$ given by $\Theta(N)=N^{(I)}$. Note that this isomorphism restricts to a bijection $\Theta:\mx^{fi}(M)\to\mx^{fi}(M^{(I)})$ provided $M$ is quasi-projective by Lemma \ref{maxFI}. 
\end{obs}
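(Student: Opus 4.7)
The plan is to prove the three assertions of the remark in sequence, taking the preradical representation $N=\alpha(M)$ from \cite{raggilattice} as given and leveraging it throughout. The key engine is that every preradical commutes with direct sums: using the canonical inclusions $\iota_i\colon M\to M^{(I)}$ and projections $\pi_i\colon M^{(I)}\to M$---each an $R$-homomorphism, hence compatible with any preradical---one obtains $\alpha(M)^{(I)}\subseteq \alpha(M^{(I)})$ via the family $\{\iota_i\}$, and $\alpha(M^{(I)})\subseteq \alpha(M)^{(I)}$ via the family $\{\pi_i\}$, whence $\alpha(M^{(I)})=\alpha(M)^{(I)}$.

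From this equality the remaining claims drop out cleanly. Well-definedness of $\Theta$ is immediate: for $N=\alpha(M)\in\Lambda^{fi}(M)$ one has $\Theta(N)=N^{(I)}=\alpha(M^{(I)})\in\Lambda^{fi}(M^{(I)})$. Injectivity follows because $\pi_i(N^{(I)})=N$ recovers $N$ from $\Theta(N)$. For surjectivity, given $K\in\Lambda^{fi}(M^{(I)})$, I invoke the cited representation again to write $K=\beta(M^{(I)})$ for some preradical $\beta$; then $K=\beta(M^{(I)})=\beta(M)^{(I)}=\Theta(\beta(M))$. That $\Theta$ preserves the lattice operations is then routine, since $(N_1\cap N_2)^{(I)}=N_1^{(I)}\cap N_2^{(I)}$ and $(N_1+N_2)^{(I)}=N_1^{(I)}+N_2^{(I)}$ hold coordinatewise.

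For the restriction to maximal fully invariant submodules, the essential point is purely order-theoretic: $\Theta$ is a lattice isomorphism with $\Theta(M)=M^{(I)}$, so it sends coatoms to coatoms, producing the announced bijection $\mx^{fi}(M)\to\mx^{fi}(M^{(I)})$. The appeal to Lemma \ref{maxFI} is then the content-bearing refinement: under the quasi-projective hypothesis on $M$, the element $N\in\Lambda^{fi}(M)$ is a coatom exactly when $M/N$ is \textrm{FI}-simple, and this matches the analogous criterion on the $M^{(I)}$ side via the identification $M^{(I)}/N^{(I)}\cong (M/N)^{(I)}$ together with the instance of $\Theta$ for $M/N$.

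The only substantive obstacle, should one wish to bypass the second invocation of \cite{raggilattice}, is surjectivity: one would have to argue by hand that, for any $K\leq_{fi}M^{(I)}$, the submodules $\pi_j(K)\leq M$ are independent of $j\in I$ and fully invariant in $M$, and then that $K$ coincides with the common value $\pi_j(K)^{(I)}$; this uses that every composite $\iota_j\pi_k$ lies in $\End(M^{(I)})$ and so stabilizes $K$. The preradical route proposed above sidesteps this hands-on verification entirely and makes the remark essentially a transport of structure.
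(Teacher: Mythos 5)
Your proposal is correct and follows the same route the remark itself intends: represent $N$ as $\alpha(M)$ for a preradical $\alpha$, use that preradicals commute with direct sums to get $\alpha(M^{(I)})=\alpha(M)^{(I)}$ (which gives well-definedness and, applied to a preradical representing an arbitrary $K\leq_{fi}M^{(I)}$, surjectivity), and pass to coatoms. Your observation that the coatom bijection already follows order-theoretically from $\Theta$ being a lattice isomorphism, with Lemma \ref{maxFI} serving only to re-express coatoms via FI-simple quotients, is accurate and consistent with the paper.
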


\begin{prop}\label{dirsumfa}
Let $M$ be a quasi-projective strongly harmonic module. Then $M^{(I)}$ is a strongly harmonic module for every index set $I$.
\end{prop}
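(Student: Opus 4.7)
The plan is to transfer the strongly harmonic property from $M$ to $M^{(I)}$ through the lattice bijection $\Theta\colon\Lambda^{fi}(M)\to\Lambda^{fi}(M^{(I)})$, $\Theta(N)=N^{(I)}$, recorded in Remark \ref{isosumafi}. Since $M$ is quasi-projective, $\Theta$ restricts to a bijection $\mx^{fi}(M)\to\mx^{fi}(M^{(I)})$. So any two distinct $\mathcal{M}_1,\mathcal{M}_2\in\mx^{fi}(M^{(I)})$ are of the form $N^{(I)}$ and $L^{(I)}$ with distinct $N,L\in\mx^{fi}(M)$.

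Applying the strongly harmonic hypothesis on $M$, I obtain $N',L'\in\Lambda^{fi}(M)$ with $L'\nleq L$, $N'\nleq N$ and $L'_MN'=0$. Setting $\widetilde{N}=(N')^{(I)}=\Theta(N')$ and $\widetilde{L}=(L')^{(I)}=\Theta(L')$, the fact that $\Theta$ is an order isomorphism immediately yields $\widetilde{L}\nleq L^{(I)}$ and $\widetilde{N}\nleq N^{(I)}$. The substantive step, and the main obstacle, is to show that the product $\widetilde{L}_{M^{(I)}}\widetilde{N}=0$, since the lattice isomorphism $\Theta$ is not a priori multiplicative.

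To verify this, I would unravel the definition: $\widetilde{L}_{M^{(I)}}\widetilde{N}=\sum\{f(\widetilde{L})\mid f\in\Hom(M^{(I)},\widetilde{N})\}$, so it suffices to prove that $f(\widetilde{L})=0$ for each such $f$. Given any $f\colon M^{(I)}\to (N')^{(I)}$, let $\iota_i\colon M\to M^{(I)}$ be the $i$-th inclusion and $\pi_j\colon (N')^{(I)}\to N'$ the $j$-th projection; then $g_{ji}=\pi_j\circ f\circ\iota_i\colon M\to N'$ is a homomorphism, so $g_{ji}(L')\subseteq L'_MN'=0$, i.e.\ $g_{ji}$ vanishes on $L'$. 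Any element $x\in(L')^{(I)}$ has finite support and can be written as $x=\sum_{i\in F}\iota_i(x_i)$ with $x_i\in L'$; applying $\pi_j\circ f$ gives $\pi_j(f(x))=\sum_{i\in F}g_{ji}(x_i)=0$ for every coordinate $j$, whence $f(x)=0$.

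Thus $\widetilde{L}_{M^{(I)}}\widetilde{N}=0$, and the three witnesses $\widetilde{L},\widetilde{N}$ show that $M^{(I)}$ is strongly harmonic. The main technical content is the coordinate-wise decomposition of homomorphisms between direct sums, which turns the single ``scalar'' relation $L'_MN'=0$ on $M$ into the corresponding relation on $M^{(I)}$; I expect this to be the only nonformal part of the argument, as everything else follows from the ready-made bijection $\Theta$.
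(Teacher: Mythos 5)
Your proposal is correct and follows essentially the same route as the paper: both use the bijection $\Theta(N)=N^{(I)}$ from Remark \ref{isosumafi} to pull the maximal fully invariant submodules of $M^{(I)}$ back to $\mx^{fi}(M)$, take the witnesses there, and then verify the vanishing of the product on $M^{(I)}$ by decomposing a homomorphism out of the direct sum into its components $M\to N'$, each of which kills $L'$ because $L'_MN'=0$. The only cosmetic difference is that the paper tests against a single copy $K_2$ and invokes $\bigl({K_1^{(I)}}_{M^{(I)}}K_2^{(I)}\bigr)=\bigl({K_1^{(I)}}_{M^{(I)}}K_2\bigr)^{(I)}$, while you compose with the projections of $(N')^{(I)}$ directly; the substance is identical.
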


\begin{proof}
Let $N\neq L\in\mx^{fi}(M^{(I)})$. Then there exist $A,B\in\mx^{fi}(M)$ such that $N=A^{(I)}$ and $L=B^{(I)}$. By hypothesis there exist $K_1,K_2\in\Lambda^{fi}(M)$ such that $K_1\nleq A$, $K_2\nleq B$ and ${K_1}_MK_2=0$. Hence $K_1^{(I)}\nleq N$ and $K_2^{(I)}\nleq L$. If $f:M^{(I)}\to K_2$ is any morphism, then $f((m_i)_I)=\sum_If(\eta_i(m_i))$ where $\eta_i:M\to M^{(I)}$ are the canonical inclusions. Since ${K_1}_MK_2=0$ then $f\eta_i(K_1)=0$ for all $i\in I$. Hence $f(K_1^{(I)})=0$. This implies that 

\[{K_1^{(I)}}_{M^{(I)}}K_2^{(I)}=\left({K_1^{(I)}}_{M^{(I)}}K_2\right)^{(I)}=0.\]

\noindent Thus $M^{(I)}$ is strongly harmonic.
\end{proof}

\begin{cor}
Let $R$ be a strongly harmonic ring. Then every right (left) free $R$-module is strongly harmonic.
\end{cor}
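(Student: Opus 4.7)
The plan is to reduce the corollary to Proposition \ref{dirsumfa} applied with $M = R$. Since $R$ is projective (hence quasi-projective) as a left module over itself, the structural hypothesis of Proposition \ref{dirsumfa} is automatic; everything therefore hinges on checking that \emph{$R$ is a strongly harmonic ring} translates exactly to \emph{$R$ is a strongly harmonic module over itself} in the sense of Definition \ref{strongly}.

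To do this translation, I would record two standard facts. First, $\End_R(R)$ acts on $R$ by right multiplication, so $\Lambda^{fi}(R)$ is precisely the lattice of two-sided ideals of $R$, and consequently $\mx^{fi}(R)$ is the space of maximal two-sided ideals. Second, every $f \in \Hom_R(R,L)$ is determined by $f(1)\in L$ and satisfies $f(x) = xf(1)$; thus for left ideals $N,L\leq R$ one has
\[
N_R L \;=\; \sum\{f(N)\mid f\in\Hom_R(R,L)\} \;=\; \sum\{Nl\mid l\in L\} \;=\; NL,
\]
the ordinary product of ideals. Combining these with Proposition \ref{STRONGLY}(c), the module-theoretic strongly harmonic condition on $R$ reads: for distinct maximal two-sided ideals $\mathcal{M},\mathcal{N}$ there exist $a\notin\mathcal{M}$ and $b\notin\mathcal{N}$ with $a\in\Ann_R(Rb)$, i.e., $aRb=0$. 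This is the original Koh definition of a strongly harmonic ring, so the ring-theoretic hypothesis in the corollary is equivalent to $R$ being a strongly harmonic left $R$-module.

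With that dictionary established, an arbitrary free left $R$-module is of the form $R^{(I)}$ for some set $I$, and Proposition \ref{dirsumfa} applied to $M=R$ yields that $R^{(I)}$ is strongly harmonic. The argument for right free modules is entirely symmetric: a strongly harmonic ring is a two-sided notion (two-sided ideals and the product $aRb$ are side-independent), so $R$ is also strongly harmonic as a right module over itself, and Proposition \ref{dirsumfa} (applied on the right side) finishes the job.

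The only point requiring care is the identification of $-_R-$ with the usual ideal product and of $\Lambda^{fi}(R)$ with the two-sided ideal lattice; I do not anticipate any further obstacle, since once that dictionary is in place the statement is a direct consequence of Proposition \ref{dirsumfa}.
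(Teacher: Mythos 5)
Your proposal is correct and follows exactly the route the paper intends: the corollary is stated without proof as an immediate consequence of Proposition \ref{dirsumfa} applied to $M=R$ (which is quasi-projective as a module over itself), and your careful identification of $\Lambda^{fi}(R)$ with the two-sided ideal lattice and of $N_RL$ with the ordinary ideal product $NL$ just makes explicit the dictionary the authors leave implicit. Nothing is missing.
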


\begin{prop}\label{shdirsum}
Let $M$ be a quasi-projective module. Suppose $M=\bigoplus_I M_i$ is a direct sum with $M_i\in\Lambda^{fi}(M)$. Then $M$ is a strongly harmonic module if and only if $M_i$ is strongly harmonic.
\end{prop}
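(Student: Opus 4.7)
The plan is to prove the two implications separately. The forward implication is immediate from Proposition \ref{summandfa}: for each $i\in I$, the submodule $K_i:=\bigoplus_{j\neq i}M_j$ is a sum of fully invariant submodules, hence itself fully invariant in $M$, and the canonical projection identifies $M/K_i$ with $M_i$; since $M$ is quasi-projective and strongly harmonic, $M_i$ inherits the property.

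For the converse I will first extract three structural facts about the decomposition $M=\bigoplus_I M_i$ with $M_i\leq_{fi} M$. First, $\Hom_R(M_i,M_j)=0$ whenever $i\neq j$: extending any such $f$ by zero on the other summands produces $\tilde f\in\End(M)$, and full invariance of $M_i$ forces $f(M_i)\subseteq M_i\cap M_j=0$. Second, every $N\leq_{fi} M$ decomposes as $N=\bigoplus_i(N\cap M_i)$ with each $N\cap M_i\leq_{fi} M_i$ (the projections lie in $\End(M)$); conversely, any fully invariant submodule of $M_i$ is also fully invariant in $M$, because endomorphisms of $M$ restrict to endomorphisms of $M_i$ by full invariance of $M_i$. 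Third, for $N,L\leq_{fi} M$ the product decomposes as $N_M L=\bigoplus_i(N\cap M_i)_{M_i}(L\cap M_i)$: any $f\in\Hom(M,L)$ breaks into restrictions $f|_{M_i}\colon M_i\to L$, each of which by the first fact lands in $L\cap M_i$, and the restrictions can be chosen independently on each summand.

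The second fact implies that the coatoms of $\Lambda^{fi}(M)$ are precisely the submodules of the form $(\bigoplus_{j\neq i_0}M_j)\oplus N_0$ with $N_0\in\mx^{fi}(M_{i_0})$. Given two distinct coatoms $N,L\in\mx^{fi}(M)$ arising from indices $i_0,i_1$ with components $N_0,L_0$, I split into two cases. If $i_0\neq i_1$, take $N':=M_{i_0}$ and $L':=M_{i_1}$; both are fully invariant in $M$, the non-containments $N'\nleq N$ and $L'\nleq L$ follow from $N\cap M_{i_0}=N_0\lneq M_{i_0}$ (and analogously for $L$), and $L'_M N'=0$ follows at once from the product formula. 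If $i_0=i_1$, then $N_0\neq L_0$ are distinct coatoms of $M_{i_0}$, and the strong harmonicity of $M_{i_0}$ furnishes $N_0',L_0'\in\Lambda^{fi}(M_{i_0})\subseteq\Lambda^{fi}(M)$ with $N_0'\nleq N_0$, $L_0'\nleq L_0$ and $(L_0')_{M_{i_0}}N_0'=0$, which the product formula promotes to $L'_M N'=0$ in $M$.

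The main technical step, and the only substantive one, is the product formula in the third structural fact: establishing it cleanly is what both drives the case distinction and forces us to insist that the $M_i$ be fully invariant. Once it is in hand, the separation witnesses are produced either by the mutual orthogonality of the summands (case $i_0\neq i_1$) or by transporting the witnesses from the component (case $i_0=i_1$), so no further calculation is needed.
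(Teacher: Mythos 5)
Your proof is correct, and its skeleton is the same as the paper's: identify the coatoms of $\Lambda^{fi}(M)$ as the submodules that are ``defective'' in exactly one component $i_0$, then split into the cases where the two defective indices differ or coincide, producing the separating pair from the orthogonality of distinct summands in the first case and by transporting witnesses from $M_{i_0}$ in the second. Where you genuinely diverge is in the machinery. The paper gets the componentwise decomposition by writing each $N\in\Lambda^{fi}(M)$ as $\alpha(M)$ for a preradical $\alpha$ (so $N=\bigoplus_I\alpha(M_i)$), invokes Lemma \ref{maxFI} and an external lemma to pin down the coatom structure, handles the case $i_0\neq i_1$ by citing $\Ann_M(M_i)=\bigoplus_{j\neq i}M_j$, and in the case $i_0=i_1$ checks $\eta_i(A)_M\eta_i(B)=0$ by a direct computation with homomorphisms $M\to\eta_i(B)$. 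You instead derive everything from the single observation $\Hom_R(M_i,M_j)=0$ for $i\neq j$, which gives you both the lattice identification $\Lambda^{fi}(M)\cong\prod_I\Lambda^{fi}(M_i)$ and the product formula $N_ML=\bigoplus_I(N\cap M_i)_{M_i}(L\cap M_i)$; that formula then settles both cases in one stroke. This is more self-contained (no preradicals, no citation for the annihilator identity), the product formula is a reusable statement in its own right, and your route makes visible that quasi-projectivity is only consumed by the easy direction through Proposition \ref{summandfa}; the cost is a slightly longer preamble of structural facts before the case analysis begins.
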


\begin{proof}
Suppose $M_i$ is strongly harmonic for every $i\in I$. Let $N, L\in\mx^{fi}(M)$ distinct. There exist preradicals $\alpha$ and $\beta$ in $R$-Mod such that 

{\vspace{5pt}}
\centerline{$N=\alpha(M)=\alpha\left(\bigoplus_IM_i\right)=\bigoplus_I\alpha(M_i)$}

and 

\centerline{$L=\beta(M)=\beta\left(\bigoplus_IM_i\right)=\bigoplus_I\beta(M_i).$}
{\vspace{5pt}}

By Lemma \ref{maxFI} and \cite[Lemma 17]{raggiprime},  there exist $i,k\in I$ such that $\alpha(M_i)\neq M_i$ and $\alpha(M_j)=M_j$ for all $j\neq i$, and $\beta(M_k)\neq M_k$ and $\beta(M_j)=M_j$ for all $j\neq k$. Thus,

\vspace{4pt}\centerline{\vspace{4pt}$N=\bigoplus_{j\neq i}M_j\oplus \alpha(M_i), \mbox{ and } \, L=\bigoplus_{j\neq k}M_j\oplus\beta(M_k).$}

Note that $M_i\nleq N$ and $M_k\nleq L$. By \cite[Proposition 1.8]{PepeGab}, $\Ann_M(M_i)=\bigoplus_{j\neq i}M_j$. If $i\neq k$ then $M_k\leq \Ann_M(M_i),$ and so ${M_k}_MM_i=0$. On the other hand, suppose $i=k$. Since $N\neq L$, $\alpha(M_i)\neq\beta(M_i)$. Note that $\alpha(M_i),\beta(M_i)\in\mx^{fi}(M_i)$. By hypothesis, there exist $A,B\in\Lambda^{fi}(M_i)$ such that $A\nleq\alpha(M_i)$, $B\nleq\beta(M_i),$ and $A_{M_i}B=0$. Consider $\eta_i(A)$ and $\eta_i(B),$ the images of $A$ and $B$ under the canonical inclusion $\eta_i:M_i\to M,$ respectively. Then $\eta_i(A),\eta_i(B)\in\Lambda^{fi}(M)$. Let $f:M\to \eta_i(B)$ be any homomorphism. Hence, $\pi_if\eta_i:M_i\to B,$ where $\pi_i:M\to M_i$ is the canonical projection. Since $A_{M_i}B=0$, $\pi_if(\eta_i(A))=0$. We have that $f(\eta_i(A))\leq\eta_i(B)\leq M_i$, so $\pi_jf(\eta_i(A))=0$ for all $j\neq i$. This implies that $f(\eta_i(A))=0$, that is $\eta_i(A)_M\eta_i(B)=0$. Thus $M$ is strongly harmonic. 

The converse follows from Proposition \ref{summandfa}.
\end{proof}

It is easy to see that, in general, the direct sum of two strongly harmonic modules is not strongly harmonic. For instance, 

\begin{ej}
Let $R=\left(\begin{smallmatrix}
\mathbb{Z}_2 & \mathbb{Z}_2 \\
0 & \mathbb{Z}_2
\end{smallmatrix}\right)$. Consider $e_1=\left(\begin{smallmatrix}
1 & 0 \\
0 & 0
\end{smallmatrix}\right)$ and $e_2=\left(\begin{smallmatrix}
0 & 0 \\
0 & 1
\end{smallmatrix}\right)$. Then $Re_1=\left(\begin{smallmatrix}
\mathbb{Z}_2 & 0 \\
0 & 0
\end{smallmatrix}\right)$ and $Re_2=\left(\begin{smallmatrix}
0 & \mathbb{Z}_2 \\
0 & \mathbb{Z}_2
\end{smallmatrix}\right)$. Note that $Re_1$ is simple and $Re_2$ has three submodules $\{0, \left(\begin{smallmatrix}
0 & \mathbb{Z}_2 \\
0 & 0
\end{smallmatrix}\right), \left(\begin{smallmatrix}
0 & \mathbb{Z}_2 \\
0 & \mathbb{Z}_2
\end{smallmatrix}\right)\}$. Hence $R=Re_1\oplus Re_2$ is a direct sum of strongly harmonic modules. Also, $R$ has two maximal fully invariant submodules: $Re_2=\left(\begin{smallmatrix}
\mathbb{Z}_2 & \mathbb{Z}_2 \\
0 & 0
\end{smallmatrix}\right),$ and $\mathcal{M}=\left(\begin{smallmatrix}
0 & \mathbb{Z}_2 \\
0 & \mathbb{Z}_2
\end{smallmatrix}\right)$. The unique nonzero proper ideal not contained in $Re_2$ is $\mathcal{M}$ and the unique nonzero proper ideal not contained in $\mathcal{M}$ is $Re_2$. Note that $Re_2\mathcal{M}\neq 0$. Thus, $R$ is not strongly harmonic.
\end{ej}

By Lemma \ref{maxfiinspec}, $\mx^{fi}(M)$ is contained in $Spec(M)$. Given $K\in\Lambda^{fi}(M)$, the open subset relative to $\mx^{fi}(M)$ is denoted by $m(K)=\U(K)\cap\mx^{fi}(M)$. 

\begin{prop}\label{stmaxhausdorf}
Let $M$ be projective in $\sm$. If $M$ is strongly harmonic, then $\mx^{fi}(M)$ is a Hausdorff subspace of $Spec(M).$ If in addition, $0=\bigcap\mx^{fi}(M)$ then converse holds. 
\end{prop}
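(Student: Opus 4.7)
The plan is to use the description of the topology on $Spec(M)$ given in Proposition~\ref{t}, restricted to the subspace $\mx^{fi}(M)$: its basic open sets are $m(K)=\U(K)\cap\mx^{fi}(M)$ for $K\in\Lambda^{fi}(M)$, and $P\in m(K)$ iff $K\nleq P$. Note that by Lemma~\ref{maxfiinspec} (which applies because $M$ is projective in $\sm$, making $-_M-$ associative) we indeed have $\mx^{fi}(M)\subseteq Spec(M)$, so elements of $\mx^{fi}(M)$ are prime in $\Lambda^{fi}(M)$. This will be used in both directions.

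For the forward direction, take distinct $N,L\in\mx^{fi}(M)$. Strongly harmonicity supplies $N',L'\in\Lambda^{fi}(M)$ with $N'\nleq N$, $L'\nleq L$ and $L'_MN'=0$. Then $L\in m(L')$ and $N\in m(N')$. For disjointness, any $P\in m(L')\cap m(N')$ would satisfy $L'\nleq P$ and $N'\nleq P$, contradicting primeness of $P$ applied to $L'_MN'=0\leq P$. Hence $\mx^{fi}(M)$ is Hausdorff.

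For the converse, assume $\mx^{fi}(M)$ is Hausdorff and $\bigcap\mx^{fi}(M)=0$. Given distinct $N,L\in\mx^{fi}(M)$, Hausdorffness produces disjoint basic open neighborhoods $m(L')$ of $L$ and $m(N')$ of $N$, so automatically $L'\nleq L$ and $N'\nleq N$. The key observation is that
\[L'_MN'\leq L'\wedge N',\]
because for any $f\in\Hom(M,N')$ we have $f(L')\leq N'$ trivially, while composing $f$ with the inclusion $N'\hookrightarrow M$ gives an endomorphism of $M$ and full invariance of $L'$ forces $f(L')\leq L'$. Now disjointness of $m(L')$ and $m(N')$ means that for every $P\in\mx^{fi}(M)$ either $L'\leq P$ or $N'\leq P$, and in either case $L'_MN'\leq P$. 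Intersecting over $P$ yields $L'_MN'\leq \bigcap\mx^{fi}(M)=0$, and we conclude that $M$ is strongly harmonic.

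There is no real obstacle: the argument is essentially the Hausdorff translation of the defining conditions, with two quiet but crucial inputs — the inclusion $L'_MN'\leq L'\wedge N'$ (which is exactly why full invariance enters the definition), and the identification $\mx^{fi}(M)\subseteq Spec(M)$ used in the forward direction to invoke primeness. The hypothesis $\bigcap\mx^{fi}(M)=0$ is clearly indispensable in the converse: without it one could only conclude $L'_MN'\leq\bigcap\mx^{fi}(M)$ rather than $L'_MN'=0$.
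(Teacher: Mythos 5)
Your proof is correct and follows essentially the same route as the paper's: both directions translate the strongly harmonic condition into separation by the basic open sets $m(K)$, using $\mx^{fi}(M)\subseteq Spec(M)$ (Lemma \ref{maxfiinspec}) for the forward implication and the inclusion ${L'}_M N'\leq L'\wedge N'$ together with $\bigcap\mx^{fi}(M)=0$ for the converse. The paper phrases the forward step as $m(L_1)\cap m(L_2)=m({L_1}_M L_2)$ and leaves the converse's containment implicit, but the content is identical to what you wrote.
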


\begin{proof} Consider the topological subspace $\mx^{fi}(M)$ and   $N_1,N_2\in \mx^{fi}(M).$   Since $M$ is strongly harmonic, there exist $L_1,\,L_2\in \Lambda^{fi}(M)$ such that $L_1\nleq N_1,$   $L_2\nleq N_2$ and ${L_1}_M{L_2}=0.$ 
Then,  $N_1\in m(L_1)$ and $N_2\in m(L_2).$  Also, $m(L_1)\cap  m(L_2)
=m({L_1}_M{L_2}).$ Inasmuch as ${L_1}_M{L_2}=0,$ we can conclude that $m({L_1}_M{L_2})=\emptyset.$ Therefore, $\mx^{fi}(M)$ is Hausdorff.

Reciprocally, assume that $0=\bigcap\mx^{fi}(M)$. Let $N\neq L\in\mx^{fi}(M)$. Since $\mx^{fi}(M)$ is Hausdorff, there exist disjoint open sets $m(K_1)$ and $m(K_2)$ of $\mx^{fi}(M)$ containing $N$ and $L$ respectively. This implies that ${K_1}_MK_2\subseteq\bigcap\mx^{fi}(M)=0$. Note that $K_1\nleq N$ and $K_2\nleq L$. Thus, $M$ is strongly harmonic.
\end{proof}

\begin{lem}\label{kohmodules}
Let $M$ be projective in $\sigma[M]$ and strongly harmonic. If $\mathcal{F}\subseteq \mx^{fi}(M)$ is a compact subset and $N\in \mx^{fi}(M)$ is such that $N\notin \mathcal{F},$ then there exists $L,K\in\Lambda^{fi}(M)$ with $N\in m(K)$ and $F\subseteq m(L)$ such that $L_MK=0$. Moreover, if $\mx^{fi}(M)$ is compact , then $\mx^{fi}(M)$ is a normal space.
\end{lem}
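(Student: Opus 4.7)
The plan is to adapt the classical proof that a compact Hausdorff space is regular (and hence normal) to this module-theoretic setting, using strong harmonicity pointwise, compactness to extract a finite witness, and the primeness of the elements of $\mx^{fi}(M)\subseteq Spec(M)$ (Lemma~\ref{maxfiinspec}) to control intersections.

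For the first assertion, fix $N\in\mx^{fi}(M)\setminus\mathcal{F}$. For each $P\in\mathcal{F}$, strong harmonicity applied to the distinct coatoms $N$ and $P$ yields $K_P,L_P\in\Lambda^{fi}(M)$ with $K_P\nleq N$, $L_P\nleq P$, and $(L_P)_M K_P=0$. The family $\{m(L_P)\}_{P\in\mathcal{F}}$ is then an open cover of $\mathcal{F}$, and by compactness finitely many $m(L_{P_1}),\dots,m(L_{P_n})$ already cover it. I set $L:=L_{P_1}+\cdots+L_{P_n}$ and $K:=K_{P_1}\cap\cdots\cap K_{P_n}$, both in $\Lambda^{fi}(M)$. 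Then $\mathcal{F}\subseteq m(L)$ since $m$ sends sums to unions, and $L_M K=\sum_i (L_{P_i})_M K\leq\sum_i (L_{P_i})_M K_{P_i}=0$, using left distributivity of $-_M-$ over sums and monotonicity of the product in its second argument.

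The subtle step, and what I expect to be the main obstacle, is showing $K\nleq N$. A finite intersection of submodules, none of which lies in the prime $N$, could in principle still fall inside $N$, so I must replace the intersection by a product. Here projectivity in $\sm$ pays off: the product on $\Lambda(M)$ is associative, and a direct check shows $A_M B\leq A\cap B$ for $A,B\in\Lambda^{fi}(M)$ (any $f\colon M\to B$ composes with the inclusion $B\hookrightarrow M$ to give an endomorphism of $M$ stabilising the fully invariant $A$, hence $f(A)\subseteq A\cap B$). Iterating, $K_{P_1}{}_M\cdots{}_M K_{P_n}\leq K$; if $K$ were contained in $N$, this iterated product would be too, and since $N$ is prime some $K_{P_i}$ would lie in $N$, contradicting the construction. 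Thus $N\in m(K)$, completing the first part.

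For the \emph{moreover} clause, assume $\mx^{fi}(M)$ is compact and let $\mathcal{F}_1,\mathcal{F}_2$ be disjoint closed subsets (hence compact). For each $N\in\mathcal{F}_2$, the first part applied to $\mathcal{F}_1$ and $N$ produces $L_N,K_N\in\Lambda^{fi}(M)$ with $\mathcal{F}_1\subseteq m(L_N)$, $N\in m(K_N)$, and $(L_N)_M K_N=0$. Compactness of $\mathcal{F}_2$ extracts finitely many $m(K_{N_1}),\dots,m(K_{N_m})$ covering $\mathcal{F}_2$. Set $K:=K_{N_1}+\cdots+K_{N_m}$ and $L:=L_{N_1}\cap\cdots\cap L_{N_m}$. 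Then $\mathcal{F}_2\subseteq m(K)$, and the same product-in-a-prime argument, now applied to each $P\in\mathcal{F}_1$ via $L_{N_1}{}_M\cdots{}_M L_{N_m}\leq L$, gives $\mathcal{F}_1\subseteq m(L)$. Finally $L_M K=\sum_i L_M K_{N_i}\leq\sum_i (L_{N_i})_M K_{N_i}=0$, so if some $P\in m(L)\cap m(K)$ existed then $L_M K=0\leq P$ would force $L\leq P$ or $K\leq P$ by primeness, a contradiction. Hence $m(L)$ and $m(K)$ are disjoint open sets separating $\mathcal{F}_1$ from $\mathcal{F}_2$, proving normality.
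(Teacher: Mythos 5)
Your proof is correct and, for the main separation statement, follows essentially the paper's route: apply strong harmonicity pointwise over $\mathcal{F}$, extract a finite subcover of $\mathcal{F}$ by the $m(L_{P_i})$, and take $L$ to be the finite sum. The only real difference there is that you take $K$ to be the intersection of the $K_{P_i}$ and then use the iterated product together with primeness of $N$ to see $K\nleq N$, whereas the paper takes $K$ to be the iterated product of the $K_{\alpha_i}$ itself, for which $N\in\bigcap_i m(K_{\alpha_i})=m(K)$ is immediate; since the product sits below the intersection, the two choices are interchangeable. For the \emph{moreover} clause you genuinely diverge: the paper invokes Proposition \ref{stmaxhausdorf} to get that $\mx^{fi}(M)$ is Hausdorff and then cites the general-topology fact that a compact Hausdorff regular space is normal, while you run a second compactness argument to separate two disjoint closed sets directly. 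Your version is more self-contained and in fact never uses the Hausdorff property. One step you should justify: you write $L_M\bigl(\sum_j K_{N_j}\bigr)=\sum_j L_M K_{N_j}$, i.e.\ distributivity of $-_M-$ over a finite sum in the \emph{second} argument, but the paper only records this distributivity for directed families. It does hold here because $M$ is projective in $\sm$ (lift any $f\colon M\to K_1+K_2$ through the epimorphism $K_1\oplus K_2\to K_1+K_2$), but you can also bypass it entirely: $m(L)\cap m(K)=\bigcup_j\bigl(m(L)\cap m(K_{N_j})\bigr)=\bigcup_j m(L_MK_{N_j})=\emptyset$, since each $L_MK_{N_j}\leq (L_{N_j})_MK_{N_j}=0$ by monotonicity in the first argument.
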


\begin{proof}
Let $\F{:=}\{N_{\alpha}\}_{\alpha\in \Gamma}$ a compact subset of $\mx^{fi}(M)$ and $N{\in }\mx^{fi}(M)$ such that $N\notin \mathcal{F}$.  Since $M$ is strongly harmonic, for each $\alpha\in\Gamma$ there exist $L_{\alpha},\,K_{\alpha}\in\Lambda^{fi}(M)$ such that $L_{\alpha}\nleq N_{\alpha},$ $K_{\alpha}\nleq N,$ and ${L_{\alpha}}_{M}K_{\alpha}=0.$ 
Hence, $\{m(L_{\alpha})\}_{\alpha\in\Gamma}$ is an open cover for $\F.$
Since $\F$ is compact, there exist $\alpha_{1},\ldots,\alpha_n\in\Gamma$ sucht that 

\vspace{4pt}\centerline{\vspace{4pt}$\displaystyle\F\subseteq \bigcup_{i=1}^n\{m(L_{\alpha_i})\}=m(\sum_{i=1}^nL_{\alpha_i}).$}

\noindent Also, we have that  $N\in \bigcap_{i=1}^n m(K_{\alpha_i})=m({K_{\alpha_1}}_M\cdots_M {K_{\alpha_n}})$ and  ${L_{\alpha_i}}_{M}K_{\alpha_i}=0,$ for each $i=1,\ldots,n.$ 

Now, using the facts that $-_{M}-$ is associative and $M$ is a right multiplicative identity on $\Lambda^{fi}(M)$,  

\vspace{4pt}\centerline{\vspace{4pt}$ {L_{\alpha_2}}_M({K_{\alpha_1}}_M{K_{\alpha_2}})\leq {L_{\alpha_2}}_M({M_M}K_{\alpha_2})=({L_{\alpha_2}}_MM)_MK_{\alpha_2}={L_{\alpha_2}}_M{K_{\alpha_2}}=0.
$}

Thus, ${L_{\alpha_2}}_M({K_{\alpha_1}}_M{K_{\alpha_2}})=0.$ Similarly, it can be proved that 
\[{L_{\alpha_3}}_M({K_{\alpha_1}}_M{K_{\alpha_2}}_M{K_{\alpha_3}})\leq {L_{\alpha_3}}_MK_{\alpha_3}=0.\]
In fact, it is satisfied that ${L_{\alpha_i}}_M({K_{\alpha_1}}_M{K_{\alpha_2}}\cdots _M{K_{\alpha_i}})=0,$ for every $i\in \{ 1,\ldots, n\}.$ Consequently, ${(\sum_{i=1}^n{L_{\alpha_i}})}_M ( {K_{\alpha_1}}_M\cdots_M {K_{\alpha_n}})=0.$ Therefore,  $L:=\sum_{i=1}^n{L_{\alpha_i}}$ and $K:= {K_{\alpha_1}}_M\cdots_M {K_{\alpha_n}}$ are the required modules. 

Since  $M$ is strongly harmonic, $\mx^{fi}(M)$ is Hausdorff. If in addition, we have $\mx^{fi}(M)$ is compact, then every closed set is compact. It gets that $\mx^{fi}(M)$ is a compact Hausdorff regular space, by the above argument. It is well known, from general topology theory, that those conditions imply that the underlying space is normal. 
\end{proof}

\begin{lem}\label{compatomic}
Let $M$ be projective in $\sm$. $\Lambda^{fi}(M)$ is compact if and only if $\mx^{fi}(M)$ is compact and $\Lambda^{fi}(M)$ is coatomic.
\end{lem}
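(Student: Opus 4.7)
The plan is to prove both implications by passing back and forth between covers of $M$ in the idiom $\Lambda^{fi}(M)$ and open covers of the subspace $\mx^{fi}(M)$, using the basic opens $m(N)=\U(N)\cap\mx^{fi}(M)$. The bridge in both directions is the obvious translation: a family $\{N_i\}_I\subseteq\Lambda^{fi}(M)$ satisfies $\sum_I N_i=M$ if and only if no coatom of $\Lambda^{fi}(M)$ dominates every $N_i$, i.e.\ $\{m(N_i)\}_I$ covers $\mx^{fi}(M)$. The one ingredient that fails without the coatomic hypothesis is the ``only if'' part of this equivalence, which is exactly why coatomicity must appear on one side.

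For the implication $(\Rightarrow)$, coatomicity of $\Lambda^{fi}(M)$ is already Lemma \ref{compatomic2}. For compactness of $\mx^{fi}(M)$, since every open set is of the form $m(N)$, I would take an open cover $\{m(N_i)\}_I$ of $\mx^{fi}(M)$ and observe that $\bigcup_I m(N_i)=m\bigl(\sum_I N_i\bigr)=\mx^{fi}(M)$. If $\sum_I N_i\neq M$, then by the coatomicity just established, some $\mathcal{M}\in\mx^{fi}(M)$ dominates $\sum_I N_i$, so $\mathcal{M}$ lies in none of the $m(N_i)$, a contradiction. Hence $\sum_I N_i=M$, and compactness of $\Lambda^{fi}(M)$ delivers a finite subfamily $F\subseteq I$ with $\sum_F N_i=M$, whose associated $\{m(N_i)\}_{i\in F}$ covers $\mx^{fi}(M)$.

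For the implication $(\Leftarrow)$, I would assume $\mx^{fi}(M)$ is compact and $\Lambda^{fi}(M)$ is coatomic, and take a family $\{N_i\}_I\subseteq\Lambda^{fi}(M)$ with $\sum_I N_i=M$. For any $\mathcal{M}\in\mx^{fi}(M)$ we have $\mathcal{M}\neq M=\sum_I N_i$, so at least one $N_i\nleq\mathcal{M}$; therefore $\{m(N_i)\}_I$ is an open cover of $\mx^{fi}(M)$. By compactness, some finite $F\subseteq I$ gives $\bigcup_F m(N_i)=\mx^{fi}(M)$, which means $\sum_F N_i$ is contained in no element of $\mx^{fi}(M)$. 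Coatomicity of $\Lambda^{fi}(M)$ then forces $\sum_F N_i=M$, establishing that $\Lambda^{fi}(M)$ is compact.

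I do not anticipate a serious obstacle: the key point is simply that compactness of $M$ in $\Lambda^{fi}(M)$ and of the top of the topological space $\mx^{fi}(M)$ are two shadows of the same statement, and coatomicity is precisely what is needed to ensure that the map ``$N\mapsto m(N)$'' detects proper elements. One small thing to check carefully is that the argument uses only basic opens $m(N)$ and that this suffices because every open subset of $\mx^{fi}(M)$ has this form by Proposition \ref{t}; the reduction from an arbitrary open cover to a basic open cover is therefore automatic.
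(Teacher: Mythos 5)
Your proposal is correct and follows essentially the same route as the paper's own proof: both directions rest on the translation $\bigcup_I m(N_i)=m\bigl(\sum_I N_i\bigr)$ between joins in $\Lambda^{fi}(M)$ and open covers of $\mx^{fi}(M)$, with coatomicity invoked exactly where you invoke it to upgrade ``contained in no coatom'' to ``equals $M$''. No gaps.
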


\begin{proof}
$\Rightarrow$ By Lemma \ref{compatomic2}, $\Lambda^{fi}(M)$ is coatomic. Now,  suppose that  $\mx^{fi}(M)=\bigcup_I\U(N_i)=\U(\sum_I N_i)$. This implies that $\sum_I N_i\nleq \mathcal{M}$ for all $\mathcal{M}\in\mx^{fi}(M)$. Since $\Lambda^{fi}(M)$ is coatomic, $\sum_I N_i=M$. Hence $M=\sum_{i\in F} N_i$ for some $F\subseteq I$ finite by hypothesis. Thus $\mx^{fi}(M)=\bigcup_F\U(N_i)$, that is, $\mx^{fi}(M)$ is compact.

$\Leftarrow$ Let $M=\sum_I N_i$ with $N_i\in\Lambda^{fi}(M)$. Then $\mx^{fi}(M)=\bigcup_I\U(N_i)$. Since $\mx^{fi}(M)$ is compact, $\mx^{fi}(M)=\bigcup_{i\in F}\U(N_i)$ for some $F\subseteq I$ finite. This implies that $\sum_{F} N_i\nleq \mathcal{M}$ for all $\mathcal{M}\in\mx^{fi}(M)$. Since $\Lambda^{fi}(M)$ is coatomic, $M=\sum_F N_i$. That is, $\Lambda^{fi}(M)$ is compact.
\end{proof}

\begin{prop}\label{norstr}
Let $M$ be projective in $\sigma[M]$. Consider the following conditions.
\begin{enumerate}[label=\emph{(\alph*)}]
\item $\Lambda^{fi}(M)$ is normal.
\item $M$ is strongly harmonic. 
 
\end{enumerate}
Then \emph{(a)}$\Rightarrow$\emph{(b)} holds. If in addition, $\Lambda^{fi}(M)$  is compact, the two conditions are equivalent.
\end{prop}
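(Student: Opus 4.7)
The plan is to handle the two directions separately: for (a)$\Rightarrow$(b) a direct application of normality suffices, while for (b)$\Rightarrow$(a) under the compactness hypothesis I would combine the point/compact separation produced in Lemma \ref{kohmodules} with a finite cover argument. For (a)$\Rightarrow$(b), take distinct $N,L\in\mx^{fi}(M)$; since both are coatoms of $\Lambda^{fi}(M)$, one has $N\vee L=M$, so the normality hypothesis provides $a,b\in\Lambda^{fi}(M)$ with $N\vee b=M=a\vee L$ and $ab=0$. Because $N,L\neq M$, the two equalities force $b\nleq N$ and $a\nleq L$, so setting $N':=b$ and $L':=a$ yields $L'\nleq L$, $N'\nleq N$ and ${L'}_M N'=ab=0$, which is exactly the data required by Definition \ref{strongly}.

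For the converse, assume further that $\Lambda^{fi}(M)$ is compact. By Lemma \ref{compatomic}, $\mx^{fi}(M)$ is compact and $\Lambda^{fi}(M)$ is coatomic. Given $x,y\in\Lambda^{fi}(M)$ with $x\vee y=M$, I would consider the closed subsets $V_m(x):=\V(x)\cap\mx^{fi}(M)$ and $V_m(y):=\V(y)\cap\mx^{fi}(M)$; these are compact and disjoint, because no coatom of $\Lambda^{fi}(M)$ can contain $x\vee y=M$. For each $N\in V_m(x)$ (hence $N\notin V_m(y)$), Lemma \ref{kohmodules} applied with $\mathcal{F}:=V_m(y)$ supplies $L_N,K_N\in\Lambda^{fi}(M)$ with $V_m(y)\subseteq m(L_N)$, $N\in m(K_N)$ and ${L_N}_M K_N=0$. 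The cover $\{m(K_N)\}_{N\in V_m(x)}$ of the compact set $V_m(x)$ admits a finite subcover indexed by $N_1,\ldots,N_k$, and I would set
\[ x':=L_{N_1}\,{}_M\,L_{N_2}\,{}_M\cdots{}_M\, L_{N_k},\qquad y':=K_{N_1}+\cdots+K_{N_k}. \]

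Because the elements of $\mx^{fi}(M)$ are prime (Lemma \ref{maxfiinspec}), the identities $m(\xi\,{}_M\,\eta)=m(\xi)\cap m(\eta)$ and $m(\xi+\eta)=m(\xi)\cup m(\eta)$ are valid; they give $V_m(y)\subseteq m(x')$ and $V_m(x)\subseteq m(y')$, and the coatomic hypothesis then promotes these containments to $x'\vee y=M$ and $x\vee y'=M$. The main obstacle will be checking ${x'}_M\,y'=0$, since the product on $\Lambda^{fi}(M)$ is not commutative. Distributing $y'$ reduces this to showing $(L_{N_1}\cdots L_{N_k})_M K_{N_i}=0$ for each $i$; I plan to imitate the bracketing used inside the proof of Lemma \ref{kohmodules}. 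Specifically, each $L_{N_j}\leq M$ and $M\,{}_M\,K_{N_i}\leq K_{N_i}$, so monotonicity gives $L_{N_{i+1}}\cdots L_{N_k}\,{}_M\,K_{N_i}\leq K_{N_i}$; associativity then implies $L_{N_i}\,{}_M\,(L_{N_{i+1}}\cdots L_{N_k}\,{}_M\,K_{N_i})\leq {L_{N_i}}_M K_{N_i}=0$, and multiplication on the left by $L_{N_1}\cdots L_{N_{i-1}}$ preserves the zero, closing the argument.
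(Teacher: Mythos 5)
Your argument is correct and follows essentially the same route as the paper: the forward direction is the paper's own (two distinct coatoms join to $M$, apply normality and note that a coatom cannot absorb a join equal to $M$), and your converse is an explicit unwinding of what the paper compresses into its citation of Lemma \ref{kohmodules}, compactness of $\mx^{fi}(M)$ (via Lemma \ref{compatomic}) and coatomicity. One step deserves justification, though: when you distribute ${x'}_M\bigl(K_{N_1}+\cdots+K_{N_k}\bigr)$ into $\sum_i {x'}_M K_{N_i}$ you are using right-distributivity of $-_M-$ over a \emph{finite, non-directed} sum, which is not among the laws the paper records (it states left-distributivity over arbitrary sums, but right-distributivity only over directed families, and a finite non-directed sum is not recovered from the directed case). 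The step is nevertheless valid under the standing hypothesis that $M$ is projective in $\sigma[M]$: any $f\colon M\to K_{N_1}+\cdots+K_{N_k}$ lifts through the epimorphism $\bigoplus_i K_{N_i}\to \sum_i K_{N_i}$ in $\sigma[M]$, so $f(x')\leq \sum_i {x'}_M K_{N_i}$. Observe that the paper's Lemma \ref{kohmodules} avoids this issue by always placing the sum in the \emph{left} factor of the product, where distributivity over arbitrary sums is available; since the separation data ${L_N}_M K_N=0$ has the $L$'s on the left and the product is not commutative, your choice of $x'$ as a product of $L$'s and $y'$ as a sum of $K$'s forces the sum to the right, hence the need for this extra justification.
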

\begin{proof}
(a)$\Rightarrow$(b) Let $N_1,N_2\in \mx^{fi}(M),$ with $N_1\neq N_2.$ Then $N_1+N_2=M.$ 
By the normality on $\Lambda^{fi}(M),$ it follows  
 that there exist $L_1,L_2\in \Lambda^{fi}(M)$ such that    
 $N_1+L_1=M$ and $N_2+L_2=M,$ and ${L_1}_M{L_2}=0.$ 
Hence,  $M$ is strongly harmonic.

Now suppose that $\Lambda^{fi}(M)$ is a compact space.\\
(b)$\Rightarrow$(a) Let $L_1,L_2\in \Lambda^{fi}(M)$ such that $L_1+L_2=M.$ Hence $\U(L_1)\cup\U(L_2)=Spec(M)$, it follows that $\V(L_1)\cap \V(L_2)=\emptyset.$ So, $\V(L_1)\cap \mx^{fi}(M)$ and $\V(L_2)\cap \mx^{fi}(M)$ are disjoint closed sets in $\mx^{fi}(M).$ It follows from Lemma \ref{compatomic} that $\mx^{fi}(M)$ is compact. Then by Lemma \ref{kohmodules} and Proposition \ref{stmaxhausdorf},  there exist $K_1,K_2\in \Lambda^{fi}(M)$ such that $\V(L_1)\cap \mx^{fi}(M)\subseteq m(K_1)$, $\V(L_2)\cap \mx^{fi}(M)\subseteq m(K_2)$ and    ${K_1}_M{K_2}=0.$

Notice that if  $L_1+K_1<M,$ there exists $N\in \mx^{fi}(M)$ such that $L_1+K_1\leq N$ by Lemma \ref{compatomic}. But this implies that $N\in\V(L_1)\cap \V(K_1)$ which is a contradiction. Therefore $L_1+K_1=M$. Analogously, $L_2+K_2=M$. Thus $\Lambda^{fi}(M)$ is normal.
\end{proof}

\begin{cor}\label{ringlamstr}
The following conditions are equivalent for a ring $R$:
\begin{enumerate}[label=\emph{(\alph*)}]
\item $\Lambda^{fi}(R)$ is normal.
\item $R$ is a strongly harmonic ring.
\end{enumerate}
\end{cor}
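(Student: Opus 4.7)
The plan is to obtain Corollary \ref{ringlamstr} as a direct specialization of Proposition \ref{norstr} applied to $M=R$, so essentially no new argument is needed beyond verifying the two hypotheses of the ``in addition'' clause of that proposition.

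First I would observe that the ring $R$, viewed as a left module over itself, is projective in $R$-Mod, and that $\sigma[R]=R$-Mod since every left $R$-module is an epimorphic image of a free $R$-module and hence $R$-subgenerated. Thus $R$ is projective in $\sigma[R]$, so the hypothesis of Proposition \ref{norstr} is met, and in particular the implication (a)$\Rightarrow$(b) is immediate.

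Next I would verify that $\Lambda^{fi}(R)$ is a compact lattice. Since $R$ has an identity, $R=R\cdot 1$ is cyclic, hence finitely generated as a left module over itself. By the corollary following Lemma \ref{maxfiinspec} (the ``in particular'' clause there), for any module $M$ that is projective in $\sigma[M]$ and finitely generated, $\Lambda^{fi}(M)$ is compact; applying this to $M=R$ gives the compactness of $\Lambda^{fi}(R)$. With both hypotheses of the ``in addition'' part of Proposition \ref{norstr} in hand, conditions (a) and (b) of that proposition are equivalent for $M=R$, which is exactly the statement of Corollary \ref{ringlamstr}.

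There is no real obstacle here; the whole point of the corollary is to record the ring-theoretic consequence of the module-theoretic result, and the only content is the two trivial verifications above. The proof I would write is essentially one sentence: apply Proposition \ref{norstr} with $M=R$, noting that $R$ is projective in $\sigma[R]=R$-Mod and is finitely generated, so that $\Lambda^{fi}(R)$ is compact by the corollary of Lemma \ref{maxfiinspec}.
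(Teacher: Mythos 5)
Your proposal is correct and matches the paper's (implicit) argument: the corollary is stated without proof precisely because it is the specialization of Proposition \ref{norstr} to $M=R$, using that $R$ is projective in $\sigma[R]=R\text{-Mod}$ and that $\Lambda^{fi}(R)$ is compact since $R$ is cyclic, hence finitely generated. Both verifications you supply are the right ones and nothing further is needed.
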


 Recall that, setting $A=B=\Lambda^{fi}(M)$ in Remark \ref{adjunction} we have a multiplicative nucleus $\mu:\Lambda^{fi}(M)\to \Lambda^{fi}(M)$. We can resume the applications of Section 3 to modules and rings and these section's results in the following theorem and corollary.

\begin{thm}\label{theoremsth} 
Let $M$ be projective in $\sigma[M]$ such that $\Lambda^{fi}(M)$ is compact. Consider the following conditions:
\begin{enumerate}[label=\emph{(\alph*)}]
\item $M$ is a strongly harmonic module.
\item $\Lambda^{fi}(M)$ is normal.
\item $\Lambda^{fi}(M)_{\mu}$ is a normal lattice. 
\item $Spec(M)$ is a normal space.
\end{enumerate}
Then the implications \emph{(a)}$\Rightarrow$\emph{(b)}$\Rightarrow$\emph{(c)}$\Leftrightarrow$\emph{(d)} hold. If in addition $0=\bigcap\mx^{fi}(M)$, then the four conditions are equivalent.
\end{thm}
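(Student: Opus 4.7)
The theorem packages together several results from Sections 3 and 4, so my plan is to treat each arrow as an invocation of an already established lemma/proposition and to identify where each hypothesis is used.

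First, the implication (a)$\Rightarrow$(b) is precisely the nontrivial direction of Proposition \ref{norstr}, which is available because we have assumed that $\Lambda^{fi}(M)$ is compact. The equivalence (c)$\Leftrightarrow$(d) is a direct instance of Corollary \ref{muglobal} applied to the quasi-quantale $A = \Lambda^{fi}(M)$ (which satisfies condition $(\star)$ as noted after Definition \ref{sqqcond}), so no further work is needed there.

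The step (b)$\Rightarrow$(c) is an application of Lemma \ref{lema2}, and the main bookkeeping is to verify its three hypotheses for our $A = \Lambda^{fi}(M)$ and its canonical multiplicative nucleus $\mu$: namely, that $\Lambda^{fi}(M)$ is a coatomic multiplicative idiom, that $\mu$ is a multiplicative nucleus, and that $\mu$ fixes every coatom. Coatomicity comes from Lemma \ref{compatomic2} together with the compactness assumption; the nucleus property is Proposition \ref{41}(b); and the fact that $\mu$ fixes coatoms is exactly Remark \ref{mufix}, which uses that $M$ is projective in $\sm$. With these in hand, Lemma \ref{lema2} produces normality of $\Lambda^{fi}(M)_{\mu}$.

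For the final sentence — under the additional hypothesis $0 = \bigcap \mx^{fi}(M)$ — the natural way to close the loop is (c)$\Rightarrow$(b) via Lemma \ref{mu}, whose essential input is $\mu(0) = 0$. This is exactly where the extra hypothesis enters: since $\mx^{fi}(M) \subseteq Spec(M)$ by Lemma \ref{maxfiinspec}, we have $\bigcap Spec(M) \leq \bigcap \mx^{fi}(M) = 0$, hence $\bigcap Spec(M) = 0$; Proposition \ref{41}(a) then gives $\mu(0) \leq \bigcap Spec(M) = 0$, so Lemma \ref{mu} applies. Chaining this with the already established (a)$\Leftrightarrow$(b) closes the circle. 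The only real obstacle, if one calls it that, is locating the correct lemma for each arrow and recognizing that the role of the extra hypothesis $0 = \bigcap \mx^{fi}(M)$ is solely to ensure $\mu(0) = 0$, which is precisely what Lemma \ref{mu} demands to reverse the nuclear normality transfer.
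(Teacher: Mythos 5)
Your forward chain (a)$\Rightarrow$(b)$\Rightarrow$(c)$\Leftrightarrow$(d) is exactly the paper's: Proposition \ref{norstr} for the first arrow, Lemma \ref{lema2} for the second, Corollary \ref{muglobal} for the equivalence. Your explicit verification of the hypotheses of Lemma \ref{lema2} is sound, with one cosmetic caveat: Remark \ref{mufix} as stated concerns the nucleus attached to $LgSpec(M)$ (i.e.\ $A=\Lambda(M)$, $B=\Lambda^{fi}(M)$), whereas the $\mu$ of the theorem is the one for $A=B=\Lambda^{fi}(M)$; the same two ingredients (Lemma \ref{maxfiinspec} and Proposition \ref{41}(a)) give the coatom-fixing property in this case as well. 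Where you genuinely diverge from the paper is in closing the cycle under $0=\bigcap\mx^{fi}(M)$. The paper proves (d)$\Rightarrow$(a) directly: normality of $Spec(M)$ makes $\mx^{fi}(M)$ Hausdorff (distinct coatoms being distinct closed points of $Spec(M)$, a fact the paper leaves implicit), and then the converse direction of Proposition \ref{stmaxhausdorf}, which is exactly where $0=\bigcap\mx^{fi}(M)$ enters, yields strong harmonicity. You instead prove (c)$\Rightarrow$(b) via Lemma \ref{mu}, correctly observing that the extra hypothesis forces $\mu(0)\leq\bigcap Spec(M)\leq\bigcap\mx^{fi}(M)=0$ by Proposition \ref{41}(a), and then fall back on the unconditional direction of Proposition \ref{norstr}. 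Both arguments are correct; yours stays entirely inside the lattice-theoretic framework and isolates the precise role of the extra hypothesis as ``$\mu(0)=0$'', while the paper's is shorter but leans on the topological Proposition \ref{stmaxhausdorf} and the unstated separation of closed points.
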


\begin{proof}
(a) $\Leftrightarrow$ (b) It follows from Proposition \ref{norstr}.

(b) $\Rightarrow$ (c) It follows from Lemma \ref{lema2}.

(c) $\Leftrightarrow$ (d) It follows from Corollary \ref{muglobal}.

Now suppose that $0=\bigcap\mx^{fi}(M)$. 

(d)$\Rightarrow$ (a) Since $Spec(M)$ is normal, $\mx^{fi}(M)$ is Hausdorff. Hence $M$ is strongly harmonic by Proposition \ref{stmaxhausdorf}.
\end{proof}

\begin{cor}
Let $R$ be a ring such that the intersection of all maximal ideals is zero. The following conditions are equivalent:
\begin{enumerate}[label=\emph{(\alph*)}]
\item $R$ is a strongly harmonic ring.
\item $Spec(R)$ is a normal space.
\item The lattice of ideals of $R$ is a normal lattice.
\item The frame of semiprime ideals is a normal lattice. 
\end{enumerate}
\end{cor}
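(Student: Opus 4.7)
The plan is to specialize Theorem \ref{theoremsth} to the case $M=R$ viewed as a left $R$-module, and then match up the four conditions of the corollary with the four conditions of that theorem. First I would verify that the standing hypotheses of Theorem \ref{theoremsth} are met: $R$ is projective in $\sigma[R]=R\text{-Mod}$ by definition, and $\Lambda^{fi}(R)$ is the complete lattice of two-sided ideals, which is compact because $1\in R$ makes $R$ a finitely generated (cyclic) $R$-module, so the top element is compact in $\Lambda^{fi}(R)$. The additional hypothesis $0=\bigcap\mx^{fi}(R)$ becomes precisely the assumed condition that the intersection of all maximal ideals of $R$ is zero, since for $M=R$ the set $\mx^{fi}(R)$ coincides with the usual set $\mx(R)$ of maximal two-sided ideals.

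Next I would translate each item of the corollary into the corresponding item of Theorem \ref{theoremsth}. Condition (a) of the corollary is condition (a) of the theorem by definition. Condition (c) of the corollary, that the lattice of ideals of $R$ is normal, is literally condition (b) of the theorem, since the lattice of (two-sided) ideals is $\Lambda^{fi}(R)$. Condition (b) of the corollary matches condition (d) of the theorem via $Spec(R)=Spec_{B}(B)$ for $B=\Lambda^{fi}(R)$.

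The only step requiring any extra identification is condition (d) of the corollary, which asserts that the frame of semiprime ideals is normal. I would invoke the same reasoning used for Corollary \ref{normaleq}: by Proposition \ref{41}, the fixed points of the multiplicative nucleus $\mu$ obtained from the adjunction of Remark \ref{adjunction} (with $A=B=\Lambda^{fi}(R)$) are exactly the elements that arise as meets of prime elements, which by \cite[Proposition 1.11]{maugoldie} are precisely the semiprime ideals of $R$ together with $R$ itself, i.e.\ the frame $SP(R)$. Hence $\Lambda^{fi}(R)_{\mu}=SP(R)$, and condition (d) of the corollary matches condition (c) of Theorem \ref{theoremsth}.

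With these identifications in place, all four conditions of the corollary are exactly the four equivalent conditions of Theorem \ref{theoremsth}, and the result follows immediately. There is no genuine obstacle here; the only point one must be careful about is the identification of $\Lambda^{fi}(R)_{\mu}$ with the frame $SP(R)$, which has already been verified in the earlier part of the paper, so the proof is essentially a one-line appeal to Theorem \ref{theoremsth}.
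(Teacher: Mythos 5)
Your proposal is correct and follows exactly the route the paper intends: the corollary is the specialization of Theorem \ref{theoremsth} to $M=R$, with the identifications $\Lambda^{fi}(R)=\{\text{two-sided ideals}\}$ (compact since $R$ is cyclic), $\mx^{fi}(R)=\{\text{maximal two-sided ideals}\}$, and $\Lambda^{fi}(R)_{\mu}=SP(R)$ as in Corollary \ref{normaleq}. The only nitpick is your aside identifying $\mx^{fi}(R)$ with ``$\mx(R)$'': in the paper's notation $\mx(R)$ would denote maximal left ideals, but this does not affect the argument since the hypothesis genuinely is $0=\bigcap\mx^{fi}(R)$.
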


Given an $R$-module $M$, in \cite[Section 5]{medina2017attaching} was defined the spatial frame $\Psi(M),$ as follows:
\[\Psi(M)=\{N\in\Lambda^{fi}(M)\mid\forall n\in N,[N+\Ann_M(Rn)]=M\}.\]
If $M$ is self-progenerator in $\sm$, the frame $\Psi(M)$ is characterized as the fixed points of an operator called $Ler:\Lambda^{fi}(M)\to \Lambda^{fi}(M)$ \cite[Proposition 5.11]{medina2017attaching}. This operator is defined as 
\[Ler(N)=\{m\in M\mid N+\Ann_M(Rm)=M\},\]
for $N\in\Lambda^{fi}(M)$.
Properties of this operator are given in \cite[Propositions 5.8--5.10]{medina2017attaching}. The operator $Ler$ will be crucial to give a connection between the frames $\mathcal{O}(\mx^{fi}(M))$ and $\Psi(M)$ for a strongly harmonic module $M$.

\begin{obs}\label{lersuma}
For a module $M$ projective in $\sm$, the operator $Ler$ can be described as
\[Ler(N)=\sum\{K\in\Lambda^{fi}(M)\mid N+Ann_M(K)=M\},\]
for any $N\in\Lambda^{fi}(M)$.
\end{obs}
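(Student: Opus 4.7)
The plan is to prove both inclusions separately. For the inclusion $\sum\mathcal{F}\leq Ler(N)$, where $\mathcal{F}=\{K\in\Lambda^{fi}(M)\mid N+\Ann_M(K)=M\}$, I would take any $K\in\mathcal{F}$ and any $k\in K$. Since $Rk\leq K$, every $\phi\in\Hom(M,Rk)$ composes with the inclusion $Rk\hookrightarrow K$ to give an element of $\Hom(M,K)$, so $\Ann_M(K)\leq\Ann_M(Rk)$. Hence $M=N+\Ann_M(K)\leq N+\Ann_M(Rk)$, which forces $k\in Ler(N)$. Summing over $K\in\mathcal{F}$ yields $\sum\mathcal{F}\leq Ler(N)$.

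For the reverse inclusion, given $m\in Ler(N)$, my candidate is $K=\widehat{Rm}$, the least fully invariant submodule of $M$ containing $m$. Since $M$ is projective in $\sm$, the product $-_M-$ is associative and satisfies $M_MM=M$, so one checks directly that $\widehat{Rm}=Rm_MM=\sum_{f\in\End(M)}f(Rm)$. It suffices to verify $\widehat{Rm}\in\mathcal{F}$, which in turn will follow from the key identity
\[\Ann_M(Rm)=\Ann_M(\widehat{Rm}).\]
The inclusion $\Ann_M(\widehat{Rm})\leq\Ann_M(Rm)$ is the monotonicity already used above. For the reverse, associativity of $-_M-$ gives
\[\Ann_M(Rm)_M\widehat{Rm}=\Ann_M(Rm)_M(Rm_MM)=(\Ann_M(Rm)_MRm)_MM=0_MM=0,\]
and the maximality defining $\Ann_M(\widehat{Rm})$ then forces $\Ann_M(Rm)\leq\Ann_M(\widehat{Rm})$.

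With this identity, $N+\Ann_M(\widehat{Rm})=N+\Ann_M(Rm)=M$ (the latter equality holding because $m\in Ler(N)$), so $\widehat{Rm}\in\mathcal{F}$. Since $m\in\widehat{Rm}\leq\sum\mathcal{F}$, this completes the proof.

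The main obstacle I anticipate is the annihilator identity $\Ann_M(Rm)=\Ann_M(\widehat{Rm})$: this is precisely where the hypothesis ``projective in $\sm$'' is essential, since without associativity of $-_M-$ one cannot reduce $\Ann_M(Rm)_M\widehat{Rm}$ to the vanishing product $\Ann_M(Rm)_MRm=0$. Once this identity is in place, the remark amounts to translating the element-wise definition of $Ler(N)$ into a statement about fully invariant submodules, via the fact that $\widehat{Rm}$ is the smallest fully invariant submodule containing $m$.
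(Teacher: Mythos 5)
The paper states this remark without proof, so there is nothing to compare against line by line; your argument is correct and supplies what is surely the intended justification. The easy inclusion via $\Ann_M(K)\leq\Ann_M(Rk)$ for $k\in K$ is fine, and the reverse inclusion correctly hinges on the identity $\Ann_M(Rm)=\Ann_M(\widehat{Rm})$ with $\widehat{Rm}=Rm_MM$, where associativity of $-_M-$ (available because $M$ is projective in $\sigma[M]$) gives $\Ann_M(Rm)_M(Rm_MM)=(\Ann_M(Rm)_MRm)_MM=0$ and maximality of the annihilator does the rest. The only point you lean on implicitly is that $Ler(N)$ is itself a (fully invariant) submodule, so that membership of each generator forces $\sum\mathcal{F}\leq Ler(N)$; this is covered by the properties of $Ler$ the paper cites from \cite[Propositions 5.8--5.10]{medina2017attaching}, so no gap remains.
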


Given a module $M$ projective in $\sm$ and $N\in\Lambda^{fi}(M),$ there exists the greatest (fully invariant) submodule $\Ann_M^r(N)$ of $M$ such that $N_M\Ann_M^r(N)=0,$ see \cite[Definition 1.14]{mauacc}.

\begin{lem}\label{ler1}
Let $M$ be projective in $\sm,$ and suppose that $\Lambda^{fi}(M)$ is coatomic. Let $N\in\Lambda^{fi}(M)$ and $\mathcal{M}\in \mx^{fi}(M)$. If $M$ is strongly harmonic then the following statements hold, 
\begin{enumerate}[label=\emph{(\alph*)}]
\item If $Ler(\mathcal{M})\leq N\neq M$ then $N\leq \mathcal{M}$.
\item If $M=N+\Ann_M^r(L)$ then $L\leq Ler(N)$.
\end{enumerate}
\end{lem}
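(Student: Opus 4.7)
The plan is to treat both parts by contradiction, leveraging coatomicity of $\Lambda^{fi}(M)$ to produce suitable coatoms, the primeness of every element of $\mx^{fi}(M)$ (Lemma \ref{maxfiinspec}), the sum description of $Ler$ from Remark \ref{lersuma}, and the strongly harmonic hypothesis to separate pairs of distinct coatoms.

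For \emph{(a)}, I will assume $N\not\leq\mathcal{M}$. Full invariance of $N$ together with maximality of $\mathcal{M}$ forces $N+\mathcal{M}=M$; since $N$ is proper and $\Lambda^{fi}(M)$ is coatomic, $N$ sits below some $\mathcal{N}\in\mx^{fi}(M)$, necessarily different from $\mathcal{M}$. Applying Definition \ref{strongly} with $\mathcal{N}$ in the role of $N$ and $\mathcal{M}$ in the role of $L$ produces $N',L'\in\Lambda^{fi}(M)$ with $N'\not\leq\mathcal{N}$, $L'\not\leq\mathcal{M}$, and $L'_MN'=0$. This last equation gives $L'\leq\Ann_M(N')$, so $\Ann_M(N')\not\leq\mathcal{M}$, and maximality of $\mathcal{M}$ yields $\mathcal{M}+\Ann_M(N')=M$. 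Remark \ref{lersuma} then places $N'\leq Ler(\mathcal{M})\leq N\leq\mathcal{N}$, contradicting $N'\not\leq\mathcal{N}$.

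For \emph{(b)}, the opening move is the algebraic identity
\[L=L_MM=L_MN+L_M\Ann_M^r(L)=L_MN,\]
using distributivity of the product in its second argument together with the defining property $L_M\Ann_M^r(L)=0$; this immediately gives $L\leq N$. I will then fix $l\in L$ (nonzero, since $l=0$ is trivial) and set $J:=\widehat{Rl}$, noting that $\Ann_M(Rl)=\Ann_M(J)$ and, by order-reversal of $\Ann_M^r$ applied to $J\leq L$, that $M=N+\Ann_M^r(J)$. The goal thus reduces to showing $N+\Ann_M(J)=M$. Assume for contradiction this sum lies inside some $\mathcal{M}\in\mx^{fi}(M)$: then $\Ann_M^r(J)\not\leq\mathcal{M}$, and primeness of $\mathcal{M}$ (Lemma \ref{maxfiinspec}) applied to $J_M\Ann_M^r(J)=0$ forces $J\leq\mathcal{M}$. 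Coatomicity provides $\mathcal{N}\in\mx^{fi}(M)$ containing the proper submodule $\Ann_M^r(J)$, with $\mathcal{N}\neq\mathcal{M}$; and from $M=N+\Ann_M^r(L)\leq N+\Ann_M^r(J)\leq N+\mathcal{N}$ one reads $N\not\leq\mathcal{N}$. The contrapositive of part \emph{(a)} applied to $\mathcal{N}$ then yields $Ler(\mathcal{N})\not\leq N$; a final application of the strongly harmonic separation to $(\mathcal{M},\mathcal{N})$ together with the Remark \ref{lersuma} argument from part \emph{(a)} will deliver the required contradiction.

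The hard part will be \emph{(b)}: the hypothesis involves the right annihilator $\Ann_M^r$ whereas $Ler$ is defined through the left annihilator $\Ann_M$, and for non-commutative modules these need not agree. The identity $L=L_MN$ is the crucial bridge, because it pins $L$ beneath $N$ and reduces the problem to transporting information across the two annihilators of a single fully invariant submodule $\widehat{Rl}$; strong harmony, combined recursively with part \emph{(a)}, then supplies the separator needed to close the argument.
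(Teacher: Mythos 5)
Your part (a) is correct and is essentially the paper's own argument: coatomicity gives $\mathcal{N}\in\mx^{fi}(M)$ above $N$ with $\mathcal{N}\neq\mathcal{M}$, the strongly harmonic separation of $(\mathcal{M},\mathcal{N})$ produces $N'$ with $\Ann_M(N')\nleq\mathcal{M}$, hence $\mathcal{M}+\Ann_M(N')=M$ and $N'\leq Ler(\mathcal{M})\leq N\leq\mathcal{N}$, a contradiction. (The intermediate observation $N+\mathcal{M}=M$ is true but never used.)

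Part (b) is where the genuine gap is. Your setup is good --- the identity $L=L_MN+L_M\Ann_M^r(L)=L_MN$ and the reduction to $J=\widehat{Rl}$ are sound for $M$ projective in $\sm$ (modulo the unproved but reasonable identity $\Ann_M(Rl)=\Ann_M(J)$), and you correctly reach the configuration $N+\Ann_M(J)\leq\mathcal{M}$, $J\leq\mathcal{M}$, $\Ann_M^r(J)\nleq\mathcal{M}$. But the final sentence, ``a final application of the strongly harmonic separation to $(\mathcal{M},\mathcal{N})$ together with the Remark \ref{lersuma} argument \ldots will deliver the required contradiction,'' is not a proof, and the ingredients you have assembled do not visibly produce one: separating $\mathcal{M}$ from $\mathcal{N}$ only yields statements such as $Ler(\mathcal{M})\nleq\mathcal{N}$ and $Ler(\mathcal{N})\nleq\mathcal{M}$, which are perfectly consistent with your standing hypotheses, and the fact $Ler(\mathcal{N})\nleq N$ points at the wrong coatom. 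The auxiliary coatom $\mathcal{N}$ is in fact a red herring. The move that actually closes the argument --- and it is the paper's key move --- is to apply part (a) \emph{to $\mathcal{M}$ itself} with the submodule $X:=Ler(\mathcal{M})+\Ann_M^r(J)$: since $\Ann_M^r(J)\nleq\mathcal{M}$ the alternative $X\leq\mathcal{M}$ is impossible, so $X=M$; then
\[
J=J_MM=J_MLer(\mathcal{M})+J_M\Ann_M^r(J)=J_MLer(\mathcal{M})\leq Ler(\mathcal{M}),
\]
so $l\in Ler(\mathcal{M})$, i.e. $\mathcal{M}+\Ann_M(Rl)=M$, contradicting $\Ann_M(Rl)=\Ann_M(J)\leq\mathcal{M}$. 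It is worth noting that your reduction to the cyclic generator $l$ is precisely what makes this last contradiction immediate (the paper works with $L$ itself and asserts $L\nleq Ler(\mathcal{M})$ without the element-wise justification), so with the step above inserted your route is arguably cleaner than the published one; as written, however, the conclusion of (b) is not reached.
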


\begin{proof}
(a) Suppose that $Ler(\mathcal{M})\leq N$ and $N\neq M$. Then there exists $\mathcal{N}\in\mx^{fi}(M)$ such that $N\leq \mathcal{N}$. If $\mathcal{M}\neq\mathcal{N}$ then there exist $N',L'\in\Lambda^{fi}(M)$ such that $N'\nleq \mathcal{M}$ and $L'\nleq\mathcal{N}$ such that ${N'}_ML'=0$. Since $N'\leq\Ann_M(L')$ then $\Ann_M(L')\nleq\mathcal{M}$. Hence $M=\mathcal{M}+\Ann_M(L')$. Therefore, 

\centerline{$L'\leq Ler(\mathcal{M})\leq N\leq \mathcal{N},$}

\noindent and this is a contradiction. Thus $\mathcal{N}=\mathcal{M}$.

(b)  Let $L\leq M$ such that $M=N+\Ann_M^r(L)$. Suppose $N+\Ann_M(L)\neq M$. Then, there exists $\mathcal{M}\in\mx^{fi}(M)$ such that $N+\Ann_M(L)\leq\mathcal{M}$. Note that $L\nleq Ler(\mathcal{M})$. Set $K=Ler(\mathcal{M})+\Ann_M^r(L)$. By (a), $K=M$ or $K\leq\mathcal{M}$. Suppose $K=M$, then

\centerline{$L\leq L_MM= L_MLer(\mathcal{M})+L_M\Ann_M^r(L)= L_MLer(\mathcal{M})\leq Ler(\mathcal{M}),$}

\noindent getting a contradiction. Now, if $K\leq\mathcal{M}$ then $\Ann_M^r(L)\leq\mathcal{M}$. Also, we have that $N\leq\mathcal{M}$. Hence $M=N+\Ann_M^r(L)\leq\mathcal{M},$ a contradiction. Thus, $M=N+\Ann_M(L),$ that is, $L\leq Ler(N)$.
\end{proof}

\begin{thm}\label{dreamtheo}
Let $M$ be a self-progenerator in $\sm$. Assume  $\Lambda^{fi}(M)$ is compact. The following conditions are equivalent:
\begin{enumerate}[label=\emph{(\alph*)}]
\item $M$ is strongly harmonic.

\item $\Lambda^{fi}(M)$ is a normal idiom.

\item For each $N\in\Lambda^{fi}(M)$ and $\mathcal{M}\in \mx^{fi}(M)$

\centerline{$ Ler(N)\leq\mathcal{M}\Leftrightarrow N\leq\mathcal{M}.$}

\item $Ler$ is $\sum$-preserving (equivalently $Ler$ has right adjoint)

\item For each $N, L\in\Lambda^{fi}(M)$ 

\centerline{$ N+L=M\Rightarrow Ler(N)+Ler(L)=M. $}

\end{enumerate}
\end{thm}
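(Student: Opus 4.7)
The plan is to establish (a)$\Leftrightarrow$(b) via Proposition \ref{norstr}, run the cycle (b)$\Rightarrow$(c)$\Leftrightarrow$(e)$\Rightarrow$(a), and attach (d) through (c)$\Rightarrow$(d)$\Rightarrow$(e). The principal tools I would use are Lemma \ref{ler1} (to convert normality data into $Ler$-statements), Proposition \ref{STRONGLY}(c) (the elementwise reformulation of strong harmonicity), and coatomicity of $\Lambda^{fi}(M)$ coming from the compactness hypothesis via Lemma \ref{compatomic2}.

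For (b)$\Rightarrow$(c), one direction is automatic from $Ler(N)\leq N$. For the other, suppose $N\nleq\mathcal{M}$, so $N+\mathcal{M}=M$ by maximality; normality then produces $A,B\in\Lambda^{fi}(M)$ with $N+B=M$, $\mathcal{M}+A=M$, and $A\cdot_M B=0$. The last identity forces $B\leq \Ann_M^r(A)$, so $M=N+\Ann_M^r(A)$, and Lemma \ref{ler1}(b) yields $A\leq Ler(N)$; combined with $\mathcal{M}+A=M$ this precludes $Ler(N)\leq\mathcal{M}$. For (c)$\Leftrightarrow$(e), coatomicity translates a sum being $M$ into the condition that no coatom of $\Lambda^{fi}(M)$ contains both summands, and (c) transfers this property between $\{N,L\}$ and $\{Ler(N),Ler(L)\}$; conversely, applying (e) to $N+\mathcal{M}=M$ and using the deflation $Ler(\mathcal{M})\leq\mathcal{M}$ recovers (c). Then (e)$\Rightarrow$(a) follows from Proposition \ref{STRONGLY}(c): for distinct $\mathcal{M}_1,\mathcal{M}_2\in\mx^{fi}(M)$, (e) applied to $\mathcal{M}_1+\mathcal{M}_2=M$ gives $Ler(\mathcal{M}_1)\nleq\mathcal{M}_2$; any $b\in Ler(\mathcal{M}_1)\setminus\mathcal{M}_2$ satisfies $\mathcal{M}_1+\Ann_M(Rb)=M$ by the very definition of $Ler$, so picking $a\in \Ann_M(Rb)\setminus\mathcal{M}_1$ produces the pair $(a,b)$ demanded by Proposition \ref{STRONGLY}(c).

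The implication (d)$\Rightarrow$(e) is immediate since $Ler(M)=M$ and $Ler$ preserves binary sums. The main obstacle is (c)$\Rightarrow$(d). Compactness of $\Lambda^{fi}(M)$ first reduces the task to finite sums, because $(\sum_i N_i)+\Ann_M(Rm)=M$ already holds over a finite subsum, placing $m$ in $Ler$ of that finite subsum. For $Ler(N+L)=Ler(N)+Ler(L)$, monotonicity gives $\supseteq$, and I would establish $\subseteq$ by noting that both sides lie in $\Psi(M)=\mathrm{Fix}(Ler)$ and, by (c), are contained in exactly the same coatoms of $\Lambda^{fi}(M)$; through the hull-kernel adjunction of Remark \ref{adjunctionsb} applied to $\mx^{fi}(M)$, and using that under (c) the operator $Ler$ coincides with the interior projecting onto $\Psi(M)$, the two sides are forced to agree. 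The subtle point---that $\Psi(M)$ is closed under finite joins inside $\Lambda^{fi}(M)$---is precisely where (e) (already available from the cycle) is consumed, closing the equivalence.
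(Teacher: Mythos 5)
Most of your architecture checks out. The equivalence (a)$\Leftrightarrow$(b) is Proposition \ref{norstr} plus Lemma \ref{compatomic}, as you say. Your (b)$\Rightarrow$(c) is correct and is actually a different (and shorter) route than the paper's: the paper proves (a)$\Rightarrow$(c) by expanding $Ler(\mathcal{M})$ via Remark \ref{lersuma}, using compactness to extract finitely many $K_i$, and multiplying out the factors $\mathcal{M}+\Ann_M(K_i)$; you instead feed the normality witnesses directly into Lemma \ref{ler1}(b) via $B\leq\Ann_M^r(A)$, which is cleaner (just note that Lemma \ref{ler1} is stated under the hypothesis that $M$ is strongly harmonic, so you are implicitly routing through (a)$\Leftrightarrow$(b) first, and through Lemma \ref{compatomic2} for coatomicity). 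Your (c)$\Leftrightarrow$(e), (e)$\Rightarrow$(a) via Proposition \ref{STRONGLY}(c), and (d)$\Rightarrow$(e) are all fine.

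The genuine gap is in (c)$\Rightarrow$(d), which you yourself flag as the main obstacle. Three problems. First, you assert that $Ler(N+L)$ lies in $\Psi(M)=\mathrm{Fix}(Ler)$; that is the idempotence of $Ler$, which is not free --- it is exactly Lemma \ref{ler2} of the paper, itself proved \emph{from} statement (c), and you never supply that argument. Second, your justification that $\Psi(M)$ is closed under finite joins --- ``precisely where (e) is consumed'' --- is wrong: (e) only concerns joins equal to $M$ and gives nothing about $Ler(P+Q)=P+Q$ for general $P,Q\in\Psi(M)$. (The closure is in fact automatic from $\sum Ler(N_i)\leq Ler(\sum N_i)\leq \sum N_i$, no hypothesis needed, so the step is true but your reason for it is not.) Third, the decisive claim that two elements of $\Psi(M)$ contained in the same coatoms must coincide is not a formal consequence of ``the hull-kernel adjunction''; it needs an explicit argument (for $m\in P$ with $P\in\Psi(M)$, if $Q+\Ann_M(Rm)\neq M$ then some coatom contains $Q$ and hence $P$, contradicting $P+\Ann_M(Rm)=M$), which again leans on coatomicity and the defining property of $\Psi(M)$. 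All three holes are fillable, but none is filled, and the detour through $\Psi(M)$ is unnecessary: the paper's own proof of (c)$\Rightarrow$(d) is a two-line element chase --- take $a\in Ler(\sum_I N_i)$, suppose $\sum_I Ler(N_i)+\Ann_M(Ra)$ is proper, trap it under a coatom $\mathcal{M}$, apply (c) to get $N_i\leq\mathcal{M}$ for all $i$ and hence $M\leq\mathcal{M}$, then conclude $a\in Ler(\sum_I Ler(N_i))\leq\sum_I Ler(N_i)$ using only $Ler(K)\leq K$ --- and it handles arbitrary index sets at once, making your reduction to finite sums superfluous.
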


\begin{proof}
(a) $\Leftrightarrow$(b) It follows from Lemma \ref{compatomic} and Proposition \ref{norstr}.

(a) $\Rightarrow$ (c) Suppose $N\nleq\mathcal{M}$. Then, $M=N+Ler(\mathcal{M})$ by Lemma \ref{ler1}(a). So, 
$M=N+\sum\{K\in\Lambda^{fi}(M)\mid \mathcal{M}+\Ann_M(K)=M\}$ by Remark \ref{lersuma}.
Since $\Lambda^{fi}(M)$ is compact,

\centerline{ $M=N+\displaystyle \sum_{i=1}^n\{K_i\mid \mathcal{M}+\Ann_M(K_i)=M\}.$}
\noindent Therefore, 
\begin{equation*}
\begin{split}
M &=M_MM_M\cdots_MM \\
& =(\mathcal{M}+\Ann_M(K_1))_M(\mathcal{M}+\Ann_M(K_2))_M\cdots_M(\mathcal{M}+\Ann_M(K_n)) \\
& \leq \mathcal{M}+\bigcap_{i=1}^n \Ann_M(K_i).
\end{split}
\end{equation*}

On the other hand, each $K_i\leq\Ann^r_M(\Ann_M(K_i))$. Hence,
\begin{equation*}
\begin{split}
M &=N+\sum_{i=1}^n\{K_i\mid \mathcal{M}+\Ann_M(K_i)=M\}\\
& \leq N+\sum_{i=1}^n\{\Ann^r_M(\Ann_M(K_i))\mid\mathcal{M}+\Ann_M(K_i)=M\}.
\end{split}
\end{equation*}
Note that $\sum_{i=1}^n\Ann^r_M(\Ann_M(K_i))\leq\Ann_M^r(\Ann_M(\sum_{i=i}^{n}K_i))$. So,

\centerline{$ M=N+\Ann_M^r(\Ann_M(\displaystyle\sum_{i=i}^{n}K_i)).$}

\noindent By Lemma \ref{ler1}(b), $\Ann_M(\sum_{i=i}^{n}K_i)\leq Ler(N)\leq\mathcal{M}$. Since $\bigcap_{i=1}^n \Ann_M(K_i)=\Ann_M(\sum_{i=i}^{n}K_i)$, 

\centerline{$M=\mathcal{M}+\displaystyle \bigcap_{i=1}^n \Ann_M(K_i)=\mathcal{M}+\Ann_M(\sum_{i=i}^{n}K_i)\leq\mathcal{M},$}

\noindent a clear contradiction. Thus $N\leq\mathcal{M}$. The converse is clear.\\

(c) $\Rightarrow$ (d) Let $\{N_i\}_I\subseteq\Lambda^{fi}(M)$. It follows from \cite[Lemma 5.9]{medina2017attaching} that,

\centerline{$\displaystyle\sum_I Ler(N_i)\leq Ler\left(\sum_I N_i\right).$}

\noindent Let $a\in Ler(\sum_I N_i)$, then $M=\sum_I N_i+\Ann_M(Ra)$. Suppose that $M\neq \sum_I Ler(N_i)+\Ann_M(Ra)$. Then, there exists $\mathcal{M}\in\mx^{fi}(M)$ such that 

\centerline{$\displaystyle \sum_I Ler(N_i)+\Ann_M(Ra)\leq\mathcal{M}.$}

\noindent In particular, $Ler(N_i)\leq\mathcal{M}$ for all $i\in I$. By hypothesis, $N_i\leq\mathcal{M}$ for all $i\in I.$ Thus, 

\centerline{$ M=\displaystyle \sum_I Ler(N_i)+\Ann_M(Ra)\leq\mathcal{M}$}

\noindent a contradiction. Thus $M=\sum_I Ler(N_i)+\Ann_M(Ra)$, that is, 

\centerline{$\displaystyle a\in Ler\left(\sum_ILer(N_i)\right)\leq\sum_I Ler(N_i).$}

\noindent Therefore,

\centerline{$\displaystyle\sum_I Ler(N_i)=Ler\left(\sum_I N_i\right).$}

(d) $\Rightarrow$ (e) Suppose that $M=N+L$ with $N,L\in\Lambda^{fi}(M)$. Then 

\vspace{4pt}\centerline{\vspace{4pt} $ M=Ler(M)=Ler(N+L)=Ler(N)+Ler(L).$}

(e) $\Rightarrow$ (a) Let $\mathcal{M},\mathcal{N}\in\mx^{fi}(M)$ be distinct. Then $M=\mathcal{M}+\mathcal{N}$. If $Ler(\mathcal{M})\leq\mathcal{N}$, then
$ M=\mathcal{M}+\mathcal{N}=Ler(\mathcal{M})+Ler(\mathcal{N})\leq \mathcal{N},$
 a contradiction. Hence, there exists $a\in Ler(\mathcal{M})$ such that $a\notin \mathcal{N}$. We have that $M=\mathcal{M}+\Ann_M(Ra)$, hence $\Ann_M(Ra)\nleq\mathcal{M}$. Therefore, $Ra\nleq\mathcal{N}$, $\Ann_M(Ra)\nleq\mathcal{M}$ and $\Ann_M(Ra)_MRa=0$. Consequently, $M$ is strongly harmonic.
\end{proof}

\begin{lem}\label{ler2}
Let $M$ be a self-progenerator in $\sm$. Assume $M$ is strongly harmonic such that $\Lambda^{fi}(M)$ is compact. Then $Ler$ is idempotent.
\end{lem}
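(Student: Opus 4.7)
The plan is to prove that the operator $Ler$ sends every element of $\Lambda^{fi}(M)$ into $\Psi(M)$; once $Ler(N)\in\Psi(M)$, the idempotency $Ler(Ler(N))=Ler(N)$ follows immediately from \cite[Proposition 5.11]{medina2017attaching}, which identifies $\Psi(M)$ with the set of fixed points of $Ler$ when $M$ is a self-progenerator in $\sm$.

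To prove the containment $Ler(N)\in\Psi(M)$, I would fix $N\in\Lambda^{fi}(M)$ and an arbitrary $m\in Ler(N)$, so that by definition of $Ler$ one has $N+\Ann_M(Rm)=M$. It suffices to upgrade this equality to $Ler(N)+\Ann_M(Rm)=M$. I would argue by contradiction: if this sum were a proper fully invariant submodule of $M$, then Lemma \ref{compatomic2} (applied using the hypothesis that $\Lambda^{fi}(M)$ is compact) provides some $\mathcal{M}\in\mx^{fi}(M)$ with $Ler(N)+\Ann_M(Rm)\leq\mathcal{M}$. In particular $Ler(N)\leq\mathcal{M}$, and because $M$ is strongly harmonic and $\Lambda^{fi}(M)$ is compact, Theorem \ref{dreamtheo}, via the implication (a)$\Rightarrow$(c), converts this into $N\leq\mathcal{M}$. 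Combined with $\Ann_M(Rm)\leq\mathcal{M}$ this produces $M=N+\Ann_M(Rm)\leq\mathcal{M}\lneq M$, a contradiction. Hence $Ler(N)\in\Psi(M)$, and the cited fixed-point description of $\Psi(M)$ gives $Ler(Ler(N))=Ler(N)$.

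The delicate ingredient is the equivalence in Theorem \ref{dreamtheo}(c) that transfers a containment from $Ler(N)$ back to $N$ across a maximal fully invariant submodule; the standing hypotheses self-progenerator, strongly harmonic, and $\Lambda^{fi}(M)$ compact are precisely what makes this available. Beyond that, the proof is a direct assembly of coatomicity of $\Lambda^{fi}(M)$ from Lemma \ref{compatomic2} and the fixed-point description of $\Psi(M)$ from \cite{medina2017attaching}, with no further computation required.
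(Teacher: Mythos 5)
Your proposal is correct and follows essentially the same route as the paper's own proof: both reduce to showing that $M=N+\Ann_M(Rm)$ upgrades to $Ler(N)+\Ann_M(Rm)=M$ for each $m\in Ler(N)$, by passing to a maximal fully invariant submodule (coatomicity from compactness) and invoking Theorem \ref{dreamtheo}(c) to transfer $Ler(N)\leq\mathcal{M}$ to $N\leq\mathcal{M}$. The only cosmetic difference is that you phrase the conclusion via $Ler(N)\in\Psi(M)$ and the fixed-point description of $\Psi(M)$, while the paper states directly that $m\in Ler(Ler(N))$; these amount to the same computation.
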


\begin{proof}
Let $m\in Ler(N)$, that is, $M=N+\Ann_M(Rm)$. Suppose that $Ler(N)+\Ann_M(Rm)\neq M$. Then, there exists $\mathcal{M}\in\mx^{fi}(M)$ such that
 $Ler(N)+\Ann_M(Rm)\leq\mathcal{M}$. 
 So, $Ler(N)\leq\mathcal{M}$. By Theorem \ref{dreamtheo}(3) $N\leq\mathcal{M}$. Thus, $M=N+\Ann_M(Rm)\leq\mathcal{M},$ a contradiction. Consequently, $Ler(N)+{\Ann_M}(Rm)=M,$ and $m\in Ler(Ler(N))$.
\end{proof}

Now we can give a connection for a strongly harmonic module$M$ between $\Psi(M)$ and $\mathcal{O}(\mx^{fi}(M))$. In the next Proposition we prove that the point space of $\Psi(M)$ is homeomorphic to the space $\mx^{fi}(M)$.

\begin{prop}\label{homeo}
Let $M$ be a self-progenerator in $\sm$. Assume that $\Lambda^{fi}(M)$ is compact. Then $\pt(\Psi(M))$ is homeomorphic to $\mx^{fi}(M)$.
\end{prop}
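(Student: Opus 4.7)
The plan is to define $\varphi\colon\mx^{fi}(M)\to\pt(\Psi(M))$ by $\varphi(\mathcal{M})=Ler(\mathcal{M})$ and verify that it is a homeomorphism. The two workhorses are Theorem \ref{dreamtheo}(c), which pivots between containment of $N$ and of $Ler(N)$ in a coatom of $\Lambda^{fi}(M)$, and the regularity of $\Psi(M)$ (Theorem \ref{dreamcon}), which forces $\pt(\Psi(M))$ to be Hausdorff. Compactness of $\Lambda^{fi}(M)$ gives coatomicity via Lemma \ref{compatomic2}, so each proper fully invariant submodule lies in some element of $\mx^{fi}(M)$.

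To see that $\varphi$ is well-defined, I check that $Ler(\mathcal{M})$ is a point of $\Psi(M)$. It lies in $\Psi(M)$ by idempotence of $Ler$ (Lemma \ref{ler2}) and satisfies $Ler(\mathcal{M})\leq\mathcal{M}\lneq M$ by Theorem \ref{dreamtheo}(c) applied with $N=\mathcal{M}$. For $\wedge$-irreducibility in $\Psi(M)$: given $N_1,N_2\in\Psi(M)$ with $N_1\cap N_2\leq Ler(\mathcal{M})$, full invariance of $N_1$ forces $f(N_1)\subseteq N_1\cap N_2$ for every $f\in\Hom(M,N_2)$, so $N_{1M}N_2\leq N_1\cap N_2\leq\mathcal{M}$; primality of $\mathcal{M}$ (Lemma \ref{maxfiinspec}) then yields $N_i\leq\mathcal{M}$ for some $i$, and monotonicity of $Ler$ together with $N_i=Ler(N_i)$ gives $N_i\leq Ler(\mathcal{M})$. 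Injectivity follows directly from Theorem \ref{dreamtheo}(c): if $Ler(\mathcal{M}_1)=Ler(\mathcal{M}_2)$ then $\mathcal{M}_1\leq\mathcal{M}_2$, and symmetry with maximality gives $\mathcal{M}_1=\mathcal{M}_2$.

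Surjectivity is the step that most leverages regularity. Given $p\in\pt(\Psi(M))$, coatomicity produces $\mathcal{M}\in\mx^{fi}(M)$ with $p\leq\mathcal{M}$, and monotonicity of $Ler$ with $p=Ler(p)$ yields $p\leq Ler(\mathcal{M})$. Both elements are points of $\Psi(M)$, and Theorem \ref{dreamcon} makes $\Psi(M)$ regular, so $\pt(\Psi(M))$ is Hausdorff, hence $T_1$. Since in the hull-kernel topology the closure of a point $q$ equals $\{r\in\pt(\Psi(M))\mid q\leq r\}$, the $T_1$ axiom forces distinct points to be incomparable as frame elements; therefore $p\leq Ler(\mathcal{M})$ collapses to $p=Ler(\mathcal{M})$. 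I expect this to be the main conceptual step of the proof; a direct argument bypassing regularity would need to establish uniqueness of the coatom above $p$ from strongly harmonic orthogonality together with meet-irreducibility of $p$, which is considerably more technical.

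Finally, for the homeomorphism part I compute preimages and images of basic opens. For $N\in\Psi(M)$ the equivalence $N\leq\mathcal{M}\iff N\leq Ler(\mathcal{M})$ (forward by monotonicity of $Ler$ and $N=Ler(N)$, backward since $Ler(\mathcal{M})\leq\mathcal{M}$) gives $\varphi^{-1}(U_{\Psi(M)}(N))=m(N)$, proving continuity. For openness, combining Theorem \ref{dreamtheo}(c) (replacing $K\nleq\mathcal{M}$ with $Ler(K)\nleq\mathcal{M}$) and the same equivalence applied to $Ler(K)\in\Psi(M)$ yields $\varphi(m(K))=U_{\Psi(M)}(Ler(K))$ for every $K\in\Lambda^{fi}(M)$. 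A continuous open bijection is a homeomorphism.
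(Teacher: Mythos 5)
Your proposal is correct, and most of it (the map $\mathcal{M}\mapsto Ler(\mathcal{M})$, well-definedness via primality of coatoms, injectivity, and the computations $\varphi^{-1}(U(N))=m(N)$ and $\varphi(m(K))=U(Ler(K))$) coincides with the paper's argument. Where you genuinely diverge is surjectivity. The paper argues directly: it first shows that every point $N\in\pt(\Psi(M))$ lies below a \emph{unique} element of $\mx^{fi}(M)$, using the strongly harmonic condition to produce $A,B$ with $A_MB=0$ and then $Ler(A)\cap Ler(B)=Ler(A)_MLer(B)\leq N$ together with Theorem \ref{dreamtheo}(c); it then rules out $N\lneq Ler(\mathcal{M})$ by choosing $a\in Ler(\mathcal{M})\setminus N$, embedding $N+\Ann_M(Ra)$ in a coatom, and invoking the uniqueness just proved to reach a contradiction with $M=\mathcal{M}+\Ann_M(Ra)$. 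You instead note that $p$ and $Ler(\mathcal{M})$ are both points with $p\leq Ler(\mathcal{M})$ and that in a regular frame distinct points are incomparable, importing regularity of $\Psi(M)$ from Theorem \ref{dreamcon}. This is a legitimate and cleaner argument (the proof of Theorem \ref{dreamcon} uses only Theorem \ref{dreamtheo} and Lemma \ref{ler2}, not Proposition \ref{homeo}, so there is no circularity), but it requires reordering the exposition, since \ref{dreamcon} is stated after \ref{homeo} in the paper; it also loses the explicit intermediate fact --- useful later in the Gelfand section --- that each point of $\Psi(M)$ sits under a unique maximal fully invariant submodule, which the paper's route establishes along the way. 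One further remark applying to both proofs: the hypothesis that $M$ is strongly harmonic is missing from the statement as printed but is used throughout (via Theorem \ref{dreamtheo}, Lemma \ref{ler2} and, in your case, Theorem \ref{dreamcon}); you assume it exactly as the paper does, so this is not a defect of your argument relative to the original.
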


\begin{proof}
Let $\mathcal{M}\in\mx^{fi}(M)$. We claim that $Ler(\mathcal{M})\in\pt(\Psi(M))$. Let $N,L\in\Psi(M)$ such that $N\cap L\leq Ler(\mathcal{M})$. Since $N,L\in\Lambda^{fi}(M)$, $N_ML\leq N\cap L$. This implies $N_ML\leq\mathcal{M}$. Therefore, $N\leq \mathcal{M}$ or $L\leq\mathcal{M}$. By \cite[Proposition 5.11]{medina2017attaching}, $N=Ler(N)\leq Ler(\mathcal{M})$ or $L=Ler(L)\leq Ler(\mathcal{M}),$ proving the claim. 
Define $\Theta:\mx^{fi}(M)\to \pt(\Psi(M))$ as $\Theta(\mathcal{M})=Ler(\mathcal{M})$. Suppose $Ler(\mathcal{M})=Ler(\mathcal{N})$ for $\mathcal{M},\mathcal{N}\in\mx^{fi}(M)$. Hence $Ler(\mathcal{M})\leq\mathcal{N}$. By Lemma \ref{ler1} $\mathcal{N}\leq\mathcal{M}$. Thus $\mathcal{M}=\mathcal{N}$, that is, $\Theta$ is injective.
Let $U(N)$ be an open set of $\pt(\Psi(M))$. Then

\vspace{4pt}\centerline{$\mathcal{M}\in\Theta^{-1}(U(N))\Leftrightarrow Ler(\mathcal{M})\in U(N)\Leftrightarrow N\nsubseteq Ler(\mathcal{M})$}

\centerline{\vspace{4pt}$\Leftrightarrow N\nsubseteq \mathcal{M}\text{ because }N\text{ is a fixed point of }Ler.$}

\noindent Thus $\Theta^{-1}(U(N))=m(N)$, that is, $\Theta$ is continuous.

Let $N\in\pt(\Psi(M))$. We claim that $N$ is contained in a unique element of $\mx^{fi}(M)$. Suppose $N\leq\mathcal{M}$ and $N\leq\mathcal{N}$ with $\mathcal{M},\mathcal{N}\in\mx^{fi}(M)$. If $\mathcal{M}\neq\mathcal{N}$ there exist $A,B\in\Lambda^{fi}(M)$ such that $A\nleq\mathcal{M}$, $B\nleq\mathcal{N}$ and $A_MB=0$. Hence $Ler(A)_MLer(B)=0$. Since $Ler$ is idempotent by Lemma \ref{ler2}, $Ler(A),Ler(B)\in\Psi(M).$ Then,  by \cite[Proposition 5.4]{medina2017attaching},

\vspace{4pt}\centerline{\vspace{4pt}$Ler(A)\cap Ler(B)=Ler(A)_MLer(B)\leq N.$}

\noindent Thus, $Ler(A)\leq N\leq\mathcal{M}$ or $Ler(B)\leq N\leq\mathcal{N}$. By Theorem \ref{dreamtheo}(3), $A\leq\mathcal{M}$ or $B\leq\mathcal{N}$ which is a contradiction. Therefore, $\mathcal{M}=\mathcal{N}$ proving the claim.

Let $N\in\pt(\Psi(M))$ and let $\mathcal{M}\in\mx^{fi}(M)$ such that $N\leq\mathcal{M}$. Suppose that $N$ is contained properly in $Ler(\mathcal{M})$. Then there is $a\in Ler(M)$ with $a\notin N$. Since $a\notin N=Ler(N)$, $M\neq N+\Ann_M(Ra)$. Hence, there exists $\mathcal{N}\in\mx^{fi}(M)$ such that $N+\Ann_M(Ra)\leq\mathcal{N}$. By the claim proved above, $\mathcal{N}=\mathcal{M}$. Hence $\Ann_M(Ra)\leq\mathcal{M}$. On the other hand, since $a\in Ler(\mathcal{M})$, $M=\mathcal{M}+\Ann_M(Ra)$. Thus $M=\mathcal{M}+\Ann_M(Ra)\leq\mathcal{M}$ which is a contradiction. Therefore, $N=Ler(\mathcal{M})$. This proves that the function $\Theta$ is surjective. Moreover, $\Theta$ is an open map. For, let $m(K)$ be an open set in $\mx^{fi}(M)$. Then

\vspace{4pt}\centerline{\vspace{4pt} $\Theta(m(K))=\{Ler(\mathcal{M})\mid \mathcal{M}\in m(K)\}=\{Ler(\mathcal{M})\mid K\nsubseteq \mathcal{M}\}$}

\centerline{\vspace{4pt} $=\{Ler(\mathcal{M})\mid K\nsubseteq Ler(\mathcal{M})\}=\{N\in\pt(\Psi(M))\mid K\nsubseteq N\}.$}

Thus $\Theta\colon\mx^{fi}(M)\to \pt(\Psi(M))$ is a homeomorphism.
\end{proof}

\begin{cor}
	If $R$ is a strongly harmonic ring, then $\pt(\Psi(R))$ is homeomorphic to $\mx^{fi}(R)$.
\end{cor}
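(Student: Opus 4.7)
The plan is to deduce the corollary directly from Proposition \ref{homeo} by specializing to the regular module $M := R$ (viewed as a left $R$-module over itself). I would verify the hypotheses of the proposition in turn. First, $\sigma[R]$ coincides with $R\text{-Mod}$, and the regular module $R$ is projective and a generator of $R\text{-Mod}$, hence a self-progenerator in $\sigma[R]$. Second, $\Lambda^{fi}(R)$ is the lattice of two-sided ideals of $R$; its top element $R = R\cdot 1$ is cyclic, hence compact in $\Lambda(R)$, and compactness of the top element restricts to the sublattice $\Lambda^{fi}(R)$.

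Third, I would verify that the module-theoretic property ``$R$ is a strongly harmonic module'' from Definition \ref{strongly} reduces, when $M = R$, to the classical notion of strongly harmonic ring from \cite{koh1972representation}. The key observations are that $\mx^{fi}(R)$ coincides with the set of maximal two-sided ideals of $R$, and that for two-sided ideals $I, J$ of $R$ the module product $I_R J = \sum\{f(I) \mid f \in \Hom_R(R, J)\}$ agrees with the usual ideal product $IJ$ via the canonical isomorphism $\Hom_R(R, J) \cong J$, $f \mapsto f(1)$. Under these identifications, Definition \ref{strongly} applied to $M = R$ becomes exactly the statement that for any two distinct maximal two-sided ideals $\mathcal{M}, \mathcal{N}$ of $R$ there exist ideals $I \nsubseteq \mathcal{M}$ and $J \nsubseteq \mathcal{N}$ with $IJ = 0$, which is the standard definition of a strongly harmonic ring.

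With these three identifications in place, the assumption that $R$ is a strongly harmonic ring translates into $R$ being a strongly harmonic self-progenerator in $\sigma[R]$ with $\Lambda^{fi}(R)$ compact, so Proposition \ref{homeo} immediately supplies the desired homeomorphism $\Theta \colon \mx^{fi}(R) \to \pt(\Psi(R))$ given by $\Theta(\mathcal{M}) = Ler(\mathcal{M})$. The main ``obstacle'' here is entirely bookkeeping: one needs to match the module-theoretic definitions with their familiar ring-theoretic counterparts, but no substantive new argument is required beyond Proposition \ref{homeo} itself.
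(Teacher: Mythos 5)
Your proposal is correct and is exactly the argument the paper intends: the corollary is stated as an immediate specialization of Proposition \ref{homeo} to $M=R$, and your verification of the hypotheses (that $R$ is a self-progenerator in $\sigma[R]=R\text{-Mod}$, that $\Lambda^{fi}(R)$ is compact because $1$ generates $R$ and sums of two-sided ideals are two-sided, and that Definition \ref{strongly} for $M=R$ recovers the classical strongly harmonic ring via $I_RJ=IJ$) is precisely the bookkeeping the paper leaves implicit.
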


In \cite[Theorem 5.20]{medina2017attaching} was studied the regularity of the frame $\Psi(M)$ in the sense of \cite{johnstone1986stone} and \cite{simmons1989compact}. Here, we can give other conditions to get the regularity of that frame. Note that by Lemma \ref{ler2} $Ler$ is idempotent. Hence, for $N\leq M$, $Ler(N)$ is the largest submodule of $N$ in the frame $\Psi(M)$.

\begin{thm}\label{dreamcon}
Let $M$ be a self-progenerator in $\sm$. Assume $M$ is strongly harmonic such that $\Lambda^{fi}(M)$ is compact. Then $\Psi(M)$ is regular, that is, $\Psi(M)=(\Lambda^{fi}(M))^{reg}$.
\end{thm}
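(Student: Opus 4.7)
The strategy is to leverage the topological picture built up in Proposition \ref{homeo}. First I would show that the space $\mx^{fi}(M)$ is compact Hausdorff. Compactness follows from Lemma \ref{compatomic} together with the assumption that $\Lambda^{fi}(M)$ is compact; the Hausdorff property is Proposition \ref{stmaxhausdorf}, applicable because $M$ is strongly harmonic. In fact, Lemma \ref{kohmodules} then guarantees normality of $\mx^{fi}(M)$, although compact Hausdorff is already enough for what follows.

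Next I would invoke spatiality of $\Psi(M)$ (this is one of the results stated in \cite{medina2017attaching}). Combining spatiality with the homeomorphism $\mx^{fi}(M)\cong\pt(\Psi(M))$ of Proposition \ref{homeo} produces a frame isomorphism $\Psi(M)\cong\mathcal{O}(\mx^{fi}(M))$. A standard theorem of pointfree topology asserts that the frame of open sets of a compact Hausdorff space is compact and regular (see, e.g., \cite{picado2011frames}). Transporting regularity back along the isomorphism shows that $\Psi(M)$ is a regular frame.

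For the claimed equality $\Psi(M)=(\Lambda^{fi}(M))^{reg}$, where the right-hand side denotes the regular part of the idiom $\Lambda^{fi}(M)$, I would argue as follows. The inclusion $\Psi(M)\subseteq(\Lambda^{fi}(M))^{reg}$ is immediate from the first two paragraphs, since a regular sub-frame is contained in the regular part by the universal property. For the reverse inclusion, the key ingredient is the operator $Ler$: by \cite[Proposition 5.11]{medina2017attaching} we have $\Psi(M)=\mathrm{Fix}(Ler)$, and by Lemma \ref{ler2} $Ler$ is an idempotent nucleus. Using Theorem \ref{dreamtheo}(c), any element $N\in\Lambda^{fi}(M)$ which is a join of elements well-inside it (i.e.\ lies in the regular part) must satisfy $N\leq Ler(N)$: every local witness $(K,K')$ to $L\prec N$ forces $L\leq Ler(N)$ by the argument of Lemma \ref{ler1}(b) applied to $\Ann_M^r$. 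Taking the join over all such $L$ recovers $N$, and then the idempotency of $Ler$ gives $N=Ler(N)$, placing $N\in\Psi(M)$.

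The main technical obstacle is the reverse inclusion in the third paragraph: translating the purely lattice-theoretic notion of being well-inside in $\Lambda^{fi}(M)$ into the $Ler$-fixedness that defines $\Psi(M)$. This is where all the machinery developed in Section \ref{sec4} (strong harmonicity, compactness of $\Lambda^{fi}(M)$, associativity of $-_M-$, and the relation between $Ler$, $\Ann_M$ and $\Ann_M^r$) is used at once, and it is essentially the only step where the hypothesis that $M$ is strongly harmonic is indispensable.
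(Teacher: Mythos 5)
Your argument is essentially correct but runs in the opposite logical direction from the paper's. The paper proves regularity by a purely lattice-theoretic computation inside $\Psi(M)$: it introduces $r(N)=\sum\{K\in\Psi(M)\mid N+K^r=M\}$ with $K^r=\sum\{L\in\Psi(M)\mid L_MK=0\}$, identifies $K^r=Ler(\Ann_M(K))$, rewrites $r(N)$ as $\sum\{K\in\Psi(M)\mid N+\Ann_M(K)=M\}$ using the $\sum$-preservation of $Ler$ (Theorem \ref{dreamtheo}), and then deduces $N=Ler(N)=Ler(Ler(N))\leq r(N)$ from Remark \ref{lersuma} and Lemma \ref{ler2}; no point spaces or spatiality enter. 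You instead import spatiality of $\Psi(M)$ from \cite{medina2017attaching}, combine it with the homeomorphism $\pt(\Psi(M))\cong\mx^{fi}(M)$ of Proposition \ref{homeo} and the compact Hausdorffness of $\mx^{fi}(M)$ (Lemma \ref{compatomic}, Proposition \ref{stmaxhausdorf}), and read off regularity from $\Psi(M)\cong\mathcal{O}(\mx^{fi}(M))$. This is legitimate precisely because spatiality is independently available from the earlier paper, but note that in the present paper the isomorphism $\Psi(M)\cong\mathcal{O}(\mx^{fi}(M))$ is Corollary \ref{corhomeo}, \emph{deduced from} Theorem \ref{dreamcon} via the compact-regular/compact-Hausdorff duality; the paper's computation is what makes that corollary self-contained, whereas your route leans on the external spatiality result. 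Your third paragraph correctly senses that the displayed equality $\Psi(M)=(\Lambda^{fi}(M))^{reg}$ is where $Ler$, $\Ann_M$ and $\Ann_M^r$ must interact, and your sketch via Lemma \ref{ler1}(b) and Lemma \ref{ler2} is the right mechanism for the inclusion $(\Lambda^{fi}(M))^{reg}\subseteq\Psi(M)$; be aware, though, that the "universal property" justification for the other inclusion should really be the concrete observations that $K^r\leq\Ann_M^r(K)$ and that sums of $Ler$-fixed submodules are $Ler$-fixed (Theorem \ref{dreamtheo}(d)), and that the paper itself treats the equality as synonymous with regularity of $\Psi(M)$ and proves only $N\leq r(N)$.
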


\begin{proof}
Let $r:\Psi(M)\to \Psi(M)$ given by

\centerline{$r(N)=\sum\{K\in\Psi(M)\mid N+K^r=M\},$}

\noindent where $K^r=\sum\{L\in\Psi(M)\mid L_MK=0\}$. Note that $K^r\leq\Ann_M(K)$. Note that $Ler(\Ann_M(K))\in\Psi(M)$ and $Ler(\Ann_M(K))\leq\Ann_M(K)$ then $Ler(\Ann_M(K))\leq K^r$. On the other hand, $Ler(\Ann_M(K))$ is the largest submodule of $\Ann_M(K)$ in $\Psi(M)$, hence $K^r=Ler(\Ann_M(K))$. This implies that
{\begin{equation*}
\begin{split}
r(N) & =\sum\{K\in\Psi(M)\mid N+K^r=M\}\\
& =\sum\{K\in\Psi(M)\mid N+Ler(\Ann_M(K))=M\}\\
& =\sum\{K\in\Psi(M)\mid Ler(N)+Ler(\Ann_M(K))=M\}\\
& =\sum\{K\in\Psi(M)\mid Ler(N+\Ann_M(K))=M\}\\
& =\sum\{K\in\Psi(M)\mid N+\Ann_M(K)=M\}
\end{split}
\end{equation*}}
We have that $\Ann_M(\_)$ is order-reversing and $Ler$ commutes with sums (Theorem \ref{dreamtheo}), 
\vspace{-5pt}
\begin{equation*}
\begin{split}
Ler(N)& =Ler(Ler(N))\\
& =Ler\left(\sum\{B\in\Lambda^{fi}(M)\mid N+\Ann_M(B)=M\}\right)\\
& =\sum\{Ler(B)\mid N+\Ann_M(B)=M\}\\
& \leq\sum\{Ler(B)\mid N+\Ann_M(Ler(B))=M\}\\
& \leq r(N)
\end{split}
\end{equation*}
Since $Ler(N)=N$, $N\leq r(N)$. Thus $\Psi(M)=(\Lambda^{fi}(M))^{reg}$.
\end{proof}
 
Let $\EuScript{KHT}\mathrm{op}$ be the category of compact Hausdorff spaces and continuous functions. It is well known that this category is dually equivalent to the category $\EuScript{KRF}\mathrm{rm}$ of compact regular frames and frames morphisms (see \cite{banassatone1} or \cite{johnstone1986stone} and \cite{picado2011frames}).

\begin{cor}\label{corhomeo}
Let $M$ be a self-progenerator in $\sm$. Assume $M$ is strongly harmonic such that $\Lambda^{fi}(M)$ is compact. Then $\Psi(M)\cong\mathcal{O}(\mx^{fi}(M))$
\end{cor}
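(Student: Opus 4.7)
The plan is to combine Proposition \ref{homeo}, Theorem \ref{dreamcon}, and the duality $\EuScript{KHT}\mathrm{op}\simeq\EuScript{KRF}\mathrm{rm}^{\mathrm{op}}$ that was just recalled. The scheme is: show $\Psi(M)$ is a compact regular frame; invoke spatiality of compact regular frames to get $\Psi(M)\cong\mathcal{O}(\pt(\Psi(M)))$; then transport along the homeomorphism $\pt(\Psi(M))\cong\mx^{fi}(M)$.

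First I would verify that $\Psi(M)$ is a compact frame. Its top element is $M$, and joins in $\Psi(M)$ are given by applying $Ler$ to sums taken in $\Lambda^{fi}(M)$. Given a family $\{N_i\}_{i\in I}\subseteq\Psi(M)$ whose join in $\Psi(M)$ is $M$, i.e. $Ler(\sum_I N_i)=M$, Theorem \ref{dreamtheo}(d) shows that $Ler$ commutes with sums, and each $N_i$ is fixed by $Ler$, so
\[
M=Ler\!\left(\sum_I N_i\right)=\sum_I Ler(N_i)=\sum_I N_i
\]
already in $\Lambda^{fi}(M)$. Compactness of $\Lambda^{fi}(M)$ yields a finite $F\subseteq I$ with $\sum_F N_i=M$, which is a fortiori the join in $\Psi(M)$. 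Combined with the regularity established in Theorem \ref{dreamcon}, $\Psi(M)$ is a compact regular frame.

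Since every compact regular frame is spatial (the frame-theoretic side of Banaschewski's duality), the hull-kernel morphism $U_{\Psi(M)}\colon\Psi(M)\to\mathcal{O}(\pt(\Psi(M)))$ is a frame isomorphism. By Proposition \ref{homeo} the map $\Theta\colon\mx^{fi}(M)\to\pt(\Psi(M))$, $\mathcal{M}\mapsto Ler(\mathcal{M})$, is a homeomorphism, which induces a frame isomorphism $\mathcal{O}(\pt(\Psi(M)))\cong\mathcal{O}(\mx^{fi}(M))$. Composing gives the desired $\Psi(M)\cong\mathcal{O}(\mx^{fi}(M))$.

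The only delicate point — and the main obstacle I foresee — is the compactness verification: one must be careful that the join in $\Psi(M)$ a priori differs from the sum in $\Lambda^{fi}(M)$, and the reduction hinges entirely on Theorem \ref{dreamtheo}(d) together with the characterisation of $\Psi(M)$ as the fixed points of the idempotent $Ler$ (Lemma \ref{ler2}). Once that reduction is in hand, everything else is invocation of prior results and the standard compact regular/compact Hausdorff duality, so no further calculation is required.
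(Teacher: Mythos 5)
Your proof is correct and follows essentially the same route as the paper: regularity from Theorem \ref{dreamcon}, the point-space homeomorphism from Proposition \ref{homeo}, and the compact regular/compact Hausdorff duality. The one addition is your explicit verification that $\Psi(M)$ is a compact frame (via Theorem \ref{dreamtheo}(d) and the fixed-point description of $\Psi(M)$), a step the paper asserts but does not spell out, so your write-up is if anything slightly more complete.
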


\begin{proof}
By Proposition \ref{dreamcon} that $\Psi(M)$ is a compact regular frame with associated space $\mx^{fi}(M)$ (Proposition \ref{homeo}). It follows from Proposition \ref{stmaxhausdorf} that this space is compact Hausdorff, and so 
\[\Psi(M)\cong\mathcal{O}(\mx^{fi}(M)).\]\end{proof}

\begin{cor}
	If $R$ is a strongly harmonic ring then $\Psi(R)\cong\mathcal{O}(\mx^{fi}(R))$.
\end{cor}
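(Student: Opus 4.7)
The plan is to deduce this corollary directly from Corollary \ref{corhomeo} applied to the module $M = R$, so the only work is to verify the three hypotheses: that $R$ is a self-progenerator in $\sigma[R]$, that $\Lambda^{fi}(R)$ is compact, and that $R$ is strongly harmonic (this last being the standing assumption of the corollary).

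First I would note that $\sigma[R] = R\text{-Mod}$, since every left $R$-module is $R$-subgenerated. In this ambient category, $R$ is a projective generator: it is projective in $R\text{-Mod}$ because $R$ is free on one generator, and every submodule (left ideal) of $R$ is $R$-generated. Hence $R$ is a self-progenerator in $\sigma[R]$ in the sense used in Theorem \ref{dreamtheo}.

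Next I would verify that $\Lambda^{fi}(R)$ is compact. Here $\Lambda^{fi}(R)$ is the lattice of two-sided ideals of $R$; its top element is $R$, which is generated by $1$. Thus any family of ideals $\{I_\alpha\}$ whose sum is $R$ must contain $1$ in a finite subsum, so $R = I_{\alpha_1} + \cdots + I_{\alpha_n}$ for finitely many indices. This gives compactness of $1_{\Lambda^{fi}(R)}$, which is exactly the compactness of the lattice. (Alternatively, this is the ring-theoretic instance of the observation after the corollary following Lemma \ref{maxfiinspec}: finitely generated modules have compact $\Lambda^{fi}$.)

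With these two verifications in hand, and with the hypothesis that $R$ is strongly harmonic, all hypotheses of Corollary \ref{corhomeo} hold with $M = R$. I would then simply invoke that corollary to conclude
\[\Psi(R) \cong \mathcal{O}(\mx^{fi}(R)).\]
There is no real obstacle here; the entire content of the statement has been packaged into Corollary \ref{corhomeo}, and the corollary for rings is just the specialization $M = R$, whose hypotheses are automatic for any ring with identity. The proof is therefore essentially a one-line application.
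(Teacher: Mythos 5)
Your proof is correct and follows exactly the route the paper intends: the corollary is the specialization $M=R$ of Corollary \ref{corhomeo}, and your verifications that $R$ is a self-progenerator in $\sigma[R]=R\text{-Mod}$ and that $\Lambda^{fi}(R)$ is compact (via the identity element) are precisely the facts that make the hypotheses automatic for a unital ring.
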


\begin{lem}\label{roler}
	Let $M$ be projective in $\sm$ and let $\rho\colon M\to N$ be any homomorphism. Then $\rho Ler(L)\leq Ler\rho(L)$ for any $L\in\Lambda^{fi}(M)$.
\end{lem}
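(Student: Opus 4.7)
The plan is to fix $m \in Ler(L)$ (so that $L + \Ann_M(Rm) = M$ by definition) and show that $\rho(m) \in Ler(\rho(L))$, namely that $\rho(L) + \Ann_N(R\rho(m)) = N$. The strategy is to transport the given decomposition along $\rho$ and then enlarge the second summand: applying $\rho$ to $L + \Ann_M(Rm) = M$ yields $\rho(L) + \rho(\Ann_M(Rm)) = \rho(M)$, and if one can show $\rho(\Ann_M(Rm)) \subseteq \Ann_N(R\rho(m))$, then $\rho(M) \subseteq \rho(L) + \Ann_N(R\rho(m))$, placing $\rho(m)$ in the desired sum.

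The heart of the argument, and the only place the hypothesis that $M$ is projective in $\sm$ is used, is the containment
\[\rho(\Ann_M(Rm)) \subseteq \Ann_N(R\rho(m)).\]
Here I would rely on the description
\[\Ann_M(Rm) = \bigcap\{\ker f \mid f \in \Hom(M, Rm)\}\]
(and its analogue for $\Ann_N(R\rho(m))$), which follows from the definition of the submodule product $-_M-$ together with the characterization of $\Ann_M(Rm)$ as the largest submodule $K$ with $K_MRm = 0$. Given any $g \in \Hom(N, R\rho(m))$, form $g\rho \in \Hom(M, R\rho(m))$. Since $Rm \leq M$ lies in $\sm$ and $\sm$ is closed under epimorphic images, $R\rho(m) = \rho(Rm)$ also belongs to $\sm$, and the restriction $\rho|_{Rm} \colon Rm \twoheadrightarrow R\rho(m)$ is an epimorphism. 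The projectivity of $M$ in $\sm$ therefore furnishes a lift $h \in \Hom(M, Rm)$ with $\rho|_{Rm} \circ h = g\rho$. For any $x \in \Ann_M(Rm)$ one has $h(x) = 0$ by the characterization above, hence $g(\rho(x)) = \rho|_{Rm}(h(x)) = 0$; since $g$ was arbitrary, $\rho(x) \in \Ann_N(R\rho(m))$.

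The main obstacle is precisely this lifting step: one must recognize that $R\rho(m)$ sits in $\sm$ in order to invoke the projectivity hypothesis, and one must translate the annihilator into the intersection-of-kernels form so that the lift produces a vanishing. Once these identifications are made, the lifting is automatic. The remaining assembly — combining $\rho(L) + \rho(\Ann_M(Rm)) = \rho(M)$ with the containment just proved, and reading off $\rho(m) \in \rho(L) + \Ann_N(R\rho(m))$ — is purely formal.
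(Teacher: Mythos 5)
Your proof is correct and follows essentially the same route as the paper: the whole matter reduces to the containment $\rho(\Ann_M(Rm))\leq \Ann_N(R\rho(m))$, which the paper obtains by citing \cite[Lemma 5.9]{beachy2002m} (that $H_MK=0$ implies $H_M\rho(K)=0$) and which you prove directly by the same lifting argument through the epimorphism $Rm\twoheadrightarrow R\rho(m)$ using projectivity of $M$ in $\sm$. The only cosmetic point is that, like the paper's own proof (which writes $N=\rho(M)$), your final step needs $\rho$ to be surjective onto $N$ for $\rho(m)\in\rho(L)+\Ann_N(R\rho(m))$ to upgrade to $\rho(L)+\Ann_N(R\rho(m))=N$, as the definition of $Ler$ in $N$ requires.
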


\begin{proof}
	Let $K\leq M$. If $H_MK=0$ for some $H\leq M$, then $H_M\rho(K)=0$ by \cite[Lemma 5.9]{beachy2002m}. Therefore, $g\rho(H)=0$ for all $g\colon N\to \rho(K),$ and consequently,  $\rho(H)_N\rho(K)=0$. This implies that $\rho(H)\leq\Ann_N(\rho(K))$. It follows $\rho(\Ann_M(K))\leq \Ann_N(\rho(K))$. Hence, if $m\in Ler(L)$, that is, $M=L+\Ann_M(Rm)$ then 	
	
\centerline{$N=\rho(M)=\rho(L)+\rho(\Ann_M(Rm))\leq \rho(L)+\Ann_N(R\rho(m)).$}
	
\noindent	Thus, $\rho(m)\in Ler(\rho(L)).$ Therefore, $\rho Ler(L)\leq Ler\rho(L)$.
\end{proof}

\begin{lem}\label{idmorf}
	Let $M$ be quasi-projective and $\rho\colon M\to N$ be any epimorphism. If $N$ is a strongly harmonic module, self-progenerator in $\sigma[N]$ and $\Lambda^{fi}(N)$ is compact, then $Ler\rho\colon \Lambda^{fi}(M)\to \Psi(N)$ is an idiom morphism.
\end{lem}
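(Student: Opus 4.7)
The plan is to verify three things about $Ler\rho$: that it is well-defined as a map to $\Psi(N)$, that it preserves arbitrary joins, and that it preserves finite meets. For \emph{well-definedness}, quasi-projectivity of $M$ combined with Lemma \ref{lemmapp}(b) gives $\rho(L)\in\Lambda^{fi}(N)$ for every $L\in\Lambda^{fi}(M)$. The hypotheses on $N$ match exactly those of Lemma \ref{ler2}, so $Ler$ is idempotent on $\Lambda^{fi}(N)$; together with the characterization of $\Psi(N)$ as the set of fixed points of $Ler$ from \cite[Proposition 5.11]{medina2017attaching}, this yields $Ler(\rho(L))\in\Psi(N)$. For \emph{preservation of arbitrary joins}, additivity of $\rho$ gives $\rho(\sum_i L_i)=\sum_i\rho(L_i)$, and Theorem \ref{dreamtheo}(d) applied to $N$ says $Ler$ is $\sum$-preserving on $\Lambda^{fi}(N)$; since joins in $\Psi(N)$ agree with the ambient sums (a sum of $Ler$-fixed elements is again fixed by $\sum$-preservation and idempotence), we obtain $Ler\rho(\sum L_i)=\sum Ler\rho(L_i)$.

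The core step is \emph{preservation of binary meets}. The inclusion $Ler\rho(L_1\cap L_2)\leq Ler\rho(L_1)\cap Ler\rho(L_2)$ is immediate from monotonicity of $\rho$ and $Ler$. For the reverse, I would pass through the frame isomorphism $\Psi(N)\cong\mathcal O(\mx^{fi}(N))$ of Corollary \ref{corhomeo}: by Theorem \ref{dreamtheo}(c), each $Ler\rho(L)$ corresponds to the open set $m(\rho(L))=\{\mathcal M\in\mx^{fi}(N):\rho(L)\nleq\mathcal M\}$, and equality of elements of $\Psi(N)$ is equivalent to equality of the corresponding open sets. Since $N$ is projective in $\sigma[N]$, Lemma \ref{maxfiinspec} makes each $\mathcal M\in\mx^{fi}(N)$ prime in $\Lambda^{fi}(N)$, and primality converts intersections of open sets into open sets of products, giving $m(\rho(L_1))\cap m(\rho(L_2))=m(\rho(L_1){}_N\rho(L_2))$. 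Meet-preservation therefore reduces to showing that $\rho(L_1){}_N\rho(L_2)$ and $\rho(L_1\cap L_2)$ have equal hulls in $\mx^{fi}(N)$; one direction is immediate (if $\rho(L_1\cap L_2)\nleq\mathcal M$ then both $\rho(L_i)\nleq\mathcal M$ since $\rho(L_1\cap L_2)\leq\rho(L_i)$, so $\rho(L_1){}_N\rho(L_2)\nleq\mathcal M$ by primality), and the other direction reduces to the key inequality $\rho(L_1){}_N\rho(L_2)\leq\rho(L_1\cap L_2)$.

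To prove this key inequality, I would take any $g\colon N\to\rho(L_2)$ and apply quasi-projectivity of $M$ to the epimorphism $\rho\colon M\twoheadrightarrow N$ and the morphism $g\rho\colon M\to N$, obtaining a lift $\tilde g\colon M\to M$ with $\rho\tilde g=g\rho$ and image contained in $L_2+\ker\rho$. Full invariance of $L_1$ forces $\tilde g(L_1)\leq L_1$, so $\tilde g(L_1)\leq L_1\cap(L_2+\ker\rho)$. The \emph{hard part} is the subsequent refinement: showing that $g(\rho(L_1))=\rho(\tilde g(L_1))$ actually lies in $\rho(L_1\cap L_2)$ rather than only in the a~priori larger $\rho(L_1)\cap\rho(L_2)$. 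This is where the full interplay of quasi-projectivity of $M$, full invariance of both $L_1$ and $L_2$, and the modular law in $\Lambda(M)$ must be used carefully; once it is established, summing over all $g$ yields $\rho(L_1){}_N\rho(L_2)\leq\rho(L_1\cap L_2)$, completing the proof that $Ler\rho$ is an idiom morphism.
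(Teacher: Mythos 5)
Your overall architecture (well-definedness, joins, then binary meets) matches the paper's, and your identification of the crucial point --- the inequality $\rho(L_1)_N\rho(L_2)\leq\rho(L_1\cap L_2)$ --- is exactly right. But that inequality is precisely what you do not prove: your last paragraph lifts $g\rho$ to $\tilde g\colon M\to M$ with image in $L_2+\ker\rho$, obtains $\tilde g(L_1)\leq L_1\cap(L_2+\ker\rho)$, and then declares the passage from $\rho\bigl(L_1\cap(L_2+\ker\rho)\bigr)$ down to $\rho(L_1\cap L_2)$ to be ``the hard part'' where modularity and full invariance ``must be used carefully.'' That step is the entire content of the lemma, and the route you sketch does not close it: modularity gives $L_1\cap(L_2+\ker\rho)=(L_1\cap L_2)+(L_1\cap\ker\rho)$ only when $\ker\rho\leq L_1$ or $L_2\leq L_1$, and in general $\rho\bigl(L_1\cap(L_2+\ker\rho)\bigr)$ lands in $\rho(L_1)\cap\rho(L_2)$ but not in $\rho(L_1\cap L_2)$. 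So the proposal as written has a genuine, acknowledged gap at its core, and the surrounding machinery (the detour through $\Psi(N)\cong\mathcal O(\mx^{fi}(N))$ and equality of hulls) is scaffolding around a missing keystone.

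The paper closes this differently and more cheaply. First, it arranges the lift to land in $K=L_2$ itself rather than in $L_2+\ker\rho$: for $g\colon N\to\rho(K)$ it produces $h_g\colon M\to K$ with $g\rho=\rho h_g$, so that $h_g(L)\leq L_MK$ by the very definition of the product, and hence $\rho(L)_N\rho(K)=\sum g\rho(L)=\sum\rho h_g(L)\leq\rho(L_MK)\leq\rho(L\cap K)$ (the last step because $L,K$ are fully invariant). This sidesteps your ``refinement'' problem entirely. Second, it does not pass through the spatial/open-set picture at all: given $n\in Ler\rho(L)\cap Ler\rho(K)$, it writes $N=N_NN=\bigl(\rho(L)+\Ann_N(Rn)\bigr)_N\bigl(\rho(K)+\Ann_N(Rn)\bigr)\leq\rho(L)_N\rho(K)+\Ann_N(Rn)\leq\rho(L\cap K)+\Ann_N(Rn)$, which says directly that $n\in Ler\rho(L\cap K)$. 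If you want to salvage your write-up, replace the lift-into-$L_2+\ker\rho$ step by a lift into $L_2$ and route the estimate through $\rho(L_1)_N\rho(L_2)\leq\rho\bigl({L_1}_ML_2\bigr)$; once that inequality is in hand, either your hull argument or the paper's elementwise multiplicative computation finishes the proof.
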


\begin{proof}
	It follows from Theorem \ref{dreamtheo} that $Ler\rho$ commutes with sums. Now, let $L$ and $K$ in $\Lambda^{fi}(M)$. By Lemma \ref{lemmapp}, $\rho(L),\rho(K)\in\Lambda^{fi}(N)$ and it is clear that
	
\vspace{4pt}\centerline{\vspace{4pt} $Ler\rho(L\cap K)\leq Ler\rho(L)\cap Ler\rho(K).$}
	
	\noindent Let $n\in Ler\rho(L)\cap Ler\rho(K)$. Then $N=\rho(L)+\Ann_{N}(Rn)$ and $N=\rho(K)+\Ann_N(Rn)$. We claim that $\rho(L)_N\rho(K)\leq \rho(L_MK)$. Let $g:N\to f(K)$ be any homomorphism. Since $M$ is quasi-projective in $\sm$, there exists $h_g\colon M\to K$ such that $g\rho=\rho h_g$. Therefore,
	\begin{equation*}
	\begin{split}
	\rho(L)_N\rho(K) & =\sum\left\{g\rho(L)\mid g\colon N\to \rho(K)\right\} \\
	 & =\sum\left\{\rho h_g(L)\mid h_g\colon M\to K\right\} \\
	 & =\rho\left(\sum\left\{h_g(L)\mid h_g\colon M\to K\right\}\right) \\
	 & \leq \rho\left(L_MK\right). 	
	\end{split}
	\end{equation*}
	That proves our claim. On the other hand,
	\begin{equation*}
	\begin{split}
	N=N_NN & = \left(\rho(L)+\Ann_N(Rn)\right)_M\left(\rho(K)+\Ann_N(Rn)\right) \\
	 & \leq \rho(L)_N\rho(K) + \Ann_N(Rn) \\
	 & \leq \rho(L_MK) + \Ann_N(Rn) \\
	 & \leq \rho(L\cap K) +\Ann_N(Rn)
	\end{split}
	\end{equation*}
	Hence $n\in Ler(\rho(L\cap K))$. Thus, $Ler\rho$ commutes with finite intersections.
\end{proof}

\begin{prop}\label{framemor}
Let $M$ be self-progenerator in $\sm$ and $\rho\colon M\to N$ be any epimorphism. If $N$ is a strongly harmonic module, self-progenerator in $\sigma[N]$ and $\Lambda^{fi}(N)$ is compact, then $\rho:\Psi(M)\to \Psi(N)$ is a frame morphism.
\end{prop}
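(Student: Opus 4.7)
The plan is in two stages: first verify that $\rho$ restricts to a well-defined map $\Psi(M)\to\Psi(N)$, and second use Lemmas \ref{roler} and \ref{idmorf} to promote this restriction to a frame morphism.

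For well-definedness, take $L\in\Psi(M)$. Since $M$ is quasi-projective (being a self-progenerator in $\sm$), Lemma \ref{lemmapp} places $\rho(L)$ in $\Lambda^{fi}(N)$. For any $l\in L$ the defining condition $L+\Ann_M(Rl)=M$, combined with the auxiliary inequality $\rho(\Ann_M(Rl))\leq\Ann_N(R\rho(l))$ established inside the proof of Lemma \ref{roler}, delivers $\rho(L)+\Ann_N(R\rho(l))=N$; since every element of $\rho(L)$ has the form $\rho(l)$ with $l\in L$, this yields $\rho(L)\in\Psi(N)$. In particular $\rho(L)$ is a fixed point of $Ler$ on $\Lambda^{fi}(N)$, so on $\Psi(M)$ the map $\rho$ agrees with the restriction of $Ler\rho$ from Lemma \ref{idmorf}.

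Now I verify the frame-morphism axioms. The top is preserved since $\rho$ is surjective. For finite meets, observe that $\Psi(M)$ is closed under intersections inside $\Lambda^{fi}(M)$, so given $L,K\in\Psi(M)$ Lemma \ref{idmorf} yields $Ler\rho(L\cap K)=Ler\rho(L)\cap Ler\rho(K)$, which via the identification above reads $\rho(L\cap K)=\rho(L)\cap\rho(K)$. For arbitrary joins, I plan to use the characterization $\bigvee_{\Psi(M)}L_i=Ler(\sum L_i)$ coming from Proposition 5.11 of \cite{medina2017attaching}. The sum-preservation clause of Lemma \ref{idmorf} yields $Ler\rho(\sum L_i)=\sum Ler\rho(L_i)=\sum\rho(L_i)$, so $\sum\rho(L_i)$ is already a fixed point of $Ler$ on $\Lambda^{fi}(N)$, hence belongs to $\Psi(N)$ and equals $\bigvee_{\Psi(N)}\rho(L_i)$. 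Then Lemma \ref{roler} gives $\rho(Ler(\sum L_i))\leq Ler(\rho(\sum L_i))=\sum\rho(L_i)$, while $\sum L_i\leq Ler(\sum L_i)$ supplies the reverse inequality by monotonicity; combining these yields $\rho(\bigvee_{\Psi(M)}L_i)=\bigvee_{\Psi(N)}\rho(L_i)$.

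The delicate point is the join step: unlike meets, joins in $\Psi(M)$ are not inherited from $\Lambda^{fi}(M)$, so the argument must pair the sum-preserving (idiom-morphism) behavior of $Ler\rho$ on the target side with the commutation inequality $\rho\,Ler\leq Ler\,\rho$ of Lemma \ref{roler} on the source side. Once these two tools are deployed together with the characterization of $\Psi(M)$ as the fixed-point frame of $Ler$, everything else is routine.
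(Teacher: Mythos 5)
Your proposal is correct and follows essentially the same route as the paper: both reduce the statement to Lemma \ref{idmorf} by identifying $\rho$ with $Ler\rho$ on $\Psi(M)$, using the inequality $\rho\,Ler\leq Ler\,\rho$ of Lemma \ref{roler} together with the fixed-point description of $\Psi(-)$. The paper does this in one line via $\rho(L)=\rho(Ler(L))\leq Ler(\rho(L))\leq\rho(L)$, whereas you verify membership in $\Psi(N)$ elementwise and then unpack the frame axioms explicitly, but the underlying argument is the same.
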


\begin{proof}
	By Lemma \ref{idmorf} we just have to prove that $\rho=Ler\rho$. Let $L\in\Psi(M)$. Then $Ler(L)=L$ by \cite[Proposition 5.11]{medina2017attaching}. It follows using Lemma \ref{roler} that
 $\rho(L)=\rho(Ler(L))\leq Ler(\rho(L))\leq \rho(L).$ Thus, $\rho(L)=Ler(\rho(L))$.
\end{proof}

\begin{cor}\label{corh}
Let $M$ be a self-progenerator in $\sm$. Assume $M$ is strongly harmonic such that $\Lambda^{fi}(M)$ is compact. Then, there exists an frame morphism $\Psi(R)\to \Psi(M)$.
\end{cor}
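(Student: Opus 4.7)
The plan is to factor the desired morphism through the free module $F=R^{(M)}$, using the canonical $R$-linear epimorphism $\rho\colon F\to M$, $(r_m)_{m\in M}\mapsto \sum_m r_m m$, so that Proposition \ref{framemor} can be invoked.

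First I would verify that $F$ satisfies the ``source'' hypothesis of Proposition \ref{framemor}, i.e., that $F$ is self-progenerator in $\sigma[F]$. Because $R$ embeds into $F$ as a direct summand, $\sigma[F]=R\text{-Mod}$, and freeness of $F$ gives projectivity in this category. Moreover, every cyclic submodule $Rn\leq F$ is the image of the endomorphism of $F$ that sends a fixed basis vector to $n$ and the remaining basis vectors to $0$; summing over generators shows each submodule of $F$ is $F$-generated, so $F$ is a self-generator. Now Proposition \ref{framemor} applied to $\rho$, whose codomain $M$ satisfies all standing hypotheses, yields a frame morphism
\[
\rho_{\ast}\colon \Psi(F)\longrightarrow \Psi(M),\qquad L\mapsto Ler(\rho(L)).
\]

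To obtain a frame morphism $\Psi(R)\to \Psi(F)$, I would appeal to Remark \ref{isosumafi}: the assignment $\Theta\colon \Lambda^{fi}(R)\to \Lambda^{fi}(F)$, $I\mapsto I^{(M)}$, is a lattice isomorphism. Next I would check that $\Theta$ intertwines the $Ler$ operators of $R$ and $F$, that is, $Ler_F(I^{(M)})=(Ler_R(I))^{(M)}$. This follows from the componentwise description $\Ann_F(R(r_m))=(\bigcap_m \Ann_R(Rr_m))^{(M)}$ for a finitely supported element $(r_m)\in F$, together with the fact that the condition $I+\Ann_R(Rr)=R$ extends coordinatewise. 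Consequently $\Theta$ restricts to a lattice (hence frame) isomorphism $\Theta\colon\Psi(R)\xrightarrow{\cong}\Psi(F)$.

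Composing, $\rho_{\ast}\circ\Theta\colon \Psi(R)\to \Psi(M)$ is a frame morphism; explicitly it is the map $I\mapsto Ler(IM)$. The main obstacle is the verification that $\Theta$ respects $Ler$: pinning down $\Ann_F(Rn)$ for $n\in F$ of finite support and then deducing the purity condition for $I^{(M)}$ from that of $I$. Once this componentwise bookkeeping is in place, everything else is formal composition of frame morphisms. As an alternative, one could bypass $F$ and verify directly that $I\mapsto Ler(IM)$ is a frame morphism, using Theorem \ref{dreamtheo}(d) for preservation of joins and an ad hoc argument for finite meets, but the route through $R^{(M)}$ keeps the work inside the already-established framework of Proposition \ref{framemor}.
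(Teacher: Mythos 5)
Your proposal is correct and follows essentially the same route as the paper: take a free module mapping onto $M$, apply Proposition \ref{framemor} to that epimorphism, and use Remark \ref{isosumafi} to identify $\Psi(R)$ with $\Psi$ of the free module. The extra details you supply (that the free module is a self-progenerator and that the isomorphism of Remark \ref{isosumafi} is compatible with $Ler$) are points the paper leaves implicit, so your write-up is a slightly more careful version of the same argument.
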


\begin{proof}
We know that there exists a free module $R^{(X)}$ and an epimorphism $\rho:R^{(X)}\to M$. Hence $\rho$ defines a frame morphism $\Psi(R^{(X)})\to \Psi(M)$ by Proposition \ref{framemor}. Note that, by Remark \ref{isosumafi} $\Psi(R)\cong\Psi(R^{(X)})$.
\end{proof}

\begin{obs}\label{selfprocciente}
If $M$ is self-progenerator in $\sm$ and $N\in\Lambda^{fi}(M)$, then $M/N$ is a self-progenerator in $\sigma[M/N]$.
\end{obs}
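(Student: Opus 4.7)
The plan is to verify the two components of being a self-progenerator separately: first that every submodule of $M/N$ is $M/N$-generated, and second that $M/N$ is projective in $\sigma[M/N]$. For the generator condition, given a submodule $K/N\leq M/N$ with $N\leq K\leq M$, I would use that $M$ is a self-generator to write $K=\sum_{f\in\Hom(M,K)}f(M)$. Each $f\colon M\to K$ composed with the inclusion $K\hookrightarrow M$ is an endomorphism of $M$; full invariance of $N$ then forces $f(N)\subseteq N$, so $f$ descends to $\bar{f}\colon M/N\to K/N$, and the family $\{\bar{f}\}$ generates $K/N$.

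For projectivity, the key intermediate claim is: \emph{for every $X\in\sigma[M/N]$ and every homomorphism $\phi\colon M\to X$, one has $\phi(N)=0$.} Granted this, projectivity of $M/N$ in $\sigma[M/N]$ follows by a standard lifting argument: given an epimorphism $g\colon A\twoheadrightarrow B$ in $\sigma[M/N]$ and a morphism $h\colon M/N\to B$, lift the composition $h\pi\colon M\to M/N\to B$ through $g$ using projectivity of $M$ in $\sm$ to obtain $\tilde{h}\colon M\to A$; applying the claim to $X=A$ yields $\tilde{h}(N)=0$, so $\tilde{h}$ factors through $\pi$ as $\bar{h}\colon M/N\to A$ with $g\bar{h}=h$.

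To establish the claim, I would embed $X$ into a quotient $X'$ of $(M/N)^{(I)}=M^{(I)}/N^{(I)}$ for some index set $I$ via a monomorphism $\iota\colon X\hookrightarrow X'$. Since $\iota$ is injective it suffices to show $\iota\phi(N)=0$. Apply projectivity of $M$ in $\sm$ twice: first lift $\iota\phi$ through the quotient $q\colon (M/N)^{(I)}\twoheadrightarrow X'$ to obtain $\tilde{\phi}\colon M\to M^{(I)}/N^{(I)}$, then lift $\tilde{\phi}$ through $M^{(I)}\twoheadrightarrow M^{(I)}/N^{(I)}$ to obtain $\hat{\phi}\colon M\to M^{(I)}$. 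Each coordinate $\hat{\phi}_i\colon M\to M$ is an endomorphism of $M$, so $\hat{\phi}_i(N)\subseteq N$, whence $\hat{\phi}(N)\subseteq N^{(I)}$ and therefore $\iota\phi(N)=q\tilde{\phi}(N)=0$.

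The main obstacle is the iterated lifting: one must check that projectivity of $M$ in $\sm$ indeed provides both lifts, which is fine because $X'$, $M^{(I)}/N^{(I)}$, and $M^{(I)}$ all lie in $\sm$, and that $N^{(I)}$ is genuinely fully invariant in $M^{(I)}$ so that the coordinate-wise argument is valid. The latter follows either directly (any endomorphism of $M^{(I)}$ decomposes via the canonical inclusions and projections into ``matrices'' of endomorphisms of $M$, each preserving $N$) or by invoking Remark~\ref{isosumafi}, which already gives the lattice isomorphism $\Lambda^{fi}(M)\cong\Lambda^{fi}(M^{(I)})$ sending $N$ to $N^{(I)}$.
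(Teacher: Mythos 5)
Your proof is correct. Note that the paper states this as a bare remark and supplies no argument of its own, so there is no in-paper proof to compare against; the most useful comparison is with the standard facts the paper implicitly relies on. Your treatment of the generator half is exactly right: self-generation of $M$ gives $K=\sum\{f(M)\mid f\in\Hom_R(M,K)\}$ for $N\leq K\leq M$, and full invariance of $N$ is precisely what lets each such $f$ descend to a map $M/N\to K/N$, whose images sum to $K/N$. For the projectivity half, the paper elsewhere (Remark \ref{casicoat}) simply quotes, without reference, the fact that $M$ projective in $\sm$ and $N\leq_{fi}M$ force $M/N$ to be projective in $\sigma[M/N]$; your key claim --- that every homomorphism from $M$ into an object of $\sigma[M/N]$ annihilates $N$ --- combined with the double lifting through $(M/N)^{(I)}\cong M^{(I)}/N^{(I)}$ is essentially the standard proof of that fact, and you correctly locate the one place where full invariance of $N$ (equivalently of $N^{(I)}$ in $M^{(I)}$, cf.\ Remark \ref{isosumafi}) is indispensable, namely the coordinate-wise estimate $\hat{\phi}(N)\subseteq N^{(I)}$. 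All the auxiliary inclusions you need ($\sigma[M/N]\subseteq\sm$, so that projectivity of $M$ in $\sm$ applies to the epimorphisms you lift through) do hold, since $M/N$ and hence every object of $\sigma[M/N]$ lies in $\sm$. In short, the argument is complete as written and fills a gap the paper leaves to the reader.
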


\begin{prop}\label{dual}
Let $M$ be a self-progenerator in $\sm$. Assume $M$ is strongly harmonic such that $\Lambda^{fi}(M)$ is compact. Then, the  assignment \[\Lambda^{fi}(M)\rightarrow \EuScript{KRF}\mathrm{rm}\] given by $N\mapsto \Psi(M/N)$ determines a functor.
\end{prop}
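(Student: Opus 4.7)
The plan is to interpret $\Lambda^{fi}(M)$ as a (thin) category whose morphisms $N \to N'$ are the inequalities $N \leq N'$, and then to show that (i) for every $N\in\Lambda^{fi}(M)$ the frame $\Psi(M/N)$ is an object of $\EuScript{KRF}\mathrm{rm}$, (ii) every morphism $N\leq N'$ gives rise, via the canonical projection $\pi_{N,N'}\colon M/N\twoheadrightarrow M/N'$, to a frame morphism $\Psi(M/N)\to\Psi(M/N')$, and (iii) these assignments are compatible with identities and composition.

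For (i), given $N\in\Lambda^{fi}(M)$, by Remark \ref{selfprocciente} the quotient $M/N$ is a self-progenerator in $\sigma[M/N]$, and by Proposition \ref{summandfa} it is strongly harmonic. The compactness of $\Lambda^{fi}(M/N)$ follows from compactness of $\Lambda^{fi}(M)$: the correspondence theorem (using that fully invariant submodules are preradical evaluations, as in Remark \ref{isosumafi}) identifies $\Lambda^{fi}(M/N)$ with the interval $[N,M]$ inside $\Lambda^{fi}(M)$, and compactness of the top element is inherited by such intervals. Hence $M/N$ satisfies all hypotheses of Corollary \ref{corhomeo}, so $\Psi(M/N)\cong\mathcal{O}(\mx^{fi}(M/N))$ is a compact regular frame.

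For (ii), whenever $N\leq N'$ in $\Lambda^{fi}(M)$ we have the canonical epimorphism $\pi_{N,N'}\colon M/N\to M/N'$. Because $M/N'$ is strongly harmonic, self-progenerator in $\sigma[M/N']$, and $\Lambda^{fi}(M/N')$ is compact (by the argument in (i) applied to $N'$), Proposition \ref{framemor} yields a frame morphism $\Psi(\pi_{N,N'})\colon \Psi(M/N)\to \Psi(M/N')$ sending $L/N\mapsto \pi_{N,N'}(L/N)=L/N'$ (using that $\pi_{N,N'}$ itself preserves the $Ler$-fixed points by that proposition).

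For (iii), functoriality is straightforward: $\pi_{N,N}=\mathrm{id}_{M/N}$, whose induced frame map is the identity on $\Psi(M/N)$; and for $N\leq N'\leq N''$ the composition of canonical projections satisfies $\pi_{N',N''}\circ\pi_{N,N'}=\pi_{N,N''}$, so applying $\Psi$ (which is functorial on epimorphisms by Proposition \ref{framemor}) gives $\Psi(\pi_{N',N''})\circ\Psi(\pi_{N,N'})=\Psi(\pi_{N,N''})$. The main technical point to be careful about is the inheritance of compactness of $\Lambda^{fi}$ by quotients $M/N$; everything else is a routine assembly of results proved earlier in the section.
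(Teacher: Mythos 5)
Your proposal is correct and follows essentially the same route as the paper: quotients by fully invariant submodules are handled via Remark \ref{selfprocciente} and Proposition \ref{summandfa}, objects land in $\EuScript{KRF}\mathrm{rm}$ by Theorem \ref{dreamcon}/Corollary \ref{corhomeo}, and morphisms come from Proposition \ref{framemor} applied to the canonical projections. The only difference is that you explicitly verify the compactness of $\Lambda^{fi}(M/N)$ via the interval $[N,M]$ (for which Lemma \ref{lemmapp} is the cleanest justification), a point the paper's proof leaves implicit.
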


\begin{proof}
By Remark \ref{selfprocciente}, $M/N$ is a self-progenerator in $\sigma[M/N].$ Also, by Proposition \ref{summandfa}, $M/N$ is a strongly harmonic module,  and then satisfies the hypothesis of the Theorem \ref{dreamcon}. Given $N\leq L$, there is an epimorphism $M/N\to M/L$. Hence by Proposition \ref{framemor}, there is a frame morphism $\Psi(M/N)\to \Psi(M/L)$.
\end{proof}

\begin{prop}\label{dual0}
Let $R$ be a ring, and let $\EuScript{SH}^{fi}$ denote   the subcategory of $R\Mod$ whose objects are all strongly harmonic modules $M$ satisfying that they are self-progenerator in their $\sm$ and $\Lambda^{fi}(M)$ is a compact idiom, and whose morphisms are epimorphism $\rho\colon M\to N$. Then, the $\Psi(\_)$ construction provides a covariant functor

\vspace{4pt}\centerline{$ \Psi(\_)\colon\EuScript{SH}^{fi}\rightarrow\EuScript{KRF}\mathrm{rm} $}
\end{prop}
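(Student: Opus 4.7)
The plan is to check the three defining requirements for a covariant functor: that $\Psi$ sends each object of $\EuScript{SH}^{fi}$ to an object of $\EuScript{KRF}\mathrm{rm}$, that it sends each morphism of $\EuScript{SH}^{fi}$ to a frame morphism between the corresponding compact regular frames, and that it preserves identities and composition. All of the conceptual work has been done in the preceding results; the task is mainly to assemble them correctly.

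First I would verify the object assignment. For any $M\in\EuScript{SH}^{fi}$ the hypotheses of Corollary \ref{corhomeo} are satisfied, so $\Psi(M)\cong\mathcal{O}(\mx^{fi}(M))$, and by Proposition \ref{stmaxhausdorf} together with Lemma \ref{compatomic} the space $\mx^{fi}(M)$ is compact Hausdorff. Hence $\Psi(M)$ is a compact frame (its top element is the top of a compact topology), and it is regular by Theorem \ref{dreamcon}. Thus $\Psi(M)\in\EuScript{KRF}\mathrm{rm}$.

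Next I would define the morphism assignment. Given an epimorphism $\rho\colon M\to N$ in $\EuScript{SH}^{fi}$, Proposition \ref{framemor} applies (since $N$ itself belongs to $\EuScript{SH}^{fi}$) and provides a frame morphism $\Psi(\rho)\colon\Psi(M)\to\Psi(N)$, explicitly given by $L\mapsto\rho(L)$ (and equal to $Ler\,\rho(L)$ by the argument in that proposition). In particular, the assignment $\Psi(\rho)$ lies in $\EuScript{KRF}\mathrm{rm}$ as required.

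Finally I would verify functoriality. For the identity $\mathrm{id}_M$, which is an epimorphism and hence a morphism of $\EuScript{SH}^{fi}$, one has $\Psi(\mathrm{id}_M)(L)=\mathrm{id}_M(L)=L$ for every $L\in\Psi(M)$, so $\Psi(\mathrm{id}_M)=\mathrm{id}_{\Psi(M)}$. For composition, given epimorphisms $M\xrightarrow{\rho}N\xrightarrow{\sigma}L$ in $\EuScript{SH}^{fi}$ (the composite is again an epimorphism, so lies in $\EuScript{SH}^{fi}$), for every $K\in\Psi(M)$ we have $\Psi(\sigma\circ\rho)(K)=(\sigma\circ\rho)(K)=\sigma(\rho(K))=\Psi(\sigma)(\Psi(\rho)(K))$, so $\Psi(\sigma\circ\rho)=\Psi(\sigma)\circ\Psi(\rho)$. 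The only potential obstacle is making sure at each step that the intermediate image $\rho(K)$ actually sits in $\Psi(N)$ (not just in $\Lambda^{fi}(N)$), but this is exactly the content of Proposition \ref{framemor} via the identity $\rho(L)=Ler(\rho(L))$, so no extra verification is needed.
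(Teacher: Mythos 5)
Your proposal is correct and follows the same route as the paper, whose entire proof is the single line ``The result follows immediately from Proposition \ref{framemor}''; you have simply made explicit the object assignment (via Theorem \ref{dreamcon}, Corollary \ref{corhomeo}, Proposition \ref{stmaxhausdorf} and Lemma \ref{compatomic}), the morphism assignment (Proposition \ref{framemor}), and the identity/composition checks that the authors leave implicit.
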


\begin{proof}
The result follows immediately from \ref{framemor}.
\end{proof}

\section{Gelfand modules}\label{sec5}

On this section, we introduce the concept of Gelfand modules, in an attempt to give a modular version of the existing concept for rings, and we obtain some characterizations of these. As in the case of rings, we note that each Gelfand module turns out to be also strongly harmonic.

\begin{obs}\label{propertyeta}
Let $N\leq M$. In \cite{medina2015generalization} was considered the preradical $\eta^M_N(K)=\bigcap\{f^{-1}(N)\mid f\in\Hom_R(K,M)\}$. It was proved that if $P\in LgSpec(M),$ then $\eta^M_P(M)\in Spec(M)$ \cite[Proposition 4.9 and Proposition 4.10]{medina2015generalization}. In particular, we have that $\eta^M_{\mathcal{M}}(M)\in Spec(M)$ for any maximal submodule $\mathcal{M}$ of $M$.
\end{obs}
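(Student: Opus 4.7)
The plan is to verify both the cited primeness statement and deduce the ``in particular'' clause for maximal submodules. The key preliminary observation is that $\eta^M_P(M)$ coincides with the largest fully invariant submodule of $M$ contained in $P$. Indeed, taking $f=\mathrm{id}_M$ in the intersection defining $\eta^M_P(M)$ shows $\eta^M_P(M)\leq P$, while $\eta^M_P(M)$ is fully invariant because for any $g\in\End_R(M)$ and any $x\in\eta^M_P(M)$ one has $f(g(x))=(fg)(x)\in P$ for every $f\in\End_R(M)$, so $g(x)\in\eta^M_P(M)$. Conversely, any fully invariant $N\leq P$ satisfies $f(N)\leq N\leq P$ for each endomorphism $f$, hence $N\leq f^{-1}(P)$ for all $f$, giving $N\leq\eta^M_P(M)$.

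To show $\eta^M_P(M)\in Spec(M)$, I would take $A,B\in\Lambda^{fi}(M)$ with $A_MB\leq\eta^M_P(M)\leq P$. By assumption $P\in LgSpec(M)$, meaning $P$ is prime in $\Lambda(M)$ relative to the subquasi-quantale $\Lambda^{fi}(M)$, so $A\leq P$ or $B\leq P$. Since $A$ and $B$ are already fully invariant, the preliminary observation forces $A\leq\eta^M_P(M)$ or $B\leq\eta^M_P(M)$, which is exactly the primeness condition inside $\Lambda^{fi}(M)$.

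For the particular case, I would appeal to Remark \ref{casicoat}: under the (quasi-)projectivity hypothesis on $M$, every $\mathcal{M}\in\mx(M)$ lies in $LgSpec(M)$, so specializing the previous argument with $P=\mathcal{M}$ yields $\eta^M_{\mathcal{M}}(M)\in Spec(M)$. There is essentially no technical obstacle; the only point requiring care is tracking where projectivity enters, namely only in ensuring $\mx(M)\subseteq LgSpec(M)$, since the primeness argument itself is purely lattice-theoretic once $\eta^M_P(M)$ has been identified with the largest fully invariant submodule of $M$ inside $P$.
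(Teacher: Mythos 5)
Your argument is correct. The paper itself does not prove this Remark (it simply cites \cite[Propositions 4.9 and 4.10]{medina2015generalization}), but your key preliminary observation --- that $\eta^M_P(M)$ is the greatest fully invariant submodule of $M$ contained in $P$ --- is precisely the content of the Proposition that immediately follows the Remark in the paper, proved there by the same computation you give. The remaining step is the natural one: from $A_MB\leq\eta^M_P(M)\leq P$ with $A,B\in\Lambda^{fi}(M)$, relative primeness of $P$ gives $A\leq P$ or $B\leq P$, and maximality of $\eta^M_P(M)$ among fully invariant submodules below $P$ upgrades this to $A\leq\eta^M_P(M)$ or $B\leq\eta^M_P(M)$; properness of $\eta^M_P(M)$ follows from $\eta^M_P(M)\leq P<M$. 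Your caveat about the ``in particular'' clause is also well placed: the inclusion $\mx(M)\subseteq LgSpec(M)$ is not free and is where (quasi-)projectivity of $M$ in $\sigma[M]$ enters, a hypothesis the Remark leaves implicit (it is recorded in the unnumbered Remark of Section~\ref{sec2} rather than in Remark~\ref{casicoat}, which is the more accurate internal reference).
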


\begin{prop}
Let $N$ be a submodule of a module $M$. Then, $\eta^M_N(M)$ is the greatest fully invariant submodule of $M$ which is contained in $N.$
\end{prop}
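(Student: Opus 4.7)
The plan is to verify three things in turn: (1) $\eta^M_N(M) \subseteq N$, (2) $\eta^M_N(M)$ is fully invariant in $M$, and (3) any fully invariant submodule of $M$ contained in $N$ is contained in $\eta^M_N(M)$.

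For (1), I would simply take the identity map $\mathrm{id}_M \in \End_R(M)$ in the defining intersection
\[\eta^M_N(M) = \bigcap\{f^{-1}(N) \mid f \in \End_R(M)\},\]
so that $\eta^M_N(M) \subseteq \mathrm{id}_M^{-1}(N) = N$. This is the easy inclusion and requires no work.

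For (2), let $g \in \End_R(M)$ and $m \in \eta^M_N(M)$; I want $g(m) \in \eta^M_N(M)$. For any $f \in \End_R(M)$, the composite $f\circ g$ again lies in $\End_R(M)$, so by hypothesis $(f\circ g)(m) \in N$, i.e., $f(g(m)) \in N$. Since $f$ was arbitrary, $g(m) \in \bigcap_f f^{-1}(N) = \eta^M_N(M)$. This shows $g(\eta^M_N(M)) \subseteq \eta^M_N(M)$.

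For (3), suppose $L \leq_{fi} M$ with $L \subseteq N$. For every $f \in \End_R(M)$, full invariance gives $f(L) \subseteq L \subseteq N$, hence $L \subseteq f^{-1}(N)$. Intersecting over all $f$, $L \subseteq \eta^M_N(M)$.

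There is no real obstacle here; the argument is essentially bookkeeping with the definition of $\eta^M_N$ and the definition of fully invariant submodule. The only subtlety is that one must use $\End_R(M)$-homomorphisms (the relevant endomorphisms of $M$) rather than arbitrary homomorphisms, which is consistent with the definition $\eta^M_N(K)$ evaluated at $K = M$.
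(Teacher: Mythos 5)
Your proof is correct and follows essentially the same route as the paper's: containment via the identity endomorphism, full invariance by composing $f\circ g$, and maximality from $f(L)\subseteq L\subseteq N$. Nothing further is needed.
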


\begin{proof} Let $m\in \eta^M_N(M).$ Then $m=Id(m)\in N.$ Thus, $\eta^M_N(M)\leq N.$

Now, let $m\in \eta^M_N(M)$ and $g\in \End_R(M).$
 Let $f\in \End_R(M)$, $f(g(m))=fg(m)\in N.$ Then $g(m)\in \eta^M_N(M).$ Therefore, $\eta^M_N(M)\in \Lambda^{fi}(M).$
 
 Finally, let $K$ be a fully invariant submodule of $M$ such that $K\leq N.$ Then, for any $f\in \End_R(M)$ we have that $f(K)\leq K\leq N.$ Thus, $K\leq \eta^M_N(M).$
\end{proof}

\begin{lem}\label{Mf}
Let $M$ be a module and $\mathcal{M}< M$ be a maximal submodule. Then $f^{-1}(M)$ is a maximal submodule of $M$ for all $f\in\End_R(M)$ such that $f(M)\nsubseteq \mathcal{M}$.
\end{lem}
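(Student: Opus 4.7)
The plan is to interpret the statement with the obvious typo correction: we wish to show that $f^{-1}(\mathcal{M})$ is a maximal submodule of $M$ whenever $f\in\End_R(M)$ satisfies $f(M)\nsubseteq\mathcal{M}$. The natural approach is simply to compose $f$ with the canonical projection and invoke the First Isomorphism Theorem, so I expect no real obstacle.

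First I would form the composition $\pi\circ f\colon M\to M/\mathcal{M}$, where $\pi\colon M\to M/\mathcal{M}$ is the canonical projection. Since $\mathcal{M}$ is a maximal submodule, the quotient $M/\mathcal{M}$ is a simple module. The hypothesis $f(M)\nsubseteq\mathcal{M}$ translates exactly to $\pi\circ f\neq 0$, so the image of $\pi\circ f$ is a nonzero submodule of the simple module $M/\mathcal{M}$, hence equals $M/\mathcal{M}$. Thus $\pi\circ f$ is an epimorphism.

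Next I would compute the kernel: $\ker(\pi\circ f)=\{m\in M\mid f(m)\in\mathcal{M}\}=f^{-1}(\mathcal{M})$. Applying the First Isomorphism Theorem yields
\[
M/f^{-1}(\mathcal{M})\cong M/\mathcal{M},
\]
and since $M/\mathcal{M}$ is simple, $f^{-1}(\mathcal{M})$ must be a maximal submodule of $M$. This completes the argument.
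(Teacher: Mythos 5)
Your proof is correct and follows essentially the same route as the paper's: both observe that $f(M)\nsubseteq\mathcal{M}$ forces the induced map into the simple module $M/\mathcal{M}$ to be surjective, so the First Isomorphism Theorem gives $M/f^{-1}(\mathcal{M})\cong M/\mathcal{M}$ and hence maximality of $f^{-1}(\mathcal{M})$. You also correctly read the statement's $f^{-1}(M)$ as the typo for $f^{-1}(\mathcal{M})$, which is how the paper itself uses the lemma.
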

\begin{proof}
Let $f\in\End_R(M)$ such that $f(M)\nsubseteq \mathcal{M}$. Then there exists $x\in M$ such that $f(x)\notin \mathcal{M}$. Therefore, $f$ defines an isomorphism

\vspace{4pt}\centerline{\vspace{4pt} $ M/f^{-1}(\mathcal{M})\to M/\mathcal{M}=R(f(x)+\mathcal{M}).$}

\noindent Thus, $f^{-1}(\mathcal{M}) $ is maximal.
\end{proof}

\begin{dfn}\label{defGelfand}
A module $M$ is \emph{Gelfand} if for every distinct elements $N,L\in \mx(M)$ there exist $N',L'\in\Lambda^{fi}(M)$ such that $L'\nleq L,\,N'\nleq N$ and $L'_MN'=0.$ 
\end{dfn}

\begin{prop}\label{Gelfand}
	Let $M$ be a module such that $-_M-$ is an associative product. Then, the following conditions are equivalent.
	\begin{enumerate}[label=\emph{(\alph*)}]
		\item $M$ is a Gelfand module.
		\item For every distinct elements $N,L\in \mx(M)$ there exist $N',L'\in\Lambda(M)$ such that $L'\nleq L,\,N'\nleq N$ and $L'_MN'=0.$
		\item For every distinct elements $N,L\in \mx(M),$ there exist $a\notin L$ and $b\notin N$ such that  for each $f:M\to Rb,\,$ $f(a)=0$ holds, that is $a\in Ann_M(Rb).$
	\end{enumerate}
\end{prop}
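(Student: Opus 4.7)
The plan is to mirror the proof of Proposition \ref{STRONGLY} essentially verbatim. Nothing in that argument exploited the fact that the maximal submodules $N,L$ in question were themselves fully invariant; it used only that the produced witnesses $N',L'$ could be taken fully invariant, together with associativity of $-_M-$ and the fact that annihilators are fully invariant. Replacing $\mx^{fi}(M)$ by $\mx(M)$ throughout therefore yields the present statement.

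Concretely, (a) $\Rightarrow$ (b) is immediate from $\Lambda^{fi}(M)\subseteq\Lambda(M)$. For (b) $\Rightarrow$ (c), given the witnesses $N',L'\in\Lambda(M)$ with $L'_MN'=0$, $L'\nleq L$ and $N'\nleq N$, I pick $a\in L'\setminus L$ and $b\in N'\setminus N$. Then $Ra\leq L'$ and $Rb\leq N'$, and
\[L'_MN' \;=\; \sum\{f(L') \mid f\in\Hom(M,N')\}\;=\;0,\]
so every $f\colon M\to Rb$, viewed as a map $M\to N'$ via the inclusion $Rb\hookrightarrow N'$, annihilates $a$; this is exactly $a\in\Ann_M(Rb)$.

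For (c) $\Rightarrow$ (a), I set $L':=Ra_MM$ and $N':=Rb_MM$. Both lie in $\Lambda^{fi}(M)$, since $K_MM=\sum\{g(K)\mid g\in\End_R(M)\}$ is the fully invariant submodule generated by $K$ for any $K\leq M$. Since $a\in L'\setminus L$ and $b\in N'\setminus N$, we have $L'\nleq L$ and $N'\nleq N$. Using associativity of $-_M-$,
\[L'_MN' \;=\; (Ra_MM)_M(Rb_MM) \;=\; ((Ra_MM)_MRb)_MM.\]
The submodule $\Ann_M(Rb)$ is fully invariant: if $g\in\End_R(M)$ and $m\in\Ann_M(Rb)$, then for every $f\colon M\to Rb$ one has $f(g(m))=(fg)(m)=0$, so $g(m)\in\Ann_M(Rb)$. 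Hence $a\in\Ann_M(Rb)$ gives $Ra\leq\Ann_M(Rb)$, whence $Ra_MM\leq\Ann_M(Rb)_MM=\Ann_M(Rb)$, and
\[(Ra_MM)_MRb \;\leq\; \Ann_M(Rb)_MRb \;=\; 0,\]
so $L'_MN'=0$.

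No real obstacle is present: the argument is a routine transcription, with the only points worth flagging being that $K_MM$ and $\Ann_M(Rb)$ are fully invariant and that the product $-_M-$ is available as an associative operation, which is precisely the standing hypothesis of the proposition.
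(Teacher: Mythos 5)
Your proposal is correct and is exactly the paper's approach: the paper's proof of this proposition consists of the single line ``It is similar to the proof of Proposition~\ref{STRONGLY},'' and you carry out precisely that transcription, with the witnesses $L'=Ra_MM$, $N'=Rb_MM$ and the annihilator computation matching the argument given there. Your added verifications that $K_MM$ is the fully invariant submodule generated by $K$ and that $\Ann_M(Rb)$ is fully invariant are points the paper leaves implicit, but they are routine and do not change the route.
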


\begin{proof}
It is similar to the proof of Proposition \ref{STRONGLY}.
\end{proof}

\begin{lem}\label{Gispm}
Let $M$ be a Gelfand module and $P\leq M$ be a prime submodule. If there exist $L,N\in\mx(M)$ such that $P\leq N$ and $P\leq L$ then $N=L$. 

\end{lem}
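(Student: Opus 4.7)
The plan is to argue by contradiction and play the Gelfand separation property off against the primeness of $P$. Assume that $N \neq L$. Since $M$ is Gelfand and $N, L \in \mx(M)$ are distinct, Definition~\ref{defGelfand} supplies fully invariant submodules $N', L' \in \Lambda^{fi}(M)$ such that $L' \nleq L$, $N' \nleq N$, and $L'_M N' = 0$.

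Now I would use the fact that $P$ is a prime element of $\Lambda^{fi}(M)$, i.e., $P \in Spec(M) = Spec_B(B)$ with $B = \Lambda^{fi}(M)$. Since $L'_M N' = 0 \leq P$ and both $L', N' \in \Lambda^{fi}(M)$, primeness of $P$ yields either $L' \leq P$ or $N' \leq P$. In the first case, $L' \leq P \leq L$ contradicts $L' \nleq L$; in the second case, $N' \leq P \leq N$ contradicts $N' \nleq N$. Either way we reach a contradiction, so $N = L$.

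There is really no serious obstacle: the statement is essentially just the observation that the Gelfand separation property and primeness of elements of $\Lambda^{fi}(M)$ cannot coexist above the same prime. The only step requiring any care is recognizing that ``prime submodule'' here means an element of $Spec(M)$ in the sense of the paper (prime relative to the subquasi-quantale $\Lambda^{fi}(M)$), so that the product $L'_M N' \leq P$ is indeed the hypothesis that primeness is designed to digest, and that this applies to the fully invariant submodules $N', L'$ provided by Definition~\ref{defGelfand}.
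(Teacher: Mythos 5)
Your proof is correct and follows essentially the same route as the paper's: both argue by contradiction, invoke the Gelfand separation property to produce fully invariant $N', L'$ with $L'_M N' = 0 \leq P$, and then use primeness of $P$ (with respect to the product of fully invariant submodules) together with $P \leq L$ and $P \leq N$ to derive a contradiction. Your added remark that primeness must be read relative to $\Lambda^{fi}(M)$ so that it applies to the $N', L'$ supplied by Definition~\ref{defGelfand} is a correct and worthwhile clarification that the paper leaves implicit.
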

\begin{proof} Let $P$ be a prime module of $M$ and let $L,N\in\mx(M)$ satisfying that $L\neq N$ and $P\leq L\cap N.$ Since $M$ is Gelfand, there exist $L',N'$ such that $L'\neq L,$ $N'\neq N$ and ${L'}_M N'=0.$
Then, $0={L'}_MN'\leq P.$ Because $P$ is prime, it follows that $L'\leq P\leq L$ or $N'\leq P\leq N,$ which is a contradiction.
Thus, $L=N.$
\end{proof}

\begin{prop}\label{MaxGelfand}
Let $M$ be a  Gelfand module. Then $M$ is a quasi-duo module (i.e $\mx(M)\subseteq \Lambda^{fi}(M))$.
\end{prop}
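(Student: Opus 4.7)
The plan is to argue by contradiction: suppose $\mathcal{M}\in\mx(M)$ is not fully invariant, and manufacture two distinct maximal submodules which, together with the Gelfand condition and the preradical $\eta^M_{\mathcal{M}}$ recalled in Remark \ref{propertyeta}, produce a contradiction.

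First, if $\mathcal{M}$ were not fully invariant, there would exist $f\in\End_R(M)$ with $f(\mathcal{M})\nsubseteq\mathcal{M}$. Since $\mathcal{M}\leq M$, this forces $f(M)\nsubseteq\mathcal{M}$, so Lemma \ref{Mf} applies and $f^{-1}(\mathcal{M})\in\mx(M)$. Moreover $f^{-1}(\mathcal{M})\neq\mathcal{M}$, because equality would give $f(\mathcal{M})\subseteq\mathcal{M}$, contrary to the assumption.

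Next, I would invoke the Gelfand condition on the two distinct maximals $\mathcal{M}$ and $f^{-1}(\mathcal{M})$, producing $N',L'\in\Lambda^{fi}(M)$ with $N'\nleq\mathcal{M}$, $L'\nleq f^{-1}(\mathcal{M})$ and $N'_M L'=0$. The key technical step is to upgrade the second non-inclusion: since $L'$ is fully invariant, $f(L')\subseteq L'$, and picking $x\in L'$ with $f(x)\notin\mathcal{M}$ gives $f(x)\in L'\setminus\mathcal{M}$, hence $L'\nleq\mathcal{M}$. So now both $N'$ and $L'$ are fully invariant submodules not contained in $\mathcal{M}$.

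Finally, I would use Remark \ref{propertyeta}: $\eta^M_{\mathcal{M}}(M)$ is the largest fully invariant submodule of $M$ contained in $\mathcal{M}$, and it lies in $Spec(M)$. Since $N',L'\in\Lambda^{fi}(M)$ and $N'_M L'=0\leq\eta^M_{\mathcal{M}}(M)$, primality (in the sense of $Spec(M)=Spec_B(B)$ with $B=\Lambda^{fi}(M)$) forces $N'\leq\eta^M_{\mathcal{M}}(M)\leq\mathcal{M}$ or $L'\leq\eta^M_{\mathcal{M}}(M)\leq\mathcal{M}$, contradicting the previous paragraph. Hence every $\mathcal{M}\in\mx(M)$ is fully invariant, i.e.\ $\mx(M)\subseteq\Lambda^{fi}(M)$.

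The only delicate step is the transfer in the third paragraph: the Gelfand hypothesis only separates $\mathcal{M}$ from $f^{-1}(\mathcal{M})$ on the side of $f^{-1}(\mathcal{M})$, and the proof only works because the separating submodule $L'$ is fully invariant, which lets $f$ move witnesses back inside $L'$. Once that is noted, the role of $\eta^M_{\mathcal{M}}(M)$ as a prime element lying below $\mathcal{M}$ closes the argument cleanly.
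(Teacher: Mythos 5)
Your proof is correct and follows essentially the same route as the paper's: both rest on Lemma \ref{Mf}, on $\eta^M_{\mathcal{M}}(M)$ being a prime fully invariant submodule below $\mathcal{M}$, and on the Gelfand condition forbidding a prime from lying under two distinct maximal submodules (the paper packages this last step as Lemma \ref{Gispm}). The only real difference is your full-invariance transfer of $L'\nleq f^{-1}(\mathcal{M})$ to $L'\nleq\mathcal{M}$, which is correct but an avoidable detour, since $\eta^M_{\mathcal{M}}(M)=\bigcap_g g^{-1}(\mathcal{M})\leq f^{-1}(\mathcal{M})$ already places the prime below both maximal submodules.
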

\begin{proof}
Let $\mathcal{M}\in\mx(M).$ Then $\eta^M_{\mathcal{M}}(M)\leq \mathcal{M}$ is a prime submodule of $M.$ We claim that 
$\eta^M_{\mathcal{M}}(M)=\mathcal{M}.$ Let $f\in\End_R(M)$. If $f(M)\leq\mathcal{M}$, then $f^{-1}(\mathcal{M})=M$. This implies that 

\vspace{4pt}\centerline{\vspace{4pt} $\eta^M_\mathcal{M}(M)=\bigcap\{f^{-1}(\mathcal{M})\mid f\in\End_R(M)\text{ and }f^{-1}(M)\nsubseteq\mathcal{M}\}.$}

Now, let $f\in\End_R(M)$ such that $f(M)\nsubseteq\mathcal{M}$. By Lemma \ref{Mf}, $f^{-1}(\mathcal{M})$ is a maximal submodule of $M$. Since $\eta^M_{\mathcal{M}}(M)\leq \mathcal{M}$ and $\eta^M_{\mathcal{M}}(M)\leq f^{-1}(\mathcal{M})$, $\mathcal{M}=f^{-1}(\mathcal{M})$ by Lemma \ref{Gispm}. Thus, $\mathcal{M}=\eta^M_{\mathcal{M}}(M).$ Therefore, $\mathcal{M}$ is fully invariant.
\end{proof}

\begin{cor}\label{gelcoa}
Let $M$ be projective in $\sm$. If $M$ is a Gelfand module then $\Lambda^{fi}(M)$ is coatomic.
\end{cor}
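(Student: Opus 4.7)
The plan is to show directly that every proper element of $\Lambda^{fi}(M)$ sits below a coatom of $\Lambda^{fi}(M)$, by exhibiting such a coatom as an ordinary maximal submodule of $M$.

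First, fix any $N\in\Lambda^{fi}(M)$ with $N\neq M$. Since $M$ is projective in $\sigma[M]$ and $N\in\Lambda^{fi}(M)$, the factor module $M/N$ is projective in $\sigma[M/N]$, so by \cite[22.3]{wisbauerfoundations} one has $\mx(M/N)\neq\emptyset$. Pulling back along the canonical projection $M\to M/N$ yields a maximal submodule $\mathcal{M}$ of $M$ with $N\leq\mathcal{M}$.

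Next I would invoke the Gelfand hypothesis via Proposition \ref{MaxGelfand}: because $M$ is Gelfand we have $\mx(M)\subseteq\Lambda^{fi}(M)$, hence $\mathcal{M}\in\Lambda^{fi}(M)$. It remains to see that $\mathcal{M}$ is a coatom in $\Lambda^{fi}(M)$. But any $K\in\Lambda^{fi}(M)$ with $\mathcal{M}\lneq K\leq M$ would in particular satisfy $\mathcal{M}\lneq K$ inside the full lattice $\Lambda(M)$, and maximality of $\mathcal{M}$ in $\Lambda(M)$ forces $K=M$. Therefore $\mathcal{M}$ is a coatom of $\Lambda^{fi}(M)$ above $N$, which is exactly the definition of $\Lambda^{fi}(M)$ being coatomic.

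There is no real obstacle here: the whole proof is a short chain combining three ingredients already in the paper, namely (i) the existence of maximal submodules in factors of modules projective in $\sigma[M]$, (ii) the Gelfand-theoretic fact that maximal submodules are fully invariant (Proposition \ref{MaxGelfand}), and (iii) the trivial observation that a maximal submodule of $M$ which happens to be fully invariant is automatically maximal in $\Lambda^{fi}(M)$. The only point requiring any care is checking that the chosen $\mathcal{M}$ lies above the given $N$, which is guaranteed by constructing it from $\mx(M/N)$.
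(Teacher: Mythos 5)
Your proof is correct and follows essentially the same route as the paper: the paper's own argument also produces a maximal submodule $\mathcal{M}\geq N$ via the projectivity argument of Remark \ref{casicoat} and then applies Proposition \ref{MaxGelfand} to see that $\mathcal{M}$ is fully invariant, hence a coatom of $\Lambda^{fi}(M)$. The only difference is that you spell out explicitly the (trivial) final step that a fully invariant maximal submodule of $M$ is a coatom of $\Lambda^{fi}(M)$, which the paper leaves implicit.
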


\begin{proof}
Let $N\in\Lambda^{fi}(M)$. Since $M$ is projective in $\sm$, there exists $\mathcal{M}\in\mx(M)$ such that $N\leq\mathcal{M}$ by Remark \ref{casicoat}. By Proposition \ref{MaxGelfand}, $\mathcal{M}\in\Lambda^{fi}(M)$. Thus $\Lambda^{fi}(M)$ is coatomic.
\end{proof}

\begin{lem}\label{Glemma10.11H}
Let $M$ be a quasi-duo module projective in $\sm$. Then $\mx^{fi}(M)=\mx(M).$	
\end{lem}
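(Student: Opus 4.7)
The plan is to prove the two inclusions $\mx(M)\subseteq\mx^{fi}(M)$ and $\mx^{fi}(M)\subseteq\mx(M)$ separately, using the quasi-duo hypothesis to link maximal submodules with fully invariant ones and the projectivity in $\sigma[M]$ to guarantee existence of maximal submodules above proper fully invariant submodules.

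For the inclusion $\mx(M)\subseteq\mx^{fi}(M)$, I would take $\mathcal{M}\in\mx(M)$. By the quasi-duo hypothesis, $\mathcal{M}\in\Lambda^{fi}(M)$. It is then immediate that $\mathcal{M}$ is a coatom of $\Lambda^{fi}(M)$: if $N\in\Lambda^{fi}(M)$ satisfies $\mathcal{M}\leq N\leq M$, then in particular $\mathcal{M}\leq N$ in $\Lambda(M)$, and maximality of $\mathcal{M}$ in $\Lambda(M)$ forces $N=\mathcal{M}$ or $N=M$. Hence $\mathcal{M}\in\mx^{fi}(M)$.

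For the reverse inclusion $\mx^{fi}(M)\subseteq\mx(M)$, let $N\in\mx^{fi}(M)$. Since $M$ is projective in $\sigma[M]$ and $N\in\Lambda^{fi}(M)$ with $N\neq M$, by the argument used in Remark \ref{casicoat} (applying \cite[22.3]{wisbauerfoundations} to $M/N$, which is projective in $\sigma[M/N]$), we obtain a maximal submodule $\mathcal{M}$ of $M$ with $N\leq\mathcal{M}<M$. By the quasi-duo hypothesis $\mathcal{M}\in\Lambda^{fi}(M)$, so $N\leq\mathcal{M}$ is a relation between two elements of $\Lambda^{fi}(M)$ with $\mathcal{M}\neq M$. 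Maximality of $N$ in $\Lambda^{fi}(M)$ then forces $N=\mathcal{M}$, and hence $N\in\mx(M)$.

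The main (very mild) obstacle is justifying that $\mx(M/N)\neq\emptyset$ for $N\in\Lambda^{fi}(M)$ with $N<M$, but this is precisely the content invoked in Remark \ref{casicoat}: projectivity of $M$ in $\sigma[M]$ passes to $M/N$ in $\sigma[M/N]$, and a nonzero projective module in its own $\sigma$-category has a maximal submodule. Apart from this standard fact, the proof is a direct consequence of the definition of quasi-duo together with the observation that every coatom in $\Lambda^{fi}(M)$ must coincide with a maximal submodule dominating it.
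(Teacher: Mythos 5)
Your proof is correct and follows essentially the same route as the paper's: the inclusion $\mx(M)\subseteq\mx^{fi}(M)$ uses quasi-duo to place a maximal submodule in $\Lambda^{fi}(M)$ and then observes it is automatically a coatom there, while the reverse inclusion invokes Remark \ref{casicoat} to find a maximal submodule above a coatom of $\Lambda^{fi}(M)$ and quasi-duo again to force equality. No gaps; your write-up is in fact slightly more explicit than the paper's about why coatomicity transfers.
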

\begin{proof}
Let $\mathcal{N}\in\mx(M).$  By hypothesis, $\mathcal{N}\in\Lambda^{fi}(M).$ We notice that $\mathcal{N}\in\Lambda^{fi}(M).$ Indeed, let $L\in\Lambda^{fi}(M)$ such that $\mathcal{N}\leq L.$ 
Since $\mathcal{N}\in \mx(M),$ we get $\mathcal{N}=L.$ Hence, $\mx(M)\subseteq \mx^{fi}(M).$

On the other hand, let $\mathcal{K}\in\mx^{fi}(M)\subseteq \Lambda^{fi}(M).$ By Remark \ref{casicoat}, there exists $\mathcal{M}\in\mx(M)$ such that $K\leq \mathcal{M}.$ By hypothesis, $\mx(M)\subseteq \Lambda^{fi}(M).$ So $\mathcal{M}\in \Lambda^{fi}(M)$ and $\mathcal{K}\in\mx^{fi}(M)$ implies that $K=\mathcal{M}.$
\end{proof}

The following result allows us to note that in order to study the Gelfand modules, we can  focus first on the study of strongly harmonic modules that satisfy the extra condition of being quasiduo.

\begin{thm}\label{SHDuoGelfand}
Let $M$ be projective in $\sm$. Then following conditions are equivalent: 
\begin{enumerate}[label=\emph{(\alph*)}]
\item $M$ is Gelfand
\item $M$ is strongly harmonic and quasi-duo.
\end{enumerate}
\end{thm}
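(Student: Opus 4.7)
The plan is to observe that the only difference between the definitions of ``Gelfand'' (Definition \ref{defGelfand}) and ``strongly harmonic'' (Definition \ref{strongly}) is the set of maximal objects over which the separation property is required: Gelfand uses $\mx(M)$ while strongly harmonic uses $\mx^{fi}(M)$. Once these two sets are identified, the two notions coincide. So the proof should reduce entirely to producing the equality $\mx(M)=\mx^{fi}(M)$ under the appropriate hypotheses.

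For (a) $\Rightarrow$ (b), I would first invoke Proposition \ref{MaxGelfand} to conclude that $M$ is quasi-duo, i.e., $\mx(M)\subseteq\Lambda^{fi}(M)$. Combined with the assumption that $M$ is projective in $\sm$, Lemma \ref{Glemma10.11H} then yields $\mx^{fi}(M)=\mx(M)$. Given two distinct $N,L\in\mx^{fi}(M)$, they are distinct elements of $\mx(M)$, and Definition \ref{defGelfand} produces $N',L'\in\Lambda^{fi}(M)$ with $N'\nleq N$, $L'\nleq L$, and $L'_{M}N'=0$. Hence $M$ is strongly harmonic, and quasi-duo has already been established.

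For (b) $\Rightarrow$ (a), the argument is the mirror image. Since $M$ is quasi-duo and projective in $\sm$, Lemma \ref{Glemma10.11H} again gives $\mx^{fi}(M)=\mx(M)$. Given two distinct $N,L\in\mx(M)$, they are distinct in $\mx^{fi}(M)$, and Definition \ref{strongly} supplies $N',L'\in\Lambda^{fi}(M)$ with $N'\nleq N$, $L'\nleq L$, and $L'_{M}N'=0$, which is precisely the Gelfand condition.

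There is no real obstacle here: the content of the theorem lies in the earlier results (Proposition \ref{MaxGelfand} and Lemma \ref{Glemma10.11H}), and the proof is essentially a bookkeeping exercise that transports the separation property between the two indexing sets $\mx(M)$ and $\mx^{fi}(M)$. The only point to be slightly careful about is that projectivity in $\sm$ is needed in both directions: in (a)$\Rightarrow$(b) it is used implicitly through Lemma \ref{Glemma10.11H} (and guarantees $\mx(M)\neq\emptyset$ when we pass through that lemma), while in (b)$\Rightarrow$(a) it feeds directly into the same lemma to identify the two maximal spectra.
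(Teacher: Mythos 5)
Your proposal is correct and follows essentially the same route as the paper: Proposition \ref{MaxGelfand} gives quasi-duo, Lemma \ref{Glemma10.11H} identifies $\mx(M)$ with $\mx^{fi}(M)$, and then the two definitions coincide verbatim. The paper routes the final step through Propositions \ref{STRONGLY} and \ref{Gelfand} rather than quoting the definitions directly, but since both definitions already ask for separating submodules in $\Lambda^{fi}(M)$, your direct comparison is equally valid (and avoids the associativity hypothesis those propositions carry, which is in any case supplied by projectivity in $\sm$).
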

\begin{proof}
(a) $\Rightarrow$ (b) Let $M$ be a Gelfand module. By Proposition \ref{MaxGelfand}, $M$ is quasi-duo. It follows by Lemma \ref{Glemma10.11H} that $\mx(M)=\mx^{fi}(M)$. Thus, $M$ is strongly harmonic by Proposition \ref{Gelfand}.

(b) $\Rightarrow$ (a) We have that $\mx(M)=\mx^{fi}(M)$ by Lemma \ref{Glemma10.11H}. Hence $M$ is a Gelfand module by Proposition \ref{STRONGLY}.
\end{proof}

\begin{prop}\label{summandgel}
Let $M$ be a quasi-projective Gelfand module and $N\leq M$. Then $M/N$ is a Gelfand module.
\end{prop}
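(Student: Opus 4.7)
The plan is to mimic the proof of Proposition \ref{summandfa}, exploiting the fact that, by Proposition \ref{MaxGelfand}, the maximal submodules of $M$ are already fully invariant. Write $\pi\colon M\to M/N$ for the canonical projection. Let $\overline{\mathcal{M}},\overline{\mathcal{N}}\in \mx(M/N)$ be distinct and lift them to distinct maximal submodules $\mathcal{M},\mathcal{N}$ of $M$ containing $N$. Since $M$ is Gelfand, Proposition \ref{MaxGelfand} gives $\mathcal{M},\mathcal{N}\in\Lambda^{fi}(M)$, and the Gelfand condition then produces $A,B\in\Lambda^{fi}(M)$ with $A\nleq\mathcal{M}$, $B\nleq\mathcal{N}$ and $A_MB=0$.

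I then push $A$ and $B$ forward to $M/N$. Since $M$ is quasi-projective and $A,B$ are fully invariant, Lemma \ref{lemmapp}(b) guarantees that $\pi(A)=(A+N)/N$ and $\pi(B)=(B+N)/N$ are fully invariant submodules of $M/N$. Using $\pi^{-1}(\overline{\mathcal{M}})=\mathcal{M}$ and $\pi^{-1}(\overline{\mathcal{N}})=\mathcal{N}$, the strict inclusions $A\nleq\mathcal{M}$ and $B\nleq\mathcal{N}$ immediately yield $\pi(A)\nleq\overline{\mathcal{M}}$ and $\pi(B)\nleq\overline{\mathcal{N}}$.

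The remaining point is that $\pi(A)_{M/N}\pi(B)=0$; this is the same product computation used in Proposition \ref{summandfa}. Given any homomorphism $f\colon M/N\to\pi(B)$, the quasi-projectivity of $M$ produces $\overline{f}\colon M\to B$ with $\pi|_B\overline{f}=f\pi$; since $A_MB=0$ means every $g\colon M\to B$ annihilates $A$, we have $\overline{f}(A)=0$, and therefore $f(\pi(A))=f\pi(A)=\pi|_B\overline{f}(A)=0$. Together with the previous paragraph this verifies the Gelfand condition for $M/N$ (relabelling the pair $\pi(A),\pi(B)$ if necessary to match the order of factors in Definition \ref{defGelfand}).

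The main obstacle is really the quasi-projective lifting step that produces $\overline f\colon M\to B$ from $f\colon M/N\to \pi(B)$; this is the same delicate point already taken for granted in Proposition \ref{summandfa}, and once it is granted the rest is essentially bookkeeping—transferring fully invariant submodules through $\pi$ via Lemma \ref{lemmapp}(b) and using quasi-duo to ensure that maximal submodules of $M$ above $N$ lift the Gelfand witnesses we need in $M/N$.
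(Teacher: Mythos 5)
Your argument is correct and follows essentially the same route as the paper's own proof: lift the two maximal submodules of $M/N$ to $M$, apply the Gelfand condition there, push the witnesses $A,B$ forward along $\pi$, and verify $\pi(A)_{M/N}\pi(B)=0$ via the quasi-projective lifting of $f\colon M/N\to (B+N)/N$ to $\overline f\colon M\to B$. The only (harmless) extras are your explicit appeal to Proposition \ref{MaxGelfand}, which is not needed since Definition \ref{defGelfand} does not require the maximal submodules themselves to be fully invariant, and your explicit citation of Lemma \ref{lemmapp}(b) for the full invariance of $\pi(A),\pi(B)$, which the paper leaves implicit.
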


\begin{proof}
Let $\mathcal{M}/N,\mathcal{N}/N\in\mx(M/N)$. It follows that $\mathcal{M},\mathcal{N}\in\mx(M)$. Since $M$ is a Gelfand module, there exist $A,B\in\Lambda^{fi}(M)$ such that $A\nleq\mathcal{M}$, $B\nleq\mathcal{N}$ and $A_MB=0$. Since $A\nleq\mathcal{M}$, $(A+N)/N\nleq\mathcal{M}/N$. Analogously, $(B+N)/N\nleq\mathcal{N}/N$. We claim that the product $\left(\frac{A+N}{N}\right)_{M/N}\left(\frac{B+N}{N}\right)=0$. Let $f:M/N\to (B+N)/N$ be any homomorphism. Since $M$ is quasi-projective, there exists $\overline{f}:M\to B$ such that $\pi|_{B}\overline{f}=f\pi$ where $\pi:M\to M/N$ is the canonical projection. Note that $\overline{f}(A)=0$ because $A_MB=0$. Hence,

\vspace{4pt}\centerline{$\displaystyle f\left(\frac{A+N}{N}\right)=f\pi(A)=\pi|_B\overline{f}(A)=0.$}

\noindent This proves the claim. Thus, $M/N$ is a Gelfand module.
\end{proof}

\begin{cor}
The following conditions are equivalent for a ring $R$:
\begin{enumerate}[label=\emph{(\alph*)}]
\item $R$ is a Gelfand ring.
\item Every cyclic $R$-module is Gelfand.
\item $Re$ is a Gelfand module for any idempotent $e\in R$.
\end{enumerate}
\end{cor}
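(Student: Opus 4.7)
The plan is to derive each implication directly from Proposition \ref{summandgel}, noting that $R$ itself is a quasi-projective $R$-module and every cyclic $R$-module, as well as each $Re$ for an idempotent $e$, arises as a quotient (or principal cyclic submodule) of $R$.

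First, for \emph{(a)} $\Rightarrow$ \emph{(b)}, I would observe that if $R$ is a Gelfand ring, then $R$ viewed as a left $R$-module is Gelfand (by definition, identifying rings with their regular representation). Since $R$ is free, it is quasi-projective. Any cyclic $R$-module has the form $R/I$ for some left ideal $I$, and Proposition \ref{summandgel} applied with $M = R$ and $N = I$ yields that $R/I$ is Gelfand.

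Next, the implication \emph{(b)} $\Rightarrow$ \emph{(c)} is immediate: for any idempotent $e \in R$, the left module $Re$ is cyclic (generated by $e$), hence Gelfand by hypothesis.

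Finally, for \emph{(c)} $\Rightarrow$ \emph{(a)}, I take the idempotent $e = 1$, so that $Re = R$ is a Gelfand module, which is exactly the statement that $R$ is a Gelfand ring. No step here should present any real obstacle; the content of the corollary is entirely packaged inside Proposition \ref{summandgel} together with the trivial observations that $R$ is quasi-projective over itself and that cyclic modules (in particular the $Re$) are quotients of $R$.
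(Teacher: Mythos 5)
Your proof is correct and follows exactly the route the paper intends: the corollary is stated without proof immediately after Proposition \ref{summandgel}, and the implications you give (quotients $R/I$ of the quasi-projective Gelfand module $R$ for (a)$\Rightarrow$(b), cyclicity of $Re$ for (b)$\Rightarrow$(c), and $e=1$ for (c)$\Rightarrow$(a)) are precisely the packaging the authors have in mind. Nothing to add.
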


\begin{obs}
In contrast to strongly harmonic modules (Proposition \ref{dirsumfa}),  an arbitrary coproduct of copies of a Gelfand module might not be Gelfand. In fact, direct sums of copies of a quasi-duo module is not quasi-duo in general, as the following example shows: the semisimple $\mathbb{Z}$-module $M=\mathbb{Z}_2\oplus\mathbb{Z}_3$ is Gelfand. Note that $\mathbb{Z}_2\oplus\mathbb{Z}_2\oplus\mathbb{Z}_3$ is a maximal submodule of $M\oplus M$ which is not fully invariant.
\end{obs}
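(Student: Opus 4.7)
The goal is to verify three separate claims in the remark: (i) $M=\mathbb{Z}_2\oplus\mathbb{Z}_3$ is Gelfand as a $\mathbb{Z}$-module; (ii) $\mathbb{Z}_2\oplus\mathbb{Z}_2\oplus\mathbb{Z}_3$ is a maximal submodule of $M\oplus M$; and (iii) this maximal submodule is not fully invariant, which in view of Proposition \ref{MaxGelfand} shows that $M\oplus M$ is not Gelfand even though $M$ is.

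For (i), my plan is to list the maximal submodules of $M$ and directly produce the witnesses required by Definition \ref{defGelfand}. Since $\mathbb{Z}_2$ and $\mathbb{Z}_3$ are simple modules of coprime orders, $\mx(M)=\{\mathbb{Z}_2\oplus 0,\,0\oplus\mathbb{Z}_3\}$. I will take $N'=0\oplus\mathbb{Z}_3$ and $L'=\mathbb{Z}_2\oplus 0$; these are fully invariant in $M$ because $\Hom_\mathbb{Z}(\mathbb{Z}_2,\mathbb{Z}_3)=0=\Hom_\mathbb{Z}(\mathbb{Z}_3,\mathbb{Z}_2)$. Then $L'\nleq 0\oplus\mathbb{Z}_3$, $N'\nleq\mathbb{Z}_2\oplus 0$, and $L'{}_M N'=\sum\{f(L')\mid f\colon M\to N'\}=0$ for the same Hom-vanishing reason. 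This checks the definition.

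For (ii), let $N:=M\oplus(\mathbb{Z}_2\oplus 0)$, which has underlying group $\mathbb{Z}_2\oplus\mathbb{Z}_3\oplus\mathbb{Z}_2\cong \mathbb{Z}_2\oplus\mathbb{Z}_2\oplus\mathbb{Z}_3$. Maximality follows from the isomorphism $(M\oplus M)/N\cong M/(\mathbb{Z}_2\oplus 0)\cong\mathbb{Z}_3$, which is simple. For (iii), I will exhibit the swap endomorphism $\sigma\colon M\oplus M\to M\oplus M$, $\sigma(x,y)=(y,x)$, and note that $\sigma(N)=(\mathbb{Z}_2\oplus 0)\oplus M\nleq N$: for instance the element $(0,0)\oplus(0,1)\in\sigma(N)$ lies outside $N$ since the second summand of $N$ has trivial $\mathbb{Z}_3$-component.

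To close the loop, I will invoke Proposition \ref{MaxGelfand}: every Gelfand module is quasi-duo, so the existence of a non-fully-invariant maximal submodule of $M\oplus M$ forces $M\oplus M$ to fail the Gelfand condition. This contrasts with Proposition \ref{dirsumfa} for strongly harmonic modules. There is essentially no obstacle here — the entire remark is a concrete arithmetic verification; the only point requiring slight care is confirming that the two summands $\mathbb{Z}_2$ and $\mathbb{Z}_3$ of $M$ really are fully invariant (which is immediate from the vanishing of the relevant Hom groups), and that the swap endomorphism indeed witnesses failure of full invariance.
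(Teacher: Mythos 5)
Your verification is correct and follows exactly the route the paper intends: the remark itself is stated without proof, and you have simply supplied the routine checks (the two maximal submodules of $\mathbb{Z}_2\oplus\mathbb{Z}_3$ with the torsion components as fully invariant witnesses, the maximality of $M\oplus(\mathbb{Z}_2\oplus 0)$ via its simple quotient $\mathbb{Z}_3$, and the swap endomorphism defeating full invariance, combined with Proposition \ref{MaxGelfand}). Nothing is missing.
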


Recall that from Remark \ref{adjunctionsb}, setting $A=\Lambda(M)$, $B=\Lambda^{fi}(M)$ and $S=\mx(M)$, we have a multiplicative nucleus $\tau:\Lambda^{fi}(M)\to \Lambda^{fi}(M)$. Notice that Proposition \ref{homeo}, in particular ensures that the frames $\Psi(M)$ and $\mathcal{O}(\mx(M))\cong SPm(M)$ are isomorphic for the case of Gelfand modules. We have to notice that, for the case of Gelfand rings, this was proved in \cite[Theorem 4.1]{borceux1984sheaf}. We can give a direct proof of that fact and see that $\tau$ and $Ler$ define an isomorphism between those two frames. For, we need the next Lemma.

\begin{lem}
Let $M$ be a self-progenerator in $\sm$. Assume $M$ is a Gelfand module and $\mx(M)$ is compact. Then $Ler(N)\leq L$ if and only if $N\leq\tau(L)$ for all $N,L\in\Lambda^{fi}(M)$.
\end{lem}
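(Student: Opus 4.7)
The strategy is to unpack $\tau(L)$ into its concrete form as the intersection of maximals containing $L$, and then use Theorem \ref{dreamtheo}(c) together with the sum description of $Ler$ from Remark \ref{lersuma}. Since $M$ is Gelfand, Proposition \ref{MaxGelfand} gives that every maximal submodule is fully invariant, so $\mx(M)\subseteq\Lambda^{fi}(M)$ and $\mx(M)=\mx^{fi}(M)$ by Lemma \ref{Glemma10.11H}. Hence the intersection $\bigcap\{\mathcal{M}\in\mx(M)\mid L\leq \mathcal{M}\}$ lies in $\Lambda^{fi}(M)$ and therefore coincides with $\tau(L)$ (by Proposition \ref{41}(a)). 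Moreover $\Lambda^{fi}(M)$ is coatomic by Corollary \ref{gelcoa}, and since $\mx(M)=\mx^{fi}(M)$ is compact by hypothesis, Lemma \ref{compatomic} gives that $\Lambda^{fi}(M)$ is compact. Since a Gelfand module is strongly harmonic (Theorem \ref{SHDuoGelfand}), Theorem \ref{dreamtheo} applies to $M$.

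For the direction $(\Rightarrow)$, I would start from $Ler(N)\leq L$ and fix any $\mathcal{M}\in\mx(M)$ with $L\leq \mathcal{M}$. Then $Ler(N)\leq \mathcal{M}$, and Theorem \ref{dreamtheo}(c) yields $N\leq \mathcal{M}$. Intersecting over all such $\mathcal{M}$ gives $N\leq \tau(L)$.

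For the converse, suppose $N\leq\tau(L)$. By Remark \ref{lersuma}, to show $Ler(N)\leq L$ it suffices to prove that $K\leq L$ for each $K\in\Lambda^{fi}(M)$ with $N+\Ann_M(K)=M$. Since $N\leq \tau(L)$, we immediately get $\tau(L)+\Ann_M(K)=M$. The key sub-claim is that this forces $L+\Ann_M(K)=M$: if not, coatomicity of $\Lambda^{fi}(M)$ supplies a maximal $\mathcal{M}\in\mx^{fi}(M)=\mx(M)$ with $L+\Ann_M(K)\leq \mathcal{M}$; but then $L\leq \mathcal{M}$ forces $\tau(L)\leq \mathcal{M}$, contradicting $\tau(L)+\Ann_M(K)=M$. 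Once $L+\Ann_M(K)=M$ is established, the product formula (using that $M$ is a right identity in $\Lambda^{fi}(M)$ and that $K_M\Ann_M(K)=0$) delivers
\[K=K_M M=K_M\bigl(L+\Ann_M(K)\bigr)=K_M L+K_M\Ann_M(K)=K_M L\leq L,\]
completing the converse.

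The main obstacle is isolating the sub-claim $\tau(L)+\Ann_M(K)=M\Rightarrow L+\Ann_M(K)=M$; once framed, it is a short coatomicity argument. Everything else is then a straightforward combination of Theorem \ref{dreamtheo}(c) with the elementary product computation above.
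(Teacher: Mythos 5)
Your overall route is the paper's own: the forward direction is essentially verbatim the same (fix a maximal $\mathcal{M}$ containing $L$, note $Ler(N)\leq\mathcal{M}$, invoke Theorem \ref{dreamtheo}(c), and intersect), and your converse is the paper's argument transposed from elements $m\in Ler(N)$ with their annihilators $\Ann_M(Rm)$ to fully invariant summands $K$ with $\Ann_M(K)$, using the same coatomicity-plus-$\tau(L)\leq\mathcal{M}$ contradiction to establish the key sub-claim $L+\Ann_M(K)=M$. Your preparatory reductions (Gelfand $\Rightarrow$ quasi-duo $\Rightarrow$ $\mx(M)=\mx^{fi}(M)$, coatomicity from Corollary \ref{gelcoa}, compactness of $\Lambda^{fi}(M)$ from Lemma \ref{compatomic}, strong harmonicity from Theorem \ref{SHDuoGelfand}) are correct and are exactly what the paper relies on.

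There is, however, one genuine error in your final step. You conclude $K\leq L$ from $L+\Ann_M(K)=M$ by computing $K=K_ML+K_M\Ann_M(K)=K_ML$, which requires $K_M\Ann_M(K)=0$. That identity is on the wrong side: $\Ann_M(K)$ is by construction the largest submodule $A$ with $A_MK=0$ (equivalently $\Ann_M(K)=\bigcap\{\ker f\mid f\colon M\to K\}$), so what holds is $\Ann_M(K)_MK=0$; the product $-_M-$ is not commutative, and $K_M\Ann_M(K)$ need not vanish (already for $M=R$ noncommutative this is the product of an ideal with its \emph{left} annihilator, which can be nonzero; the paper reserves $\Ann_M^r$ for the right-sided version). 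The repair is immediate and does not need the product computation at all: once $L+\Ann_M(K)=M$ is established, Remark \ref{lersuma} exhibits $K$ as one of the summands of $Ler(L)$, so $K\leq Ler(L)\leq L$. With that substitution your proof coincides with the paper's, which phrases the same step at the element level as ``$m\in Ler(L)$, hence $Ler(N)\leq L$.''
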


\begin{proof}
Let $N,L\in\Lambda^{fi}(M)$. Assume $Ler(N)\leq L$. Let $\mathcal{M}\in\mx(M)$ such that $L\leq\mathcal{M}$. Hence $Ler(N)\leq\mathcal{M}$. By Corollary \ref{gelcoa} and Lemma \ref{compatomic}, the lattice $\Lambda^{fi}(M)$ is compact. Therefore Theorem \ref{dreamtheo}(3) implies that $N\leq\mathcal{M}$. Since $\mathcal{M}$ is any maximal submodule containing $L$, $N\leq\tau(L)$.

Conversely, suppose that $N\leq\tau(L)$. Let $m\in Ler(N)$. Then $M=N+Ann_M(Rm)$. If $m\notin Ler(L)$, then $L+Ann_M(Rm)\neq M$. By Corollary \ref{gelcoa}, there exists $\mathcal{M}\in\mx(M)$ such that $L+Ann_M(Rm)\leq\mathcal{M}$. This implies that $\tau(L)\leq\mathcal{M}$ and by hypothesis $N\leq\mathcal{M}$. Thus, $M=N+Ann_M(Rm)\leq\mathcal{M}$. Contradiction. Hence $m\in Ler(L)$ and so $Ler(N)\leq L$.
\end{proof}

\begin{thm}\label{DreamcomG}
Let $M$ be a self-progenerator in $\sm$. Assume $M$ is a Gelfand module and $\mx(M)$ is compact. Then $\Psi(M)\cong SPm(M)$ as frames.
\end{thm}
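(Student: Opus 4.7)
The plan is to promote the Galois connection $Ler\dashv\tau$ of the preceding lemma into a frame isomorphism by restricting it to the fixed-point lattices of the two operators. Recall that $\Psi(M)$ is precisely the set of fixed points of $Ler$ by \cite[Proposition 5.11]{medina2017attaching}, while $SPm(M)$ is the set of fixed points of $\tau$ by \cite[Theorem 3.13]{medina2017attaching}.

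First, I would unpack the hypotheses to confirm that the strongly harmonic machinery is available here. By Theorem \ref{SHDuoGelfand}, $M$ is strongly harmonic and quasi-duo, so Lemma \ref{Glemma10.11H} yields $\mx(M)=\mx^{fi}(M)$. Combining the compactness of $\mx(M)$ with Corollary \ref{gelcoa} (coatomicity of $\Lambda^{fi}(M)$) and Lemma \ref{compatomic} forces $\Lambda^{fi}(M)$ to be compact. Hence Lemma \ref{ler2} applies and $Ler$ is idempotent on $\Lambda^{fi}(M)$; of course $\tau$ is idempotent as a nucleus.

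Next I would define the candidate mutually inverse maps
\[\Phi\colon \Psi(M)\to SPm(M),\ N\mapsto\tau(N),\qquad \Upsilon\colon SPm(M)\to\Psi(M),\ L\mapsto Ler(L),\]
whose codomains are correct by the idempotency of $\tau$ and $Ler$ respectively. To check $\Upsilon\circ\Phi=\mathrm{id}$, for $N\in\Psi(M)$ the counit of the adjunction (set $L=N$ and replace $N$ by $\tau(N)$ in the preceding lemma) yields $Ler(\tau(N))\leq N$, while $N\leq\tau(N)$ combined with monotonicity of $Ler$ gives $N=Ler(N)\leq Ler(\tau(N))$. Dually, for $L\in SPm(M)$ the unit (set $N=L$, $L'=Ler(L)$) gives $L\leq \tau(Ler(L))$, and $Ler(L)\leq L$ together with monotonicity of $\tau$ and $\tau(L)=L$ gives $\tau(Ler(L))\leq L$. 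Both $\Phi$ and $\Upsilon$ are monotone, so being mutually inverse monotone bijections between complete lattices, they are order isomorphisms and hence frame isomorphisms.

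I do not expect a serious obstacle: the bulk of the argument is formal, driven entirely by the adjunction $Ler\dashv\tau$ and the idempotency of the two operators. The only place where care is required is in confirming the chain Gelfand $\Rightarrow$ strongly harmonic $+$ quasi-duo $\Rightarrow$ $\mx(M)=\mx^{fi}(M)$ $\Rightarrow$ compactness of $\Lambda^{fi}(M)$, which ensures that Lemma \ref{ler2} and the preceding lemma are in force. As an alternative route one can bypass the explicit construction of $\Phi$ and $\Upsilon$ by chaining Corollary \ref{corhomeo} (giving $\Psi(M)\cong\mathcal{O}(\mx^{fi}(M))$), the identification $\mx^{fi}(M)=\mx(M)$ from Lemma \ref{Glemma10.11H}, and the isomorphism $SPm(M)\cong\mathcal{O}(\mx(M))$ from \cite[Theorem 3.13]{medina2017attaching}, but the direct adjunction-based proof is the one that exhibits the isomorphism as the restriction of $\tau$ and $Ler$.
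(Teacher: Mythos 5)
Your proof is correct and rests on the same two maps as the paper's own argument, namely $\tau$ restricted to $\Psi(M)$ and $Ler$ restricted to $SPm(M)$, so the core is identical; the differences are in how the two verifications are carried out, and they work in your favour. Where the paper re-derives the identities $\tau Ler=\mathrm{id}_{SPm(M)}$ and $Ler\,\tau=\mathrm{id}_{\Psi(M)}$ by hand (comparing intersections of maximal submodules for the first, and running a maximal-submodule contradiction for the second), you extract them formally as the unit and counit of the adjunction $Ler(N)\leq L\Leftrightarrow N\leq\tau(L)$ from the preceding lemma --- which the paper proves but then, somewhat curiously, never invokes in the proof of the theorem itself. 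More substantially, the paper spends the second half of its proof checking explicitly that $\tau$ and $Ler$ preserve finite intersections and arbitrary joins, the latter via Theorem \ref{dreamtheo}(4) and a short computation with $\tau Ler$; you replace all of this with the observation that mutually inverse monotone bijections between complete lattices are order isomorphisms and hence automatically frame isomorphisms. That observation is sound, since joins and finite meets in each of $\Psi(M)$ and $SPm(M)$ are determined by their internal order, and it renders the join-preservation computation (and the appeal to Theorem \ref{dreamtheo}(4) at that stage) unnecessary. Your preliminary reductions --- Theorem \ref{SHDuoGelfand}, Lemma \ref{Glemma10.11H}, Corollary \ref{gelcoa} with Lemma \ref{compatomic} to get compactness of $\Lambda^{fi}(M)$, and Lemma \ref{ler2} for idempotency of $Ler$ --- are exactly the ones the paper needs, and the alternative route you mention through Corollary \ref{corhomeo} and $SPm(M)\cong\mathcal{O}(\mx(M))$ is the one the paper itself alludes to in the remark preceding the theorem.
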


\begin{proof}
We claim that $\tau Ler(L)=\tau(L)$ for all $L\in SPm(M)$. Since $Ler(L)\leq L$, $\bigcap\{\mathcal{M}\in\mx(M)\mid L\leq \mathcal{M}\}\subseteq\bigcap\{\mathcal{M}\in\mx(M)\mid Ler(L)\leq \mathcal{M}\}$. By Theorem \ref{dreamtheo}(3), any maximal submodule containing $Ler(L)$ contains $L$, hence 

\vspace{4pt}\centerline{\vspace{4pt}$ \bigcap\{\mathcal{M}\in\mx(M)\mid L\leq \mathcal{M}\}=\bigcap\{\mathcal{M}\in\mx(M)\mid Ler(L)\leq \mathcal{M}\}.$}

\noindent This implies that $\tau Ler(L)=\tau(L)$.

Now we claim that $Ler\tau(N)=Ler(N)$ for all $N\in SPm(M)$. Since $N\leq\tau(N)$ then $Ler(N)\leq Ler\tau(N)$. On the other hand, let $m\in Ler\tau(N)$. Then $M=\tau(N)+Ann_M(Rm)$. If $m\notin Ler(N)$, then $M\neq N+Ann_M(Rm)$ and so there exists $\mathcal{M}\in\mx(M)$ such that $N+\Ann_M(Rm)\leq\mathcal{M}$. Since $\tau(N)$ is contained in every maximal submodule which contains $N$, $\tau(M)\leq\mathcal{M}$. Therefore $M=\tau(N)+Ann_M(Rm)\leq\mathcal{M}$. Contradiction, proving the claim.

The above two claims imply that $Ler\tau=Id_{\Psi(M)}$ and $\tau Ler=Id_{SPm(M)}$. Since $\tau$ is a nucleus and by \cite[Proposition 5.10]{medina2017attaching}, $\tau$ and $Ler$ commutes with finite intersections.  It remains to prove that $Ler$ and $\tau$ are $\bigvee$-preserving. Recall that the supremum of a family $\{N_i\}_I$ in the frame $SPm(M)$ is given by $\tau(\sum_I N_i)$. Hence $\displaystyle Ler(\tau(\sum_I N_i))=Ler(\sum_I N_i)=\sum_I(Ler(N_i))$ by Theorem \ref{dreamtheo}(4). Thus $Ler$ is $\bigvee$-preserving. On the other hand, let $\{L_i\}_I$ be a family of elements in $\Psi(M)$. Using Theorem \ref{dreamtheo}(4) we get

\centerline{ $\displaystyle \tau\left(\sum_I L_i\right)=\tau\left(\sum_I Ler(L_i)\right)=\tau\left(\sum_I Ler\tau(L_i)\right)$}
\centerline{\vspace{4pt} $=\displaystyle \tau Ler\left(\sum_I \tau(L_i)\right)=\tau\left(\sum_I \tau(L_i)\right).$}

\noindent Thus $\tau$ is $\bigvee$-preserving. Therefore, the frames $\Psi(M)$ and $SPm(M)$ are isomorphic.
\end{proof}

\begin{cor}
Let $R$ be a Gelfand ring. Then $\Psi(R)\cong SPm(R)$ as frames.
\end{cor}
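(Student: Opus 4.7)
The plan is to deduce this corollary as a direct specialization of Theorem \ref{DreamcomG} with $M = R$ regarded as a left $R$-module. I only need to verify that the three hypotheses of Theorem \ref{DreamcomG} are satisfied in this setting.

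First, I would observe that $R$ is a self-progenerator in $\sigma[R] = R\text{-Mod}$: the regular module $R$ is projective and generates every $R$-module, and it trivially generates all of its own submodules (namely, its left ideals). Second, by hypothesis $R$ is a Gelfand ring, and Definition \ref{defGelfand} agrees with the usual ring-theoretic notion of Gelfand when $M = R$ (here $\Lambda^{fi}(R)$ is the lattice of two-sided ideals, $\mx(R)$ is the set of maximal left ideals, and the product $-_R-$ is the usual product of ideals).

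The only point that requires a brief justification is the compactness of $\mx(R)$. For this I would argue directly: given any open cover $\mx(R) = \bigcup_\alpha \U(I_\alpha)$, the complement of $\U(I)$ is $\V(I) = \{\mathcal{M}\in\mx(R)\mid I\leq\mathcal{M}\}$, so the hypothesis gives that no maximal left ideal contains $\sum_\alpha I_\alpha$. Since $R$ is projective in $R\text{-Mod}$, Remark \ref{casicoat} forces $\sum_\alpha I_\alpha = R$, whence $1 \in \sum_\alpha I_\alpha$ lies in a finite subsum and a finite subcover is obtained. Equivalently, $1_R$ is a compact element of $\Lambda^{fi}(R)$ so that $\Lambda^{fi}(R)$ is a compact lattice, and then Lemma \ref{compatomic} (together with Proposition \ref{MaxGelfand} and Lemma \ref{Glemma10.11H}) gives compactness of $\mx(R) = \mx^{fi}(R)$.

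Having verified all hypotheses, Theorem \ref{DreamcomG} applies and yields the frame isomorphism $\Psi(R) \cong SPm(R)$. There is essentially no obstacle here; the whole content has already been established at the module level, and the corollary is a matter of checking that the classical ring $R$ falls into the framework of that theorem.
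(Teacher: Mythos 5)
Your proposal is correct and is exactly how the paper intends the corollary to be read: it is stated without proof as the specialization of Theorem \ref{DreamcomG} to $M=R$, and your verification of the hypotheses (self-progenerator in $R\text{-Mod}$, compactness of $\Lambda^{fi}(R)$ via $1\in R$, and $\mx(R)=\mx^{fi}(R)$ compact by Lemma \ref{compatomic}, Proposition \ref{MaxGelfand} and Lemma \ref{Glemma10.11H}) supplies precisely the routine details the paper omits.
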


Given $S$ a topological space and $A$ a subspace of  $S,$ recall that a continuous map $\gamma\colon S\to A$ is a retraction if $\gamma \circ \iota_A=1_A, $ where $\iota_A$ denotes the canonical inclusion map of $A$ into $S.$

\begin{prop}\label{retractnormal}
Let $M$ be satisfying that $\mx(M)$ is compact.
If $\mx(M)$ is Hausdorff and  it is a retract of $Spec(M)$, then  $Spec(M)$ is normal.
\end{prop}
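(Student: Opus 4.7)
The plan is to pull back a separation from $\mx(M)$ to $Spec(M)$ through the retraction $\gamma\colon Spec(M)\to\mx(M)$. Let $C_1,C_2$ be disjoint closed subsets of $Spec(M)$; then the intersections $K_i:=C_i\cap\mx(M)$ are disjoint closed subsets of $\mx(M)$. Since $\mx(M)$ is compact Hausdorff, hence normal, there exist disjoint open sets $V_1,V_2\subseteq\mx(M)$ with $K_i\subseteq V_i$, and the preimages $U_i:=\gamma^{-1}(V_i)$ are disjoint open subsets of $Spec(M)$. The proof will be complete once we verify $C_i\subseteq U_i$ for $i=1,2$.

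This verification reduces to the key claim that $\gamma(p)\in\overline{\{p\}}$ (closure taken in $Spec(M)$) for every $p\in Spec(M)$: granted this, $p\in C_i$ yields $\gamma(p)\in\overline{\{p\}}\subseteq C_i$ because $C_i$ is closed and contains $p$, so $\gamma(p)\in K_i\subseteq V_i$ and hence $p\in U_i$. To prove the claim, continuity of $\gamma$ together with Hausdorffness of $\mx(M)$ implies that $\gamma(\overline{\{p\}})$ is contained in the closure of $\{\gamma(p)\}$ in $\mx(M)$, which is just $\{\gamma(p)\}$ since points of a Hausdorff space are closed; thus $\gamma$ is constantly $\gamma(p)$ on $\overline{\{p\}}$. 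Consequently, if some $\mathcal{M}\in\mx(M)$ lies in $\overline{\{p\}}$, then the retraction property forces $\mathcal{M}=\gamma(\mathcal{M})=\gamma(p)$, giving $\gamma(p)=\mathcal{M}\in\overline{\{p\}}$ as required (and also showing this $\mathcal{M}$ is unique).

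The main obstacle is therefore exhibiting at least one such $\mathcal{M}$, i.e.\ guaranteeing that $\overline{\{p\}}\cap\mx(M)\neq\emptyset$ for every $p\in Spec(M)$, which amounts to showing that every prime (fully invariant) submodule of $M$ lies below a maximal submodule. Under the standing hypotheses of Section \ref{sec5} for Gelfand modules (notably $M$ projective in $\sm$ and quasi-duo, which forces $\mx(M)\subseteq Spec(M)$ in the first place), this is exactly the argument of Remark \ref{casicoat}: $M/p$ is projective in $\sigma[M/p]$, so $\mx(M/p)\neq\emptyset$ by \cite[22.3]{wisbauerfoundations}, and pulling back along $M\to M/p$ produces a maximal submodule of $M$ containing $p$. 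With this existence step in place, the three ingredients---normality of $\mx(M)$, continuity of $\gamma$, and Hausdorffness of the target---combine to give normality of $Spec(M)$.
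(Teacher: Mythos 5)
Your proof is correct and follows the same route as the paper's: intersect the two disjoint closed sets with $\mx(M)$, separate them there using the fact that a compact Hausdorff space is normal, and pull the separating open sets back along the retraction $\gamma$. The only difference is that the paper merely asserts the crux of the matter, namely that $\gamma(F)=F\cap\mx(M)$ for every closed $F\subseteq Spec(M)$ (equivalently, that $\gamma(p)\in\overline{\{p\}}$ for every $p$), whereas you actually derive it from the continuity of $\gamma$, the $T_1$ property of $\mx(M)$, and the existence of a maximal submodule above each prime (Remark \ref{casicoat} under the section's standing hypotheses) --- a worthwhile addition, since that is precisely the step where the module-theoretic assumptions enter.
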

\begin{proof}
Let $\gamma\colon Spec(M)\to \mx (M)$ a continuous retraction. We claim $Spec(M)$ is normal.
First, notice that if $F$ is closed in $Spec(M)$, then $\gamma(F)=F\cap \mx(M).$
 So, given $F_1,\,F_2$  closed sets in $Spec(M)$ then $F_1\cap \mx(M)$ and $F_2\cap \mx(M)$ are closed in $\mx(M).$ 
 Now, recall that a space which is Hausdorff and compact turns out to be normal. Thus, there are $U_1,\,U_2$ open disjoint sets in $\mx(M)$ satisfying that $\gamma(F_1)\subseteq U_1$ and $\gamma(F_2)\subseteq U_2.$ Thus,  $F_1\subseteq \gamma^{-1}(U_1)$ and $F_2\subseteq \gamma^{-1}(U_2).$
 \end{proof}


\begin{obs}\label{etacontinua} The map $\eta\colon LgSpec(M)\to Spec(M)$ given by $\eta(Q):=\eta^M_Q(M)$ is a surjective, continuous, and closed function.
Indeed, by  \cite[Proposition 4.9 and Proposition 4.10]{medina2015generalization}, it follows that $\eta$ is well defined.
Now, take $\mathcal{V}_{Spec(M)}(N)$ a basic closed subset in $Spec(M).$ Then, \[\eta(\mathcal{V}_{Spec(M)}(N))^{-1}=\{Q\in LgSpec(M)\mid \eta^M_Q(M)\in\mathcal{V}_{Spec(M)}(N)\}.\]
By definition of $\mathcal{V}_{Spec(M)}(N)$ and using the fact that $\eta^M_Q(M)\leq Q,$ we conclude that $\eta(\mathcal{V}_{Spec(M)}(N))^{-1}=\mathcal{V}_{LgSpec(M)}(N).$ 
It is clear that $\eta$ is surjective.
Finally, notice that $\eta$ is a closed function. Let $\mathcal{V}_{LgSpec(M)}(N)$ is a basic closed  set  in $LgSpec(M).$ Since $N\in\Lambda^{fi}(M)$ by Remark \ref{propertyeta} it follows that $N\leq \eta^M_Q(M)$ for each $Q\in\mathcal{V}_{LgSpec(M)}.$
 Thus, $\eta({\mathcal{V}}_{LgSpec(M)}(N))= \mathcal{V}_{Spec(M)}(N).$ 
\end{obs}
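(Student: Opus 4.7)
The plan is to exploit the characterization, shown just above, that $\eta^M_Q(M)$ is the greatest fully invariant submodule of $M$ contained in $Q$. This single fact will drive all three assertions, once well-definedness is settled by citing \cite[Proposition 4.9, Proposition 4.10]{medina2015generalization}. From the definitions one has $Spec(M) = Spec_B(B) \subseteq Spec_B(A) = LgSpec(M)$, since any element prime relative to $B = \Lambda^{fi}(M)$ lies in $B \subseteq A = \Lambda(M)$ and the primality condition only involves factors from $B$. I would first use this inclusion to handle surjectivity: given $P \in Spec(M)$, the submodule $P$ is already fully invariant, so $\eta^M_P(M) = P$ (applying the characterization with $Q = P$), and hence $\eta(P) = P$.

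For continuity, I would show that the preimage of a basic closed set is basic closed of the expected form. Let $N \in \Lambda^{fi}(M)$ and consider $\mathcal{V}_{Spec(M)}(N)$. Then
\[
Q \in \eta^{-1}(\mathcal{V}_{Spec(M)}(N)) \iff N \leq \eta^M_Q(M).
\]
The crucial step is the equivalence $N \leq \eta^M_Q(M) \iff N \leq Q$: the forward direction is immediate from $\eta^M_Q(M) \leq Q$, and the backward direction uses exactly that $N$ is fully invariant and $\eta^M_Q(M)$ is the largest fully invariant submodule contained in $Q$. This identifies the preimage with $\mathcal{V}_{LgSpec(M)}(N)$, a basic closed set.

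For closedness I would verify that $\eta(\mathcal{V}_{LgSpec(M)}(N)) = \mathcal{V}_{Spec(M)}(N)$ whenever $N \in \Lambda^{fi}(M)$. The inclusion $\subseteq$ is the same equivalence as above: if $N \leq Q$ with $N$ fully invariant, then $N \leq \eta^M_Q(M) = \eta(Q)$. The reverse inclusion uses surjectivity together with the fact that $\eta$ fixes points of $Spec(M)$: any $P \in \mathcal{V}_{Spec(M)}(N)$ lies in $LgSpec(M)$, satisfies $N \leq P$, and has $\eta(P) = P$, so it is the image of itself. Since images and preimages of basic (sub-basic) closed sets suffice, this finishes all three claims.

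The main conceptual obstacle, if any, is making sure the characterization of $\eta^M_Q(M)$ really does give the two-way implication $N \leq \eta^M_Q(M) \iff N \leq Q$ for $N \in \Lambda^{fi}(M)$; everything else is bookkeeping with the definitions of the hull-kernel topologies on $Spec_B(A)$ and $Spec_B(B)$ from Proposition \ref{t}. No nontrivial ring- or module-theoretic input beyond the properties of $\eta^M_N$ already established is needed.
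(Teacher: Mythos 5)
Your proposal is correct and follows essentially the same route as the paper: well-definedness by the cited propositions, continuity and closedness via the equivalence $N\leq\eta^M_Q(M)\Leftrightarrow N\leq Q$ for $N\in\Lambda^{fi}(M)$ (one direction from $\eta^M_Q(M)\leq Q$, the other from $\eta^M_Q(M)$ being the greatest fully invariant submodule contained in $Q$), and surjectivity from $\eta$ fixing $Spec(M)\subseteq LgSpec(M)$. You merely make explicit two points the paper leaves terse, namely the reverse inclusion $\mathcal{V}_{Spec(M)}(N)\subseteq\eta(\mathcal{V}_{LgSpec(M)}(N))$ and the reason surjectivity is ``clear.''
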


\begin{cor} If $LgSpec(M)$ is normal, then $Spec(M)$ is normal.
\end{cor}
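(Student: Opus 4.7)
The plan is to apply the general topological fact that normality is preserved under continuous closed surjections: if $f\colon X\to Y$ is continuous, closed and surjective, and $X$ is normal, then $Y$ is normal. By Remark \ref{etacontinua}, the map $\eta\colon LgSpec(M)\to Spec(M)$, $\eta(Q):=\eta^M_Q(M)$, has exactly these three properties, so the corollary is immediate once we verify (or invoke) the topological fact.

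I would carry out the argument in two short steps. First, recall from Remark \ref{etacontinua} that $\eta$ is continuous, surjective, and closed. Second, take any two disjoint closed sets $F_1,F_2\subseteq Spec(M)$; by continuity, $\eta^{-1}(F_1)$ and $\eta^{-1}(F_2)$ are disjoint closed sets in $LgSpec(M)$. Using normality of $LgSpec(M)$, choose disjoint open sets $U_1,U_2\subseteq LgSpec(M)$ with $\eta^{-1}(F_i)\subseteq U_i$. Then put
\[
V_i := Spec(M)\setminus \eta\bigl(LgSpec(M)\setminus U_i\bigr),\quad i=1,2.
\]
Each $V_i$ is open because $\eta$ is closed. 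For $P\in F_i$ surjectivity gives $\eta^{-1}(P)\subseteq\eta^{-1}(F_i)\subseteq U_i$, so $P\notin\eta(LgSpec(M)\setminus U_i)$, i.e.\ $F_i\subseteq V_i$. Finally $V_1\cap V_2=\emptyset$: if $P\in V_1\cap V_2$, then $\eta^{-1}(P)$ is a nonempty subset of $U_1\cap U_2=\emptyset$, a contradiction. Thus $Spec(M)$ is normal.

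There is essentially no obstacle here: the proof is a direct application of a standard point-set topology lemma, and Remark \ref{etacontinua} has already done the work of establishing that $\eta$ is continuous, closed and surjective. The only mild care needed is in the verification that the candidate separating opens $V_i$ actually contain the $F_i$, which uses surjectivity of $\eta$ in an essential way; everything else is formal. If one prefers, the corollary can simply be stated as ``continuous closed surjective images of normal spaces are normal, applied to $\eta$.''
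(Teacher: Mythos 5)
Your proposal is correct and follows exactly the same route as the paper: the paper's proof simply cites Remark \ref{etacontinua} together with the standard fact that continuous closed surjective images of normal spaces are normal, which you spell out in full. No differences of substance.
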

\begin{proof} It is a consequence of Remark \ref{etacontinua} and the fact that the continuous and closed image of a normal space is normal.\end{proof}

\begin{prop}
Let $M$ be  projective in $\sigma[M]$ such that  $Spec(M)$ is 
normal, $\Lambda^{fi}(M)$ compact, with $\mu(0)=0.$ 
then $\mx^{fi}(M)$ is Hausdorff and  it is a retract of $Spec(M).$
\end{prop}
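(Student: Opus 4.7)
The plan is to deduce first that $M$ is strongly harmonic, and then exhibit an explicit retraction $\gamma\colon Spec(M)\to \mx^{fi}(M)$, with the Hausdorff conclusion dropping out as a byproduct.

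Since $\Lambda^{fi}(M)$ is compact and $M$ is projective in $\sm$, Lemma \ref{compatomic2} gives that $\Lambda^{fi}(M)$ is coatomic, and Remark \ref{mufix} says that the multiplicative nucleus $\mu\colon \Lambda^{fi}(M)\to\Lambda^{fi}(M)$ from Remark \ref{adjunction} fixes every coatom. Because $Spec(M)$ is normal, Corollary \ref{muglobal} yields that $\Lambda^{fi}(M)_{\mu}$ is a normal lattice. Combining this with $\mu(0)=0$, Lemma \ref{mu} gives that $\Lambda^{fi}(M)$ itself is normal, and then Proposition \ref{norstr} forces $M$ to be strongly harmonic. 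In particular, Proposition \ref{stmaxhausdorf} says $\mx^{fi}(M)$ is Hausdorff. The crucial step for defining the retraction is to verify that every $P\in Spec(M)$ is contained in a unique element of $\mx^{fi}(M)$: existence follows from coatomicity of $\Lambda^{fi}(M)$ together with the fact that every $P\in Spec(M)$ is a proper element of $\Lambda^{fi}(M)$, while uniqueness mimics Lemma \ref{Gispm}. Indeed, if $P\leq\mathcal{M}\neq\mathcal{N}$ in $\mx^{fi}(M)$, then by strong harmonicity we may pick $A,B\in\Lambda^{fi}(M)$ with $A\not\leq\mathcal{M}$, $B\not\leq\mathcal{N}$, $A_MB=0$, and observe that $A_MB\leq P$ together with $P$ prime forces $A\leq P\leq\mathcal{M}$ or $B\leq P\leq\mathcal{N}$, a contradiction. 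I then define $\gamma(P)$ to be the unique $\mathcal{M}\in\mx^{fi}(M)$ with $P\leq\mathcal{M}$; this obviously restricts to the identity on $\mx^{fi}(M)$.

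The remaining and principal step is the continuity of $\gamma$. Fix a basic open $m(K)\subseteq \mx^{fi}(M)$ and $P$ with $\gamma(P)=\mathcal{M}\in m(K)$, so that $K+\mathcal{M}=M$. Normality of $\Lambda^{fi}(M)$ furnishes $X,Y\in\Lambda^{fi}(M)$ with $X+\mathcal{M}=M=K+Y$ and $X_MY=0$. Since $P\leq\mathcal{M}$ and $X+\mathcal{M}=M$ we have $X\not\leq P$, so $P\in\U(X)$. Now for any $P'\in\U(X)$, primality of $P'$ together with $X_MY=0\leq P'$ forces $Y\leq P'\leq\gamma(P')$; on the other hand, $K+Y=M$ combined with $Y\leq\gamma(P')\neq M$ implies $K\not\leq\gamma(P')$, so $\gamma(P')\in m(K)$. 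Hence $\gamma(\U(X))\subseteq m(K)$, and $\gamma$ is continuous. The one non-routine step, where the normality of $\Lambda^{fi}(M)$ does all of the work, is producing the pair $(X,Y)$ that simultaneously separates $\mathcal{M}$ from $K$ and can be cleaved by the primeness of an arbitrary $P'\in Spec(M)$; everything else is bookkeeping with the prime property and coatomicity.
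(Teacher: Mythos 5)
Your proof is correct, and its core is the same as the paper's: both arguments funnel the hypotheses through Corollary \ref{muglobal} and Lemma \ref{mu} to obtain that $\Lambda^{fi}(M)$ is a normal idiom, both send a prime $P$ to a coatom of $\Lambda^{fi}(M)$ above it, and both prove continuity by the same separation trick (produce $X,Y$ with $X_MY=0$ from normality and let the primality of an arbitrary $P'$ cleave the product). Where you diverge is in the bookkeeping around the retraction, and your organization is arguably cleaner. The paper defines $\gamma(P)=\sum\{N\in\Lambda^{fi}(M)\mid N+P<M\}$ and must then prove by an induction on a finite compactness decomposition that this sum is proper, and separately that it is a coatom; it never isolates uniqueness of the coatom above $P$. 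You instead pass explicitly through strong harmonicity (via Proposition \ref{norstr}), which buys you two things at once: the uniqueness of the maximal fully invariant submodule above each prime (your adaptation of Lemma \ref{Gispm} from $\mx(M)$ to $\mx^{fi}(M)$ is valid, since primality of $P$ in $\Lambda^{fi}(M)$ is exactly what splits $A_MB=0\leq P$), and the Hausdorffness of $\mx^{fi}(M)$ for free from Proposition \ref{stmaxhausdorf}. The paper proves Hausdorffness differently, by separating the closed singletons $\V(\mathcal{M})=\{\mathcal{M}\}$ and $\V(\mathcal{N})=\{\mathcal{N}\}$ inside the normal space $Spec(M)$. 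One small point in your favor: your continuity argument correctly exhibits, for each $P\in\gamma^{-1}(m(K))$, a neighborhood $\U(X)$ of $P$ with $\gamma(\U(X))\subseteq m(K)$ (so the preimage is a union of basic opens), whereas the paper asserts the single equality $\gamma^{-1}(m(N))=\U(K_2)$ even though $K_2$ depends on the chosen $P$; your formulation avoids that imprecision. The only cosmetic caveat is that Remark \ref{mufix} is stated for the nucleus attached to $LgSpec(M)$, while here one needs the analogous statement for the nucleus attached to $Spec(M)$; the identical one-line argument applies, and the paper itself relies on this silently.
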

\begin{proof}
 
 Define 
 $\gamma\colon Spec(M)\to \Max^{fi}(M),$ give by 
 
 \centerline{$\gamma(P)=\sum\{N\in\Lambda^{fi}(M)\mid N+P < M\}.$ }

\noindent Let us see that $\gamma$ is well defined. 
Let $P\in Spec(M)$ and suppose that $\gamma(P)=M.$ Since  $\Lambda^{fi}(M)$ is compact, $M=\sum_{i=1}^nN_i,$ where 
every $N_i\in\{N\in \Lambda^{fi}(M)\mid N+P < M\}.$ By induction on $n,$ it can prove that due $P$ is prime and $\Lambda^{fi}(M)$ is normal (by Lemma \ref{lema2} and Lemma \ref{mu}),  there exists $N_i$ satisfying that $N_i+P=M,$ which is a contradiction to the hypothesis on $N_i$.  This contradiction comes from the assupmtion $\gamma(P)=M.$ Thus, $\gamma(P)< M.$
Now, note $\gamma(P)\in Max^{fi}(M)$. For, suppose that $\gamma(P)< K,$ since $K\nleq \gamma(P),$ then, we get $K+P=M.$ Also, due $P$ is prime, it follows that  $P\in \{N\in \Lambda^{fi}(M)\mid N+P < M\},$ and so $P\leq \gamma(P).$ Thus, $K+\gamma(P)=K+P+\gamma(P)=M.$
And so, $K+\gamma(P)=M.$
Thus, $\gamma(P)\in \Max^{fi}(M).$ If, in particular, $P$ is maximal, then $\gamma(P)=P.$

Finally, we will see $\gamma$ is continuous.  Let $\mathcal{U}(N)$ a basic open set of $Spec(M),$ and consider $\emph{m}(N):=\mathcal{U}(N)\cap Max^{fi}(M)$ a basic open set in $Max^{fi}(M).$ First, let $P\in \gamma^{-1}(\emph{m}(N))=\{L\in Spec(M)\mid \gamma(L)\in \emph{m}(N)\}=
\{L\in Spec(M)\mid \gamma(L)+N=M\}.$ So, $P+N=M.$ Since $\Lambda^{fi}(M)$ is normal, there exists $K_1,\,K_2$ such that $N+K_1=M=P+K_2$ and ${K_1}_M{K_2}=0.$ Because of $P$ is a prime submodule, we also get $K_2\not\subseteq P,$ $K_1\subseteq P.$  Thus, $P\in \U(K_2),$ where $\U(K_2)$ denotes a basic open set of $Spec(M).$
Hence, $\gamma^{-1}(m(N)\subseteq \U(K_2).$
Now, let $Q\in \U(K_2).$ So, $K_2\not\subseteq Q.$ Since $Q$ is prime and ${K_1}_M{K_2}=0,$ we get $K_1\subseteq Q.$ Also, by a previous analysis on $\gamma,$ we also know that $Q\subseteq \gamma(Q).$ Thus, $M=N+K_1$ implies $M=N+Q,$ and so $M=N+\gamma(Q).$ Hence, $Q\in \{L\in Spec(M)\mid \gamma(L)+N=M\}=\gamma^{-1}(\emph{m}(N)).$ Then, $\U(K_2)\subseteq \gamma^{-1}(m(N).$
Then, $\gamma^{-1}(m(N)=\U(K_2).$ Therefore, $\gamma$ is continuous.

To conclude this prove, we see that $\Max^{fi}(M)$ is Hausdorff. Let $\mathcal{M},\mathcal{N}\in Max^{fi}(M).$ Considere the following closet sets of $Spec(M),$ $\mathcal{V}(\mathcal{M})=\{\mathcal M\}$ and  $\mathcal{V}(\mathcal{N})=\{\mathcal{N}\}.$ Since $Spec(M)$ is normal, then there exist two disjoint open sets $\mathcal{U}_1$ and $\mathcal{U}_2$ of $Spec(M)$ satisfying $\{\mathcal{M}\}\subseteq \mathcal{U}_1$ and $\{\mathcal{N}\}\subseteq \mathcal{U}_2.$
Thus, $\mathcal{M}\in \mathcal{U}_1\cap \Max^{fi}(M)$ and $\mathcal{N}\in \mathcal{U}_2\cap Max^{fi}(M).$ Consequently, $\Max^{fi}(M)$ is Hausdorff.

\end{proof}

Now, recall that a ring $R$ is said to be a  $pm-$ring if every prime ideal is contained in a unique maximal ideal. In the study of $Spec(R)$ and $\mx(R)$ for a commutative ring,  $pm-$rings have taken an important role, for instance, we have the {\it Demarco-Orsati-Simmons Theorem} which states that,

\begin{thm}\cite{de1971commutative, simmons1980reticulated}
	Let $R$ be a commutative ring. Then:
	$R$ is a pm ring if and only if $\Max(R)$is a retract of $Spec(R)$ if and only if $Spec(R)$ is normal if and only if $R$ is strongly harmonic if and only if $R$ is Gelfand.
\end{thm}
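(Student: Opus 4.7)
The plan is to prove the five-way equivalence as a single cycle, leveraging the module-theoretic results already established in the paper. The key preliminary observation is that, since $R$ is commutative with identity, every ideal is two-sided and fully invariant; hence $\Lambda^{fi}(R)=\Lambda(R)$, $\mx^{fi}(R)=\mx(R)$, the internal product $-_R-$ coincides with the usual product of ideals (and so is associative), $R$ is a self-progenerator in $\sigma[R]$, the idiom $\Lambda(R)$ is compact (since $R$ is finitely generated by $1$), and $R$ is trivially quasi-duo. These facts match every hypothesis of the module-theoretic machinery of Sections~\ref{sec4} and~\ref{sec5} when we take $M=R$.

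First I would dispatch the easy links. The equivalence (4) $\Leftrightarrow$ (5), strongly harmonic vs.\ Gelfand, is Theorem \ref{SHDuoGelfand}. The implication (5) $\Rightarrow$ (1), Gelfand $\Rightarrow$ pm, is Lemma \ref{Gispm}. The implication (4) $\Rightarrow$ (3), strongly harmonic $\Rightarrow$ $Spec(R)$ normal, is (a) $\Rightarrow$ (d) of Theorem \ref{theoremsth}.

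Next I would bridge the two topological conditions. For (3) $\Rightarrow$ (2) I would apply the final Proposition of Section~\ref{sec5}, which produces the explicit retraction $\gamma(P)=\sum\{I\in\Lambda^{fi}(R) : I+P<R\}$. Its hypothesis $\mu(0)=0$ reads here as $\sqrt{0}=0$; if $R$ fails to be reduced, one first passes to $R/\sqrt{0}$, noting that $Spec(R)\cong Spec(R/\sqrt{0})$ and $\mx(R)\cong\mx(R/\sqrt{0})$ as topological spaces, so nothing is lost. For (2) $\Rightarrow$ (1), a continuous retraction $\gamma\colon Spec(R)\to\mx(R)$ must preserve specialization, so $\gamma(V(P))\subseteq\overline{\{\gamma(P)\}}=\{\gamma(P)\}$; every maximal ideal containing $P$ sits in $V(P)$ and is fixed by $\gamma$, so it must equal $\gamma(P)$, giving the pm condition.

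To close the cycle, (1) $\Rightarrow$ (5) is the classical step of DeMarco--Orsati--Simmons. For distinct maximal ideals $\mathcal{M}_1\neq \mathcal{M}_2$, one uses pm to show that the ideals $I_{\mathcal{M}_j}=\{r\in R : \exists s\notin \mathcal{M}_j,\ rs=0\}$ are comaximal; writing $1=a+b$ with $a\in I_{\mathcal{M}_1}$, $b\in I_{\mathcal{M}_2}$ and choosing $s_1\notin \mathcal{M}_1$ with $as_1=0$ and $s_2\notin \mathcal{M}_2$ with $bs_2=0$, the product $s_1s_2=s_1(1-a)s_2=s_1bs_2=0$ witnesses the Gelfand separation. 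I expect this final implication to be the principal obstacle: the comaximality of the $I_{\mathcal{M}_j}$'s is where the arguments of \cite{de1971commutative,simmons1980reticulated} do the substantive work, and the reduction to $R/\sqrt{0}$ in step (3) $\Rightarrow$ (2) must also be verified with some care. The rest is a direct assembly of the tools built in the preceding sections.
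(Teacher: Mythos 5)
First, a point of order: the paper does not prove this statement at all --- it is quoted from \cite{de1971commutative, simmons1980reticulated} purely as background for Theorem \ref{DOS}, so there is no internal proof to compare yours against. Judged on its own terms, your assembly is sound. The specializations $\Lambda^{fi}(R)=\Lambda(R)$, $\mx^{fi}(R)=\mx(R)$, $N_RL=NL$, compactness of the ideal lattice, and automatic quasi-duoness are all correct, so the appeals to Theorem \ref{SHDuoGelfand}, Lemma \ref{Gispm}, Theorem \ref{theoremsth}(a)$\Rightarrow$(d), and the unlabelled retraction proposition at the end of Section \ref{sec5} are all legitimate; the reduction to $R/\sqrt{0}$ to secure the hypothesis $\mu(0)=0$ is harmless, since $Spec(R)\to Spec(R/\sqrt{0})$ is a homeomorphism carrying $\mx(R)$ onto $\mx(R/\sqrt{0})$; and your specialization argument for retract $\Rightarrow$ pm is clean. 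Note that the detour through these two elementary steps is genuinely necessary: the paper's own Theorem \ref{DOS} and its ring corollary only close the cycle of implications under the hypothesis that the radical vanishes, whereas the classical statement has no such hypothesis, and your cycle correctly circumvents it.

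The only real gap is the one you flag yourself: comaximality of $I_{\mathcal{M}_1}$ and $I_{\mathcal{M}_2}$ for distinct maximal ideals of a pm ring. It closes in a few lines, so it is not a fatal obstacle. If $I_{\mathcal{M}}\subseteq\mathcal{N}$ for maximal ideals $\mathcal{M},\mathcal{N}$, consider the multiplicative set $S=\{st\mid s\notin\mathcal{M},\ t\notin\mathcal{N}\}$; if $0=st\in S$ then $t\in I_{\mathcal{M}}\subseteq\mathcal{N}$, a contradiction, so $0\notin S$ and there is a prime $P$ disjoint from $S$, which forces $P\subseteq\mathcal{M}\cap\mathcal{N}$ (any $x\in P\setminus\mathcal{M}$ would give $x=x\cdot 1\in P\cap S$) and hence $\mathcal{M}=\mathcal{N}$ by the pm property. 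Consequently any maximal ideal containing $I_{\mathcal{M}_1}+I_{\mathcal{M}_2}$ would equal both $\mathcal{M}_1$ and $\mathcal{M}_2$; since these are distinct, $I_{\mathcal{M}_1}+I_{\mathcal{M}_2}=R$, and your computation $s_1s_2=0$ then yields the Gelfand separation. With that lemma inserted, your proposal is a complete and correct proof.
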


In \cite{sunrings91} is extended the Demarco-Orsati-Simmons Theorem for symmetric rings (which includes the commutative rings). We could not find a good generalization of symmetric rings for modules which be suitable to give a version of the Demarco-Orsati-Simmons Theorem in the module-theoretic context. We finish this paper with a Theorem inspired in the Demarco-Orsati-Simmons Theorem as a compendium of our results. 

As a generalization of pm-rings, in \cite{medina2017attaching} it was introduced the following definition for modules.

\begin{dfn}\cite[Definition 5.5]{medina2017attaching}\label{pmmodule}
An  $R$-module $M$ it is said to be  a \emph{pm-module} if every prime submodule is contained in a unique maximal submodule.  
\end{dfn}

\begin{thm}\label{DOS}
	Let $M$ be projective in $\sm$ such that $\Lambda^{fi}(M)$ is compact and $\mx(M)$ compact. Consider the following conditions 
	\begin{enumerate}[label=\emph{(\alph*)}]
		\item $M$ is a Gelfand module.
		\item $M$ is a quasi-duo strongly harmonic module.
		\item $M$ is a quasi-duo pm-module with $\mx(M)$ Hausdorff.
		\item $M$ is a quasi-duo with $\mx(M)$ is Hausdorff and $\mx(M)$ is a retract of $Spec(M)$.
		\item $M$ is a quasi-duo modulo such that $Spec(M)$ is normal.
		
	\end{enumerate}
	Then the implications \emph{(a)$\Leftrightarrow$(b)$\Rightarrow$(c)$\Rightarrow$(d)$\Rightarrow$(e)} hold. If in addition $0=\bigcap\mx(M)$, all the conditions are equivalent.
\end{thm}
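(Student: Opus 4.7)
The plan is to prove the five conditions via the announced cycle, using the hard work already built up in Sections 3--5 and assembling it into the compendium promised by the statement.

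First I would dispose of the equivalence (a)$\Leftrightarrow$(b) by citing Theorem \ref{SHDuoGelfand} verbatim. For (b)$\Rightarrow$(c), since $M$ is quasi-duo, Lemma \ref{Glemma10.11H} gives $\mx(M)=\mx^{fi}(M)$, and Proposition \ref{stmaxhausdorf} (applied to the strongly harmonic module $M$) makes this space Hausdorff. The pm property is extracted from Lemma \ref{Gispm}: by (b)$\Leftrightarrow$(a), $M$ is Gelfand, and if a prime $P$ were contained in two different maximal submodules $\mathcal{N}\neq\mathcal{L}$, the Gelfand condition would produce $A,B\in\Lambda^{fi}(M)$ with $A\nleq\mathcal{N}$, $B\nleq\mathcal{L}$, and $A_MB=0\leq P$; primality of $P$ then forces $A\leq P\leq\mathcal{N}$ or $B\leq P\leq\mathcal{L}$, a contradiction.

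For (c)$\Rightarrow$(d), the pm property together with Remark \ref{casicoat} (every proper fully invariant submodule is contained in a maximal, and each such maximal is fully invariant by quasi-duoness) allows me to define $\gamma\colon\Sp(M)\to\mx(M)$ by sending $P$ to the unique maximal submodule containing $P$; the restriction to $\mx(M)$ is obviously the identity, so I only need continuity. Given a basic open $m(N)\subseteq\mx(M)$, $\gamma^{-1}(m(N))=\{P\in\Sp(M)\mid N+\gamma(P)=M\}$. I would show this is open by adapting the argument of the proposition preceding Theorem \ref{DOS}: for $P$ with $\gamma(P)=\mathcal{M}\notin V(N)$, use the Hausdorffness of $\mx(M)$ to separate $\mathcal{M}$ from the closed set $V(N)\cap\mx(M)$ by disjoint basic opens $m(A),m(B)$ and exploit pm (a prime has a unique maximal over it) to transport this separation back to an open neighborhood $U(A)$ of $P$ in $\Sp(M)$ whose image lies in $m(N)$. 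The implication (d)$\Rightarrow$(e) is immediate from Proposition \ref{retractnormal}.

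Finally, for the converse direction (e)$\Rightarrow$(a) assuming $0=\bigcap\mx(M)$: quasi-duoness gives $\mx(M)=\mx^{fi}(M)$ by Lemma \ref{Glemma10.11H}, hence $0=\bigcap\mx^{fi}(M)$; Theorem \ref{theoremsth} then upgrades $\Sp(M)$ normal to $M$ strongly harmonic, and Theorem \ref{SHDuoGelfand} combined with quasi-duoness yields Gelfand. This closes the cycle.

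The main obstacle is the continuity of $\gamma$ in (c)$\Rightarrow$(d). For commutative pm-rings one has the Gelfand identity $(1-a)b=0$-type relation available free of charge, which is what drives continuity; in the module setting the quasi-duo hypothesis is a substitute for commutativity, but the lack of a symmetric-ring analogue (noted by the authors in the discussion preceding the theorem) means one has to work purely from Hausdorffness of $\mx(M)$ plus the pm structure. The trick is that one never needs $N\leq P$ pointwise, only $N+\gamma(P)=M$, and separating $\gamma(P)$ from $V(N)\cap\mx(M)$ using the compact Hausdorff (hence normal) topology on $\mx(M)$ provides just enough room to pass to $\Sp(M)$ through the preimage operation. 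Everything else in the proof consists of bookkeeping between the three nuclei $\mu$, $\tau$ and $\mathrm{Ler}$ together with citations of Theorems \ref{SHDuoGelfand}, \ref{theoremsth} and Propositions \ref{stmaxhausdorf}, \ref{retractnormal}.
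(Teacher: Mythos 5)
Most of your assembly matches the paper's: (a)$\Leftrightarrow$(b) is Theorem \ref{SHDuoGelfand}; (b)$\Rightarrow$(c) via Lemma \ref{Glemma10.11H}, Proposition \ref{stmaxhausdorf} and Lemma \ref{Gispm} (with Remark \ref{casicoat} supplying the existence of a maximal over each prime) is exactly the paper's route; (d)$\Rightarrow$(e) is Proposition \ref{retractnormal}; and your closing step (e)$\Rightarrow$(a) via Theorem \ref{theoremsth} plus quasi-duoness is actually stated more carefully than in the paper, which only records (d)$\Rightarrow$(a).

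The genuine gap is in the continuity of $\gamma$ in (c)$\Rightarrow$(d). You separate $\gamma(P)=\mathcal{M}$ from the closed set $\V(N)\cap\mx(M)$ by disjoint basic opens $m(A)\ni\mathcal{M}$ and $m(B)\supseteq \V(N)\cap\mx(M)$, and propose $\U(A)$ as a neighbourhood of $P$ in $Spec(M)$ with $\gamma(\U(A))\subseteq m(N)$. To check this you must take a prime $Q$ with $A\nleq Q$ and suppose $N\leq\gamma(Q)$; then $\gamma(Q)\in m(B)$ gives $B\nleq\gamma(Q)$, hence $B\nleq Q$, and primality of $Q$ yields $A_MB\nleq Q$. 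But the disjointness $m(A)\cap m(B)=m(A_MB)=\emptyset$ only says that $A_MB$ is contained in \emph{every maximal submodule}, i.e.\ $A_MB\leq\bigcap\mx(M)$; it gives neither $A_MB=0$ nor $A_MB\leq Q$ for the arbitrary prime $Q$, so no contradiction follows. Equivalently, $A\nleq Q$ does not imply $A\nleq\gamma(Q)$, so $\U(A)$ is in general strictly larger than $\gamma^{-1}(m(A))$, and pm alone does not bridge this. The argument you propose to adapt (the unlabelled proposition preceding Theorem \ref{DOS}) works precisely because normality of $\Lambda^{fi}(M)$ produces $K_1,K_2$ with ${K_1}_M{K_2}=0$ \emph{exactly}, and $0\leq Q$ holds for every prime; under hypothesis (c) you do not have that normality available (it is what (e) would provide). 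Your separation argument does become valid under the supplementary hypothesis $0=\bigcap\mx(M)$, since then $A_MB\leq\bigcap\mx(M)=0\leq Q$, but the theorem asserts (c)$\Rightarrow$(d) without it. For comparison, the paper's own proof of this step avoids any separation in $\mx(M)$ and instead computes $\gamma^{-1}(\V(K)\cap\mx(M))$ directly against the basic closed sets $\V(K)$ of $Spec(M)$, using only pm and coatomicity.
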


\begin{proof}
	(a)$\Leftrightarrow$(b) It follows from Theorem \ref{SHDuoGelfand}.
	
	(a)$\Rightarrow$(c) From Corollary \ref{gelcoa} and Lemma \ref{Glemma10.11H}, every element of $\Lambda^{fi}(M)$ is contained in a maximal submodule. It follows from Lemma \ref{Gispm} that $M$ is a pm-module.
	
	(c)$\Rightarrow$(d) Since $M$ is a pm module, for every $P\in Spec(M),$ theres exists a unique $\mathfrak{M}_{P}$ maximal submodule containing $P.$ Let $\gamma\colon Spec(M)\to \Max(M)$  defined as $\gamma(P):=\mathfrak{M}_P.$ It is clear that $\gamma(N)=N$ for each $N\in\mx(M).$ Also, notice that $\gamma$ is continuous. Indeed, let $\mathcal{V}({K})\cap \Max(M)=\{N\in\mx(M)\mid K\leq N\}$ be a basic closed set of $\mx(M).$ Then, $\gamma^{-1}(\mathcal{V}({K})\cap \Max(M))=\{P\in Spec(M)\mid \gamma(P)\in \mathcal{V}({K})\cap \Max(M)\}=
	\{P\in Spec (M)\mid  K\leq \mathfrak{M}_p \}\subseteq \mathcal{V}(K).$
Now, let $P\in \mathcal{V}(K).$ Since $M$ is pm and by Lemma \ref{compatomic}, there exists a unique maximal $\mathfrak{M}_P$ such that $P\leq \mathfrak{M}_P.$ Thus, $K\leq \mathfrak{M}_P=\gamma(P),$ and so, $P\in \gamma^{-1}(\mathcal{V}({K})\cap \Max(M)).$ Hence, $\gamma^{-1}(\mathcal{V}({K})\cap \Max(M))$ is a basic open set in $Spec(M).$ Then, $\gamma$ is continuous function.
Therefore, $\gamma$ is a retraction.

(d)$\Rightarrow$(e) It follows from Proposition \ref{retractnormal}.
	
	Assume $0=\bigcap\mx(M)$. Hence	(d)$\Rightarrow$(a) follows from Theorem \ref{theoremsth}.
\end{proof}

\begin{cor}
	Let $R$ be a ring. Consider the following conditions
	\begin{enumerate}[label=\emph{(\alph*)}]
		\item $R$ is a Gelfand ring.
		\item $R$ is a quasi-duo strongly harmonic module.
		\item $R$ is a quasi-duo pm-module with $\mx(R)$ Hausdorff.
		\item $R$ is a quasi-duo with $\mx(R)$ is Hausdorff and $\mx(R)$ is a retract of $Spec(R)$.
		\item $R$ is a quasi-duo modulo such that $Spec(R)$ is normal.
	\end{enumerate}
Then the implications \emph{(a)$\Leftrightarrow$(b)$\Rightarrow$(c)$\Rightarrow$(d)$\Rightarrow$(e)} hold. If the Jacobson radical of $R$ is zero, all the conditions are equivalent.
\end{cor}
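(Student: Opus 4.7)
The plan is to obtain this corollary as a direct specialization of Theorem \ref{DOS} to the case $M = R$, so the work consists almost entirely of verifying that the hypotheses of that theorem are automatic when the module is the regular module.

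First I would observe that $R$ is projective in $\sigma[R] = R\text{-Mod}$, since $R$ is a projective generator of its own module category. Next, I would check that $\Lambda^{fi}(R)$ is compact: the top element $R$ is generated by the identity $1$, hence $R$ is a finitely generated (indeed cyclic) two-sided ideal in the lattice of ideals, so by the characterization following the example after Lemma \ref{maxfiinspec} (or by a direct Zorn's lemma argument on chains of proper ideals), $\Lambda^{fi}(R)$ is compact. Third, I would note that $\mx(R)$ is compact: since the Zariski open cover $\mx(R)=\bigcup_I \U(I_i)$ forces $\sum_I I_i$ to avoid every maximal ideal, whence $\sum_I I_i = R$, whence $1 \in \sum_F I_i$ for some finite $F$, giving the finite subcover. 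In particular for a ring the notation $\mx(R)$ and $\mx^{fi}(R)$ agree in the quasi-duo case and $\mx(R)$ is automatically nonempty.

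With those three hypotheses verified, the implications (a)$\Leftrightarrow$(b)$\Rightarrow$(c)$\Rightarrow$(d)$\Rightarrow$(e) transfer verbatim from Theorem \ref{DOS}. For the final equivalence, I would invoke the fact that $\bigcap\mx(R)$ is precisely the Jacobson radical $J(R)$ of $R$, so the extra hypothesis $0 = \bigcap\mx(R)$ of Theorem \ref{DOS} reads exactly as $J(R) = 0$, which is the assumption stated in the corollary. Under that assumption Theorem \ref{DOS} gives (e)$\Rightarrow$(a), closing the cycle.

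There is no real obstacle here: the only mildly nontrivial point is the compactness of $\mx(R)$, and that is standard. The main thing is to note that all the categorical and lattice-theoretic hypotheses on the module $M$ in Theorem \ref{DOS}, (projectivity in $\sigma[M]$, compactness of $\Lambda^{fi}(M)$, compactness of $\mx(M)$, and the condition $0 = \bigcap\mx(M)$) become either trivially true or precisely the Jacobson radical condition when $M = R$, so the corollary is essentially a restatement of the module theorem in the regular-module case.
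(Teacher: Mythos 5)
Your proposal is correct and matches the paper's intent exactly: the corollary is stated immediately after Theorem \ref{DOS} as its specialization to $M=R$, and the only content is the routine verification that $R$ is projective in $\sigma[R]=R\text{-Mod}$, that $\Lambda^{fi}(R)$ and $\mx(R)$ are compact (both via $1\in R$), and that $\bigcap\mx(R)$ is the Jacobson radical. Nothing further is needed.
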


\subsection{Questions and possible lines to work out }
Here we leave some questions which we were not able to answer in these paper:

In Proposition \ref{shdirsum} it was proved that fully invariant direct summands of a strongly harmonic module inherit the property. So, we rise the question: 

\vspace{1em}

{\it Q$_1$}: Is the condition of being strongly harmonic module closed under direct summands?

\vspace{1em}

In \cite{PepeyJaime} is proved that,

\begin{thm}  \cite[Theorem 5.10]{PepeyJaime} Let R be a commutative ring and M be a faithful multiplication R-module and $QM \neq M$ for all maximal ideals $Q$ of $R.$ Then the topological spaces $Spec(R)$ and $Spec(M)$ are homeomorphic.
\end{thm}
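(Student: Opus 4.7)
The plan is to exhibit an explicit pair of continuous mutually inverse maps between $Spec(R)$ and $Spec(M)$. Define
\[
\varphi\colon Spec(R)\to Spec(M),\qquad \varphi(P)=PM,
\]
and
\[
\psi\colon Spec(M)\to Spec(R),\qquad \psi(N)=(N:M)=\{r\in R\mid rM\subseteq N\}.
\]
Since $M$ is a multiplication module, every submodule of $M$ has the form $IM$ for some ideal $I$ of $R$, and $(IM:M)$ recovers an ideal; this is what makes the two maps candidate inverses.

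First I would verify that $\varphi$ and $\psi$ are well defined. For $\psi$, a routine check shows that if $N$ is a prime submodule of $M$ then $(N:M)$ is a prime ideal of $R$: given $rs\in (N:M)$ with $s\notin (N:M)$, there exists $m\in M$ with $sm\notin N$, and then $r\cdot (Rsm)\subseteq N$ forces $r\in (N:M)$ by the primeness of $N$. For $\varphi$, the hypothesis $QM\neq M$ for every maximal ideal $Q$ is exactly what is needed: any prime $P$ is contained in some maximal $Q$, hence $PM\subseteq QM\subsetneq M$, so $PM$ is a proper submodule; primeness of $PM$ as a submodule then follows from the multiplication property (every submodule is $IM$) together with primeness of $P$ in $R$.

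Next I would check that $\varphi$ and $\psi$ are mutually inverse. The equality $\varphi(\psi(N))=(N:M)M=N$ is immediate from the multiplication hypothesis (every submodule equals $IM$ for the canonical choice $I=(N:M)$). The harder equality is $\psi(\varphi(P))=(PM:M)=P$. The inclusion $P\subseteq (PM:M)$ is trivial. For the reverse, I would invoke the standard fact that, for a faithful multiplication module, the assignment $I\mapsto IM$ from ideals to submodules is injective — this uses faithfulness $\mathrm{ann}(M)=0$ crucially, via the cancellation property $IM=JM\Rightarrow I=J$ for ideals that are comparable to a prime with $PM\neq M$. Combined with the fact that $(PM:M)M=PM$, injectivity of $I\mapsto IM$ forces $(PM:M)=P$. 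This cancellation step is the main obstacle, and it is the point where the full force of the hypotheses (faithful, multiplication, $QM\neq M$ for all maximal $Q$) is consumed.

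Finally, bicontinuity would follow from the identification of basic closed sets. A basic closed set in $Spec(R)$ has the form $V(I)=\{P\in Spec(R)\mid I\subseteq P\}$, and I would show
\[
\varphi(V(I))=\{N\in Spec(M)\mid IM\subseteq N\},\qquad \psi^{-1}(V(I))=\{N\in Spec(M)\mid IM\subseteq N\},
\]
so that $\psi$ is continuous. Symmetrically, writing a basic closed set of $Spec(M)$ as $\{N'\in Spec(M)\mid L\subseteq N'\}$ with $L=(L:M)M$ and pulling back through $\varphi$ yields $V((L:M))$, a closed set in $Spec(R)$. These verifications are routine once the bijection is established. The upshot is that $\varphi$ and $\psi$ are mutually inverse continuous maps, so they provide the desired homeomorphism $Spec(R)\cong Spec(M)$.
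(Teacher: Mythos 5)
First, a point of comparison that matters here: the paper does not prove this statement at all --- it is quoted verbatim from \cite[Theorem 5.10]{PepeyJaime} in the closing subsection on open questions, so there is no in-paper argument to measure yours against. Your outline is the standard route to the result (the one taken in the multiplication-module literature): the maps $P\mapsto PM$ and $N\mapsto (N:M)$ are the right candidates, and once they are shown to be mutually inverse, the identification of basic closed sets is indeed routine.

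Two steps need repair. (1) The step you yourself flag as the main obstacle, $(PM:M)=P$, is not justified as written: injectivity of $I\mapsto IM$ is \emph{not} a property of faithful multiplication modules in general. The cancellation law $IM\subseteq JM\Rightarrow I\subseteq J$ holds for a faithful multiplication module exactly when $M$ is finitely generated, and finite generation is in turn equivalent, for such $M$, to the hypothesis that $QM\neq M$ for every maximal ideal $Q$ --- this is the content of the result the paper cites immediately afterwards to conclude that ``$M$ is finitely generated.'' So the correct order of deduction is: hypotheses $\Rightarrow$ $M$ finitely generated $\Rightarrow$ cancellation $\Rightarrow$ $(PM:M)=P$; asserting injectivity of $I\mapsto IM$ outright begs the question. (2) In this paper $Spec(M)$ means the prime elements of $\Lambda^{fi}(M)$ with respect to the product $N{_M}L=\sum\{f(N)\mid f\in\Hom(M,L)\}$, whereas you implicitly use the classical notion of prime submodule ($rm\in N$, $m\notin N\Rightarrow rM\subseteq N$). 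For a multiplication module over a commutative ring the two notions coincide (every submodule is fully invariant and $(IM){_M}(JM)=IJM$), but this compatibility is precisely what makes your $\varphi$ and $\psi$ land in the intended spaces and should be stated, not assumed.
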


\noindent Combining this result with Demarco-Orsati-Simmons Theorem and  \cite[Proposition 1.6]{Tuganbaev03},  the following result  is gotten:

\begin{thm}\label{sbtdos}
	Let $R$ be a commutative ring and $M$ a  faitfhul multiplication module satisfying that $IM\neq M$ for every maximal ideal. Then, $M$ is finitely generated, and  the the following conditions are equivalent.
	\begin{enumerate}[label=\emph{(\alph*)}]
		\item $R$ is pm
		\item $Spec(R)$ is normal.
		\item $\mx(R)$ is a retract of $Spec(M)$ and Hausdorff.
		\item $R$ is strongly harmonic.
		\item $M$ is strongly harmonic.
		\item $Spec(M)$ is normal.
		\item $\mx(M)$ is a retract of $Spec(M)$ and Hausdorff.
	\end{enumerate}
\end{thm}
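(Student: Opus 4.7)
The plan is to reduce everything to the classical commutative ring case by transferring structure between $R$ and $M$ through the faithful multiplication hypothesis. I would organize the argument in three stages: (i) establish finite generation, (ii) set up the dictionary between ideals of $R$ and submodules of $M$, and (iii) deduce the seven-fold equivalence from Demarco--Orsati--Simmons together with the transfer dictionary.

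First, finite generation of $M$ is immediate: Tuganbaev's Proposition 1.6 (already cited) says that a faithful multiplication $R$-module $M$ with $IM\neq M$ for every maximal ideal $I$ is finitely generated. With $M$ f.g., faithful and multiplication, I would invoke the Pepe--Jaime Theorem stated in the excerpt to obtain a homeomorphism $\Phi\colon\Spec(R)\to\Spec(M)$; because the homeomorphism is induced by the bijection $I\mapsto IM$ on the full ideal lattice, it restricts to a homeomorphism $\mx(R)\to\mx(M)$. At the same time, faithful multiplication modules over a commutative ring satisfy $\End_R(M)\cong R$, so every submodule of $M$ is fully invariant; in particular $\Lambda(M)=\Lambda^{fi}(M)$, the module $M$ is quasi-duo, and $\mx^{fi}(M)=\mx(M)$. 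The assignment $I\mapsto IM$ is also a morphism of multiplicative lattices, since for multiplication modules one has $(IM)_M(JM)=IJM$; combined with finite generation, this makes $\Lambda^{fi}(M)$ compact.

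With this dictionary in place, I would handle the equivalences in two blocks. For the ring block, (a)$\Leftrightarrow$(b)$\Leftrightarrow$(c)$\Leftrightarrow$(d) is exactly the Demarco--Orsati--Simmons Theorem quoted in the text. For the module block, the homeomorphism $\Phi$ transports the topological conditions directly: normality of $\Spec(R)$ is equivalent to normality of $\Spec(M)$, giving (b)$\Leftrightarrow$(f); likewise the restriction of $\Phi$ to $\mx(R)\cong\mx(M)$ sends retractions to retractions and preserves the Hausdorff property, giving (c)$\Leftrightarrow$(g). Finally, because $I\mapsto IM$ is an isomorphism of multiplicative lattices carrying $\mx(R)$ onto $\mx^{fi}(M)=\mx(M)$, the set-theoretic conditions defining strongly harmonicity translate verbatim, yielding (d)$\Leftrightarrow$(e). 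Chaining these gives the complete equivalence.

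As a sanity check one can instead obtain (e)$\Leftrightarrow$(f)$\Leftrightarrow$(g) directly from the module-theoretic Demarco--Orsati--Simmons Theorem \ref{DOS}, whose hypotheses are all verified above: $M$ is projective in $\sigma[M]$ (being a generator of its own subcategory by faithful multiplication), $\Lambda^{fi}(M)$ is compact, $\mx(M)$ is compact (corresponding to $\mx(R)$, and one also needs $\bigcap\mx(M)=0$, which follows from the corresponding ring-theoretic hypothesis and the lattice iso), and $M$ is quasi-duo. The main obstacle is therefore the bookkeeping in stage (ii): one must carefully check that the Pepe--Jaime homeomorphism is really induced by the lattice iso $I\mapsto IM$, that this iso respects the two multiplicative structures $(-)\cdot(-)$ and $(-)_M(-)$, and that the fully-invariant/quasi-duo reductions hold so that the ring- and module-level definitions of ``strongly harmonic'' and ``Gelfand'' genuinely coincide. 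Once these points are verified, the theorem is essentially a dressed-up restatement of the classical commutative DOS theorem.
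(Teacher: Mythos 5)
Your proposal is correct and takes essentially the same route as the paper, which obtains Theorem \ref{sbtdos} precisely by combining Tuganbaev's finite-generation result, the Pepe--Jaime homeomorphism $Spec(R)\cong Spec(M)$, and the commutative Demarco--Orsati--Simmons theorem (the paper supplies no further detail, so your stage (ii) dictionary is exactly the bookkeeping being left implicit). One minor caveat: in your alternative ``sanity check'' via Theorem \ref{DOS}, the parenthetical claim that $M$ is projective in $\sigma[M]$ because it generates $\sigma[M]$ is not a valid inference (generators need not be projective), but your main argument does not rely on it.
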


In sight of Proposition \ref{sbtdos}. 

\vspace{1em}

{\it Q$_2$:} Is $M$ a pm-module, if $M$ is a multiplication module and $\mx(M)$ is a retract of $Spec(M)$?

\vspace{1em}

In \cite{sunrings91}, Shu-Hao gave an analogous to Demarco-Orsati-Simmons Theorem for non commutative symmetric rings:

\begin{thm}\cite[Theorem 2.3]{sunrings91}.
	Let $R$ be a weakly symmetric ring. Then the following are equivalent:
\begin{enumerate}[label={(\alph*)}]
	\item $R$ is pm.
	\item $\mx (R)$ is a continuous retract of $Spec (R),$
	\item $Spec (R)$ is normal (not necessary $T_1$),
	and these imply the Hausdorffness of $\mx(R).$ 
\end{enumerate}
\end{thm}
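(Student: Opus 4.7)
The plan is to mirror the structure of Theorem \ref{DOS} proved above in the modular context, observing that for a ring the compactness of $\Lambda^{fi}(R)$ comes free (from $R$ being finitely generated over itself) and the role played by the quasi-duo hypothesis in the module setting is absorbed by the weakly symmetric hypothesis on $R$. Concretely, weak symmetry is designed precisely so that the two-sided ideal product behaves well enough with respect to prime ideals to allow the classical commutative arguments to go through verbatim; my proof would use it through one auxiliary lemma: if $P$ is a prime ideal of a weakly symmetric ring and $I,J$ are ideals with $IJ\subseteq P$, then $I\subseteq P$ or $J\subseteq P$ (together with its product-theoretic consequences used to locate separating ideals).

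For \emph{(a)$\Rightarrow$(b)}, assuming the pm property, I would define $\gamma\colon Spec(R)\to\mx(R)$ by $\gamma(P)=\mathfrak{M}_P$, the unique maximal ideal containing $P$. The equality $\gamma\circ\iota_{\mx(R)}=1_{\mx(R)}$ is immediate. For continuity, I would prove exactly as in the step (c)$\Rightarrow$(d) of Theorem \ref{DOS} that $\gamma^{-1}(\V(K)\cap\mx(R))=\V(K)\cap Spec(R)$: one inclusion is trivial; for the other, if $K\subseteq\gamma(P)=\mathfrak{M}_P$, then since $P\subseteq\mathfrak{M}_P$ by construction and $\mathfrak{M}_P$ is the \emph{only} maximal ideal containing $P$, a standard argument with the intersection of all maximal ideals over $P$ (which equals $\mathfrak{M}_P$ under pm) forces $K\subseteq P$.

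For \emph{(b)$\Rightarrow$(c)}, given a continuous retraction $\gamma$, I would first deduce that $\mx(R)$ is Hausdorff: if $\mathcal{M}_1\neq\mathcal{M}_2$ in $\mx(R)$, the weakly symmetric property guarantees the existence of ideals $I_1\nsubseteq\mathcal{M}_1$, $I_2\nsubseteq\mathcal{M}_2$ with $I_1I_2$ contained in the prime radical, which separates them in $\mx(R)$ by disjoint basic opens. With $\mx(R)$ compact Hausdorff, it is normal, and I invoke Proposition \ref{retractnormal} (in its ring version) to conclude $Spec(R)$ is normal. The retraction pulls back separating opens in $\mx(R)$ to separating opens in $Spec(R)$ containing arbitrary disjoint closed sets $F_1,F_2$ after observing that $\gamma(F_i)=F_i\cap\mx(R)$ is closed in $\mx(R)$.

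For \emph{(c)$\Rightarrow$(a)} and the final Hausdorff claim, I would argue as follows. Suppose $P$ is prime and $P\subseteq\mathcal{M}_1\cap\mathcal{M}_2$ with $\mathcal{M}_1\neq\mathcal{M}_2$ in $\mx(R)$. The singletons $\{\mathcal{M}_1\},\{\mathcal{M}_2\}$ are closed in $Spec(R)$, so normality yields disjoint basic opens $\mathcal{U}(I_1)\ni\mathcal{M}_1$, $\mathcal{U}(I_2)\ni\mathcal{M}_2$ with $\mathcal{U}(I_1)\cap\mathcal{U}(I_2)=\emptyset$, i.e.\ $I_1I_2$ lies in every prime, hence in $P$. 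Weak symmetry then gives $I_1\subseteq P\subseteq\mathcal{M}_1$ or $I_2\subseteq P\subseteq\mathcal{M}_2$, contradicting $I_i\nsubseteq\mathcal{M}_i$. Hausdorffness of $\mx(R)$ is then automatic since it is a subspace of the $T_4$ space $Spec(R)$ whose points corresponding to maximal ideals are closed. The main obstacle in this program is the Hausdorffness step in (b)$\Rightarrow$(c), because it is exactly the place where weak symmetry does non-trivial work — producing the annihilating pair $(I_1,I_2)$ from the mere distinctness of two maximals — and where, in Sun's paper, the precise definition of weakly symmetric is tailored to make this step succeed.
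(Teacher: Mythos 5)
First, a point of comparison: this statement is not proved in the paper at all. It is Sun Shu-Hao's theorem, quoted verbatim from \cite{sunrings91} in the closing subsection on open questions, purely as motivation for question {\it Q$_3$}; so there is no proof of the authors' own to measure your attempt against. Judged on its own terms, your sketch has two genuine gaps. The first is that you have misplaced where weak symmetry does its work. Your ``auxiliary lemma'' --- that $IJ\subseteq P$ for two-sided ideals $I,J$ implies $I\subseteq P$ or $J\subseteq P$ --- is the \emph{definition} of a prime two-sided ideal and holds in every ring with identity; it cannot carry the content of the hypothesis. Weak symmetry in \cite{sunrings91} is an element-level annihilator condition whose whole purpose is to manufacture two-sided separating ideals (equivalently, to make a pm ring strongly harmonic); your step (b)$\Rightarrow$(c) simply asserts that weak symmetry ``guarantees'' ideals $I_1\nsubseteq\mathcal{M}_1$, $I_2\nsubseteq\mathcal{M}_2$ with $I_1I_2$ in the prime radical, which is precisely the strongly harmonic property and is essentially the conclusion being sought, not something you may quote.

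The second gap is the continuity argument in (a)$\Rightarrow$(b). The claimed equality $\gamma^{-1}(\V(K)\cap\mx(R))=\V(K)$ amounts to saying that $K\subseteq\mathfrak{M}_P$ forces $K\subseteq P$, which is false: for a local domain $R$ with maximal ideal $\mathfrak{m}$, take $P=0$ and $K=\mathfrak{m}$. No ``intersection of the maximal ideals over $P$'' rescues this, since under pm that intersection is $\mathfrak{M}_P$ itself, not $P$. Continuity of $P\mapsto\mathfrak{M}_P$ is exactly the delicate point of the De Marco--Orsatti argument and has to be extracted from normality or from the separating-ideal property, as in the paper's own retraction construction preceding Theorem \ref{DOS}, where $\gamma^{-1}(m(N))$ is identified with a basic open set $\U(K_2)$ produced by normality of $\Lambda^{fi}(M)$ --- not by the set-theoretic identity you propose. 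Your direction (c)$\Rightarrow$(a), by contrast, is sound (and in fact uses no symmetry hypothesis, since maximal two-sided ideals are closed points of $Spec(R)$ and one may shrink the separating opens to basic ones); it is the implications out of (a) and (b), and the final Hausdorffness claim, that remain unproved in your proposal.
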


\begin{thm}\cite[Theorem 2.4.]{sunrings91} 
	Let R be a symmetric ring; then R is pm if and only if R is strongly harmonic.
\end{thm}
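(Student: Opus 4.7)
The plan is to prove the two implications separately, with the reverse direction being essentially automatic and the forward direction crucially requiring symmetry.

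For the direction ``$R$ strongly harmonic $\Rightarrow$ $R$ is pm'': this actually holds for every ring and does not require symmetry. Let $P$ be a prime ideal of $R$ contained in two distinct maximal ideals $\mathcal{M}_1\ne \mathcal{M}_2$. The strongly harmonic property supplies ideals $I_1,I_2$ with $I_1\not\subseteq \mathcal{M}_1$, $I_2\not\subseteq \mathcal{M}_2$, and $I_1 I_2 = 0$. Since $I_1 I_2 = 0\subseteq P$ and $P$ is prime, either $I_1\subseteq P\subseteq \mathcal{M}_1$ or $I_2\subseteq P\subseteq \mathcal{M}_2$, a contradiction. Hence every prime ideal lies beneath a unique maximal ideal, so $R$ is pm.

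For the direction ``$R$ pm $\Rightarrow$ $R$ strongly harmonic'': here symmetry is essential. My strategy is to route through the normality of $\mathrm{Spec}(R)$ and then descend back to an elementwise separation. Since symmetric rings are weakly symmetric, I would first invoke Shu-Hao's Theorem 2.3 (quoted just before the statement) to conclude that $\mathrm{Spec}(R)$ is normal. Next, applying Corollary \ref{muglobal} to $A = \Lambda^{fi}(R)$, normality of $\mathrm{Spec}(R)$ is equivalent to normality of the semiprime-ideal frame $\Lambda^{fi}(R)_\mu$. The plan is then to lift normality from $\Lambda^{fi}(R)_\mu$ to $\Lambda^{fi}(R)$ itself via Lemma \ref{mu}, which needs $\mu(0)=0$ and that $\mu$ fixes coatoms; once we have normality of $\Lambda^{fi}(R)$, Corollary \ref{ringlamstr} closes the loop and yields that $R$ is strongly harmonic.

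The main obstacle will be the lifting step, since $\mu(0)=0$ (i.e.\ vanishing nilradical) is not automatic. This is precisely where the symmetric hypothesis does the heavy lifting: in a symmetric ring, $aRb=0$ is equivalent to $ab=0$ and to $bRa=0$, so ideal-theoretic annihilator separations translate cleanly to elementwise ones. If a direct appeal to Lemma \ref{mu} is not available, the fallback is to argue elementwise: given distinct $\mathcal{M}_1,\mathcal{M}_2\in \mathrm{Max}(R)$, normality of $\mathrm{Spec}(R)$ yields basic open sets $\mathcal{U}(I_1),\mathcal{U}(I_2)$ with $\mathcal{U}(I_1)\cap \mathcal{U}(I_2)=\emptyset$ and $\mathcal{M}_1\in \mathcal{U}(I_1)$, $\mathcal{M}_2\in \mathcal{U}(I_2)$, forcing $I_1 I_2\subseteq \mu(0)$; using primeness and the symmetric identity $aRb=0\Leftrightarrow ab=0$, one extracts $a\in I_1\setminus \mathcal{M}_1$ and $b\in I_2\setminus \mathcal{M}_2$ with $ab=0$, and then passes to the two-sided ideals $RaR$ and $RbR$ to obtain the required ideals separating $\mathcal{M}_1$ and $\mathcal{M}_2$ with zero product. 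This gives the strongly harmonic condition directly.
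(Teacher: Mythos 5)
This statement is quoted verbatim from \cite{sunrings91} as motivation for question {\it Q$_3$}; the paper offers no proof of it, so there is nothing internal to compare against and I am judging your argument on its own. Your first direction (strongly harmonic $\Rightarrow$ pm) is correct and needs no symmetry: maximal two-sided ideals are prime, existence of a maximal ideal above any prime comes from Zorn's lemma, and the zero-product separation rules out a prime lying under two distinct maximal ideals exactly as you say --- this is the same mechanism as Lemma \ref{Gispm}.

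The converse contains a genuine gap. Your primary route --- pm $\Rightarrow$ $Spec(R)$ normal (Sun's Theorem 2.3) $\Rightarrow$ $\Lambda^{fi}(R)_\mu$ normal (Corollary \ref{muglobal}) $\Rightarrow$ $\Lambda^{fi}(R)$ normal (Lemma \ref{mu}) $\Rightarrow$ strongly harmonic (Corollary \ref{ringlamstr}) --- requires $\mu(0)=0$, i.e.\ that $R$ be semiprime, which already fails for $\mathbb{Z}/4\mathbb{Z}$, a commutative (hence symmetric) pm ring; this is consistent with Theorem \ref{theoremsth}, where recovering the strongly harmonic property from normality of $Spec$ needs the extra hypothesis $0=\bigcap\mx^{fi}$. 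You acknowledge the obstacle, but the fallback does not close it: disjointness of $\U(I_1)$ and $\U(I_2)$ only gives $I_1I_2\leq\mu(0)$, the prime radical, so products $ab$ with $a\in I_1$, $b\in I_2$ are merely nilpotent (even in a symmetric, hence 2-primal, ring), and no argument is supplied for ``extracting'' $a\notin\mathcal{M}_1$, $b\notin\mathcal{M}_2$ with $ab=0$ exactly. The identity $aRb=0\Leftrightarrow ab=0$ converts one form of zero product into another; it does not produce a zero product from a nilpotent one. The classical argument (de Marco--Orsatti for commutative rings, Sun's adaptation for symmetric ones) bypasses normality entirely: assuming no pair $a\notin\mathcal{M}_1$, $b\notin\mathcal{M}_2$ with $ab=0$ exists, one checks --- and this is precisely where symmetry is used --- that the products $ab$ generate a multiplicatively closed set avoiding $0$, hence there is a prime ideal disjoint from it, which is then forced into both $\mathcal{M}_1$ and $\mathcal{M}_2$, contradicting pm. Your proposal never reaches this step, so the implication pm $\Rightarrow$ strongly harmonic remains unproved.
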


\vspace{1em} 

{\it Q$_3 $:} Is there an analogous concept of (weakly) symmetric ring for a module? Is there an  analogous for modules of the Shu-Hao's theorems?

\section*{Acknowledgments}

In the latest revisions of this manuscript, we received the sad news that Professor Harold Simmons passed away. For us, Professor Simmons was a mentor. His ideas and theories have been a deep influence in our group of ring and module theory besides his outstanding contributions to point-free topology and mathematical logic. We will miss him.

We are deeply grateful to Professor Harold Simmons for having communicated us the manuscript  \cite{simmonssome}, and also his kind comments when they were asked. Many of the ideas of this investigation came up during a seminar made around that document.

We want to express our gratitude to the referee for her/his comments which improved substantially this manuscript.

This work has been supported by the grant UNAM-DGAPA-PAPIIT IN100517.

\bibliographystyle{amsalpha}

\bibliography{research2}

\scriptsize
 \vskip5mm
\begin{itemize}\item	 \textit{Mauricio Medina B\'arcenas$^{\ast}$, }
Department of Mathematical Sciences, Northern Illinois University, Illinois, Dekalb 60115, USA.
\item\textit{Lorena Morales Callejas$^{\star},$} Facultad de Ciencias, Universidad Nacional Aut\'onoma de M\'exico.
Circuito Exterior, Ciudad Universitaria, 04510, M\'exico, D.F., M\'exico.
\item \textit{Martha Lizbeth Shaid Sandoval Miranda$^{(\ddag)}$ }
Departamento de Matem\'aticas, Universidad Aut\'{o}noma Metropolitana - Unidad Iztapalapa, Av. San Rafael Atlixco 186, Col. Vicentina
Iztapalapa, 09340, M\'exico, D.F., M\'exico.. 
\item \textit{\'Angel Zald\'ivar Corichi$^{\dag}$.}
Departamento de Matem\'aticas, Centro Universitario de Ciencias Exactas e Ingenier\'ias, Universidad de Guadalajara, Blvd. Marcelino Garc\'ia Barrag\'an, 44430, Guadalajara, Jalisco, M\'exico.\vspace{5pt}
\end{itemize}
\begin{itemize}
\item[\hspace{33pt}] $\ast$ mmedina6@niu.edu 
\item[\hspace{33pt}] $\star$ lore.m@ciencias.unam.mx
\item[\hspace{33pt}] $\ddag$ marlisha@xanum.uam.mx    (Corresponding author)
\item[\hspace{33pt}]  $\dag$ luis.zaldivar@academicos.udg.mx 
\end{itemize}
\end{document}